\newcommand{\dx}[1]{\; \mathrm{d} #1}
\newcommand{\diam}{\mathrm{diam}\:}
\newcommand{\sd}{\: : \:}
\newcommand{\supp}{\mathrm{supp}\:}
\newcommand{\defi}{\: \mathrel{\mathop{\raisebox{1pt}{\scriptsize$:$}}}= \:}
\newcommand{\restr}{\mathbin{\vrule height 1.6ex depth 0pt width 0.13ex\vrule height 0.13ex depth 0pt width 1.3ex}}
\newcommand{\rect}{\mathrm{rect}}
\newcommand{\dist}{\mathrm{dist}}
\newcommand{\tr}{\mathrm{tr}}
\newcommand{\Sym}{\mathrm{Sym}_{0}}
\newcommand{\id}{\mathrm{Id}}
\newcommand{\Lip}{\mathrm{Lip}}
\newcommand{\radM}{\mathbf{r}_0}
\newcommand{\radP}{M\eta} 
\newcommand{\radv}{\epsilon}
\newcommand{\N}{\mathcal{N}}
\newcommand{\E}{\mathcal{E}}
\newcommand{\Eex}{\mathcal{E}_{\eta,\xi}}
\newcommand{\C}{\mathcal{C}}
\newcommand{\R}{\mathcal{R}}
\newcommand{\M}{\mathcal{M}}
\newcommand{\A}{\mathcal{A}}
\newcommand{\T}{\mathcal{T}}
\newcommand{\TT}{\mathbf{T}}
\newcommand{\MM}{\mathbb{M}}
\newcommand{\FF}{\mathbb{F}}
\newcommand{\Qex}{Q_{\eta,\xi}}
\newcommand{\Tex}{T_{\eta,\xi}}
\newcommand{\Sex}{S_{\eta,\xi}}
\newcommand{\Qexn}{Q_{\eta,\xi,n}}
\newcommand{\Texn}{T_{\eta,\xi,n}}
\newcommand{\Sexn}{S_{\eta,\xi,n}}
\newcommand{\lipQ}{\ds{L}}
\newcommand{\Qinfty}{Q_{\infty}}
\newcommand{\Qexinfty}{Q_{\eta,\xi,\infty}}
\newcommand{\sstar}{s_{*}}
\newcommand{\sexstar}{s_{\eta,\xi,*}}
\newcommand{\nnopt}{\mathtt{n}_3}
\newcommand{\polych}{\mathcal{P}}
\newcommand{\flatch}{\mathcal{F}}
\newcommand{\nn}{\mathbf{n}}
\newcommand{\mm}{\mathbf{m}}
\newcommand{\pp}{\mathbf{p}}
\newcommand{\ee}{\mathbf{e}}
\newcommand{\ds}[1]{{\color{black}#1}} 
\newcommand{\dsr}[1]{{\color{black}#1}} 
\newcommand{\dsg}[1]{{\color{black}#1}} 
\newtheorem{theorem}{Theorem}[section]
\newtheorem{proposition}[theorem]{Proposition}
\newtheorem{lemma}[theorem]{Lemma}
\newtheorem{remark}[theorem]{Remark}
\newtheorem{corollary}[theorem]{Corollary}
\newtheoremstyle{name_and_space}{11pt}{20pt}{\itshape}{}{\bfseries}{}{.5em}{#1 #2 #3}
\theoremstyle{name_and_space}
\newif\ifJournal
\begin{document}

\title{Convergence to line and surface energies in nematic liquid crystal colloids with external magnetic field}

\date\today
\author[1,2]{François Alouges}
\author[3]{Antonin Chambolle}
\author[1,4]{Dominik Stantejsky}

\affil[1]{Centre de Mathématiques Appliquées, UMR CNRS 7641, École Polytechnique, IP-Paris, 91128 Palaiseau Cedex, France}
\affil[2]{Centre Borelli, ENS Paris-Saclay et CNRS, UMR 9010, \newline 4, avenue des Sciences, 91190 Gif-sur-Yvette, France.}
\affil[3]{CEREMADE, UMR 7534, CNRS \& Université Paris Dauphine PSL, \newline 75775 Paris Cedex 16, France}
\affil[4]{Department of Mathematics and Statistics, McMaster University, \newline Hamilton, ON L8S 4L8 Canada}


\parskip 6pt

\date\today 

\parskip 6pt

\maketitle

\begin{abstract}
We use the Landau-de Gennes energy to describe a particle immersed into nematic liquid crystals with a constant applied magnetic field.
We derive a limit energy in a regime where both line and point defects are present, showing quantitatively that the close-to-minimal energy is asymptotically concentrated on lines and surfaces nearby or on the particle. 
We also discuss regularity of minimizers and optimality conditions for the limit energy.
\linebreak
\textbf{Keywords:}  Nematic liquid crystal colloids, Landau-de Gennes model, $\Gamma-$convergence, flat chains, singular limits \\
\textbf{MSC2020:} 49J45, 
				  49S05, 
				  76A15 
				 
\end{abstract}

\tableofcontents

\section{Introduction}

\ds{This paper is about a physical model of a particle immersed in a liquid crystal, in a regime where the energy is concentrated on lines and surfaces of singularities.}
The history of interaction between variational problems and geometry has been long and of great mutual influence \cite{Goldstine1980}, starting from the geometrically motivated problem of the brachistochrone curve \cite{Bernoulli1696,PaoloFreguglia2016}, Fermat's principle in optics \cite{MaxBorn2019}, material science \cite{Ball1998} to general relativity \cite{Hilbert1924,Masiello2000}.

One particularly important problem arises when the size of geometrical objects themselves is to be minimized leading to so called \emph{minimal surfaces} \cite{Lagrange1761}. A classical example is the two dimensional soap film spanning between predefined (fixed) boundary curves, called Plateau's problem \cite{Douglas1931,Rado1933,Struwe2014}. 
Some solutions can be constructed explicitly \cite{Hoffman1993,Karcher1989} or studied through means of harmonic and complex analysis \cite{Courant1977,Jost1991,Nitsche2011}, but a general theory was not available until the development of geometric measure theory and its language of currents, flat chains, mass and varifolds to describe the objects and how to measure them \cite{Federer1960,Federer1996,Morgan2016,Taylor1976,Almgren2001}.

A different question giving rise to problems involving minimal surfaces is given by the classical $\Gamma-$convergence result of Modica and Mortola \cite{Modica1977} (see also \cite{Modica1987}) of a weighted Dirichlet energy and a penalizing double-well potential to the perimeter functional. 
A constraint such as a prescribed volume ensures the problem to be non trivial. 
The energy typically is concentrated in regions where none of the favourable states of the potential are attained. For the limsup inequality, one constructs a one dimensional profile that minimizes the transition between the favoured states.

Another variational problem in which geometry appears is given by the Ginzburg-Landau model.
In the famous work \cite{Bethuel1994}, the (logarithmically diverging) leading order term and (after rescaling) a limit problem have been derived. 
The limiting variational problem is geometric and consists in finding an optimal distribution of points in the plane subject to constraints coming from the topological degree of the initial problem.
This approach stimulated research which lead to a large literature \cite{Lin1995,Mironescu1995, Alberti2005,Brezis1995, Sandier2004, Ignat2021,Canevari2020}, in particular for problems in micromagnetics \cite{Ignat2016,Kurzke2010}, superconductors \cite{Serfaty2001,Kim2002,SoerenFournais2010} and liquid crystals \cite{Liu2000,Ball2011,Ignat2016a}. 

Our work combines many of the before mentioned ideas to describe the different contributions and effects that take place in our problem. 
For example, we use the generalized three dimensional analogue of estimations in \cite{Bethuel1994} as considered in \cite{Sandier1998,Jerrard1999,Chiron2004,Canevari2015a,Canevari2020} to obtain a length minimization problem for curves. 
Coupled with this optimization problem, we show using a Modica-Mortola type argument that the remaining part of the energy concentrates on hypersurfaces which end either on the boundary of the domain or on the  described line. 

The main motivation for this article was the study of the formation and transition of singularities in colloidal nematic liquid crystals, in particular the \emph{Saturn ring effect}.
It has been observed in experiments that nematic liquid crystals may exhibit line and point singularities.
Those can take the form of a ring around one or several of the colloidal particles depending on the shape and size of the particles and the strength of an external electric or magnetic field  \cite{MondainMonval1999,Loudet2001,Loudet2002,Musevic2017}.
This phenomenon is caused by the incompatibility between the boundary condition at the surface of the particle where a positive topological charge is created, and the condition at infinity where an electric or magnetic field enforces a uniform alignment of the molecules in the direction of the field.
While spheres are the most studied particles, there is also a considerable interest in defect structures around non-spherical inclusions \cite{Sahu2019,Aplinc2019,Sudhakaran2020}.
For the study of phenomena such as self assembly \cite{Smalyukh2010,Xie2018,Solodkov2019} usually a large number of particles is needed. 
In this work however, we focus on the simpler case of a single colloidal particle as a first step for understanding the complex interaction that takes place in colloidal systems\cite{Garbovskiy2010,Musevic2017,Machon2019}.

This article is the continuation of the work started in \cite{ACS2021} where we studied a spherical inclusion, our main theorem (Theorem~\ref{thm:main}) is a generalisation of Theorem 3.1 in \cite{ACS2021} (see Remark~\ref{rem:comments_on_main_thm}). In particular, our new theorem holds for an arbitrary manifold of class $C^{2}$ instead of a sphere and we remove the hypothesis of rotational equivariance \dsg{and convexity}. 

Although the applied ideas could be used to carry out a similar analysis for a larger class of energy functionals, we place ourself in the context of the Landau-de Gennes model for nematic liquid crystals.
A common way to describe liquid crystals is by introducing a unit vector field $\nn$, the so called \emph{director field}, for example in the Oseen-Frank model. 
The vector $\nn$ represents the local orientation of the liquid crystal molecules. 
In practice, it is often not possible to distinguish between $\nn$ and $-\nn$, so that $\nn$ should rather be $\mathbb{R}P^2-$valued, where $\mathbb{R}P^2$ is the two-dimensional real projective space.
More generally, one can think of describing the arrangement of the molecules by a symmetric probability distribution $\rho$ on the sphere of directions.
Because of the symmetry, the first moment of $\rho$ vanishes and the (shifted) second moment $Q$ is a symmetric traceless matrix (also called $Q-$tensor), which is used to represent $\rho$ in the Landau-de Gennes model. 
In the following we will denote $\Sym$ the space of such symmetric traceless matrices.
Under this identification, the uniform distribution on the sphere corresponds to the \emph{isotropic} state in which all three eigenvalues $\lambda_1\geq\lambda_2\geq\lambda_3$ of $Q$ are equal to zero or equivalently $Q=0$.
In case two eigenvalues are equal, we call $Q$ \emph{uniaxial}.
More precisely, if \dsg{$\lambda_1>\lambda_2=\lambda_3$ we say that $Q$ is \emph{prolate} (or positively) uniaxial, while if $\lambda_1=\lambda_2>\lambda_3$} it is called \emph{oblate} (or negatively) uniaxial.
If all three eigenvalues of $Q$ are distinct $\lambda_1>\lambda_2>\lambda_3$, we speak of a \emph{biaxial} $Q-$tensor.
A particularly important role is played by the set $\N$ of prolate uniaxial tensors of a given norm as they are minimizers of the ordering potential in the Landau-de Gennes energy as we will see in Section~\ref{subsec:LdG-model}.
Elements $Q\in\N$ can be written as $Q=s_*(\nn\otimes\nn-\frac13\id)$ ($s_*$ being a constant depending on the liquid crystal) and thus allow an identification with the director field in direction $\pm\nn$.
On the other hand, singularities are described by situations in which one cannot identify a director field, e.g.\ if $Q$ is isotropic or oblate uniaxial.
However, the analysis carried out in this paper does not discriminate between the two different possibilities as they have asymptotically the same energetic cost in our regime.
Nevertheless, in \cite{Canevari2013} it has been proven that in some situations an oblate uniaxial defect core surrounded by a biaxial region has strictly smaller energy compared to an isotopic core.
We refer the interested reader to \cite{Ball2017} for a more detailed introduction to $Q-$tensors, the Landau-de Gennes energy and related models for liquid crystals.

As we will see later in Section~\ref{subsec:LdG-model}, the Landau-de Gennes model in our case comprises three contributions related to the elastic, ordering and magnetic energy.
The relative strength of the individual terms are modulated by the dimensionless parameter $\xi$ describing the ratio between elastic and bulk energy, while $\eta$ couples the elastic with the magnetic term. 
We are concerned with the limit of $\eta,\xi\rightarrow 0$, which can be physically interpreted as a limit of large particles and weak magnetic fields, see \cite{Gartland2018,ACS2021}.
This regime has been studied previously in \cite{Alama2017} for a spherical particle under the assumption that $\eta|\ln(\xi)|\rightarrow 0$ as $\eta,\xi\rightarrow 0$ in which a Saturn ring structure is found.
In \cite{Alama2015} the authors treated the problem in the absence of a magnetic field, i.e.\ for $\eta = \infty$.
For $\xi\rightarrow 0$ they deduce that a point defect occurs.
Our work places itself in the intermediate regime in which $\eta|\ln(\xi)|\rightarrow \beta\in (0,\infty)$ as $\eta,\xi\rightarrow 0$.
As seen in \cite{ACS2021} for a spherical particle, this regime allows for incorporating different minimizing configurations, depending on the parameter $\beta$. 
In a forthcoming paper \cite{ACS2023}, we are going to develop numerical methods to calculate the minimizing configurations around non-spherical particles based on the results in this work.

\section{Preliminaries}

Before we can state our results, we give a short introduction to the Landau-de Gennes model that we use here and the concept of flat chains, stating some results that will be used later in the proofs.

\subsection{Landau-de Gennes model for nematic liquid crystals}
\label{subsec:LdG-model}

Our article has been motivated by the study of liquid crystal colloids with external magnetic field. 
Let $E\subset\mathbb{R}^3$ be a colloidal particle and let $\Omega\defi\mathbb{R}^3\setminus E$ be the region occupied by the liquid crystal.
The Landau-de Gennes energy with additional magnetic field term \cite[Ch. 6, Secs. 3-4 and Ch. 10, Sec. 2.3]{Priestley1974} (see also \cite[Ch. 3, Secs. 1-2]{deGennes1993}) can be stated in dimensionless form as 
\begin{align}\label{def:Eex}
\Eex(Q) 
\ = \ \int_\Omega \frac{1}{2}|\nabla Q|^2 + \frac{1}{\xi^2}f(Q) + \frac{1}{\eta^2}g(Q) + C_0 \dx x\, ,
\end{align}
where the energy density $f$ is given by
\begin{align} \label{def:eq:f} 
f(Q) 
\ &= \ C - \frac{a}{2}\tr(Q^2) - \frac{b}{3}\tr(Q^3) + \frac{c}{4}\tr(Q^2)^2 \, ,
\end{align}
and $g$ is a function taking into account the effects of the external magnetic field in a way we formalize a bit later in this section.
The function $Q:\Omega\rightarrow\Sym$ is a tensorial order parameter taking values in the space of symmetric traceless matrices 
\begin{align*}
\Sym \defi \{ Q\in\mathbb{R}^{3\times 3}\sd Q^T = Q\text{ and }\tr(Q) = 0 \}\, ,
\end{align*}
equipped with the norm $\Vert Q\Vert^2\defi\tr(Q^2)$.
It is used to describe the local distribution of orientation of the liquid crystal molecules.
We consider the case when the parameters $\eta$ and $\xi$ converge to zero in a regime where $\eta|\ln(\xi)|\rightarrow \beta\in (0,\infty)$. 
The constant $C_0=C_0(\eta,\xi)$ (resp. $C$) is chosen such that the energy density (resp. $f$) becomes non-negative and with minimal value $0$.

The following properties of $f$ are going to be used in the sequel:
\begin{enumerate}
\item The function $f$ is non-negative and $\N\defi f^{-1}(0)$ is a smooth, closed, compact, connected manifold,  diffeomorphic to the real projective plane $\mathbb{R}P^2$. Note that $\N$ is given by
\begin{align*}
\N = \left\{ s_*\left(\nn\otimes\nn - \frac13\id\right) \sd \nn\in\mathbb{S}^2 \right\}\, ,
\end{align*}
for $s_* = \frac{1}{4c}(b + \sqrt{b^2 + 24ac})$ (cf. \cite{Majumdar2010a}) and in particular $Q$ is prolate uniaxial.
\item We need $f$ to behave uniformly quadratic close to its minima, i.e.\ we assume that there exist constants $\delta_0,\gamma_1>0$ such that for all $Q\in\Sym$ with $\dist(Q,\N)\leq \delta_0$ it holds
\begin{align*}
f(Q) \ \geq \ \gamma_1 \dist^2(Q,\N)\, .
\end{align*}
\item Lastly, we need to quantify the growth of $f$. More precisely, we assume that there exist constants $C_1,C_2>0$, such that for all $Q\in\Sym$
\begin{align*}
f(Q)\geq C_1\bigg(|Q|^2 - \frac{2}{3}s_*^2\bigg)^2\, , \qquad Df(Q):Q \geq C_1\: |Q|^4 - C_2 \, .
\end{align*}
\end{enumerate}
It can be checked that $f$ given in \eqref{def:eq:f} satisfies these assumptions \cite{Majumdar2010a, ACS2021, Canevari2015a, Canevari2013}.
The exponent $4$ in the last assumption is not crucial but only needs to outweigh the growth of $g$. 

We also recall the following facts about the geometry of $\Sym$:
\begin{enumerate}
\item Elements $Q\in \Sym$ admit the following decomposition: There exist $s\in [0,\infty)$ and $r\in [0,1]$ such that
\begin{equation} \label{eq:q_decom}
Q = s\left( \left(\nn\otimes\nn - \frac{1}{3}\id\right) + r\left(\mm\otimes\mm - \frac{1}{3}\id\right)\right)\, ,
\end{equation}
where $\nn,\mm$ are normalized, orthogonal eigenvectors of $Q$. The values $s$ and $r$ are continuous functions of $Q$.
\item The set where decomposition \eqref{eq:q_decom} is not unique, is given by 
\begin{align}\label{def:cal_C}
\C \defi \{ Q\in\Sym\setminus\{0\}\sd r(Q)=1 \}\cup\{0\} \, ,
\end{align}
i.e.\ $\C$ consists of the isotropic as well as the oblate uniaxial states.
Another characterization of $\C$ is $\C = \{ Q\in\Sym\sd \lambda_1(Q)=\lambda_2(Q) \}$, where the two leading eigenvalues of $Q$ are denoted by $\lambda_1,\lambda_2$. 
Moreover, $\C$ has the structure of a cone over $\mathbb{R}P^2$ and $\C\setminus\{0\} \cong \mathbb{R}P^2\times\mathbb{R}$.
\item There exists a continuous retraction $\R:\Sym\setminus\C\rightarrow\N$ such that $\R(Q)=Q$ for all $Q\in\N$. One can choose $\R$ to be the nearest point projection onto $\N$.
In this case, $\R(Q)=s_*(\nn\otimes\nn-\frac{1}{3}\id)$ for $Q\in \Sym\setminus\C$ decomposed as in \eqref{eq:q_decom}.
\end{enumerate}

The energy density $g$ in \eqref{def:Eex} incorporates an external magnetic field into the model. This motivates the following assumption:
\begin{enumerate}
\item The function $g$ favours $Q$ having an eigenvector equal to the direction of the external field, in our case chosen to be along $\ee_3$. More precisely, assume $g$ is invariant by rotations around the $\ee_3-$axis and the function $O(3)\ni R\mapsto g(R^\top Q R)$ is minimal if $\ee_3$ is eigenvector to the maximal eigenvalue of $R^\top Q R$. 
Decomposing $Q$ as in \eqref{eq:q_decom} with $\nn=\ee_3$ and keeping $s$ and $\mm$ fixed, then $g(Q)$ is minimal for $r=0$.
For a  prolate uniaxial $Q\in\N$, i.e.\ $Q=s_*(\nn\otimes\nn-\frac{1}{3}\id)$ for $s_*\geq 0$ and $\nn\in\mathbb{S}^2$ we have
\begin{align} \label{def:g:uniax}
g(Q) = c_*^2(1-\nn_3^2)\, .
\end{align}
\end{enumerate}
The precise formula for $g$ in \eqref{def:g:uniax} is not \dsg{crucial} to our analysis, but for simplicity we assume this particular form. It would be enough to assume that $g|_\N$ has a strict minimum in $Q=s_*(\ee_3\otimes\ee_3 - \frac12\id)$, see Remark 4.18 in \cite{ACS2021}.
Besides this physical assumption, our analysis requires $g$ to satisfy the following \dsg{hypotheses}:
\begin{enumerate}\addtocounter{enumi}{1}
\item The function $g:\Sym\rightarrow\mathbb{R}$ is of class $C^2$ away from $Q=0$ and in particular satisfies the  Lipschitz condition close to $\N$: There exist constants $\delta_1,C>0$ such that if $Q\in\Sym$ with $\dist(Q,\N)<\delta$ for $0<\delta<\delta_1$, then
\begin{align} \label{def:g:N-lipschitz}
|g(Q) - g(\R(Q))| \leq C\: \dist(Q,\N)\, .
\end{align}
\item The growth of $g$ is slower than $f$, namely
\begin{align} \label{def:g:growQ4}
|g(Q)| \ &\leq \ C\: (1+|Q|^4)\, , \\
|Dg(Q)| \ &\leq \ C\: (1+|Q|^3)\, , \label{def:g:growQ3}
\end{align}
for all $Q\in\Sym$ and a constant $C>0$.
\end{enumerate}

A physically motivated example that satisfies those assumptions \cite[Prop. A.1]{ACS2021} is for example given by
\begin{align} \label{def:eq:g} 
g(Q) 
\ &= \ \frac{2}{3}s_* - Q_{33}\, .
\end{align}

Under these assumptions on $f$ and $g$, it has been shown in \cite[Prop. 2.4 and Prop. 2.6]{ACS2021} that $g$ acts on $f$ as a perturbation in the following sense:

\begin{proposition}\label{prop:f__g_geq}
For $\xi,\eta>0$ with $\xi\ll\eta$, there exists a smooth manifold $\N_{\eta,\xi}\subset\Sym$, diffeomorphic to $\N$ such that
\begin{align} \label{prop:f__g_geq:eq}
f(Q)+\frac{\xi^2}{\eta^2}g(Q)+\xi^2 C_0(\xi,\eta) \geq \gamma_2\: \dist^2(Q,\N_{\eta,\xi})
\end{align}
for a constant $\gamma_2>0$. In addition, there exists a constant $C>0$ such that
\begin{align} \label{prop:f__g_geq:Nex_N}
\sup_{Q\in\N_{\eta,\xi}} \dist(Q,\N) \leq C \frac{\xi^2}{\eta^2}\, .
\end{align}
Furthermore, there exists a unique $\Qexinfty\in\N_{\eta,\xi}$ such that
\begin{align*}
\Qexinfty = \underset{Q\in\Sym}{\mathrm{argmin}} \: \frac{1}{\xi^2}f(Q)+\frac{1}{\eta^2}g(Q)\, ,
\end{align*}
given by $\Qexinfty = \sexstar(\ee_3\otimes\ee_3-\frac13\id)$, where $|s_{*,t}-s_*| \leq C t$.
\end{proposition}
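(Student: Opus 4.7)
The strategy is a perturbative analysis with the small parameter $t := \xi^2/\eta^2$, viewing $h_t(Q) := f(Q) + t\, g(Q)$ as a smooth perturbation of $f$. First I would construct $\N_{\eta,\xi}$ via the implicit function theorem: on $\N$ we have $f = 0$ and $\nabla f = 0$, and combined with the lower bound $f \geq \gamma_1 \dist^2(\cdot,\N)$, Taylor expansion shows the normal Hessian of $f$ on $\N$ is bounded below by $2\gamma_1 > 0$. By the tubular neighborhood theorem, a $\delta_0$-neighborhood of $\N$ is parametrized by pairs $(Q_0, \nu)$ with $Q_0 \in \N$ and $\nu \in T_{Q_0}\N^\perp$; requiring the normal component of $\nabla h_t(Q_0 + \nu)$ to vanish then defines, via the IFT, a smooth map $\nu_t : \N \to T\N^\perp$ with $\nu_0 \equiv 0$ and $|\nu_t(Q_0)| \leq Ct$ uniformly in $Q_0 \in \N$. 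Setting $\N_{\eta,\xi} := \{Q_0 + \nu_t(Q_0) : Q_0 \in \N\}$ yields a smooth diffeomorphic copy of $\N$ satisfying \eqref{prop:f__g_geq:Nex_N}, each of whose points is critical for $h_t$ in normal directions.

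Next I would fix $\xi^2 C_0(\eta,\xi) := -\min_{\Sym} h_t$; coercivity from $f(Q) \geq C_1(|Q|^2 - \tfrac{2}{3}s_*^2)^2$ ensures the minimum is attained, and the same IFT uniqueness forces it to lie on $\N_{\eta,\xi}$. For $Q$ with $\dist(Q,\N_{\eta,\xi})$ small, Taylor-expanding $h_t$ around the nearest $Q_1 \in \N_{\eta,\xi}$ yields $h_t(Q) - h_t(Q_1) \geq \gamma_1 |Q - Q_1|^2$, since the normal gradient vanishes by construction and the normal Hessian differs from that of $f$ by $O(t)$. The main technical obstacle is the complementary global bound: combining the quartic coercivity $f(Q) \gtrsim |Q|^4$ at infinity with the slower growth $|g(Q)| \leq C(1 + |Q|^4)$ from \eqref{def:g:growQ4} handles the region of large $|Q|$ for $t$ small, while on the compact annulus $\{\delta' \leq \dist(Q,\N) \leq R\}$ the strict positivity $f \geq \gamma_1 (\delta')^2 > 0$ together with the boundedness of $\dist^2(Q,\N_{\eta,\xi})$ on this set gives the inequality with a sufficiently small $\gamma_2$; taking the minimum of the local and global constants provides a single admissible $\gamma_2 > 0$.

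Finally, any global minimizer of $h_t$ lies on $\N_{\eta,\xi}$ by the above, and the restriction $h_t|_{\N_{\eta,\xi}}$ equals $t\, g|_{\N_{\eta,\xi}}$ up to $O(t^2)$. By \eqref{def:g:uniax} the function $g|_\N$ has a unique strict minimum at $Q_\infty := s_*(\ee_3 \otimes \ee_3 - \tfrac{1}{3}\id)$, and the rotational symmetry of $g$ around $\ee_3$ together with the $SO(3)$-invariance of $f$ under conjugation force any minimizer of $h_t$ to have $\ee_3$ as an eigenvector; the decomposition \eqref{eq:q_decom} then pins the minimizer down to the form $\seex(\ee_3 \otimes \ee_3 - \tfrac{1}{3}\id)$, and $|\seex - s_*| \leq Ct$ follows directly from $|\nu_t(Q_\infty)| \leq Ct$.
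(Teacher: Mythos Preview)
The paper does not actually prove this proposition: it is quoted from \cite[Prop.~2.4 and Prop.~2.6]{ACS2021}, so there is no ``paper's own proof'' to compare against here. Your sketch is the standard perturbation argument (IFT off the nondegenerate normal Hessian of $f$ on $\N$, local quadratic bound by Taylor expansion, global bound by coercivity and compactness, symmetry to pin down the minimizer), and it is essentially correct and almost certainly in the same spirit as the proof in \cite{ACS2021}.

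One small point worth tightening: in your local step you expand $h_t$ around the nearest point $Q_1\in\N_{\eta,\xi}$ and claim the linear term vanishes ``since the normal gradient vanishes by construction''. Your construction kills the component of $\nabla h_t(Q_1)$ in the normal space of $\N$ at the base point $Q_0$, whereas $Q-Q_1$ lies in the normal space of $\N_{\eta,\xi}$ at $Q_1$; these two normal spaces differ by $O(t)$. The resulting linear term is therefore $O(t)\,|Q-Q_1|$, not zero, but this is harmless: it is absorbed into the quadratic term by Young's inequality for $|Q-Q_1|$ in a fixed small tube and $t$ small. Similarly, your uniqueness argument for the global minimizer is cleanest if you first observe that $h_t$ is invariant under conjugation by rotations about $\ee_3$, so the (a priori unique, by the strict minimum of $g|_\N$) minimizer on $\N_{\eta,\xi}$ is fixed by this action and hence necessarily of the form $s(\ee_3\otimes\ee_3-\tfrac13\id)$; this avoids the slight circularity of invoking uniqueness before establishing it.
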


This shows that the constant $C_0$ in \eqref{def:Eex} should be chosen to be $C_0(\xi,\eta) = -\frac{1}{\xi^2}f(\Qexinfty)-\frac{1}{\eta^2}g(\Qexinfty) \geq 0$ and it also holds true that $C_0(\xi,\eta)\leq C \xi^2/\eta^4$.

Since $\sexstar\rightarrow s_{*,0} = s_*$ for $\xi,\eta\rightarrow 0$ in our regime, it is convenient to also introduce $Q_\infty\defi s_*(\ee_3\otimes\ee_3-\frac13\id)$ which minimizes $\xi^{-2}f(Q)+\eta^{-2}g(Q)$ among $Q\in\N$.

So far we have seen that the strong weight $\frac{1}{\xi^2}$ in front of the bulk potential $f$ (compared to $\frac{1}{\eta^2}$ for $g$) favours $Q$ to be close to the manifold $\N$. 
In other words, we expect energy related to $f$ to be concentrated in regions where $Q$ is far from $\N$. 
In a sense that is specified in Theorem~\ref{thm:main}, this region is related to the set where $Q$ takes values in $\C$.
In the same spirit we remark that under prolate uniaxial constraint, $g$ prefers the normalized eigenvector $\nn$ corresponding to the largest eigenvalue to have a large third component $\nn_3$ as formalized in \eqref{def:g:uniax}. 
Therefore we expect that the energy contribution coming from $g$ is concentrated on domains where $|\nn_3|\approx 0$.
More precisely, we introduce
\begin{align}\label{def:cal_T}
\T \defi \{ Q\in\Sym\sd s>0\, , 0\leq r<1\, , n_3 = 0 \}\, ,
\end{align}
where $r,s,\nn$ are defined as in \eqref{eq:q_decom}. 
We study properties of $\T$ later on in Subsection~\ref{subsec:constr_T_Q_uniax} and Section~\ref{sec:app_complex_T}.
Most importantly, we will show in Corollary~\ref{cor:calT_manifold_bdry} that $\partial\T = \C$.
This is a consequence from the fact that if $r(Q)=1$, then $Q$ has a two-dimensional eigenspace for the largest eigenvalue which necessarily intersects the hyperplane $\{n_3=0\}$.

\subsection{Flat chains}
\label{subsec:flat_chains}

One of the main results of this paper is that the previously described energy concentrates on lines and surfaces when $\eta,\xi\rightarrow 0$.
In order to state our main theorem, we therefore need an appropriate framework to describe geometric objects such as lines, surfaces and boundaries which is provided by geometric measure theory and in particular flat chains \cite{White1999,White1999a}.
The very basic idea of geometric measure theory is to represent geometric objects as elements of a vector space and therefore allows for algebraic operations such as addition.
In that respect, flat chains are such elements which in our case are dedicated to represent surfaces and lines.
In the following we give a quick overview of the most important results that we use subsequently.
For a detailed and complete presentation of flat chains and geometric measure theory, we refer to \cite{Fleming1966, Morgan2016, Federer1960, Federer1996, Simon1983}.

\paragraph{Polyhedral flat chains.}
Let $G$ be an abelian group (written additively) with neutral element $0$ and $|\cdot|:G\rightarrow [0,\infty)$ a function satisfying $|g|=0$ if and only if $g=0$, $|-g|=|g|$ and $|g+h|\leq |g|+|h|$ for all $g,h\in G$. 
In this paper, we are only concerned with the easiest case of $G = \mathbb{Z}_2$ and $|\cdot|$ the normal absolute value. 
For $n,k\in\mathbb{N}$, $k\leq n$, we denote by $\polych^k$ the group of polyhedral chains of dimension $k$ with coefficients in $G$ i.e.\ the set of formal sums of compact, convex, oriented polyhedra of dimension $k$ in $\mathbb{R}^n$ with coefficients in $G$ together with the obvious addition. 
We identify a polyhedron that results from glueing along a shared face (and compatible orientation) with the sum of the individual polyhedra. 
Also, we identify a polyhedron of opposite orientation with the negative of the original polyhedron.
An element $P\in \polych^k$ can thus be written as
\begin{align} \label{prelim:polych}
P
\ = \ \sum_{i=1}^p g_i \sigma_i\, ,
\end{align}
where $g_i\in G$ and $\sigma_i$ are compact, convex, oriented polyhedra that can be chosen to be non-overlapping.
Note that in our case of $G=\mathbb{Z}_2$, the non trivial coefficients $g_i$ all equal $1$ and that the orientational aspect of the above definition becomes trivial.
This reflects the symmetry of the director field $\nn\sim -\nn$ in the sense that around singularities we change orientation of $\nn$ without changing the physics.
In other words, we can lift $Q$ locally away from singularities to obtain a well-defined director $\nn$, but in general it is not possible to combine those local liftings into a global one.
The \emph{boundary} $\partial\sigma$ of a polyhedron $\sigma$ is the formal sum of the $(k-1)-$dimensional polyhedral faces of $\sigma$ with the induced orientation and coefficient $1$ under the above mentioned identifications. Note that $\partial(\partial\sigma)=0$. We can linearly extend this operator to a boundary operator $\partial:\polych^k\rightarrow\polych^{k-1}$.

\paragraph{Mass and flat norm.}
For a polyhedral chain $P\in\polych^k$ written as in \eqref{prelim:polych}, we define the \emph{mass} $\MM(P) = \sum_{i=1}^p |g_i| \mathcal{H}^k(\sigma_i)$ and the \emph{flat norm} $\FF(P)$ by
\begin{align*}
\FF(P)
\ = \ \inf\{ \MM(Q) + \MM(R) \sd P = \partial Q + R\, , Q\in\polych^{k+1}\, , R\in\polych^k \}\, .
\end{align*}
Obviously it holds $\FF(P)\leq \MM(P)$ and $\FF(\partial P)\leq \FF(P)$.
One can show that $\FF$ defines a norm on $\polych^k$ \cite[Ch. 2]{Fleming1966}.

\paragraph{Flat chains and associated measures.}
We define the space of \emph{flat chains} $\flatch^k$ to be the $\FF-$completion of $\polych^k$. 
The boundary operator $\partial$ extends to a continuous operator $\partial:\flatch^k\rightarrow\flatch^{k-1}$ and we still denote by $\MM$ the largest lower semicontinuous extension of the mass which was defined on $\polych^k$. Furthermore, one can show \cite[Thm 3.1]{Fleming1966} that for all $A\in\flatch^k$
\begin{align*}
\FF(A)
\ = \ \inf\{ \MM(Q) + \MM(R) \sd P = \partial Q + R\, , Q\in\flatch^{k+1}\, , R\in\flatch^k \}\, .
\end{align*}
For a finite mass flat chain $A\in\flatch^k$ and a measurable set $X\subset\mathbb{R}^n$, we can define the \emph{restriction} $A\restr X$ via an approximation by polyhedral chains,
for which the restriction coincides with the intersection under some technical assumptions and passing to the limit. 
A precise definition is given in \cite[Ch. 4]{Fleming1966}.
\dsg{We also use the notation $\MM(A,X):=\MM(A\restr X)$ and $\FF(A,X):=\FF(A\restr X)$.}
To each flat chain $A\in\flatch^k$, there exists an associated measure $\mu_A$ (see \cite[Ch. 4]{Fleming1966}) such that for each $\mu_A-$measurable set $X$,  $A\restr X$ is a flat chain and $\mu_A(X)=\MM(A\restr X)$.
The \emph{support} of $A$ is denoted $\supp(A)$ and given (if it exists) by the smallest closed set $X$ such that for every open set $U\supset X$ there exists a sequence of polyhedral chains $(P_j)_j$ approximating $A$ and such that all cells of all $P_j$ lie inside $U$. If $A$ is of finite mass, then $\supp(A)=\supp(\mu_A)$ (see \cite[Thm. 4.3]{Fleming1966}).

\paragraph{Cartesian products and induced mappings.}
In the case of finite mass flat chains $A,B$ (or one of the two chains having finite mass \emph{and} finite boundary mass), it is possible to define the product $A\times B$ (by polyhedral approximation), see e.g.\ \cite[Sec. 6]{Fleming1966}.
In particular, it is always possible to define $[0,1]\times B$. 
For $U\subset\mathbb{R}^n,V\subset\mathbb{R}^m$ open sets and a Lipschitz function $f:U\rightarrow V$, one can define an induced mapping $f_\#$ on the level of flat chains, i.e.\ for a flat chain $A$ supported in $U$, $f_\#A$ is a flat chain supported in $V$ (see \cite[Sec. 5]{Fleming1966} and \cite[Sec. 2 and 3]{Federer1960}).

\paragraph{Generic properties and Thom transversality theorem.} 
A property of an object (such as a function or a set) that can be achieved by an arbitrarily small perturbation of the object is called \emph{generic}.
Examples are properties that hold in an ``almost everywhere'' measure theoretic sense or that are true on a dense subset.
In this work we encounter two such properties:
Two dimensional planes \emph{generically} do not contain a fixed single point (can be achieved by shifting normal to the plane).
The second one is that a smooth map $f:M\rightarrow N$ \emph{generically} intersects a submanifold $S\subset N$ transversely i.e.\ $\dx f(T_xM)+T_{f(x)}S = T_{f(x)}N$ for all points $x\in f^{-1}(S)$.
The latter will be used to apply Thom's transversality theorem \cite{Thom1956} in the form given in \cite[Thm. 2.7]{Hirsch1976}. 

\paragraph{Deformations.} In certain situations it is beneficial to approximate a flat $k-$chain $A$ by a polyhedral $k-$chain $P$. 
The usual way to construct $P$ is through ``pushing'' $A$ onto the $k-$skeleton of a grid in the following way.
In this paper, a (cubic) grid of size $h$ is understood to be a cell complex in $\mathbb{R}^3$ which consists of cubes of side length $h$. 
The ``pushing'' operation consists of a radial projection of $A$ from the center of each cube onto the faces of the cubes, assuming that the center does not lie on $A$.
Then, on each face the projected flat chain gets again projected from the center of the face onto the edges (as long as the projected chain does not contain any face center point).
This procedure is stopped once the projected flat $k-$chain is contained in the $k-$dimensional skeleton.
This deformation procedure is a crucial ingredient to prove that
every $A\in\flatch^k$ can be written as $A=P+B+\partial C$, where $P\in\flatch^k$ is a polyhedral chain, $B\in \flatch^k$ and $C\in \flatch^{k+1}$ satisfy the estimates $\MM(P)\lesssim \MM(A)+h\MM(\partial A)$, $\MM(\partial P)\lesssim \MM(\partial A)$, $\MM(B)\lesssim h\MM(\partial A)$ and $\MM(C)\lesssim h\MM(A)$, see \cite{White1999} or \cite[Thm. 7.3]{Fleming1966}.

\paragraph{Compactness.}
One point of importance from the perspective of calculus of variations are the compactness properties of flat chains whose mass and the mass of their boundary is bounded. We will use the result from \cite[Cor. 7.5]{Fleming1966} which holds for coefficient groups $G$ such that for all $M>0$ the set $\{g\in G\sd |g|\leq M\}$ is compact. This is trivially true in our case where $G=\mathbb{Z}_2$. Let $K\subset\mathbb{R}^n$ be compact and $C_1>0$. Then the corollary states that
\begin{align*}
\{ A\in\flatch^k\sd \supp(A)\subset K\text{ and } \MM(A)+\MM(\partial A)\leq C_1 \}
\end{align*}
is compact.

\paragraph{Rectifiability.}
Another aspect of flat chains concerns their regularity and if one can define objects originating in smooth differential geometry such as tangent spaces. 
It turns out that this can be achieved a.e.\ provided the flat chain is rectifiable.
By definition, rectifiability of a flat chain $A\in \flatch^k$ means that there exists a countable union of $k-$dimensional $C^1-$submanifolds $N$ of $\mathbb{R}^n$ such that $A=A\restr N$ \cite[Sec. 1.2]{White1999a}.
A rectifiable flat chain admits an approximate tangent plane for $\mathcal{H}^k-$a.e.\ $x\in A$ \cite[Thm 2.83]{Ambrosio2000}. 
Such a point $x$ is called rectifiability point of $A$ and we denote $\rect(A)$ the set of all points of rectifiability of $A$.
For finite groups $G$, finite mass $\MM(A)<\infty$ implies rectifiability of $A$, see \cite[Thm 10.1]{Fleming1966}.

\section{Statement of result}

Our main result concerns the asymptotic behaviour of the energy $\Eex$ for $\eta,\xi\rightarrow 0$. 
Physically speaking, we consider the regime of large particles and weak magnetic fields, see \cite{ACS2021,Gartland2018} for more discussion of the physical interpretation of our limit. 

The liquid crystal occupies a region $\Omega$ outside a solid particle $E$, i.e.\ $\Omega = \mathbb{R}^3\setminus E$. 
We assume the boundary of the particle $\M\defi \partial E$ to be sufficiently smooth for our analysis, that is we require $\M$ to be a closed, compact and oriented manifold of class at least $C^{2}$. 
The regularity will be needed to ensure that the outward unit normal field $\nu\in C^1$ of $\partial E$ or in other words $\M$ has continuous curvature.
Furthermore, we assume that
\begin{align*}
\Gamma \defi \{\omega\in\M\sd \nu_3(\omega)=0\}
\end{align*}
is a $C^2-$curve (or a union thereof) in $\M$
and that $\nabla_{\omega} \nu_3 \neq 0$ everywhere on $\Gamma$ (seen inside the tangent bundle $T\M$), see also Remark~\ref{rem:technical_comments_on_main_thm}.

In order to make the minimization of the energy $\Eex$ non trivial, we impose the following boundary condition on $\M$:
\begin{align} \label{eq:bc} 
Q 
\ &= \ 
\dsg{Q_b}
\ := \
s_*\left( \nu \otimes \nu - \frac{1}{3}\id \right)
\quad\text{ on }\M\, .
\end{align}
Indeed, without \eqref{eq:bc} the minimizer of $\Eex$ would be the constant function $\Qexinfty$.
We define the class of admissible functions $\mathcal{A}\defi\{ Q\in H^1(\Omega,\Sym) + \Qexinfty\sd Q\text{ satisfies }\eqref{eq:bc} \}$.
It is convenient to define the energy $\Eex^\A$ for $Q\in H^1(\Omega,\mathbb{R}^{3\times 3})+\Qexinfty$ by
\begin{align*}
\Eex^\A(Q) \defi \begin{cases}
\Eex(Q) & \text{if } Q\in\A\, , \\
+\infty & \text{otherwise}\, .
\end{cases}
\end{align*}
We also use the notation $\Eex(Q,U)$ (resp. $\Eex^\A(Q,U)$) for the energy $\Eex$ (resp. $\Eex^\A$) of the function $Q$ on the set $U$.

\begin{theorem}\label{thm:main}
Suppose that
\begin{equation} \label{eq:beta}
\eta|\ln(\xi)|\rightarrow \beta\in (0,\infty) \quad \text{ as } \eta\rightarrow 0\, .
\end{equation}
Then $\eta\:\Eex^\A\rightarrow\E_0$ in a variational sense, where the limiting energy $\E_0$ is given by
\begin{equation} \label{def:lim_energy} 
\E_0(T,S) = 2 s_* c_* E_0(\M,\ee_3) + 4 s_*c_*\int_\M |\cos(\theta)| \dx\mu_{T\restr\M} + \frac{\pi}{2}s_*^2\beta \MM(S) + 4s_*c_*\MM(T\restr\Omega) 
\end{equation}
for $(T,S)\in\A_0\defi \{ (T,S)\in\flatch^2\times\flatch^1\sd \partial T = S+\Gamma \}$ and where
\begin{align*}
E_0(\M,\ee_3) \defi \int_{\{\nu_3>0\}} (1-\cos(\theta))\dx\omega + \int_{\{\nu_3\leq 0\}} (1+\cos(\theta))\dx\omega \, .
\end{align*}
The letter $\theta$ is used to denote the angle between $\ee_3$ and the outward unit normal vector $\nu(\omega)$ at a point $\omega\in\M$.
The variational convergence is to be understood in the following sense: Along any sequence $\eta_k,\xi_k\rightarrow 0$ with $\eta_k|\ln(\xi_k)|\rightarrow\beta$ (not labelled in the following statements):
\begin{enumerate}
\item \textit{Compactness and $\Gamma-$liminf:} For any sequence $\Qex\in H^1(\Omega,\mathbb{R}^{3\times 3})+\Qexinfty$ such that there exists a constant $C>0$ with
\begin{align}\label{thm:global_Eex_bound}
\eta\: \Eex^\A(\Qex)\leq C\, ,
\end{align}
there exists $(T,S)\in\A_0$, functions
$\widetilde{\Qex}\in C^\infty(\Omega,\Sym)$ with $\lim_{\eta,\xi\rightarrow 0}\Vert \Qex-\widetilde{\Qex}\Vert_{L^2} = 0$
and $Y_{\eta,\xi}\in \Sym$ with $\Vert Y_{\eta,\xi}\Vert\rightarrow 0$ such that $\Tex = (\widetilde{\Qex} - Y_{\eta,\xi})^{-1}(\T)$, $\Sex = (\widetilde{\Qex} - Y_{\eta,\xi})^{-1}(\C)$ for $\T$ and $\C$ given as in \eqref{def:cal_T},\eqref{def:cal_C} are smooth flat chains with
\begin{align}\label{thm:main:cptness_pre}
\partial\Tex = \Sex \ + \ \Gamma_{\eta,\xi}\, .
\end{align}
Here, $\Gamma_{\eta,\xi}$ is a smooth approximation of $\Gamma$ which converges to $\Gamma$ in Hausdorff distance and hence also in flat norm.
For any bounded measurable set $B\subset \Omega$ it holds $\Eex^\A(\widetilde{\Qex},B)\leq \Eex^\A(\Qex,B) + C_B$ for a constant $C_B>0$  and, up to extracting a subsequence, 
\begin{equation} \label{thm:main:cptness}
\begin{split}
\lim_{\eta,\xi\rightarrow 0} \FF(\Tex - T,B) = 0 \, ,
\quad \lim_{\eta,\xi\rightarrow 0} \FF(\Sex - S,B) = 0 \, .
\end{split}
\end{equation} 
Furthermore, we have
\begin{equation} \label{thm:main:liminf}
\liminf_{\eta\rightarrow 0} \eta\:\Eex^\A(\Qex) \geq \E_0(T,S)\, .
\end{equation}
\item \textit{$\Gamma-$limsup:} For any $(T,S)\in\A_0$, there exists a sequence $\Qex\in \A$ with $\Vert \Qex\Vert_{L^\infty}\leq \sqrt{\frac{2}{3}}\sexstar$ such that \eqref{thm:main:cptness_pre},\eqref{thm:main:cptness} hold and
\begin{equation} \label{thm:main:limsup}
\limsup_{\eta\rightarrow 0} \eta\:\Eex^\A(\Qex) \leq \E_0(T,S)\, .
\end{equation}
\end{enumerate}
\end{theorem}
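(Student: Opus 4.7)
The proof isolates two singular structures. In the bulk, the joint potential $\xi^{-2}f+\eta^{-2}g$ forces $\Qex\approx \pm Q_\infty$, so the director has to interpolate between these two wells, producing codimension-one walls carrying a Modica-Mortola surface tension and, where the director cannot be continuously defined, codimension-two defect lines carrying a Ginzburg-Landau logarithmic cost. The chains are therefore identified as $\Sex=(\widetilde{\Qex}-Y_{\eta,\xi})^{-1}(\C)$, with $\C$ the biaxial cone, and $\Tex=(\widetilde{\Qex}-Y_{\eta,\xi})^{-1}(\T)$ for a generic codimension-one half-hypersurface $\T\subset\Sym$ separating $Q_\infty$ from $-Q_\infty$ and whose boundary sits on $\C$. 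The scaling $\eta|\ln\xi|\to\beta$ is precisely what balances the line and surface contributions at the common order $\eta^{-1}$.

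\textbf{Compactness and $\Gamma$-liminf.} I would first mollify $\Qex$ on a scale $\ll\min(\eta,\xi)$ to obtain the smooth approximation $\widetilde{\Qex}$, keeping $\Vert\Qex-\widetilde{\Qex}\Vert_{H^1}\to 0$ and losing only a locally bounded amount of energy thanks to the quartic growth of $f$. A generic choice of $\T$ via Thom's transversality theorem ensures that $\Tex,\Sex$ are smooth $\mathbb{Z}_2$-polyhedral flat chains in $\Omega$, and $\partial\Tex=\Sex+\Gamma_{\eta,\xi}$ follows from $\partial\T\subset\C$ together with the fact that the Dirichlet value $s_*(\nu\otimes\nu-\tfrac13\id)$ crosses $\C$ exactly along $\Gamma=\{\nu_3=0\}$. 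Coarea and Fleming compactness then give a subsequential flat-norm limit $(T,S)\in\A_0$ on each bounded set. For the liminf, I localize around the singular structures: on tubes of radius $\radP$ around $\Sex$ a three-dimensional Sandier-Jerrard estimate adapted to $\Sym$-valued maps (\cite{Canevari2015a,Canevari2020}) yields
\begin{align*}
\eta\int_{\mathrm{tube}}\tfrac12|\nabla\Qex|^2+\tfrac{1}{\xi^2}f(\Qex)\;\geq\;\tfrac{\pi}{2}s_*^2\,\eta|\ln\xi|\,\MM(\Sex)(1-o(1))\;\longrightarrow\;\tfrac{\pi}{2}s_*^2\beta\,\MM(S)\, .
\end{align*}
Outside these tubes $\Qex$ is essentially valued in $\N_{\eta,\xi}$, and a slicing argument pairing $\tfrac12|\partial_\nu\Qex|^2$ with $\eta^{-2}g(\R(\Qex))$ gives the optimal Modica-Mortola constant $4s_*c_*$ per unit area of $T\restr\Omega$. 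The same one-dimensional profile applied in the normal direction to $\M$, together with \eqref{def:g:uniax}, produces the boundary integrand: with multiplicity zero one obtains $(1-\cos\theta)$ on $\{\nu_3>0\}$ and $(1+\cos\theta)$ on $\{\nu_3\leq 0\}$, giving the fixed cost $2s_*c_* E_0(\M,\ee_3)$, while each additional sheet of $T$ meeting $\M$ contributes $4s_*c_*|\cos\theta|$ per projected area.

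\textbf{$\Gamma$-limsup.} By a density argument I reduce to $(T,S)$ polyhedral with $T$ transverse to $\M$ and $S,\partial T\cap\Omega$ away from $\M$. The recovery sequence is then assembled from three explicit building blocks. In a tube of radius $\radP$ around $S$ I insert the vortex-type Landau-de Gennes profile from \cite[Sec. 4]{ACS2021}, of energy $\pi s_*^2|\ln\xi|(1+o(1))$ per unit length. Across each face of $T$ I glue the one-dimensional optimal Modica-Mortola transition between $+Q_\infty$ and $-Q_\infty$ for the potential $\eta^{-2}g$ restricted to $\N$, costing $4s_*c_*/\eta$ per unit area. Near $\M$ a boundary layer of thickness $O(\eta)$ interpolates between the Dirichlet data $s_*(\nu\otimes\nu-\tfrac13\id)$ and the correct sign of $Q_\infty$ on either side of $T$, producing the integrands $(1-\cos\theta)/\eta$ on $\{\nu_3>0\}$ and $(1+\cos\theta)/\eta$ on $\{\nu_3\leq 0\}$, plus a $4s_*c_*|\cos\theta|$ contribution wherever an extra sheet of $T$ hits $\M$. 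Elsewhere $\Qex\equiv Q_{\infty,\eta,\xi}$, and the $L^\infty$ bound $\Vert\Qex\Vert_{L^\infty}\leq\sqrt{2/3}\,\sexstar$ is built into each profile.

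\textbf{Main obstacle.} The real difficulty lies not in the individual Sandier-Jerrard or Modica-Mortola estimates, which are by now standard, but in the topological bookkeeping at the junctions between lines, walls and $\M$: the vortex profile around $S$ must be rotated to match the $\pm\ee_3$ director assignment on the two sides of $T$, the transition across $T$ must be compatible along $\partial T\cap\M=\Gamma$ with the sign of $\nu_3$, and the smoothing together with the shift $Y_{\eta,\xi}$ must be arranged so that $\partial\Tex=\Sex+\Gamma_{\eta,\xi}$ holds as an exact flat-chain identity. Carrying this bookkeeping through for a general rectifiable $(T,S)$ via the density step---and in particular preserving it under the $\FF$-convergence $(\Tex,\Sex)\to(T,S)$---is what will absorb most of the work.
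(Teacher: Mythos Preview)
Your high-level decomposition (Sandier--Jerrard for the line, Modica--Mortola for the surface, one-dimensional boundary layer near $\M$) matches the paper, but there is a genuine gap in the compactness step, and your upper-bound route differs from the paper's.

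\textbf{The gap: mass bound on $\Tex$.} ``Coarea and Fleming compactness'' is too fast. The coarea/averaging argument over the shift $Y$ only controls $\MM(\Tex)$ on the set $A_\delta=\{\dist(\widetilde{\Qex},\N)<\delta\}$; this is exactly the paper's Lemma~\ref{lem:estim_T_uniax}. On the complementary set $U_\delta$, however, the preimage $(\widetilde{\Qex}-Y)^{-1}(\T)$ may have arbitrarily large area: near the biaxial set $\Sex$ the map $\widetilde{\Qex}$ can wind around $\C$ many times and each crossing of $\T$ contributes area that is \emph{not} bounded by the energy. The paper therefore does not take $\Tex$ as is on $U_\delta$, but \emph{modifies} it: on balls of $U_\delta$ disjoint from $\Sex$ one replaces $\Tex$ by pieces of the boundary spheres (Lemma~\ref{lem:sing_in_T_interior}), and on cubes meeting $\Sex$ one uses a deformation-theorem-type retraction onto the $2$-skeleton (Lemma~\ref{lem:sing_in_T_exterior}), gaining the crucial estimate $\MM(\widetilde{\Texn}\restr B_R)\lesssim \eta\,\Eex(\Qexn,B_R)+n^{-1}\eta^{-1}(\xi^2 n^2+1)$. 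Only after this surgery does Fleming compactness apply. This surgery, and the fact that it preserves $\partial\widetilde{\Texn}=\Sexn+\Gamma_{\eta,\xi}$, is the heart of Section~\ref{sec:cpt} and is missing from your outline. A related point: your mollification scale ``$\ll\min(\eta,\xi)$'' is not the right one; the paper takes $n\sim\xi^{-1}$ precisely so that the number and size of the bad balls in Lemma~\ref{lem:def_set_X_eps} balance.

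\textbf{Different upper-bound strategy.} You reduce to polyhedral $(T,S)$; the paper instead regularises by solving $\min_{\tilde T}\E_0(\tilde T)+n\,\FF(\tilde T-T)$, which directly produces a $C^{1}$ surface with $C^{1,1}$ boundary (Proposition~\ref{prop:low_reg_T}). This buys smooth tubular neighbourhoods around $T_n$ and $S_n$ in which the wall and vortex profiles can be written in closed form without edge or corner patching, and it also gives control on the ``attachment line'' $\partial(T_n\restr\M)\setminus\partial T_n$ (Proposition~\ref{prop:upper_bound:approx_recov}). This line carries energy in the recovery sequence but does not appear in $\E_0$, so its length must be allowed to diverge only slowly (the paper enforces $\leq C|\ln\eta|$). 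Your polyhedral scheme is not wrong in principle, but you would still have to handle corners of $T$, triple junctions where $S$ meets edges, and this attachment line; the paper's minimisation shortcut sidesteps all of that.

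\textbf{Minor.} The ``two wells $\pm Q_\infty$'' picture is inaccurate: $-Q_\infty\notin\N$ and $g|_\N$ has a \emph{single} minimum $Q_\infty$. The two phases are the two components $\{n_3>0\}$ and $\{n_3<0\}$ of $\N\setminus(\T\cap\N)$, which yield the same $Q_\infty$ but are separated by a geodesic barrier on $\N$; that is where the constant $4s_*c_*$ in Lemma~\ref{lem:radial_turning} comes from.
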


\begin{remark}[Assumptions in the theorem]\label{rem:technical_comments_on_main_thm}
\begin{enumerate}
\item We note that due to our assumptions $\beta\in (0,\infty)$, the global energy bound \eqref{thm:global_Eex_bound} can be reformulated as
\begin{align*}
\Eex^\A(\Qex) \ \leq \ \tilde{C} \ |\ln(\xi)|\, .
\end{align*}
This reflects the classical behaviour of a logarithmic divergence of the energy close to singularities as already observed in earlier works e.g.\ in \cite{Bethuel1994}.
\item If $\Qex$ is smooth enough (for example $C^2$) and verifies a Lipschitz estimate as in \eqref{prop:prop_reg_seq_Lip} for $n\sim\ds{\xi^{-a},\, a\in [1,2)}$, we can choose $\widetilde{\Qex}=\Qex$ in the above theorem. 
This is particularly true if $\Qex$ is a minimizer of \eqref{def:Eex}. 
Indeed, from the Euler-Lagrange equations, one can deduce the regularity and the required estimate on the gradient \cite[Lemma A.2]{Bethuel1993} with $n\sim\xi^{-1}$. 
\item The compactness claim holds for almost every $Y\in\Sym$ with $\Vert Y\Vert$ small enough. 
The norm converging to zero is needed to recover the condition $\partial T = S + \Gamma$, the stated energy densities on $T\restr\M$, and the coefficient in front of $\MM(T\restr\Omega)$.
\item Another possibility of introducing $\Sex$ is by using the operator $\mathbf{S}$ defined in \cite{Canevari2019,Canevari2020}. 
In our notation, this operator maps a function $Q$ from $(L^\infty\cap W^{1,1})(\Omega,\Sym)$ to $L^1(B_{\alpha^{*}}(0),\flatch^1)$, where $\alpha_*>0$ and $B_{\alpha^{*}}(0)\subset \Sym$. 
In other words, $\mathbf{S}$ allows us to define a flat $1-$chain $\mathbf{S}_Y(\Qex)$ for $\Qex\in (L^\infty\cap W^{1,1})(\Omega,\Sym)$ and $Y\in B_{\alpha^{*}}(0)$.
\item The assumption of $\Gamma=\{\omega\in\M\sd\nu(\omega)\cdot\ee_3=0\}$ being a $C^2-$curve is not very restrictive. In fact, this can already be achieved by a slight deformation of $\M$ which changes the energies $\Eex$ and $\E_0$ in a continuous way.
The assumption that $\nabla_{\omega} \nu_3$ is nowhere vanishing on $\Gamma$ is used as a sufficient condition to ensure that the perturbed sets $\Gamma_{\eta,\xi}$ stay regular and in a neighbourhood of $\Gamma$.
In fact, since $\nu_3=0$ on $\Gamma$ the derivative vanishes in the direction tangential to $\Gamma$, so the condition is only on the part of $\nabla_\omega \nu_3$ normal to $\Gamma$.
In particular, the condition is verified if the Gaussian curvature $|\kappa_\M|>0$ on $\Gamma$.
\end{enumerate}
\end{remark}

\begin{remark}[An alternative formulation of $\E_0$] \label{rem:comments_on_main_thm}
\begin{enumerate}
\item We can express the energy \eqref{def:lim_energy} in a slightly different way by writing $\mu_{T\restr\M} = \chi_G \mathcal{H}^2\restr\M$ for a mesurable set $G\subset\M$ and defining
\begin{align}\label{def:set_F}
F \defi \{\omega\in\M\setminus G\sd \nu(\omega)\cdot\ee_3>0\} \cup \{\omega\in\M\cap G\sd\nu(\omega)\cdot\ee_3\leq 0\}\, .
\end{align}
Then, \eqref{def:lim_energy} reads
\begin{align}\label{def:limit_energy_w_F}
\E_0(T,S) 
\ &= \ 2s_*c_*\int_F (1 - \cos(\theta)) \dx\omega + 2s_*c_*\int_{\M\setminus F} (1 + \cos(\theta)) \dx\omega  \\ \nonumber
\ &\:\:\:\: + \ \frac{\pi}{2}s_*^2\beta \MM(S) + 4s_*c_*\MM(T\restr\Omega)\, .
\end{align}
The idea behind this reformulation and the definition of $F$ is the following: 
Assume for $\xi,\eta>0$ that $Q$ takes values in $\N$ such that (at least locally) we can lift $Q$ to a director field $\nn$.
Because of the boundary condition, we can assume that for a given point $\omega\in\M$ it holds that $\nn(\omega)=\nu(\omega)$. 
Following a ray in normal direction starting from $\omega$, $\nn$ must approach $\pm\ee_3$ since far from the particle, $Q$ must be close to $Q_\infty$.
If $\nu_3(\omega)>0$, it is energetically favourable for $\nn$ to approach $+\ee_3$.
On the other hand, the ray intersecting $T$ means that $\nn$ switches sign, i.e.\ if we start from $\nu_3(\omega)<0$ and cross $T$ only once, $\nn$ converges to $+\ee_3$.
In this sense, the set $F$ can be understood as the region on $\M$ in which the lifting $\nn$ along the rays starts from $\nu$ and approach $+\ee_3$, while on $\M\setminus F$ the director $\nn$ turns from $\nu$ to $-\ee_3$.
Previously, the energy $E_0(\M,\ee_3)$ describes the \emph{minimal} energy concentrated on $\M$, i.e.\ $\nn$ always turns in the energetically favourable direction and the integral involving $\mu\restr\M$ accounts for the additional energy caused by intersecting $T$.
See Figure~\ref{fig:limit_peanut_horizontal} for an illustration of the different quantities appearing in \eqref{def:limit_energy_w_F}.
\item For convex particles $E$, there exists an orthogonal projection $\Pi:\Omega\rightarrow\M$. 
By convexity of $E$, we find that $\E_0(\Pi_\#T,\Pi_\#S)\leq \E_0(T,S)$, so that we can restrict ourselves to the case $T\restr\Omega=0=S\restr\Omega$.
Using \eqref{def:set_F}, we find that $\partial F = \Pi_\# S$ and \eqref{def:limit_energy_w_F} becomes
\begin{align*}
\E_0(\Pi_\#T,\Pi_\#S) 
\ &= \ 2s_*c_*\int_F (1-\cos(\theta))\dx\omega + 2s_*c_*\int_{\M\setminus F} (1-\cos(\theta))\dx\omega  + \frac{\pi}{2}s_*^2\beta\MM(\partial F)\, .
\end{align*}
In particular, \eqref{def:lim_energy} is a generalization of the limit energy $\E_0$ defined in \cite{ACS2021}.
\end{enumerate}
\end{remark}

\begin{figure}[H]
\begin{center}
\includegraphics[scale=2.5]{./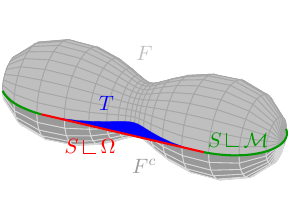}
\end{center}
\caption{Illustration of flat chains $T,S$ and the sets $F,F^c$ appearing in the limit energy $\E_0$.}
\label{fig:limit_peanut_horizontal}
\end{figure}

\begin{remark}[Physical interpretation of $T$ and $S$] \label{rem:comments_on_interpret_T_S}
The line singularity observed in physical experiments \cite{MondainMonval1999,Loudet2001,Loudet2002} has its origin in the isotropic or oblate uniaxial-biaxial defect core of the director field.
In our mathematical framework this corresponds to the set where $\Qex$ takes values in $\C$ and is therefore represented by $\Sex$ which tends towards $S$ in the limit model.
Note that it is a priori not possible to distinguish $+\frac12$ and $-\frac12$ defect lines (see Figure~\ref{fig:limit_peanut_vertical} (left)). 
But since the physical system as a whole must have a trivial topological degree, one can deduce in the situation of Figure~\ref{fig:limit_peanut_vertical} that one $+\frac12$ and two $-\frac12$ defect lines must be present.
By symmetry the line in the middle must be of degree $+\frac12$.

Point singularities of the director $\nn$ are represented by simply connected components of $T$ in our model due to the following reasoning.
As illustrated in Figure~\ref{fig:interpret_T}, the set where $\nn_3=0$ attaches to $\Gamma$ (yellow points on the surface of the sphere) and necessarily passes through the point singularity and creates a simply connected component.
However, with this description it is not possible to determine the exact position of the point defect on the surface $T$.
In the case of a minimizing $T$ around a spherical inclusion, $T$ will approach the particle surface since the nematic and magnetic exchange length become small w.r.t. the particle radius and thus $T$ forms a half-sphere (compare with \cite[Ch. 6]{ACS2021}).
In the case of a peanut-shaped particle aligned with the magnetic field we expect one of three different minimizing configurations, depending on $\beta$, see Figure~\ref{fig:limit_peanut_vertical}.
In particular, there exists a non-simply connected component of $T$ which does not correspond to a point defect, but originates in the connection of two components of $\Gamma$.
In summary, $T$ is a surface which connects $\Gamma$ to the singular set (lines and points).
\end{remark}

\begin{figure}[H]
\begin{center}
\includegraphics[scale=1.0]{./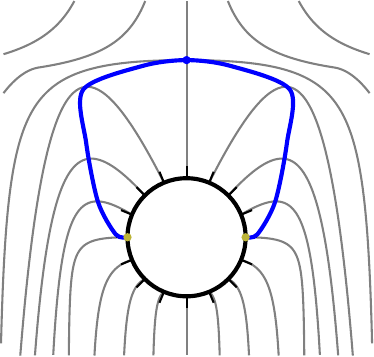}
\end{center}
\caption{Illustration of the integral lines of the director field $\nn$ around a spherical inclusion and the level set $\{x\in\Omega\sd\nn_3(x)=0\}$ representing $T$. The point defect lies on $T$.}
\label{fig:interpret_T}
\end{figure}

\begin{figure}[H]
\begin{minipage}{0.32\textwidth}
\hspace*{-2cm}
\includegraphics[scale=2.0]{./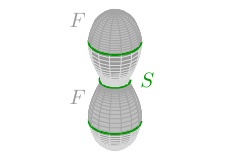}
\end{minipage}
\begin{minipage}{0.32\textwidth}
\hspace*{-2cm}
\includegraphics[scale=2.0]{./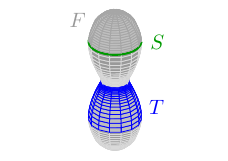}
\end{minipage}
\begin{minipage}{0.32\textwidth}
\hspace*{-2cm}
\includegraphics[scale=2.0]{./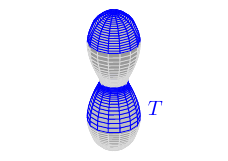}
\end{minipage}
\caption{Expected minimizers of $\E_0$ for $\beta\ll 1$ (left), $\beta\gg 1$ (right) and intermediate $\beta$ (center). 
For small $\beta$ the line $S$ has the tendency to stick to $\M$ and optimize $F$, thus no $T$ appears. 
Here, $S=\Gamma$ and $F$ consists of two connected components bounded by the three components of $S$.
This configuration corresponds to three Saturn rings around the particle.
For intermediate $\beta$ one may find a configuration as depicted in the middle, i.e.\ the energy is decreased by joining two parts of $S$ by a surface $T$ glued to $\M$.
This leads to the disappearance of the two rings that have been connected by $T$, and $F$ contains only the part of $\M$ above $S$. 
Finally, for large $\beta$, the last ring disappears and we obtain a dipole configuration in which $S=0$, $F=\emptyset$ and $T$ has two components, see Remark~\ref{rem:comments_on_interpret_T_S}.
This last configuration has been observed experimentally, see\cite[Fig. 2(a)-(c)]{Sahu2019}.}
\label{fig:limit_peanut_vertical}
\end{figure}

\section{Compactness}
\label{sec:cpt}

The structure of this section is as follows. 
We regularize the sequence $\Qex$ in the first subsection. 
For this new sequence $\Qexn$, we define a $2-$chain $\Texn\in\flatch^2$ and $1-$chain $\Sexn\in\flatch^1$ such that $\partial \Texn=\Sexn$ and we have bounds on the masses to get the existence of limit objects $T$ and $S$ with $\partial T=S$.  
This construction is carried out in steps in the subsections two, three and four.
We distinguish the case of $\Qexn$ being close to $\N$ and hence almost prolate uniaxial and the complementary case when $\Qexn$ is far from $\N$, e.g.\ when $\Qexn$ is isotropic or oblate uniaxial close to the boundary $S$. 
The passage to the limit is to happen in the last subsection.

\subsection{Approximating sequence}

\ds{
This section is devoted to the definition of a sequence of smooth functions $\Qexn$, replacing $\Qex$ in our analysis and proving the properties required for the estimates in the following chapters. More precisely, we need that 
\begin{itemize}
\item the sequence $\Qexn$ approximates $\Qex$,
\item $\Qexn|_\M$ approaches $Q_b$ in $C^1$,
\item $\Qexn$ verifies the energy bound $\eta\:\Eex(\Qexn)\leq \eta\:\Eex(\Qex) + o(1) \leq \tilde{C}$ and 
\item the estimate $\mathrm{Lip}(\Qexn)\leq C\: n$ holds.
\end{itemize}
While $n$ is introduced as regularization parameter, we will later choose $n$ dependent on $\xi$ to obtain a sequence which only depends on the original parameters $\eta,\xi$.
More explicitly, we can simply take e.g.\ $n = \xi^{-\tfrac74}$ as we will see later.

For technical reasons, we are going to extend $\Qex$ into a small neighbourhood into the interior of $E$. 
Since $\M$ is compact and of class $C^{2}$, we can fix a small radius $\radM>0$ such that $\M$ satisfies the inner ball condition for all radii $r \leq 2\radM$. 
In particular, $2\radM$ is smaller than the minimal curvature radius of $\M$. 
For $x\in E$ such that $\dist(x,\M)<2\radM$, define 
\begin{align*}
\Qex(x) 
\ = \ 
\sstar\left(\overline{\nu}(x)\otimes\overline{\nu}(x)-\frac13\id\right)\, ,
\end{align*}
where $\overline{\nu}(x)=\nu(\Pi_\M(x))$, $\Pi_\M$ the orthogonal projection onto $\M$, is the obvious extension of the outward normal unit vector field $\nu$ \dsg{of $\partial E$} in $E$.
Also by $C^2-$regularity of $\M$ and given $n\in\mathbb{N}$, there exists a $C^2-$diffeomorphism $\Phi_n:\mathbb{R}^3\rightarrow\mathbb{R}^3$ such that 
\begin{align}\label{def:Phi_n}
\Phi_n(x) \ = \ \begin{cases}
x & x\in\Omega\text{ with } \dist(x,\M) > \frac{1}{\sqrt{n}}\, , \\
x \dsg{-} \frac1n\: \overline{\nu}(x) & x\in \mathbb{R}^3\text{ with }\dist(x,\M)\leq \frac{1}{n}\, ,
\end{cases}
\end{align}
and $|\Phi_n(x) - x|\leq \frac{1}{n}$ and in operator norm $|\nabla \Phi_n(x)|\leq 1 + \frac{C}{\sqrt{n}}$ for all $x\in\Omega$.

Let $\Pi_R:\Sym\rightarrow B_{R}(0)\subset \Sym$ be the orthogonal projection with $\sqrt{\frac{2}{3}}s_*\leq R < \infty$ to be fixed later. 
Furthermore, let $\varrho\in C_c^\infty(\mathbb{R}^3)$ be a convolution kernel with $0\leq \varrho\leq 1$, $\varrho(x)=0$ if $|x|>1$, $\int_{\mathbb{R}^3}\varrho(x)\dx x=1$ and $\Vert\nabla\varrho\Vert_{\infty}\leq 1$.  We set $\varrho_n(x) = n^{3}\varrho(nx)$. Then, for $n\geq 2 \radM^{-1}$ we define $\Qexn(x)$ for $x\in\overline{\Omega}$ as the convolution
\begin{align}\label{def:reg_seq_eq}
\Qexn(x) \ \defi \ \left((\Pi_R\circ\Qex\circ\Phi_n)*\varrho_n\right)(x) \, . 
\end{align}

\begin{remark} 
\begin{enumerate}
\item \dsg{In this definition $\Qex\circ\Phi_n$ is defined in the interior of $E$ up to distance $\frac1n$ of $\M$ which is necessary in order to define the convolution}. 
\item Through the convolution, we change the boundary values of $\Qex$, i.e.\ $\Qexn$ does not necessarily satisfy \eqref{eq:bc}. 
The diffeomorphism $\Phi_n$ ensures that the regularized sequence $\Qexn$ defined above approximates the boundary data $Q_b$ in $C^1$.
\item Because of the convolution, the approximations of $T$, that we are about to construct, will not end
on $\Gamma$, but on a set $\Gamma_n= \{ \omega\in\M\sd (\overline{\nu}_3*\varrho_n)(\omega)=0 \} $ (which is again a line) in the neighbourhood of $\Gamma$.
Because of the $C^1-$convergence of $\Qexn|_\M\rightarrow Q_b$ we can use a perturbation argument to deduce that $\Gamma_n$ converges in Hausdorff distance and in flat norm to $\Gamma$.
The details of this result are provided in Section~\ref{subsec:compactness_alpha_0}.
\end{enumerate}
\end{remark}

The following proposition shows that this sequence has indeed the desired properties.

\begin{proposition} \label{prop:prop_reg_seq}
There exists $R\geq\sqrt{\frac{2}{3}}s_*$ such that the sequence $\Qexn$ defined in \eqref{def:reg_seq_eq} verifies:
\begin{enumerate}
\item The functions $\Qexn$ are smooth and there exists a constant $C>0$ such that
\begin{align} \label{prop:prop_reg_seq_Lip}
\Vert \nabla \Qexn \Vert_{L^\infty} \ \leq \ C\: n \, .
\end{align}
\item If $\eta\:\Eex(\Qex)\lesssim 1$, we have convergence $\Qexn - \Qex\rightarrow 0$ in $L^2$ and $\Qexn|_\M \rightarrow Q_b$ in $C^1$ for $n\rightarrow\infty$ and $\xi,\eta\rightarrow 0$ provided $\eta n$ diverges in the limit $n\rightarrow\infty$, $\eta\rightarrow 0$.
\item If $\eta\:\Eex(\Qex)\lesssim 1$ and $n\sim\xi^{-a}$ for some $a>\tfrac32$, then there exist constants $C_1,C_2>0$ such that for all measurable sets $\Omega'\subset\Omega$ with $|\Omega'|<\infty$  the energy of $\Qexn$ can be bounded as
\begin{equation}\label{prop:prop_reg_seq_E_bound}
\begin{aligned} 
\Eex(\Qexn,\Omega')
\ &\leq \ 
\Bigg( 1 + C_1\Bigg(\frac{1}{\sqrt{n}} + \frac{\xi^2}{\eta^2} \Bigg)\Bigg)\Eex( \Qex, B_{\frac{2}{n}}(\Omega')\cap\Omega) \\
\ &\qquad\qquad+ \ C_2\Bigg( 
  \frac{1}{\xi^\frac32 \eta n} 
+ \left( \frac{|\Omega'|}{\eta\xi^3 n^2} \right)^\frac12
\Bigg) \\
\ &\leq \ (1+o(1))\: \Eex( \Qex, B_{\frac{2}{n}}(\Omega')\cap\Omega) + o(1)\, ,
\end{aligned} 
\end{equation}
where $B_r(\Omega')$ denotes the $r-$neighbourhood around $\Omega'$ and $o(1)\rightarrow 0$ as $\eta,\xi\rightarrow 0$.
\end{enumerate}
\end{proposition}

Before proving the proposition, we show a series of three lemmata detailing how each step of construction from $\Qex$ to $\Qexn$ modifies the energy.

\begin{lemma} \label{lem:lem_reg_seq_Pi_R}
There exists $R\geq\sqrt{\frac{2}{3}}s_*$ such that we have convergence $\Pi_R\circ\Qex - \Qex\rightarrow 0$ in $L^2$ as $\xi,\eta\rightarrow 0$ if $\eta\:\Eex(\Qex)\lesssim 1$. 
For all measurable sets $\Omega'\subset\Omega$ it holds that
\begin{align*}
\Eex(\Pi_R\circ \Qex,\Omega')
\ \leq \ \Eex(\Qex,\Omega')\, .
\end{align*}
\end{lemma}

\begin{proof}
For the $L^2$ convergence, we note that $\Qex\neq \Pi_R\circ\Qex$ only on the set $A\defi \{ x\in\Omega\sd |\Qex(x)|> R \}$. 
Fixing $R>\sqrt{\frac{2}{3}}s_*$ and using Proposition~\ref{prop:f__g_geq} we get
\begin{align*}
\int_\Omega |\Qex - \Pi_R\circ\Qex|^2 \dx x
\ &\leq \ \int_{A} |\Qex - \Pi_R\circ\Qex|^2 \dx x
\ \lesssim \ \int_{A} \dist^2(\Qex,\N_{\eta,\xi}) \dx x \\
\ &\lesssim \ \int_{\Omega} F(\Qex) \dx x 
\ \lesssim \ \xi^2 \Eex(\Qex,\Omega) \, ,
\end{align*}
where we used the notation $F(Q)=f(Q)+\frac{\xi^2}{\eta^2}g(Q)+\xi^2 C_0$. Since $\eta\: \Eex(\Qex,\Omega)$ is bounded, $\Vert\Qex-\Pi_R\circ\Qex\Vert_{L^2}\lesssim \tfrac{\xi}{\sqrt{\eta}}\rightarrow 0$ as $\eta,\xi\rightarrow 0$.

It remains to prove the energy bound for $\dsg{\Pi_R\circ\Qex}$.
For this, we directly get that
\begin{align*}
\int_{\Omega'} |\nabla (\Pi_R\circ\Qex)|^2 \dx x 
\ &\leq \ \int_{\Omega'} |\nabla \Qex|^2 \dx x  \, .
\end{align*}
For the bulk energy, we use \cite[Prop. 4.1]{ACS2021} to fix $R\geq \sqrt{\frac23}s_*$ such that $F(Q)\geq F(\Pi_R Q)$ for all $Q\in\Sym$, where again $F(Q)=f(Q)+\frac{\xi^2}{\eta^2}g(Q)+\xi^2 C_0$.
Integrating this pointwise inequality implies the energy inequality.
\end{proof}

\begin{lemma} \label{lem:lem_reg_seq_Phi_n}
Let $R\geq\sqrt{\frac{2}{3}}s_*$ be as in Lemma~\ref{lem:lem_reg_seq_Pi_R} and $\Phi_n$ defined by \eqref{def:Phi_n}.
Then, $\Pi_R\circ\Qex\circ\Phi_n - \Pi_R\circ\Qex \rightarrow 0$ in $L^2$ as $n\rightarrow \infty$ and there exist constants $C_1,C_2>0$ such that for all measurable sets $\Omega'\subset\Omega$
\begin{align*}
\Eex(\Pi_R\circ \Qex\circ\Phi_n,\Omega')
\ &\leq \ 
\left( 1 + \frac{C_1}{\sqrt{n}} \right)\Eex(\Pi_R\circ\Qex,B_{\frac{1}{n}}(\Omega')\cap\Omega)
\ + \ \frac{C_2}{\dsg{\eta^2} n}
\, .
\end{align*}
\end{lemma}

\begin{proof}
We start again with the $L^2-$convergence noting that by definition of $\Phi_n$ it holds $|(\Pi_R\circ \Qex\circ\Phi_n)(x) - (\Pi_R\circ \Qex)(x)|=0$ as soon as $\dist(x,\M)\geq\frac{1}{\sqrt{n}}$. 
The complementary set $U_n\defi \{ x\in\Omega\sd \dist(x,\M)\leq\frac{1}{\sqrt{n}} \}$ is of measure $|U_n|\lesssim (\sqrt{n})^{-1}$ an together with the $L^\infty-$ bounds $|\Pi_R\circ\Qex\circ\Phi_n|,|\Pi_R\circ\Qex|\leq R$ this implies $L^2-$convergence.

For the energy estimate we calculate, using the estimate $|\det(\nabla\Phi_n)|\leq 1 + \tfrac{C}{\sqrt{n}}$
\begin{align*}
\int_{\Omega'} |\nabla (\Pi_R\circ \Qex\circ\Phi_n) |^2 \dx x 
\ &\leq \  
\left( 1 + \frac{C}{\sqrt{n}} \right)\int_{\Omega'} |\nabla (\Pi_R\circ \Qex) |^2(\Phi_n(x)) \dx x \\
\ &\leq \  \left( 1 + \frac{C}{\sqrt{n}} \right)\int_{\Phi_n(\Omega')} |\nabla (\Pi_R\circ \Qex) |^2 \frac{1}{|\det(\nabla\Phi_n)|} \dx x \\
\ &\leq \  \left( 1 + \frac{C}{\sqrt{n}} \right)\left( C\int_{\Phi_n(\Omega')\cap E} |\nabla \overline{\nu} |^2 \dx x + 
\int_{\Phi_n(\Omega')\cap\Omega} |\nabla (\Pi_R\circ \Qex) |^2 \dx x \right) \, . 
\end{align*}
Since $\Phi_n(\Omega')\subset B_{\frac1n}(\Omega')$ and the curvature of $\M$ is bounded, we get that $\int_{\Phi_n(\Omega')\cap E} |\nabla \overline{\nu} |^2 \dx x\lesssim \frac{1}{n}$. 
Therefore
\begin{align*}
\int_{\Omega'} |\nabla (\Pi_R\circ \Qex\circ\Phi_n) |^2 \dx x 
\ &\leq \  \left( 1 + \frac{C_1}{\sqrt{n}} \right) 
\int_{B_{\frac1n}(\Omega')\cap\Omega} |\nabla (\Pi_R\circ \Qex) |^2 \dx x + \frac{C_2}{n} \, . 
\end{align*}
We proceed similarly for the bulk potential $F(Q)=f(Q)+\frac{\xi^2}{\eta^2}g(Q)+\xi^2 C_0$, giving
\begin{align*}
\int_{\Omega'} F(\Pi_R\circ\Qex\circ\Phi_n) \dx x 
\ &\leq \  
\left( 1 + \frac{C}{\sqrt{n}} \right)\int_{\Phi_n(\Omega')} F(\Pi_R\circ\Qex) \dx x \\
\ &\leq \  \left( 1 + \frac{C}{\sqrt{n}} \right)\left( \int_{\Phi_n(\Omega')\cap E} F(\Qex) \dx x + 
\int_{\Phi_n(\Omega')\cap\Omega} F(\Pi_R\circ\Qex) \dx x \right) \, . 
\end{align*}
Since $\Qex = s_*(\overline{\nu}\otimes\overline{\nu}-\frac13\id)$ in $E$, we get that $|F(\Qex)|=|\frac{\xi^2}{\eta^2}g(\Qex) + \xi^2 C_0|\lesssim \frac{\xi^2}{\eta^2}$. 
We end up with
\begin{align*}
\int_{\Omega'} \frac{1}{\xi^2}F(\Pi_R\circ\Qex\circ\Phi_n) \dx x 
\ &\leq \  \left( 1 + \frac{C}{\sqrt{n}} \right) 
\int_{B_{\frac1n}(\Omega')\cap\Omega} \frac{1}{\xi^2}F(\Pi_R\circ\Qex) \dx x + \frac{C}{\eta^2 n} \, . 
\end{align*}

\end{proof}

\begin{lemma} \label{lem:lem_reg_seq_rho_n}
Let $R\geq\sqrt{\frac{2}{3}}s_*$ be as in Lemma~\ref{lem:lem_reg_seq_Pi_R}, $\Phi_n$ defined by \eqref{def:Phi_n} and $\varrho_n$ the convolution kernel used to define $\Qexn$.
Assume that $\eta\:\Eex(\Qex)\lesssim 1$.
Then, $\Qexn -  \Pi_R\circ\Qex\circ\Phi_n \rightarrow 0$ in $L^2$ as $\eta,\xi\rightarrow 0$ and $n\rightarrow\infty$ provided $n\eta\rightarrow\infty$. Furthermore, there exist constants $C_1,C_2>0$ such that for all measurable sets $\Omega'\subset\Omega$ of finite measure
\begin{equation}\label{lem:lem_reg_seq_rho_n:eq}
\begin{split}
\Eex(\Qexn,\Omega')
\ \leq \ 
\Bigg( 1 &+ C_1\Bigg(\frac{1}{\sqrt{n}} + \frac{\xi^2}{\eta^2} \Bigg)\Bigg)\Eex( \Pi_R\circ\Qex\circ\Phi_n, B_{\frac{1}{n}}(\Omega')\cap\Omega) \\
\ &+ \ C_2\Bigg( \frac{1}{n}
+ \left(\frac{1}{\xi^\frac32 n}+\frac{1}{\xi^\frac52 n^2}\right) \Eex( \Pi_R\circ\Qex\circ\Phi_n, B_{\frac{1}{n}}(\Omega')\cap\Omega) \\
&\qquad\qquad+ \frac{1}{\xi^\frac32 n^\frac32} (\Eex( \Pi_R\circ\Qex\circ\Phi_n, \Omega'))^\frac12 
+ \frac{1}{\xi^\frac52 n^3} \\
&\qquad\qquad+ |\Omega'|^\frac12\left( \frac{1}{\xi^\frac32 n} (\Eex( \Pi_R\circ\Qex\circ\Phi_n, B_{\frac{1}{n}}(\Omega')\cap\Omega))^\frac12
+ \frac{1}{\xi^\frac32 n^\frac32}  \right)
\Bigg)
\, .
\end{split}
\end{equation} 
\end{lemma}

\begin{proof}
We introduce the notation $\tilde{Q}\defi \Pi_R\circ\Qex\circ\Phi_n$ so that $\Qexn = \tilde{Q}*\varrho_n$.
Now, we observe that for any measurable $\Omega'\subset\Omega$
\begin{equation*}
\begin{split}
\int_{\Omega'} |\Qexn-\tilde{Q}|^2 \dx x
\ &= \ \int_{\Omega'} \left|\int_{B_1(0)}(\tilde{Q}(x-\tfrac{z}{n})-\tilde{Q}(x))\varrho(z)\dx z\right|^2 \dx x \\
\ &= \ \int_{\Omega'} \left|\int_{B_1(0)}\int_{0}^{\frac{1}{n}}z\cdot \nabla\tilde{Q}(x-tz)\varrho(z)\dx z\right|^2 \dx x
\ \leq \ \frac{C}{n^2} \int_{B_{\frac1n}(\Omega')} |\nabla \tilde{Q}|^2 \dx x\, .
\end{split}
\end{equation*}
Similar to the proof of Lemma~\ref{lem:lem_reg_seq_Phi_n} we can split up the last integral into two integrals over the sets $B_{\frac1n}(\Omega')\cap\Omega$ and $B_{\frac1n}(\Omega')\cap E$ to get
\begin{equation}\label{lem:lem_reg_seq_rho_n:estim_L2_grad}
\begin{split}
\int_{\Omega'} |\Qexn-\tilde{Q}|^2 \dx x
\ &\leq \  \frac{C}{n^2} \Eex(\tilde{Q},B_{\frac1n}(\Omega')\cap\Omega)
 + \frac{C}{n^3}
\, .
\end{split}
\end{equation}
The $L^2-$convergence follows from the energy bound for $\tilde{Q}$, which is a consequence of Lemma~\ref{lem:lem_reg_seq_Pi_R}, Lemma~\ref{lem:lem_reg_seq_Phi_n} and the energy bound for $\Qex$, provided $n\eta\rightarrow \infty$ as $\xi,\eta\rightarrow 0$ and $n\rightarrow\infty$.

Next, we estimate the energy of $\Qexn$ in terms of $\tilde{Q}$. 
For the gradient term we simply obtain by Young's inequality
\begin{equation} \label{lem:lem_reg_seq_rho_n:estim_grad}
\begin{split}
\int_{\Omega'} |\nabla \Qexn|^2 \dx x 
\ &= \ \int_{\Omega'} |(\nabla (\tilde{Q}*\varrho_n)|^2 \dx x \\
\ &\leq \ \int_{B_\frac{1}{n}(\Omega')} |\nabla\tilde{Q}|^2 \dx x
\ \leq \ \int_{B_\frac{1}{n}(\Omega')\cap\Omega} |\nabla\tilde{Q}|^2 \dx x + \frac{C}{n} \, .
\end{split}
\end{equation}  

The only tricky part is the estimate for $f(\Qexn)$.
We decompose $\Omega'$ into three sets $\Lambda_1,\Lambda_2,\Lambda_3$ defined as
\begin{align*}
\Lambda_1 \ &\defi \ \{ x\in\Omega'\sd \dist(\tilde{Q},\N_{\eta,\xi})\geq \lambda \}\, , \\ 
\Lambda_2 \ &\defi \ \{ x\in(\Omega'\setminus\Lambda_1)\sd |\Qexn-\tilde{Q}|\geq \lambda \}\, , \\ 
\Lambda_3 \ &\defi \ \Omega'\setminus(\Lambda_1\cup\Lambda_2)\, , 
\end{align*}
where $\lambda>0$ will be chosen later on.
The volume of $\Lambda_1$ can be estimated as follows
\begin{align*}
\Eex(\tilde{Q},\Omega')
\ &\gtrsim \ \int_{\Lambda_1} \frac{1}{\xi^2}F(\tilde{Q}) \dx x
\  \gtrsim \ \frac{\lambda^2}{\xi^2}  |\Lambda_1|\, ,
\end{align*}
where we also used Proposition~\ref{prop:f__g_geq}. 
This gives $|\Lambda_1|\lesssim \frac{\xi^2}{\lambda^2}\Eex(\tilde{Q},\Omega')$.
Using the Lipschitz continuity of $f$ on the set $B_R(0)\subset\Sym$, we can furthermore estimate
\begin{align*}
\left( \int_{\Lambda_1} \frac{\eta}{\xi^2}|f(\Qexn) - f(\tilde{Q})| \dx x \right)^2
\ &\lesssim \ \left( \int_{\Lambda_1} \frac{\eta}{\xi^2}|\Qexn - \tilde{Q}| \dx x \right)^2 \\
\ &\lesssim \ \frac{\eta^2}{\xi^4} |\Lambda_1| \int_{\Lambda_1} |\Qexn - \tilde{Q}|^2\dx x\, .
\end{align*}
Replacing $|\Lambda_1|$ by our above estimate and using \eqref{lem:lem_reg_seq_rho_n:estim_L2_grad} we get
\begin{align} \label{lem:lem_reg_seq_rho_n:estim_Lam1}
\int_{\Lambda_1} \frac{\eta}{\xi^2}|f(\Qexn) - f(\tilde{Q})| \dx x
\ &\lesssim \ \frac{\eta}{\xi\lambda}  \frac{1}{n} \Eex(\tilde{Q},B_{\frac1n}(\Omega')\cap\Omega) 
+ \frac{\eta}{\xi\lambda} \frac{1}{n^\frac32}(\Eex(\tilde{Q},\Omega'))^\frac12\, .
\end{align}

The set $\Lambda_2$ is also seen to be small since
\begin{align*}
\lambda^2|\Lambda_2|
\ &\lesssim \ \int_{\Lambda_2} |\Qexn-\tilde{Q}|^2 \dx x
\ \lesssim \ \frac{1}{n^2} \Eex(\tilde{Q},B_{\frac{1}{n}}(\Omega')\cap\Omega) + \frac{1}{n^3}\, ,
\end{align*}
again by \eqref{lem:lem_reg_seq_rho_n:estim_L2_grad} so that $|\Lambda_2|\lesssim \frac{1}{n^2\lambda^2}\Eex(\tilde{Q},B_{\frac{1}{n}}(\Omega')\cap\Omega) + \frac{1}{\lambda^2 n^3}$.
Proceeding as for $\Lambda_1$, it follows again by Lipschitz continuity of $f$ that
\begin{equation}\label{lem:lem_reg_seq_rho_n:estim_Lam2}
\begin{split}
\int_{\Lambda_2} \frac{\eta}{\xi^2}|f(\Qexn) - f(\tilde{Q})| \dx x
\ &\lesssim \ \frac{\eta}{\xi^2 n^2 \lambda} \Eex(\tilde{Q},B_{\frac1n}(\Omega')\cap\Omega) 
+ \frac{\eta}{\xi^2} \frac{1}{\lambda n^3}
\, .
\end{split}
\end{equation}

It remains to prove an estimate for $f(\Qexn)$ on $\Lambda_3$.
To this goal, we write
\begin{align*}
f(\Qexn) - f(\tilde{Q})
\ &= \ \left(\int_0^1 (D f)(\dsg{\tilde{Q}} + s(\Qexn-\tilde{Q})) \dx s\right) \cdot (\Qexn-\tilde{Q}) \\
 &\qquad - \left(\int_0^1 (D f)(\Pi_\N( \dsg{\tilde{Q}} + s(\Qexn-\tilde{Q}) )) \dx s\right) \cdot (\Qexn-\tilde{Q}) ß, ,
\end{align*}
where we used the fact that $Df = 0$ on $\N$.
To shorten the notation, we define $Q_s\defi \dsg{\tilde{Q}} + s(\Qexn-\tilde{Q})$. 
Then
\begin{align*}
f(\Qexn) &- f(\tilde{Q}) \\
\ &= \ \left(\int_0^1 \left(\int_0^1 (D^2 f)(\Pi_\N(Q_s) + t(Q_s - \Pi_\N(Q_s))) \dx t\right)\cdot(Q_s - \Pi_\N(Q_s)) \dx s\right) \cdot (\Qexn-\tilde{Q})\, .
\end{align*}
Note that $|D^2 f|$ in the above integral is bounded since $f$ is smooth on a compact neighbourhood of $\N$.
Furthermore, since $|\Qexn-\tilde{Q}|\leq \lambda$ on $\Lambda_3$ and $\dist(\tilde{Q},\N_{\eta,\xi})\leq \lambda$, it follows from \eqref{prop:f__g_geq:Nex_N} that if $\lambda\gtrsim \tfrac{\xi^2}{\eta^2}$ then $|Q_s - \Pi_\N(Q_s)|\lesssim\lambda$.
Therefore, 
\begin{align*}
\left(\int_{\Lambda_3} |f(\Qexn) - f(\tilde{Q})| \dx x\right)^2
\ &\lesssim \ \lambda^2 |\Lambda_3| \int_{\Lambda_3} |\Qexn - \tilde{Q}|^2 \dx x \\
\ &\lesssim \ \lambda^2 |\Omega'| \frac{1}{n^2}\Eex(\tilde{Q},B_{\frac1n}(\Lambda_3)\cap\Omega)
+ \frac{\lambda^2}{n^3}|\Omega'|
\, ,
\end{align*}
which gives
\begin{align} \label{lem:lem_reg_seq_rho_n:estim_Lam3}
\int_{\Lambda_3} \frac{\eta}{\xi^2} |f(\Qexn) - f(\tilde{Q})| \dx x
\ &\lesssim \ \frac{\eta}{\xi^2} \frac{\lambda}{n} |\Omega'|^\frac12 \left(\Eex(\tilde{Q},B_{\frac1n}(\Omega')\cap\Omega)\right)^\frac12
+ \frac{\eta}{\xi^2} \frac{\lambda}{n^\frac32}|\Omega'|^\frac12
 \, .
\end{align}

Combining \eqref{lem:lem_reg_seq_rho_n:estim_Lam1},\eqref{lem:lem_reg_seq_rho_n:estim_Lam2}, \eqref{lem:lem_reg_seq_rho_n:estim_Lam3}
and choosing $\lambda = \sqrt{\xi}$ yields
\begin{equation}\label{lem:lem_reg_seq_rho_n:estim_Lam123}
\begin{split}
\int_{\Omega'} \frac{\eta}{\xi^2}f(\Qexn) \dx x
\ \leq \ \int_{\Omega'} \frac{\eta}{\xi^2}f(\tilde{Q}) \dx x
+ C\Bigg( \frac{\eta}{\xi^\frac32 n} &\Eex(\tilde{Q},B_{\frac1n}(\Omega')\cap\Omega) + \frac{\eta}{\xi^\frac32 n^\frac32} (\Eex(\tilde{Q},\Omega'))^\frac{1}{2}\\
&+ \frac{\eta}{\xi^\frac52 n^2} \Eex(\tilde{Q},B_{\frac1n}(\Omega')\cap\Omega) + \frac{\eta}{\xi^\frac52 n^3}\\
&+ \frac{\eta}{\xi^\frac32 n}|\Omega'|^\frac12 (\Eex(\tilde{Q},B_{\frac1n}(\Omega')\cap\Omega))^\frac12 + \frac{\eta}{\xi^\frac32 n^\frac32}|\Omega'|^\frac12 \Bigg) \, .
\end{split}
\end{equation}

We finish the proof of this lemma with the estimates on $g$.
Note that by Proposition 4.2 in \cite{ACS2021} we can bound the energy contribution of $g$ from the set where $|Q|$ is small by $\tfrac{\xi^2}{\eta}\Eex(\Qexn,\Omega')$.
By smoothness of $g$ away from $Q=0$ we can estimate
\begin{align*}
\int_{\Omega'} g(\Qexn) - g(\tilde{Q})\dx x
\ &\lesssim \ \frac{\xi^2}{\eta}\Eex(\Qexn,\Omega') + \Vert \nabla g\Vert_{L^\infty} \int_{\Omega'} |\Qexn - \tilde{Q}| \dx x \\
\ &\lesssim \ \frac{\xi^2}{\eta}\Eex(\Qexn,\Omega') + \left(\frac{|\Omega'|}{n^2}\Eex(\tilde{Q},B_{\frac1n}(\Omega')\cap\Omega) + \frac{|\Omega'|}{n^3}\right)^\frac12\, .
\end{align*}
Combining this with \eqref{lem:lem_reg_seq_rho_n:estim_Lam123} and \eqref{lem:lem_reg_seq_rho_n:estim_grad}, we subtract $C \frac{\xi^2}{\eta^2}\Eex(\Qexn,\Omega')$ from both sides and divide by $1-C\frac{\xi^2}{\eta^2}$ to get the estimate \eqref{lem:lem_reg_seq_rho_n:eq}.
Note, that in our regime for $\eta$ and $\xi$, the terms arising in the estimate for $g$ are smaller than the corresponding terms for $f$ in \eqref{lem:lem_reg_seq_rho_n:estim_Lam123} and hence do not display in \eqref{lem:lem_reg_seq_rho_n:eq}.
\end{proof}

With the results of Lemma~\ref{lem:lem_reg_seq_Pi_R}, Lemma~\ref{lem:lem_reg_seq_Phi_n} and Lemma~\ref{lem:lem_reg_seq_rho_n}, we can now turn to the proof of Proposition~\ref{prop:prop_reg_seq}.

\begin{proof}[Proof of Proposition~\ref{prop:prop_reg_seq}.]
The smoothness of the functions $\Qexn$ is clear by standard convolution arguments, since $\varrho$ is smooth. The bound on the gradient follows from the computation
\begin{align*}
|\nabla \Qexn (x)|
\ &\leq \ \Vert \nabla\varrho_n\Vert_{L^\infty} \int_{B_1(x)} |\Pi_R\Qex(\Phi_{n}(y))| \dx y 
\ \leq \ \frac{4}{3}\pi R\: n\, .
\end{align*}

Next, we prove the $C^1-$convergence on $\M$.
For $\omega\in\M$ it holds that
\begin{align*}
|\Qexn(\omega) - Q_b(\omega)|
\ &\leq \ \int_{B_{\frac1n}(0)} \Big|\Qex\Big(\omega-y-\frac1n\nu_{\omega-y}\Big) - Q_b(\omega)\Big| \varrho_n(y) \dx y \, .
\end{align*}
Note that $\Qex$ does not depend on $\eta,\xi$ here as it is uniquely defined by the extension $\overline{\nu}$.
Since $Q_b$ and $\overline{\nu}$ are continuous on a compact set, they are also uniformly continuous which implies $C^0-$convergence for $n\rightarrow 0$.
Analogously, 
\begin{align*}
|\nabla_\omega \Qexn(\omega) - \nabla_\omega Q_b(\omega)|
\ &\leq \ \int_{B_{\frac1n}(0)} \Big|(\nabla_\omega\Qex)(\omega-y-\nu_{\omega-y}) \Big(\id + \frac1n\nabla_\omega \overline{\nu} \Big) - \nabla_\omega Q_b(\omega) \Big| \varrho_n(y) \dx y \, ,
\end{align*}
and since $\nabla_\omega \overline{\nu}$ is bounded we can use uniform continuity of $\nabla_\omega Q_b$ to deduce $C^1-$convergence on $\M$.

Next, we show the $L^2-$convergence of $\Qexn-\Qex$ to $0$ as $\eta,\xi\rightarrow 0$ \dsg{and $n\rightarrow \infty$ with $\eta n\rightarrow \infty$}. 
Writing
\begin{align*}
\Vert \Qexn-\Qex\Vert_{L^2(\Omega)}
\ &\leq \ \Vert \Qexn-\Pi_R\circ\Qex\circ\Phi_n\Vert_{L^2(\Omega)}
 + \Vert \Pi_R\circ\Qex\circ\Phi_n-\Pi_R\circ\Qex\Vert_{L^2(\Omega)} \\
&\qquad+ \Vert \Pi_R\circ\Qex-\Qex\Vert_{L^2(\Omega)}
\end{align*}
and applying Lemma~\ref{lem:lem_reg_seq_Pi_R} to the third, Lemma~\ref{lem:lem_reg_seq_Phi_n} to the second and Lemma~\ref{lem:lem_reg_seq_rho_n} to the first $L^2-$difference on the right hand side, we see that $\Vert \Qexn-\Qex\Vert_{L^2(\Omega)}$ tends to zero if $n\eta\rightarrow\infty$ as $n\rightarrow\infty$ and $\xi,\eta\rightarrow 0$.

If we assume that $\eta\:\Eex(\Qex,\Omega)\leq C$ uniformly in $\eta,\xi$, then Lemma~\ref{lem:lem_reg_seq_Pi_R} implies that also $\eta\:\Eex(\Pi_R\circ\Qex,\Omega)\leq C$.
In addition, by Lemma~\ref{lem:lem_reg_seq_Phi_n}, it follows that $\eta\: \Eex(\Pi_R\circ\Qex\circ\Phi_n,\Omega)\leq C$ and that
\begin{align*}
\Eex(\Pi_R\circ \Qex\circ\Phi_n,\Omega')
\ &\leq \ 
\left( 1 + \frac{C_1}{\sqrt{n}} \right)\Eex(\Qex,B_{\frac{1}{n}}(\Omega')\cap\Omega)
\ + \ \frac{C_2}{\dsg{\eta^2} n}
\, .
\end{align*}
Combining this with Lemma~\ref{lem:lem_reg_seq_rho_n} and using that $\eta\: \Eex(\Pi_R\circ\Qex\circ\Phi_n,\Omega)\leq C$, we can find new constants $C_1,C_2>0$ such that
\begin{align*}
\Eex(\Qexn,\Omega')
\ \leq \ 
\Bigg( 1 &+ C_1\Bigg(\frac{1}{\sqrt{n}} + \frac{\xi^2}{\eta^2} \Bigg)\Bigg)\Eex( \Qex, B_{\frac{2}{n}}(\Omega')\cap\Omega) \\
\ &+ \ C_2\Bigg( 
  \frac{1}{\xi^\frac32 \eta n} 
+ \left( \frac{|\Omega'|}{\eta\xi^3 n^2} \right)^\frac12
\Bigg)
\end{align*}
for $n\sim \xi^{-a}$ for some $a>\frac32$. 
In this regime for $n$, the energy estimate is asymptotically sharp.
\end{proof}

} 

Having established these properties of $\Qexn$, we are able to identify the size and structure of the set where $\Qexn$ is close to being prolate uniaxial as stated in the next Lemma.

\begin{lemma}\label{lem:def_set_X_eps}
There exists a constants $C,\lipQ>0$ such that for all $\delta>0$, there exists a finite set $I\subset\Omega$ which satisfies
\begin{enumerate}
\item the following inclusions
\begin{align}\label{lem:def_set_X_eps:eq_covering}
U_\delta \ \subset \ \bigcup_{x\in I} B_{\frac{\delta}{\lipQ n}}(x)\ \subset \ U_{\delta/2} \, ,
\end{align}
where $U_\delta\defi \{x\in\Omega\sd \dist(\Qexn(x),\N)>\delta\}$,
\item and
\begin{align}\label{lem:def_set_X_eps:eq_size_X_eps}
\# I \ \leq \ C \: \frac{n^3}{\eta\: f_\mathrm{min}^\delta\: \delta^3}\left( \xi^2 + \frac{1}{n^2}\right)\, ,
\end{align}
where $f_\mathrm{min}^\delta = \min\{ f(Q)\sd \dist(Q,\N)\geq\delta/2 \}$.
\end{enumerate} 
\end{lemma}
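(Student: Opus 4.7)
The plan is to extract $I$ by a Vitali-type covering argument applied to balls of radius $\sim \delta/n$ centered in $U_\delta$, and then bound $\#I$ by integrating the bulk potential $f(\Qexn)$, which is pointwise large on every ball by a Lipschitz estimate.

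First I exploit the Lipschitz bound \eqref{prop:prop_reg_seq_Lip}: if $x\in U_\delta$, then for any $y$ with $|y-x|\leq \delta/(2n)$ (up to an absorbed constant coming from the Lipschitz factor $C$), we have $|\Qexn(y)-\Qexn(x)|\leq \delta/2$, and therefore $\dist(\Qexn(y),\N)\geq \delta/2$, i.e.\ $y\in U_{\delta/2}$. Thus any ball of the safe radius $r_n\defi \delta/(2n)$ centered in $U_\delta$ is automatically contained in $U_{\delta/2}$. This handles the right inclusion of \eqref{lem:def_set_X_eps:eq_covering}.

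Next I apply a Vitali covering lemma to the family of balls $\{B_{r_n/5}(x)\sd x\in U_\delta\}$, extracting a maximal disjoint subcollection $\{B_{r_n/5}(x_i)\}_{i\in I}$ with the property that the $5\times$-enlargements $\{B_{r_n}(x_i)\}_{i\in I}$ still cover $U_\delta$. This gives the left inclusion in \eqref{lem:def_set_X_eps:eq_covering}, while the disjointness of the smaller balls is what drives the cardinality bound. The points $x_i$ form the desired set $I$.

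Finally, to bound $\#I$ I use that on each disjoint ball $B_{r_n/5}(x_i)$ the value $\Qexn$ stays in $U_{\delta/2}$ (by the Lipschitz step above), so $f(\Qexn)\geq f_\mathrm{min}^\delta$ there. Summing over the disjoint balls,
\begin{align*}
\#I\cdot c\: r_n^3 \cdot f_\mathrm{min}^\delta
\ \leq \ \sum_{i\in I}\int_{B_{r_n/5}(x_i)} f(\Qexn) \dx x
\ \leq \ \int_\Omega f(\Qexn) \dx x\, .
\end{align*}
It remains to estimate $\int_\Omega f(\Qexn)$. Here, rather than using the crude bound $\int f(\Qexn)\leq \xi^2\Eex(\Qexn)$ combined with \eqref{prop:prop_reg_seq_E_bound} (which would introduce an unwanted factor $1/(\xi^2 n^2)$), I would redo the argument inside the proof of Proposition \ref{prop:prop_reg_seq}: using \eqref{prop:prop_reg_seq:eq_f_decom}, the fact that $f(\Pi\Qex)\leq f(\Qex)$, and \eqref{prop:prop_reg_seq:eq_estim_f}, one gets directly
\begin{align*}
\int_\Omega f(\Qexn)\dx x \ \leq \ \int_\Omega f(\Qex)\dx x + \frac{C}{n^2}\Eex(\Qex) \ \leq \ \left(\xi^2 + \frac{C}{n^2}\right)\Eex(\Qex) \ \leq \ C\frac{\xi^2 + 1/n^2}{\eta}\, ,
\end{align*}
by the a priori energy bound \eqref{thm:global_Eex_bound}. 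Substituting $r_n=\delta/(2n)$ yields exactly \eqref{lem:def_set_X_eps:eq_size_X_eps}.

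The only delicate point is the interplay of constants in the Vitali step: one must choose the Vitali radius small enough that the \emph{enlarged} balls both cover $U_\delta$ \emph{and} remain inside $U_{\delta/2}$ via the Lipschitz estimate. Beyond that, the argument is a standard ball-packing against the dissipation of the bulk potential, the only subtlety being to keep the $\xi^2+1/n^2$ structure by going through the proof of Proposition \ref{prop:prop_reg_seq} rather than its statement.
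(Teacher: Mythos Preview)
Your proposal is correct and follows essentially the same route as the paper: Lipschitz bound to get $B_{r_n}(x)\subset U_{\delta/2}$ for $x\in U_\delta$, a Vitali extraction to produce $I$ with disjoint shrunken balls, and then a packing argument against $\int f(\Qexn)$, with the latter bounded via the ingredients \eqref{prop:prop_reg_seq:eq_estim_f} and $f(\Pi\Qex)\leq f(\Qex)$ from the proof of Proposition~\ref{prop:prop_reg_seq} together with the a priori bound~\eqref{thm:global_Eex_bound}. The only cosmetic differences are that the paper uses the Vitali factor $3$ rather than $5$, and organizes the chain of inequalities for $\int f(\Qexn)$ slightly differently; neither affects the argument.
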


\begin{proof}
Let $\delta>0$ and $x_0\in U_\delta$. 
Since $\Qexn$ is Lipschitz continuous (Proposition~\ref{prop:prop_reg_seq}), we can define $\lipQ := \frac{8}{3}\pi R>0$, i.e.\ $\lipQ n \geq 2\Vert\nabla \Qexn\Vert_{\infty}$. 
We deduce that for any $x\in B_{\frac{\delta}{\lipQ n}}(x_0)$ it holds
\begin{align*}
\dist(\Qexn(x),\N) 
\ \geq \ \dist(\Qexn(x_0),\N) - \Vert\nabla \Qexn\Vert_{\infty} \frac{\delta}{\lipQ n}
\ \geq \ \frac{\delta}{2}\, ,
\end{align*}
so that $x\in U_{\delta/2}$. 
From this, we get that
\begin{align*}
U_\delta \ \subset \ \bigcup_{x\in U_\delta} B_{\frac{\delta}{\lipQ n}}(x)\ \subset \ U_{\delta/2} \, .
\end{align*}
By Vitali covering theorem, we find a subset $I\subset U_\delta$ with the same property and $B_{\frac13\frac{\delta}{\lipQ n}}(x_i)\cap B_{\frac13\frac{\delta}{\lipQ n}}(x_j)=\emptyset$ for $i\neq j$ and $x_i,x_j\in I$. 
Furthermore, using Proposition~\ref{prop:prop_reg_seq} 
\begin{align*}
\frac{C\: \xi^2}{\eta} 
\ &\geq \ \int_\Omega f(\Qex) \dx x 
\ \geq \ \int_\Omega f(\Qexn) \dx x - \frac{C}{\eta}\left(\xi^2 + \frac{1}{n^2}\right) \\
\ &\geq \ \int_{U_{\delta/2}} f(\Qexn) \dx x - \frac{C}{\eta}\left(\xi^2 + \frac{1}{n^2}\right) 
\ \geq \ C \# I |B_{\frac{\delta}{\lipQ n}}| f_\mathrm{min}  - \frac{C}{\eta}\left(\xi^2 + \frac{1}{n^2}\right) \\
\ &\geq \ C \# I \frac{\delta^3 f_\mathrm{min}}{n^3} - \frac{C}{\eta}\left(\xi^2 + \frac{1}{n^2}\right)\, ,
\end{align*}
where we used that $f\geq f_\mathrm{min} >0$ on $U_{\delta/2}$. 
From this it follows that
\begin{align*}
\# I \ \leq \ C \frac{n^3}{\eta\: f_\mathrm{min} \delta^3}\left(\xi^2 + \frac{1}{n^2}\right)\, .
\end{align*}
\end{proof}

In \cite{ACS2021} a similar result was obtained using a regularization related to the energy and using the Euler-Lagrange equation to derive the Lipschitz continuity. 
This approach would also work in the new setting and one could obtain Lemma~\ref{lem:def_set_X_eps} with  $n=\xi^{-1}$. 
However, the new approach has \ds{the advantage to be \emph{local} and
provide us without effort the local estimates which lead to the lower
bounds in Section~\ref{subsec:compactness_alpha_0}}.

From \eqref{lem:def_set_X_eps:eq_size_X_eps} it follows that the volume of the union of balls in \eqref{lem:def_set_X_eps:eq_covering} converges to zero for $\eta,\xi\rightarrow 0$ and $n\sim \xi^{-a}$, \ds{$a\in [1,2)$}. 
The same holds true for the union of the surfaces of those balls. Note however that the sum of the diameters is not bounded and diverges. 
With the tool developed in \cite{Bethuel1994} and used in \cite{Canevari2015a,ACS2021} it would be possible to derive a bound, namely the sum of diameters can be shown to be bounded.

\subsection{Definition of the line singularity}
\label{subsec:def_line_S}

The goal of this section is to define a $1-$chain $\Sexn$ of finite length which satisfies the compactness properties announced in Theorem~\ref{thm:main}.
The necessary analysis has already been carried out in \cite{Canevari2019,Canevari2020} but for the reader's convenience we recall the important steps and results.

For the construction of $\Sexn$, we follow
 Section 3 in \cite{Canevari2019}. We recall that $\C$ is the cone of oblate uniaxial $Q-$tensors which can be seen as a smooth simplicial complex of codimension $2$ in $\Sym$. 
Evoking Thom's transversality theorem, one can assume that, for almost every $Y\in\Sym$, the function $\Qexn-Y$ is transverse to all cells of $\C$.
Subdividing the preimages of the cells under the map $\Qexn-Y$ if necessary, $(\Qexn-Y)^{-1}(\C)$ defines a smooth, simplicial, finite complex of codimension $2$ which we call $\Sexn$. 
Note that $\Sexn$ depends on the choice of $Y$.

The relevant estimates on $\Sexn$ needed to prove Theorem~\ref{thm:main} in Section~\ref{subsec:compactness_alpha_0} are formulated in Theorem C and Section 4 in \cite{Canevari2020}:

\begin{theorem}\label{thm:canevari-orlandi-lower_bound}
There exists a finite mass chain $S$ such that one can choose a subsequence $\Sexn$ (not relabelled) and $\alpha>0$ with
\begin{align*}
\mathbb{F}(\Sexn - S) \ \rightarrow \ 0 \qquad  \text{ for almost every } Y\in B_\alpha(0)\, .
\end{align*}
Furthermore, for any open subset $U\subset\mathbb{R}^3$ it holds
\begin{align*}
\liminf_{\substack{\xi,\eta\rightarrow 0 \\ \dsg{n\rightarrow\infty}}}  \eta\: \Eex(\Qexn,U\cap\Omega)
\ \geq \ \frac{\pi}{2}s_*^2 \beta\: \mathbb{M}(S\restr U) \, .
\end{align*}
\end{theorem}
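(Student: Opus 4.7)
The plan is to treat this theorem essentially as a direct application of Theorem C (and the techniques of Section 4) of Canevari--Orlandi \cite{Canevari2020}, adapted to our energy with its additional magnetic term $g/\eta^2$. The heart of the argument is a 2D slicing combined with a Ginzburg--Landau-type logarithmic lower bound exploiting the non-trivial homotopy $\pi_1(\N) = \pi_1(\mathbb{R}P^2) = \mathbb{Z}_2$. On a 2-disk $D$ transverse to the defect set $\Sexn = (\Qexn - Y)^{-1}(\C)$, the restriction of $\R \circ (\Qexn - Y)$ defines a map into $\N$, and transversality (guaranteed by Thom's theorem for a.e.\ $Y \in \Sym$) forces this map to have non-trivial class in $\pi_1(\N)$ around each puncture of $D \cap \Sexn$. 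A Jerrard/Sandier vortex lower bound then gives a logarithmically divergent 2D Dirichlet-plus-potential energy on each slice, cut off at the lower scale $\xi$ (the biaxial core) and at the upper scale $\eta$ (above which the magnetic term becomes dominant and prevents further divergence). Integrating this bound along the line defect via coarea and passing to the limit via open-set lower semicontinuity of $\MM$ under flat convergence produces the localized estimate $\eta\: \Eex(\Qexn, U \cap \Omega) \gtrsim \beta\: \MM(S \restr U)$.

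For compactness, the same 2D lower bound used in reverse provides a uniform mass estimate $\MM(\Sexn) \lesssim \eta \Eex(\Qexn) \leq C$; the boundary $\partial \Sexn$ is controlled by the uniaxial boundary data on $\M$, since $\Qex|_\M \in \N$ keeps $(\Qexn - Y)|_\M$ away from $\C$ for $Y \in B_\alpha(0)$ with $\alpha$ sufficiently small. The flat chain compactness theorem of Fleming (recalled in Section 2.2) then yields some $S \in \flatch^1$ and a subsequence along which $\FF(\Sexn - S) \to 0$ on any bounded set. To upgrade this to a statement that holds for a.e.\ $Y \in B_\alpha(0)$, one uses a Fubini argument in $Y$: integrating the mass estimate over $B_\alpha(0)$ and diagonalizing, one shows that the set of $Y$ for which no subsequential flat limit exists has Lebesgue measure zero.

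The main obstacle is the precise scale separation needed to extract the constant $\beta = \lim \eta |\ln \xi|$ as the coefficient of $\MM(S)$. This calls for a ball-growth argument of Jerrard--Soner / Sandier--Serfaty type, adapted to the $\mathbb{R}P^2$-valued setting by Canevari--Orlandi. In our context the verification reduces to checking that $f$ satisfies uniform quadratic non-degeneracy near $\N$ and quartic coercivity at infinity (both already part of the assumptions made earlier in this section) and that the magnetic term $g/\eta^2$ does not interfere with the logarithmic lower bound. The latter holds because $g$ is Lipschitz near $\N$ (assumption \eqref{def:g:N-lipschitz}) and bounded below, so on each 2D slice its contribution on intermediate scales between $\xi$ and $\eta$ is $O(1)$, which is absorbed by the $|\ln \xi| \to \infty$ divergence of the logarithm. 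With these verifications, the proof of Canevari--Orlandi transfers with only cosmetic modifications.
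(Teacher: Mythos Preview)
Your proposal is correct and aligns with the paper's own treatment: the paper does not give a proof of this theorem at all but simply records it as the statement of Theorem C and Section 4 in \cite{Canevari2020} (and later, for the sharp constant, Proposition 4.1 of the same reference in the proof of Proposition \ref{prop:lower_bound_-_line}). Your sketch accurately summarizes the content of that cited argument --- transversality for a.e.\ $Y$, 2D slicing, Jerrard/Sandier vortex lower bounds with $\pi_1(\N)=\mathbb{Z}_2$, coarea to integrate along the defect line, and flat-chain compactness via Fleming --- together with the one extra verification genuinely needed here, namely that the perturbed potential $\frac{1}{\xi^2}f + \frac{1}{\eta^2}g + C_0$ remains non-negative and quadratically non-degenerate near its zero set (which is exactly Proposition \ref{prop:f__g_geq}), so that the Canevari--Orlandi machinery applies without change.
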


In our situation, by construction of $\Qexn$ and for $Y\in B_\alpha(0)$ ($\alpha$ small enough) it holds that 
\begin{align*}
(\Qexn-Y)^{-1}(\C) \ \subset \ U_\delta \ \subset \ \bigcup_{x\in I} B_{\frac{\delta}{\lipQ n}}(x)\, .
\end{align*}
Hence $\supp(\Sexn)\subset \bigcup_{x\in I} B_{\frac{\delta}{\lipQ n}}(x)$ and in view of the lower bound in Theorem~\ref{thm:canevari-orlandi-lower_bound} we deduce that the energy coming from $\Sexn$ in $U$ is already contained in $U\cap\bigcup_{x\in I} B_{\frac{\delta}{\lipQ n}}(x)$.

\subsection{Construction of $T$ and estimates for $Q$ close to $\N$}
\label{subsec:constr_T_Q_uniax}

In this subsection we carry out the first steps to define the $2-$chain $T$. 
We start by defining 
\begin{align*}
\T \defi \{ Q\in\Sym\sd s>0\, , 0\leq r<1\, , n_3 = 0 \}\, ,
\end{align*}
where $r,s,\nn$ are defined as in \eqref{eq:q_decom}. 
From this we want to define $\Texn$ close to $\Qexn^{-1}(\T)$. 
As carried out in \cite{Canevari2019} and described in Subsection~\ref{subsec:def_line_S}, for almost every $Y$ the set $(\Qexn-Y)^{-1}(\T)$ is in fact a smooth finite complex.
In Lemma~\ref{lem:estim_T_uniax}, we show that in addition for a.e.\ $Y\in\Sym$, the definition 
\begin{align*}
\Texn \ \defi \ (\Qexn-Y)^{-1}(\T)
\end{align*}
allows to control the area in regions where $\Qexn$ is close to $\N$. 
Since both the constructions of $\Sexn$ and $\Texn$ are valid for a.e.\ $Y$, we can choose the same $Y$ and hence $\partial\Texn\cap\Omega = \Sexn$. 
In parts of $\Omega$ where $\Qexn$ is far from $\N$, e.g.\ close to $\Sexn$, we need to modify $\Texn$. This will be the subject of the next subsection.

At first, we recall the (intuitively obvious) result that $\T$ is well behaved close to $\N$ in the sense that the level sets $\{n_3=s\}$ for $s$ small have a similar $\mathcal{H}^4-$volume as $\T$.
This can be interpreted as control on the curvature of $\T\cap \N$.

\begin{lemma}\label{lem:T_level_set_=_s}
There exists $\alpha_0,\alpha_1,C>0$ such that for $Q\in\Sym$, $\dist(Q,\N)\leq\alpha_0$ and $\alpha\in (0,\alpha_1)$ it holds that
\begin{align*}
\lim_{s\rightarrow 0} \mathcal{H}^4(\{Y\in B_\alpha(0)\sd n_3(Q-Y) = s\}) 
\ &= \ \mathcal{H}^4(B_\alpha(Q)\cap \T)\, .
\end{align*}
\end{lemma}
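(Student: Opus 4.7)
The plan is to reduce the claim to continuity of the slice area for a smooth function with non-vanishing gradient. By the isometric change of variables $P\defi Q-Y$ (so $Y\in B_\alpha(0)$ iff $P\in B_\alpha(Q)$), the left-hand side becomes $\lim_{s\to 0}\H^4(\{P\in B_\alpha(Q)\sd n_3(P)=s\})$. Since $\N$ and $\C$ are disjoint closed subsets of $\Sym$, one has $d_0\defi \dist(\N,\C)>0$; choosing $\alpha_0,\alpha_1>0$ with $\alpha_0+\alpha_1<d_0/2$ forces $B_\alpha(Q)\subset\Sym\setminus\C$ whenever $\dist(Q,\N)\leq\alpha_0$ and $\alpha<\alpha_1$. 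On this region the decomposition \eqref{eq:q_decom} is unique, and as $B_\alpha(Q)$ is simply connected one can select a smooth choice of leading eigenvector $\nn(P)$, turning $n_3(P)$ into a smooth function on $B_\alpha(Q)$.

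Next I would obtain a uniform positive lower bound on $|\nabla n_3|$ over $\T\cap B_\alpha(Q)$. Fix $P_0\in\T$ with leading eigenvector $\nn_0\perp\ee_3$, extend to an orthonormal eigenbasis $(\nn_0,\mm_0,\ell_0)$ of $P_0$, and consider the direction $V\defi \ee_3\otimes\nn_0+\nn_0\otimes\ee_3\in\Sym$. First-order eigenvector perturbation theory, applied to the three eigenvalues of $P_0$ whose relevant gaps are $s(1-r)$ and $s$ (both uniformly bounded below since $s\approx s_*$ and $r\approx 0$ for $P_0\in B_\alpha(Q)$), gives
\begin{align*}
\left.\partial_t n_3(P_0+tV)\right|_{t=0}\ =\ \frac{1}{s}\left(\frac{(\mm_0)_3^2}{1-r}+(\ell_0)_3^2\right)\ \geq\ \frac{c}{s_*},
\end{align*}
for some absolute $c>0$, using $(\mm_0)_3^2+(\ell_0)_3^2=1$. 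Since $|V|=\sqrt 2$, this yields $|\nabla n_3|\geq c'>0$ uniformly on $\T\cap B_\alpha(Q)$.

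Armed with this non-degeneracy, the implicit function theorem furnishes a tubular neighborhood $W$ of $\T\cap\overline{B_\alpha(Q)}$ and $\delta>0$ together with a smooth diffeomorphism $\Phi\colon(\T\cap\overline{B_\alpha(Q)})\times(-\delta,\delta)\to W$ satisfying $n_3\circ\Phi(\cdot,s)\equiv s$ and $\Phi(\cdot,0)=\id$. Its Jacobian depends smoothly on $s$ and equals $1$ at $s=0$, so the area formula produces $\H^4(\{n_3=s\}\cap W)\to\H^4(\T\cap\overline{B_\alpha(Q)})$. A compactness argument based on the lower bound on $|\nabla n_3|$ (together with the fact that $n_3^{-1}(0)\cap B_\alpha(Q)=\T\cap B_\alpha(Q)$) ensures $\{n_3=s\}\cap B_\alpha(Q)\subset W$ for $|s|$ small, so the limit follows modulo the boundary contribution $\H^4(\{n_3=s\}\cap\partial B_\alpha(Q))$, which vanishes by a Fubini-type argument on $\partial B_\alpha(Q)$ for all but countably many $\alpha$ and is in any case negligible in the open-versus-closed ball comparison.

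The principal obstacle I anticipate is the uniform non-degeneracy $|\nabla n_3|\geq c>0$ near $\T$, which underpins everything that follows. The perturbation computation itself is elementary but requires care with the gauge (sign of $\nn$); this is made workable by restricting to the simply connected ball inside the unique-decomposition region $\Sym\setminus\C$, which is precisely the role of the smallness assumptions on $\alpha_0$ and $\alpha_1$.
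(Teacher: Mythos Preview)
Your argument is correct and follows essentially the same route as the paper: after the translation $P=Q-Y$, both proofs reduce the claim to the continuity in $s$ of $\H^4(\{n_3=s\}\cap B_\alpha(Q))$, establish that $n_3$ is smooth with nonvanishing gradient on the relevant ball, and then invoke the implicit function theorem together with a Jacobian-$\to 1$ argument. The only noteworthy difference is how the non-degeneracy $|\nabla n_3|\geq c>0$ is obtained: the paper appeals to its appendix Lemma~\ref{lem:app_calT_normal_tangent}, which identifies $D n_3$ with the normal direction $N_Q$ to $\T$ at points of $\T\cap\N$, whereas you derive the same fact by a direct first-order eigenvector perturbation (your computation of $\partial_t n_3(P_0+tV)|_{t=0}$ is exactly the content of that lemma specialized to the direction $V=\ee_3\otimes\nn_0+\nn_0\otimes\ee_3$). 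You are also somewhat more explicit than the paper about the compactness step ensuring $\{n_3=s\}\cap B_\alpha(Q)\subset W$ and about the boundary contribution on $\partial B_\alpha(Q)$; the paper's proof is terser on these points but relies on the same mechanism.
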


In the smooth case this lemma follows as in \cite[Lemma 3]{Modica1987a}, however we give a proof here for completeness.

\begin{proof}
The parameter $\alpha_0$ needs to be small enough to avoid problems far from $\N$ due to the non-smoothness of $\T$ at the singularity $0\in\Sym$.
So we choose $0<\alpha_0<\frac{1}{8}\dist(0,\N)$.
To avoid dealing with the topology of the sets involved, we pick $0<\alpha_1<\frac{1}{8}\diam(\N)$. Hence, $B_\alpha(Q)\cap T$ is diffeomorphic to a $4-$dimensional ball. 

\dsg{
In the appendix, we check that $\partial_{n_3}Q \neq
0$ in a neighbourhood of each $Q\in \mathcal{T}$ close to $\N$, so that by the implicit function theorem, $n_3$ is a smooth function of $Q$ with $D_Q n_3(Q)\neq 0$ near $Q\in \mathcal{T}$. 
In addition, $n_3$ is uniformly bounded in $C^2$ in $\dist(\cdot,\N)\leq \alpha_0$ for $\alpha_0$ small enough (since then $r\approx 0$ and $s\approx s^*$).
It follows that
\begin{align*}
\mathcal{H}^4(\{Y\in B_\alpha(0)\sd n_3(Q-Y)=s\})
\ &= \ 
\mathcal{H}^4(\{Y \in B_\alpha(Q): n_3(Y)=s \} ) \\
\ &\rightarrow \
\mathcal{H}^4(\{Y \in B_\alpha(Q): n_3(Y)=0 \})
\ = \
\mathcal{H}^4(B_\alpha(Q)\cap\T) 
\end{align*}
as $s\to 0$.
} 
\end{proof}

For $\delta>0$, we introduce the set $A_\delta\subset\Omega$ in which $\Qexn$ is close to being prolate uniaxial with norm $\sqrt{\frac23}s_*$ as
\begin{align} \label{eq:A_delta}
A_\delta \ \defi \ \{ x\in\Omega \sd \dist(\Qexn(x),\mathcal{N})<\delta \} \, .
\end{align}
The next lemma shows that (in average) the $\mathcal{H}^2-$measure of $(\Qexn - Y)^{-1}(\T)$ that lies in $A_\delta$ is controlled by the energy.

\begin{lemma}\label{lem:estim_T_uniax}
There exists $\alpha_0,\delta_0>0$ such that for all $\alpha\in (0,\alpha_0)$, $\delta\in(0,\delta_0)$ one can find a constant $C>0$ such that 
\begin{align}\label{lem:estim_T_uniax_eq}
\int_{B_\alpha(0)} &\mathcal{H}^2(A_\delta\cap(\Qexn-Y)^{-1}(\mathcal{T}) \}) \dx Y 
\ \leq \ C \ \eta \ \Eex(\Qexn,A_\delta) \, .
\end{align}
\end{lemma}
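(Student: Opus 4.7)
The plan is to combine a slicing/coarea identity on the lifted zero set of $F(x,Y) := n_3(\Qexn(x)-Y)$ with a volume estimate coming from the magnetic term $g$ in the energy, which is what accounts for the factor $\eta$ on the right-hand side.

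First I would fix $\alpha_0,\delta_0$ small enough that, for $x\in A_\delta$ and $Y\in B_\alpha(0)$ with $\delta<\delta_0$, $\alpha<\alpha_0$, the point $\Qexn(x)-Y$ lies in a tubular neighbourhood $V$ of $\N$ where the conditions $s>0$ and $r<1$ from \eqref{def:cal_T} are automatic. Then $\T\cap V=\{Q\in V : n_3(Q)=0\}$, and consequently
\[
A_\delta\cap(\Qexn-Y)^{-1}(\T) \;=\; \{x\in A_\delta : F(x,Y)=0\}
\]
for every such $Y$. By Thom transversality this slice is a smooth $2$-surface for a.e.\ $Y$, and the joint zero set $M := F^{-1}(0)\cap(A_\delta\times B_\alpha(0))$ is a smooth $7$-submanifold of $\mathbb{R}^8$ since $|\nabla_Y F|=|D n_3|$ is bounded away from $0$ on $V$.

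The second step is a coarea identity on $M$ for the two projections $p(x,Y)=Y$ and $q(x,Y)=x$. Using $\nabla_x F = D\Qexn^\top Dn_3(\Qexn-Y)$ and $\nabla_Y F = -Dn_3(\Qexn-Y)$, a direct computation on the tangent spaces of $M$ gives the coarea Jacobians $J_{\mathrm{co}}p = |\nabla_x F|/|\nabla F|$ and $J_{\mathrm{co}}q = |\nabla_Y F|/|\nabla F|$, so that $J_{\mathrm{co}}p \le |D\Qexn(x)|\,J_{\mathrm{co}}q$ pointwise on $M$. Applying both coarea formulas and combining them yields
\[
\int_{B_\alpha(0)}\mathcal{H}^2\bigl(A_\delta\cap(\Qexn-Y)^{-1}(\T)\bigr)\,dY \;\le\; \int_{A_\delta}|D\Qexn(x)|\,\mathcal{H}^4(F_x)\,dx,
\]
where $F_x := \{Y\in B_\alpha(0) : \Qexn(x)-Y\in\T\}$. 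Lemma \ref{lem:T_level_set_=_s} then supplies $\mathcal{H}^4(F_x)\le C\alpha^4$ uniformly for $\Qexn(x)$ close to $\N$, and $F_x=\emptyset$ unless $\dist(\Qexn(x),\T)\le c\alpha$.

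The main obstacle is to produce the factor $\eta$: naive Cauchy--Schwarz on $\int|D\Qexn|$ introduces $|A_\delta|^{1/2}$, which is not small a priori. I would resolve this by restricting to the support $B := \{x\in A_\delta : \dist(\Qexn(x),\T)\le c\alpha\}$ of the integrand: on $B$, $\R(\Qexn(x))$ is uniaxial with director satisfying $|\nn_3|\lesssim\alpha+\delta$, so \eqref{def:g:uniax} together with the Lipschitz bound \eqref{def:g:N-lipschitz} gives $g(\Qexn(x))\ge c_*^2/2$ for $\alpha_0,\delta_0$ chosen small enough. The $g/\eta^2$ term of $\Eex$ then forces $|B|\le (2\eta^2/c_*^2)\,\Eex(\Qexn,A_\delta)$, and Cauchy--Schwarz on $B$ yields
\[
\int_B |D\Qexn|\,dx \;\le\; |B|^{1/2}\Bigl(\int_{A_\delta}|D\Qexn|^2\Bigr)^{1/2} \;\le\; C\,\eta\,\Eex(\Qexn,A_\delta).
\]
Combining with the slicing bound produces the claim, with $C$ depending on $\alpha,\delta,c_*$ only through the prefactor $\alpha^4$ and the constants from Lemma \ref{lem:T_level_set_=_s}.
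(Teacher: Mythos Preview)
Your argument is correct and follows the same overall strategy as the paper: reduce to a bound of the form $\int_{B_\alpha}\mathcal{H}^2(\ldots)\,dY \le \int_{A_\delta}|\nabla\Qexn|\,\mathcal{H}^4(B_\alpha(\Qexn)\cap\T)\,dx$, use that $\mathcal{H}^4(B_\alpha(\Qexn)\cap\T)\le C\alpha^4$ and vanishes unless $\dist(\Qexn,\T)<\alpha$, and then invoke the lower bound $g\ge c>0$ on that support to extract the factor~$\eta$.

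The implementation differs in two places. For the slicing step, the paper does not work directly on the joint level set $M\subset\mathbb{R}^8$; instead it approximates $\chi_{\{n_3>0\}}$ by $h_\epsilon\circ n_3$ with a smooth Heaviside $h_\epsilon$, uses the identity $h_\epsilon'(n_3)\nabla_Q n_3=-\nabla_Y(h_\epsilon\circ n_3)$, and applies Fubini together with the coarea formula in the $Y$~variable (this is why Lemma~\ref{lem:T_level_set_=_s} is phrased as a limit $s\to 0$). Your coarea-on-the-product computation is more direct once the Jacobians $J_{\mathrm{co}}p$, $J_{\mathrm{co}}q$ are verified. For the final step, the paper uses the pointwise Young inequality $|\nabla Q|\,\mathcal{H}^4\le\tfrac{\eta}{2}|\nabla Q|^2+\tfrac{1}{2\eta}(\mathcal{H}^4)^2$ rather than your Cauchy--Schwarz on $B$; both yield the same conclusion. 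Two small points to address explicitly: $n_3$ is only defined up to sign, so your function $F$ must be read locally (the paper restricts to simply connected balls $B_r(x_0)\cap A_\delta$ for precisely this reason); and the uniform bound $\mathcal{H}^4(F_x)\le C\alpha^4$ you invoke is Proposition~\ref{prop:app_vol_Ba_in_T_N}, not Lemma~\ref{lem:T_level_set_=_s}.
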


\begin{proof}
Let $\alpha,\delta>0$ small enough such that for $Y\in B_\alpha(0)$, the map $Q\mapsto n_3(Q-Y)$ is smooth on $\{Q\in\Sym\sd\dist(Q,\N)<\delta\}$.
Let $A_\delta$ be defined as in \eqref{eq:A_delta}.
In order for the map $x\mapsto n_3(\Qexn(x)-Y)$ to be well defined, we need to restrict ourselves to a simply connected subset of $A_\delta$. 
For this, take $x_0\in A_\delta$ and $r>0$ such that $A_\delta\cap B_r(x_0)$ is simply connected. 
We carry out the analysis on $A_\delta\cap B_r(x_0)$, noting that we can cover $A_\delta$ by such balls to find the estimate \eqref{lem:estim_T_uniax_eq}.
With $x_0\in A_\delta$ and $r>0$ fixed as described, we can calculate
\begin{align*}
\int_{B_\alpha(0)} &\mathcal{H}^2(B_r(x_0)\cap A_\delta\cap(\Qexn(x)-Y)^{-1}(\mathcal{T}) \}) \dx Y \\
\ &= \ \int_{B_\alpha(0)} |D\chi_{\{x\in \Omega | n_3(\Qexn(x)-Y)>0\}}|(B_r(x_0)\cap A_\delta) \dx Y \\
\ &\leq \ \liminf_{\epsilon\rightarrow 0} \int_{B_\alpha(0)} \int_{B_r(x_0)\cap A_\delta} |\nabla_x (h_\epsilon\circ n_3 \circ (\Qexn-Y))(x)| \dx x \dx Y \\
\ &= \ \liminf_{\epsilon\rightarrow 0} \int_{B_\alpha(0)} \int_{B_r(x_0)\cap A_\delta} |h_\epsilon'(n_3(\Qexn(x)-Y)) \nabla_Q n_3(\Qexn(x)-Y) : \nabla_x Q(x)| \dx x \dx Y \, ,
\end{align*}
where $h_\epsilon\in C^1(\mathbb{R},[0,1])$ is an approximation of the Heaviside function, i.e. $h_\epsilon(x)=0$ for $x\leq 0$, $h_\epsilon(x)=1$ for $x\geq \epsilon$ and $h_\epsilon'>0$ on $(0,\epsilon)$. The above inequality is then just the lower semi continuity of the total variation. 
With the identity $h_\epsilon'(n_3(\Qexn(x)-Y)) \nabla_Q n_3(\Qexn(x)-Y) = -\nabla_Y (h_\epsilon\circ n_3\circ(\Qexn(x)-Y))$ and the Fubini theorem we can rewrite
\begin{align*}
\int_{B_\alpha(0)} &\int_{B_r(x_0)\cap A_\delta} |h_\epsilon'(n_3(\Qexn(x)-Y)) \nabla_Q n_3(\Qexn(x)-Y) : \nabla_x \Qexn(x)| \dx x \dx Y \\
\ &\leq \ \int_{B_r(x_0)\cap A_\delta} |\nabla \Qexn| \int_{B_\alpha(0)} |\nabla_Y (h_\epsilon\circ n_3\circ(\Qexn-Y))| \dx Y \dx x  \\
\ &= \ \int_{B_r(x_0)\cap A_\delta} |\nabla \Qexn| \int_{0}^1 \mathcal{H}^4(\{Y\in B_\alpha(0)\sd h_\epsilon(n_3(\Qexn(x)-Y)) = s\}) \dx s \dx x\\
\ &= \ \int_{B_r(x_0)\cap A_\delta} |\nabla \Qexn| \int_{0}^1 \mathcal{H}^4(\{Y\in B_\alpha(0)\sd n_3(\Qexn(x)-Y) = h_\epsilon^{-1}(s)\}) \dx s \dx x \, ,
\end{align*}
where we also used the coarea formula. By Lemma~\ref{lem:T_level_set_=_s} in the liminf $\epsilon\rightarrow 0$ this equals
\begin{align*}
\liminf_{\epsilon\rightarrow 0} &\int_{B_r(x_0)\cap A_\delta} |\nabla \Qexn| \int_{0}^1 \mathcal{H}^4(\{ Y\in B_\alpha(0)\sd  n_3(\Qexn(x)-Y) = h_\epsilon^{-1}(s)\}) \dx s \dx x \\
\ &= \ \int_{B_r(x_0)\cap A_\delta} |\nabla \Qexn| \: \mathcal{H}^4(B_\alpha(\Qexn)\cap\T)\dx x
\end{align*}
by translation invariance of $\mathcal{H}^4$. Applying the inequality $2ab\leq a^2+b^2$ we get
\begin{align*}
\int_{B_r(x_0)\cap A_\delta} &|\nabla \Qexn| \: \mathcal{H}^4(B_\alpha(\Qexn)\cap\T)\dx x \\
\ &\leq \ \int_{B_r(x_0)\cap A_\delta} \frac{\eta}{2}|\nabla \Qexn|^2 \ + \ \frac{1}{2\eta}\mathcal{H}^4(B_\alpha(\Qexn)\cap\T)^2\dx x\, .
\end{align*}
The Dirichlet term appears in the energy, so it remains to estimate $\mathcal{H}^4(B_\alpha(\Qexn)\cap\T)^2$ in terms of $g(\Qexn)$. We first note that $\T\cap B_\alpha(\Qexn(x))=\emptyset$ if $\dist(\Qexn(x),\T)>\alpha$
and since $\dist(\Qexn,\N)<\delta$ we have $\mathcal{H}^4(B_\alpha(\Qexn)\cap\T)\leq C_\delta \alpha^4$ by Proposition~\ref{prop:app_vol_Ba_in_T_N}. Hence, we get
\begin{align*}
\int_{B_r(x_0)\cap A_\delta} \mathcal{H}^4(B_\alpha(\Qexn)\cap \mathcal{T})^2 dx \ 
\leq \ (C_{\delta}\alpha^4)^2 |B_r(x_0)\cap A_\delta\cap \{x\in\Omega \sd \dist(\Qexn(x),\mathcal{T})<\alpha\}|\, .
\end{align*}
For $x\in A_\delta\cap \{x\in\Omega \sd \dist(\Qexn(x),\T)<\alpha\}$ we can estimate
\begin{align*}
g(\Qexn(x)) 
\ &\geq \ g(\mathcal{R}(\Qexn(x))) - C_g \dist(\Qexn(x),\N) 
\ \geq \ \sqrt{\frac{3}{2}}(1 - n_3^2(\Qexn(x))) - C_g \delta \\
\ &\geq \ \sqrt{\frac{3}{2}}(1 - C_\mathcal{T} \alpha) - C_g \delta
\ \geq \ G > 0
\end{align*}
for $\alpha,\delta\ll 1$ small enough. Hence, 
\begin{align*}
G |B_r(x_0)\cap A_\delta\cap \{x\in\Omega \sd \dist(\Qexn(x),\T)<\alpha\}| \ \leq \ \int_{B_r(x_0)\cap A_\delta} g(\Qexn) \dx x\, .
\end{align*}
\end{proof}

We remark that although Lemma~\ref{lem:estim_T_uniax} controls the size for a.e.\ \emph{fixed} $Y\in B_\alpha(0)$, the estimate degenerates with $\alpha$.
Hence it does not provide a uniform bound in $\alpha$ allowing to pass to the limit $Y\rightarrow 0$.
A bound independent of $\alpha$ will be derived in the section on the lower bound.

\subsection{Estimates near singularities}
\label{subsec:near_sing}

At points $x\in\Omega$ where $\dist(\Qexn(x),\N)>\delta$, the estimates we derived in the previous subsection are no longer available and we need new tools to bound the mass of $\Texn$. We are concerned with two different cases: 
The first case is the one of $x\in\Texn$ far from the boundary $\Sexn$. We can simply ``cut out'' those pieces and replace them by parts of surfaces of spheres which are controlled in mass. This will be made precise using Lemma~\ref{lem:def_set_X_eps}. 
The second case is more challenging. We will modify $\Texn$ close to the boundary $\Sexn$ by using a construction similar to the one used in the deformation theorem (see Lemma~\ref{lem:sing_in_T_exterior}). This will allow us to express the mass of the modified $2-$chain in terms of the surface of cubes and Lemma~\ref{lem:def_set_X_eps} permits us to control the number of such cubes.

\begin{lemma}[Deformation in the interior]\label{lem:sing_in_T_interior}
Let $I^\mathrm{int}\subset I$ be the subset of points $x_0\in I$ such that $\dist(x_0,\Sexn)>\frac{\delta}{\lipQ n}$ and $\dist(x_0,\Texn)<\frac{\delta}{\lipQ n}$. 
Then, there exists a flat $2-$chain $\widetilde{T^\mathrm{int}}$ with values in $\pi_1(\N)$ and support in $B^\mathrm{int}\defi \bigcup_{x\in I^\mathrm{int}} \overline{B_{\frac{\delta}{\lipQ n}}(x)}$ such that
\begin{enumerate}
\item $\partial\widetilde{T^\mathrm{int}} = \partial(\Texn\restr B^\mathrm{int})$,
\item and $\MM(\widetilde{T^\mathrm{int}})\lesssim \frac{n}{\eta}\left(\xi^2+\frac{1}{n^2}\right)$.
\end{enumerate}
\end{lemma}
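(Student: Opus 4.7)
The plan is to cut $\Texn$ out of each ball $B_{\frac{\delta}{2n}}(x_0)$ with $x_0 \in I^\mathrm{int}$ and replace it by a $2$-chain supported on the boundary sphere of that ball, then sum these contributions and control the total mass via Lemma \ref{lem:def_set_X_eps}.

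The key geometric input is that the defining condition $\dist(x_0,\Sexn) > \frac{\delta}{2n}$ of $I^\mathrm{int}$ forces $\Sexn \cap B_{\frac{\delta}{2n}}(x_0) = \emptyset$. Combined with $\partial \Texn \cap \Omega = \Sexn$ (Subsection \ref{subsec:constr_T_Q_uniax}) and the slicing formula for restrictions, the $1$-cycle $\partial(\Texn \restr B_{\frac{\delta}{2n}}(x_0))$ is then entirely carried by the sphere $\partial B_{\frac{\delta}{2n}}(x_0) \cong S^2$. Since $H_1(S^2;\mathbb{Z}_2) = 0$, this cycle bounds a $2$-chain $\widetilde{T}_{x_0}$ lying on the sphere itself, and one may take
\begin{align*}
\MM(\widetilde{T}_{x_0}) \ \leq \ \mathcal{H}^2(\partial B_{\frac{\delta}{2n}}(x_0)) \ = \ \frac{\pi\delta^2}{n^2}\, ,
\end{align*}
for instance by picking the smaller of the two regions delimited by the cycle on the sphere.

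Setting $\widetilde{T} \defi \sum_{x_0 \in I^\mathrm{int}} \widetilde{T}_{x_0}$, we obtain $\supp(\widetilde{T}) \subset B^\mathrm{int}$, coefficients in $\pi_1(\N) = \mathbb{Z}_2$, and using $\#I^\mathrm{int} \leq \#I$ together with Lemma \ref{lem:def_set_X_eps}
\begin{align*}
\MM(\widetilde{T}) \ \leq \ \#I \cdot \frac{\pi\delta^2}{n^2} \ \lesssim \ \frac{n^3}{\eta\, f_\mathrm{min}^\delta\, \delta^3}\left(\xi^2+\frac{1}{n^2}\right) \cdot \frac{\delta^2}{n^2} \ \lesssim \ \frac{n}{\eta}\left(\xi^2+\frac{1}{n^2}\right)\, ,
\end{align*}
where the $\delta$-dependent factors are absorbed in $\lesssim$.

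The remaining point is the boundary identity $\partial\widetilde{T} = \partial(\Texn \restr B^\mathrm{int})$. If the covering balls were pairwise disjoint, both sides would equal $\sum_{x_0} \partial(\Texn\restr B_{\frac{\delta}{2n}}(x_0))$ by linearity of $\partial$, and we would be done. In our setting the Vitali construction underlying $I$ only guarantees disjointness of the one-third-sized balls, so the covering balls may overlap with multiplicity bounded by a purely dimensional constant. I expect this overlap bookkeeping to be the main technical obstacle. It can be handled by ordering the balls and, at each step, subtracting from the remaining $\Texn$ the pieces already placed on previous spheres before sealing off what remains inside the current ball on its own sphere; the uniform multiplicity bound ensures that only a dimensional constant is lost in the mass estimate, so the bound above survives unchanged.
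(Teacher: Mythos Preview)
Your proposal is correct and shares the paper's core idea: since $\Sexn$ misses $B^\mathrm{int}$, the boundary cycle lives on the spheres and can be filled in there, with total mass bounded by $\#I \cdot \pi\delta^2/n^2$ via Lemma~\ref{lem:def_set_X_eps}. The only difference is organizational. The paper treats $B^\mathrm{int}$ as a single set rather than ball by ball: the cycle $\partial(\Texn\restr B^\mathrm{int})$ lies on $\partial B^\mathrm{int}$, and $\widetilde{T}$ is taken to be a union of its complementary regions there, giving $\MM(\widetilde{T})\leq \mathcal{H}^2(\partial B^\mathrm{int})\leq \sum_{x\in I^\mathrm{int}} 4\pi(\tfrac{\delta}{2n})^2$ directly. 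This global viewpoint sidesteps the overlap bookkeeping you correctly identified as the sticking point of the ball-by-ball construction; your iterative seal-off would also work and yields the same bound, but the paper's route delivers the boundary identity $\partial\widetilde{T}=\partial(\Texn\restr B^\mathrm{int})$ without any induction over balls.
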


\begin{proof}
Since $B^\mathrm{int}\cap\supp(\Texn)\neq \emptyset$ and $B^\mathrm{int}\cap\supp(\Sexn)=\emptyset$ we know that $\emptyset\neq \partial(\Texn\restr B^\mathrm{int})\subset\partial B^\mathrm{int}$. Furthermore, since $\partial^2=0$ it follows that $\partial(\Texn\restr B^\mathrm{int})$ consists of closed curves and divides $\partial B^\mathrm{int}$ into domains. Let $D$ be the set of these domains. Now pick any subset $D'\subset D$ such that $\partial\left(\bigcup_{U\in D'} U\right) = \partial(\Texn\restr B)$.
We define $\widetilde{T^\mathrm{int}}\defi\sum_{U\in D'} [U]$. Then, by definition $\Texn\restr B^\mathrm{int}$ and $\widetilde{T^\mathrm{int}}$ have the same boundary and since $\widetilde{T^\mathrm{int}}\subset\partial B^\mathrm{int}$ we also have 
\begin{align*}
\MM(\widetilde{T^\mathrm{int}}) 
\ &\leq \ \MM(\partial B^\mathrm{int})  
\ \leq \ \sum_{x\in I^\mathrm{int}} \MM(\partial B_{\frac{\delta}{\lipQ n}}) 
\ \leq \ \# I \: 4\pi\:\frac{\delta^2}{\lipQ^2 n^2} 
\ \lesssim \ \frac{n}{\eta}\left(\xi^2 + \frac{1}{n^2}\right) \, .
\end{align*}
\end{proof}


At the boundary we cannot remove a disk without the risk of creating new boundary which might not be controlled, so another method has to be used. The idea is the following: Take a cube $K$ of size $\frac{\delta}{n}$ which contains a part of the singular line $\Sexn$ and intersects with $\Texn$. We then modify (deform) the ``surface'' connecting $\Texn\cap\partial K$ and $\Sexn\cap K$ by pushing it onto a part of $\partial K$ (see also Figure~\ref{fig:deformation_near_bdry}). The result is a modified $\Texn$ with the same boundary as before and the surface inside the cube is controlled by the surface area of $K$ and the length of the singular line.
We point out that this procedure and its proof are closely related to the deformation theorem (for flat chains) (see \cite{White1999}, Chapter 5 in \cite{Federer1960}, Theorem 7.3 in \cite{Fleming1966} and Chapter 4.2 in \cite{Federer1996}) but differs in some details so that we give a full proof here. 

\begin{lemma}[Deformation close to the boundary]\label{lem:sing_in_T_exterior}
Let $I^\mathrm{bdry}\subset I$ be the subset of points $x_0\in I$ such that $\dist(x_0,\Sexn)<\frac{\delta}{\lipQ n}$ and $\dist(x_0,\Texn)<\frac{\delta}{\lipQ n}$. 
Then there exists a flat $2-$chain $\widetilde{T^\mathrm{bdry}}$ with values in $\pi_1(\N)$ and support in a finite union of cubes of side length $\delta/n$ called $B^\mathrm{bdry}$ verifying $\bigcup_{x\in I^\mathrm{bdry}} \overline{B_{\frac{\delta}{\lipQ n}}(x)}\subset B^\mathrm{bdry}$ such that
\begin{enumerate}
\item $|B^\mathrm{bdry}|\lesssim \frac{1}{\eta}(\xi^2 + \frac{1}{n^2})$,
\item $\partial\widetilde{T^\mathrm{bdry}} = \partial(\Texn\restr B^\mathrm{bdry})$,
\item and  
\begin{align}\label{lem:sing_in_T_exterior:control_M(T)_eq}
\MM(\widetilde{T^\mathrm{bdry}})  
\ \lesssim \ \frac{n}{\eta}\left(\xi^2 + \frac{1}{n^2}\right)\, .
\end{align}
\end{enumerate}
\end{lemma}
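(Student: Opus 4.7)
The plan is to implement a localized version of the deformation theorem for flat chains, cube by cube. First, I would fix a cubical grid of mesh size $L \asymp \delta/n$ and define $B^{\mathrm{bdry}}$ as the union of all closed cubes of this grid that meet some ball $\overline{B_{\delta/(2n)}(x_0)}$ with $x_0\in I^{\mathrm{bdry}}$. Since each ball is hit by a bounded number of cubes, the number of cubes $\mathcal{K}$ composing $B^{\mathrm{bdry}}$ satisfies $\#\mathcal{K}\lesssim \# I^{\mathrm{bdry}}$.

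Inside each cube $Q\in\mathcal{K}$, I would pick a generic interior point $p\in Q$ and consider the radial projection $\pi_p:Q\setminus\{p\}\to\partial Q$. The pushforward $(\pi_p)_\#(\Texn\restr Q)$ is supported on $\partial Q$ and has boundary $(\pi_p)_\#(\partial(\Texn\restr Q))$. Decomposing $\partial(\Texn\restr Q) = (\Texn\cap\partial Q) + (\Sexn\cap Q)$ (the first piece is fixed by $\pi_p$, the second is not), the boundary mismatch equals $(\pi_p)_\#(\Sexn\cap Q) - \Sexn\cap Q$, which is minus the boundary of the cone $p\star(\Sexn\cap Q)$. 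Setting
\begin{equation*}
\widetilde{T}_Q \defi (\pi_p)_\#(\Texn\restr Q) + p\star(\Sexn\cap Q)
\end{equation*}
therefore yields $\partial \widetilde{T}_Q = \partial(\Texn\restr Q)$. The global chain $\widetilde{T} \defi \sum_{Q\in\mathcal{K}} \widetilde{T}_Q$ has the correct boundary because contributions on shared internal faces of the grid cancel, and it takes values in $\pi_1(\N)=\mathbb{Z}_2$ since both $(\pi_p)_\#$ and the cone are chain operations compatible with the coefficient group.

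For the mass I would combine two ingredients: $\MM((\pi_p)_\#(\Texn\restr Q)) \leq \MM(\partial Q) = 6L^2$, which holds for almost every $p$ via the standard Jacobian bound for radial projections (the averaging core of the deformation theorem, invoked rather than reproved), and $\MM(p\star(\Sexn\cap Q)) \leq L\,\MM(\Sexn\cap Q)$. Summing over $\mathcal{K}$ and using Lemma \ref{lem:def_set_X_eps} together with the bound $\MM(\Sexn)\lesssim 1$ (coming from the global energy bound \eqref{thm:global_Eex_bound} and Theorem \ref{thm:canevari-orlandi-lower_bound}), one gets
\begin{equation*}
\MM(\widetilde{T}) \lesssim \# I^{\mathrm{bdry}}\,L^2 + L\,\MM(\Sexn) \lesssim \frac{n^3}{\eta\delta^3}\Big(\xi^2+\frac{1}{n^2}\Big)\frac{\delta^2}{n^2} + \frac{\delta}{n}\, ,
\end{equation*}
in which the first term dominates and gives \eqref{lem:sing_in_T_exterior:control_M(T)_eq} up to $\delta$-dependent constants.

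The main obstacle is the averaged mass bound on $(\pi_p)_\#$: one must show that for almost every $p$ in a fixed subcube of $Q$, the pushforward has mass bounded by a multiple of $L^2$ uniform in the chain. This is the standard Fubini-on-Jacobian argument underlying the deformation theorem and applies verbatim because $\Texn$ is a smooth simplicial 2-chain in $Q$. A minor secondary point is choosing a slightly shifted grid so that every $\overline{B_{\delta/(2n)}(x_0)}$ sits in the interior of a cube of $\mathcal{K}$ rather than straddling a face, which is possible by translating the grid generically.
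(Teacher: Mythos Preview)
Your overall strategy matches the paper's: cover a neighbourhood of $\Sexn$ by a cubic grid of side $h\asymp\delta/n$, push $\Texn$ radially onto the faces of each cube, and estimate the mass cube by cube. The paper even formulates this as $\widetilde T=\partial(\Phi_\#([0,1]\times T))-T$ with $\Phi$ the straight-line homotopy to the radial projection $P$, which on each cube $K$ gives
\[
\widetilde T\restr K \;=\; P_\#(T\restr K)\;+\;\Phi_\#\big([0,1]\times(S\restr K)\big).
\]

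There is, however, a genuine gap in your boundary computation. The object you need to cancel the mismatch $(\pi_p)_\#(S\cap Q)-S\cap Q$ is \emph{not} the cone $p\star(S\cap Q)$: the cone boundary formula gives $\partial\big(p\star(S\cap Q)\big)=S\cap Q - p\star\langle S,\partial Q\rangle$, where the second term consists of segments from $p$ to the finitely many points $S\cap\partial Q$, not the radial projection of $S$ onto $\partial Q$. With the cone your $\widetilde T_Q$ therefore does not satisfy $\partial\widetilde T_Q=\partial(\Texn\restr Q)$. The correct filler is the homotopy $\Phi_\#([0,1]\times(S\cap Q))$ from the identity to $\pi_p$; because $\Phi$ is relative to $\partial Q$, its boundary is exactly $(\pi_p)_\#(S\cap Q)-S\cap Q$.

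This change also affects the mass bound. The homotopy sweeps $S$ outward by $\pi_p$, whose Lipschitz constant blows up like $h\,\dist(x,p)^{-1}$, so $\MM\big(\Phi_\#([0,1]\times(S\cap Q))\big)$ is \emph{not} a priori $\lesssim L\,\MM(S\cap Q)$; one only gets $\lesssim h^2\int \dist(x,p)^{-1}\,d\mu_S$, and the averaging over grid translations (equivalently over $p$) is what turns this into $\lesssim h\,\MM(S)$. In other words, the deformation-theorem averaging you invoke is needed for the $S$-piece, not for $(\pi_p)_\#(T\restr Q)$: with $\mathbb Z_2$ coefficients the latter is a $2$-chain supported on $\partial Q$ and hence trivially has mass $\le 6L^2$ for \emph{every} admissible $p$. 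Once you replace the cone by the homotopy and reallocate the averaging to the $S$-term, your argument coincides with the paper's.
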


\begin{proof}
For the sake of readability we drop the dependences on $\xi,\eta,n$ in the notation of this proof and simply write $\tilde{T}$ instead of $\widetilde{T^\mathrm{bdry}}$.
Covering $S$ with a cubic grid of size $h=\frac{\delta}{n}$ such that $S$ is in a general position, we can assume that the center $x_K$ of all cubes $K$ that contain parts of $S$ does not intersects $S$ or $T$, i.e.\ $x_K\notin \supp(T),\supp(S)$.
Indeed, this is possible \dsg{since} $S$ intersects only a finite number of cubes according to Lemma~\ref{lem:def_set_X_eps}.
Let $G$ be the set of those cubes and $X$ the set of its centers.

\textit{Step 1 (Construction and properties of the retraction map).}
Let $K\in G$ be a cube and let $x_K\in X$ be its center. 
Let $P$ be the central projection onto $\partial K$ originating in $x_K$. 
We define a homotopy $\Phi:[0,1]\times(K\setminus\{x_K\})\rightarrow K$ between the identity on $K$ and $P$ by  simply taking $\Phi(t,x) = (1-t)x + t P x$. 
Note that by definition this homotopy is relative to $\partial K$, i.e.\ $\Phi(t,x)=x$ for all $t\in[0,1]$ and $x\in\partial K$. Furthermore, for all $x\in K\setminus\{x_k\}$ and $t\in [0,1]$ it holds
\begin{align} \label{lem:sing_in_T_exterior:eq_estim_Phi_Lip}
\dist(\Phi(t,x),x_K) \ \geq \ \dist(x,x_K)\, .
\end{align}
Since $|\partial_t \Phi(t,x)| = |-x+Px|\leq \sqrt{3} h$ and by \eqref{lem:sing_in_T_exterior:eq_estim_Phi_Lip} we deduce that $\Phi$ is locally Lipschitz continuous and $\Lip(\Phi(t,x))\leq C\: h\: \dist(x,x_K)^{-1}$.
Since $\Phi$ is relative to $\partial K$ we can glue together all those functions defined on the cubes $K\in G$ with the identity on cubes $K\notin G$ to get a function $\Phi$ defined everywhere in $\mathbb{R}^3\setminus X$.

\textit{Step 2 (Intermediate estimate).}
In this step we want to show that if we allow for a small translation of the chain $S$, then the mass of $\Phi_\#([0,1]\times S)$ can be bounded by $\mathbb{M}(S)$ times the size of the cube $h$, up to a constant.

Applying Corollary 2.10.11 in \cite{Federer1996} (or  Section 2.7 in \cite{Federer1960}) we get as in \cite[Lemma 2.1]{White1999}
\begin{align*}
\mathbb{M}(\Phi_\#([0,1]\times S))
\ &\leq \ \Vert \id - P \Vert_\infty \int_{\mathbb{R}^3} \sup_{t\in [0,1]}\Lip(\Phi(t,x)) \dx\mu_S(x) \\
\ &\leq \ C\:h^2\: \int_{\mathbb{R}^3}  \dist(x,X)^{-1} \dx\mu_S(x)\, .
\end{align*}
Taking the mean over translations \dsg{$\tau_{hy}$} for a vector $y\in [0,1]^3$, we arrive at
\begin{align*}
\int_{[0,1]^3} \mathbb{M}(\Phi_\#([0,1]\times (\tau_{hy} S))\dx y
\ &= \ C\: h^2 \int_{[0,1]^3} \int_{\mathbb{R}^3}  \dist(x,X)^{-1} \dx\mu_{\tau_{hy} S}(x) \dx y \\
\ &= \ C\: h^2 \int_{[0,1]^3} \int_{\mathbb{R}^3}  \dist(x+hy,X)^{-1} \dx\mu_{S}(x) \dx y \\
\ &= \ C\: h^2 \int_{\mathbb{R}^3}  \int_{[0,1]^3} \dist(x+hy,X)^{-1}  \dx y \dx\mu_{S}(x) \\
\ &\leq \ C\: h \int_{\mathbb{R}^3}  \dx\mu_{S}(x) \\
\ &= \ C\: h\: \mathbb{M}(S)\, . 
\end{align*}
Hence, we can assume that $S$ is in a position such that
\begin{align}\label{lem:sing_in_T_exterior:eq_estim_Phi_S}
\mathbb{M}(\Phi_\#([0,1]\times S))
\ & \leq \ C\: h\: \mathbb{M}(S)\, .
\end{align}

\textit{Step 3 (Definition of $\tilde{T}$).}
We define 
\begin{align*}
\tilde{T} \defi \partial(\Phi_\#([0,1]\times T)) - T\, .
\end{align*}
Considering a cube $K\in G$, one can think of this construction as the boundary of the three dimensional object created by filling the space between $T$ and its projection onto $\partial K$ according to Step 1 and then removing the original part $T$. Another but equivalent point of view is  to take $\tilde{T}$ as all the points along the path created by projecting $T\restr\partial K$, $S$ together with the projection $P_\#(T)$, see also Figure~\ref{fig:deformation_near_bdry}. Indeed, one can calculate for $K\in G$
\begin{align*}
\partial(\Phi_\#([0,1]\times (T\restr K))) 
 &= \Phi_\#((\partial[0,1])\times (T\restr K)) + \Phi_\#([0,1]\times (\partial T)\restr K) + \Phi_\#([0,1]\times T\restr(\partial K)) \\
 &=  P_\#(T\restr K) - (\id_K)_\#(T) + \Phi_\#([0,1]\times (S\restr K)) + \Phi_\#([0,1]\times T\restr(\partial K)) \, .
\end{align*}
Thus, we have the formula 
\begin{align*}
\tilde{T}\restr K
\ &= \ P_\#(T\restr K) \ + \ \Phi_\#([0,1]\times (S\restr K)) \ + \ \Phi_\#([0,1]\times T\restr(\partial K)) \, .
\end{align*}
Since $P_\#(T\restr K) + \Phi_\#([0,1]\times T\restr (\partial K))\subset \partial K$ from which we derive the bound on the mass of $\tilde{T}$
\begin{align} \label{lem:sing_in_T_exterior:M(T)_on_K}
\mathbb{M}(\tilde{T}\restr K)
\ &\leq \ \mathbb{M}(\partial K) \ + \ \mathbb{M}(\Phi_\#([0,1]\times (S\restr K))) 
\ \leq \ 6\: h^2 \ + \  C\: h\:\mathbb{M}(S\restr K) \, ,
\end{align}
where we also used the estimate \eqref{lem:sing_in_T_exterior:eq_estim_Phi_S} on $K$ of Step 2.
On all cubes $K\notin G$, $\tilde{T}\restr K=0$, so that we find $\supp(\tilde{T})\subset \bigcup_{K\in G}\overline{K}$. 
Defining $B^\mathrm{bdry}\defi\bigcup\{ \overline{K}\sd K\text{ is cube of the grid}, \: \exists x\in I^\mathrm{bdry}\text{ with } K\cap B_{\frac{\delta}{\lipQ n}}(x)\neq\emptyset \}$, it is clear that $\tilde{T}$ is supported in $B^\mathrm{bdry}$ since $\bigcup_{K\in G}\overline{K}\subset B^\mathrm{bdry}$.
Furthermore, by definition of $B^\mathrm{bdry}$, we have the claimed
inclusion $\bigcup_{x\in I^\mathrm{bdry}} \overline{B_{\frac{\delta}{\lipQ n}}(x)}\subset B^\mathrm{bdry}$.
The measure of $B^\mathrm{bdry}$ can easily be estimated since it is formed by cubes covering $\bigcup_{x\in I^\mathrm{bdry}} B_{\frac{\delta}{\lipQ n}}(x)$, the cubes having the same length scale $\frac{\delta}{n}$ as the balls. 
Therefore, up to a constant only depending on the space dimension and $\lipQ$, \eqref{lem:def_set_X_eps:eq_size_X_eps} implies that $|B^\mathrm{bdry}|\lesssim h^3\frac{n^3}{\eta \delta^3}(\xi^2+\frac{1}{n^2}) = \frac{1}{\eta}(\xi^2+\frac{1}{n^2})$.
Since $\partial\circ\partial=0$, the boundary of $\tilde{T}$ coincides with $\partial T$.
Since all calculations in Step 3 were local and $\Phi$ is relative to the boundaries of the cubes, \eqref{lem:sing_in_T_exterior:control_M(T)_eq} follows from summing up \eqref{lem:sing_in_T_exterior:M(T)_on_K} over all cubes $K\in G$.
\end{proof}

\begin{figure}[H]
\centering
\includegraphics[scale=1]{./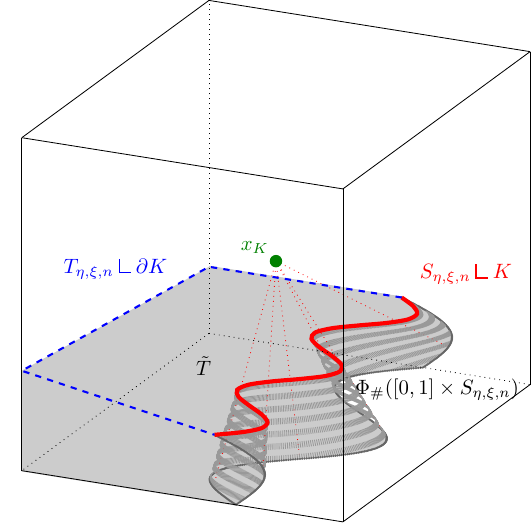}
\caption{Construction near the boundary inside a cube: The newly created area (grey) is controlled by the surface of the cube and the length of the line singularity (red).}
\label{fig:deformation_near_bdry}
\end{figure}

As a direct consequence of Lemma~\ref{lem:estim_T_uniax}, Lemma~\ref{lem:sing_in_T_interior} and Lemma~\ref{lem:sing_in_T_exterior} we have the following corollary:

\begin{corollary} \label{cor:control_M(T)}
There exists a flat $2-$chain $\widetilde{\Texn}$ with values in $\pi_1(\N)$ such that 
\begin{enumerate}
\item $\partial\widetilde{\Texn} = \Sexn$,
\item for all $x_0\in\Omega$ and $R>0$ 
\begin{align}\label{cor:control_M(T)_eq}
\MM(\widetilde{\Texn}\restr B_R(x_0)) 
\ \lesssim \  \eta\:\Eex(\Qexn,B_R(x_0)) + \frac{n}{\eta}\left(\xi^2 + \frac{1}{n^2}\right)\, ,
\end{align}
\item and
\begin{align}\label{cor:control_M(T)_cpt}
\FF(\widetilde{\Texn} - \Texn) \ \lesssim \  \frac{n}{\eta}\left(\xi^2 + \frac{1}{n^2}\right)\, .
\end{align}
\end{enumerate}
\end{corollary}

\begin{proof}
Starting from $\Texn$ and the estimate in Lemma~\ref{lem:estim_T_uniax}, we can modify $\Texn$ according to Lemma~\ref{lem:sing_in_T_interior} and Lemma~\ref{lem:sing_in_T_exterior} in the region $B^\mathrm{int}\cup B^\mathrm{bdry}$ to obtain $\widetilde{\Texn}$ without changing the boundary $\Sexn$ by setting
\begin{align*}
\widetilde{\Texn} 
\ \defi \ \widetilde{T^\mathrm{int}}\restr B^\mathrm{int} \ + \  \widetilde{T^\mathrm{bdry}}\restr B^\mathrm{bdry} \ + \ \Texn\restr((B^\mathrm{int}\cup B^\mathrm{bdry})^c)\, .
\end{align*}
Estimate \eqref{cor:control_M(T)_eq} is a direct consequence of the three aforementioned lemmata.
Finally, by construction $\Texn-\widetilde{\Texn}$ is supported in $B^\mathrm{int}\cup B^\mathrm{bdry}$ and $\partial(\Texn-\widetilde{\Texn})=0$. 
Hence, $\FF(\Texn-\widetilde{\Texn})\leq |B^\mathrm{int}\cup B^\mathrm{bdry}|$, from which \eqref{cor:control_M(T)_cpt} follows for $n$ large and $\eta,\xi$ small enough.
\end{proof}

In the following analysis we only work with $\widetilde{\Texn}$.
In order to improve readability, we drop the tilde in our notation from now on.

\subsection{Proof of compactness for fixed $Y$}
\label{subsec:compact_fix_Y}

Let $B\subset \Omega$ open, bounded and choose $n:=\xi^{-a}$ \ds{for $a\in(\tfrac{3}{2},2)$}. 
Then, by Lemma~\ref{lem:estim_T_uniax} and Corollary~\ref{cor:calT_manifold_bdry}, we deduce that for $\alpha>0$ and $\xi,\eta>0$, there exist $Y_{\eta,\xi}\in B_\alpha(0)\subset\Sym$ such that our construction yields a flat chain $\Texn\in\flatch^2$ such that $\partial\Texn = \Sexn + \Gamma_n$ and
\ds{
\begin{align*}
\MM(\Texn\restr B)
\ &\leq \ C\left( \eta\:\Eex(\Qexn,B) + \frac{\xi^{2-a}}{\eta}\right) \\
\ &\leq\ C\left( \eta\:\Eex(\Qex,B_{2\xi^{a}}(B)\cap\Omega) + \frac{\xi^{2-a}}{\eta} + \frac{\xi^{a-\tfrac32}}{\eta}(1+|B|^\frac12) \right) \, ,
\end{align*}
} where we also used \eqref{prop:prop_reg_seq_E_bound} of Proposition~\ref{prop:prop_reg_seq}.
In particular the energy bound \eqref{thm:global_Eex_bound} implies that $\MM(\Texn\restr B)$ is bounded. 
Applying a compactness theorem for flat chains as stated in the preliminary part (\hspace{1sp}\cite[Cor. 7.5]{Fleming1966}), there exists a subsequence (which we do not relabel) and a flat chain $T\in\flatch^2$ with support in $\overline{\Omega}$ such that $\FF((\Texn - T)\restr B) \rightarrow 0$ for $\xi,\eta\rightarrow 0$. 
Extracting another subsequence if necessary we can assume that $Y_{\eta,\xi}\rightarrow Y\in \overline{B_\alpha(0)}$ for $\eta,\xi\rightarrow 0$.
We note that the $T$ constructed here depends on $Y$ (and $\alpha$).
In order to keep our notation simple, we only write this dependence explicitly when necessary, i.e.\ when we pass to the limit $\Vert Y\Vert,\alpha\rightarrow 0$ in Subsection~\ref{subsec:compactness_alpha_0}.
Since the boundary operator $\partial$ is continuous we conclude with Theorem~\ref{thm:canevari-orlandi-lower_bound} that $\partial T = S + \Gamma_Y$, where \ds{$\Gamma_Y=\{ \omega\in\M\sd n_3(s_*(\nu\otimes\nu - \frac13\id)-Y)=0 \}$}. 
The finite mass of $T$ and $S$ immediately implies rectifiability \cite[Thm 10.1]{Fleming1966}.


\section{Lower bound}
\label{sec:lower_bound}

This section is devoted to the $\Gamma-$liminf inequality of Theorem~\ref{thm:main}. 
The proof necessary to deduce the line energy has already been given in \cite{Canevari2020}, so that we will only state the result for completeness (Proposition~\ref{prop:lower_bound_-_line}). 
The energy contributions of $T$ far from $\M$ are to be derived in Subsection~\ref{subsec:lower_bound_-_blowup}.
In the remaining, we are concerned with the energy of $T$ and $F$ close to, resp. on, $\M$. 


The precise cost of a singular line in our setting has been derived first in \cite{Canevari2015} based on ideas in \cite{Sandier1998, Jerrard1999}.
In our case, the result reads as follows.

\begin{proposition}\label{prop:lower_bound_-_line}
Let $B\subset\overline{\Omega}$ be a bounded open set and $U_\eta\defi\{ x\in\Omega\sd \dist(x,\Sexn)\leq \sqrt{\eta} \}$. Then
\begin{align}\label{prop:lower_bound_-_line_eq}
\liminf_{\eta,\xi\rightarrow 0} \eta\: \Eex(\Qexn,U_\eta\cap B)
\ &\geq \ \frac{\pi}{2}s_*^2\beta\: \MM(S\restr B)\, .
\end{align}
\end{proposition}

\begin{proof}
See Theorem C and Proposition 4.1 in \cite{Canevari2020} for a proof of the version we used here.
\end{proof}

To derive the exact minimal energy for the lower bound related to $T$, we introduce the following auxiliary problem:
\begin{equation} \label{def:one_dim_aux_problem}
I(r_1,r_2,a,b) \ := \ \inf_{\underset{n_3(r_1)=a,\: n_3(r_2)=b}{n_3\in H^1([r_1,r_2],[-1,1])}} \int_{r_1}^{r_2} \frac{s_*^2 |n_3'|^2}{1-n_3^2} + c_*^2(1-n_3^2) \dx r
\end{equation}
for $0\leq r_1\leq r_2\leq \infty$, $a,b\in [-1,1]$.
It is one dimensional and only takes into account the derivative along the integration path. 
Problem \eqref{def:one_dim_aux_problem} is equivalent to minimizing $\int \left(\frac{1}{2}|\partial_r Q|^2 + g(Q)\right) \dx r$ subject to a $\N-$valued function $Q$ and fitting boundary conditions. 
This reflects that by Lemma~\ref{lem:def_set_X_eps}, the regions where $\Qexn$ is far from $\N$ are small. 
Indeed, if $Q(r)=s_*(\nn(r)\otimes\nn(r)-\frac13\id)$ for a $\mathbb{S}^2-$valued function $\nn$, then $\partial_r Q = s_*((\partial_r\nn)\otimes\nn + \nn\otimes(\partial_r\nn))$ and hence 
$|\partial_r Q|^2 = 2 s_*^2 |\partial_r \nn|^2$ since $|\nn|^2=1$ and therefore $2(\partial_r\nn)\cdot\nn = \partial_r|\nn|^2=0$.
Using again that $\nn\in\mathbb{S}^2$, we can write $\nn=(\pm\sqrt{1-n_3}\tilde{n}_1,\pm\sqrt{1-n_3^2}\tilde{n}_2, n_3)$, where $\tilde{n}=(\tilde{n}_1,\tilde{n}_2)$ is a $\mathbb{S}^1-$valued function.
One can then easily calculate that $|\partial_r \nn|^2 \geq |\sqrt{1-n_3^2}2n_3\partial_r n_3|^2 + |\partial_r n_3|^2 = |\partial_r n_3|^2/(1-n_3^2)$ (with equality if $\tilde{n}$ is constant), which is the first term in \eqref{def:one_dim_aux_problem}.
For the second term in \eqref{def:one_dim_aux_problem} we note that by \eqref{def:g:uniax} it holds that  $g(s_*(\nn\otimes\nn-\frac13\id))=c_*^2(1-n_3^2)$.
The functional in \eqref{def:one_dim_aux_problem} has been introduced in \cite{Alama2017} and studied in \cite[Lemma 3.4]{Alama2017}, \cite[Lemma 4.17]{ACS2021}, which show the following lemma:

\begin{lemma} \label{lem:radial_turning}
Let $0\leq r_1\leq r_2 \leq \infty$. Then,
\begin{enumerate}
\item $I(r_1,r_2,-1,1)\geq 4s_*c_*$.
\item Let $\theta\in [0,\pi]$. Then the minimizer $\nnopt$ of $I(0,\infty,\cos(\theta),1)$ is explicitly given by
\begin{align}\label{lem:radial_turning:nn3_opt}
\nnopt(r,\theta) = \frac{A(\theta)-\exp(-2c_*/s_* r)}{A(\theta)+\exp(-2c_*/s_* r)}\, , \quad A(\theta)=\frac{1+\cos(\theta)}{1-\cos(\theta)}
\end{align}
and 
\begin{align}\label{lem:radial_turning:cos}
I(0,\infty,\cos(\theta),\pm 1) = 2s_*c_*(1\mp \cos(\theta))\, .
\end{align}
\end{enumerate}
\end{lemma}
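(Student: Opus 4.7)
The plan is to exploit the standard Bogomolny (``completing the square'') trick that is available because the two densities in the integrand are conjugate in the AM--GM sense. More precisely, for any admissible $n_3$ I would write
\begin{equation*}
\frac{s_*^2 |n_3'|^2}{1-n_3^2} + c_*^2 (1-n_3^2)
\ \geq \ 2\sqrt{\frac{s_*^2 |n_3'|^2}{1-n_3^2}\cdot c_*^2(1-n_3^2)}
\ = \ 2 s_* c_* |n_3'|\, ,
\end{equation*}
with equality if and only if $s_* |n_3'| = \pm c_*(1-n_3^2)$. Integrating and using the $1$-dimensional fundamental theorem yields
\begin{equation*}
\int_{r_1}^{r_2} \frac{s_*^2|n_3'|^2}{1-n_3^2} + c_*^2(1-n_3^2)\dx r
\ \geq \ 2 s_* c_*\int_{r_1}^{r_2}|n_3'|\dx r
\ \geq \ 2 s_* c_* |n_3(r_2)-n_3(r_1)|\, .
\end{equation*}
With the boundary data $n_3(r_1)=-1$, $n_3(r_2)=1$ this immediately gives $I(r_1,r_2,-1,1)\geq 4 s_* c_*$, proving (i).

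For (ii), I would first produce a candidate by solving the first-order ODE in which equality holds. Picking the sign so that $n_3$ can move from $\cos\theta$ up to $1$ as $r\to\infty$, the ODE $s_* n_3' = c_*(1-n_3^2)$ separates to
\begin{equation*}
\frac{s_*}{2}\ln\frac{1+n_3}{1-n_3} \ = \ c_* r + C\, ,
\end{equation*}
and imposing $n_3(0)=\cos\theta$ fixes $e^{2C/s_*}=A(\theta)=(1+\cos\theta)/(1-\cos\theta)$. Solving for $n_3$ and multiplying numerator and denominator by $e^{-2c_* r/s_*}$ gives exactly the formula \eqref{lem:radial_turning:nn3_opt}. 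One checks the boundary values $\nnopt(0,\theta)=\cos\theta$ and $\nnopt(r,\theta)\to 1$ as $r\to\infty$ directly. Since this candidate saturates the pointwise AM--GM inequality and since $n_3$ is monotone along the flow, the chain of inequalities above is an equality, yielding
\begin{equation*}
I(0,\infty,\cos\theta,1) \ = \ 2 s_* c_*|1-\cos\theta| \ = \ 2 s_* c_*(1-\cos\theta)\, .
\end{equation*}
The case $n_3(\infty)=-1$ is identical after choosing the opposite sign in the Bogomolny ODE, replacing $c_*$ by $-c_*$, and then the value is $2 s_* c_*(1+\cos\theta)$, which is formula \eqref{lem:radial_turning:cos}.

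The only mild subtlety I anticipate is justifying the AM--GM step where $1-n_3^2$ may vanish: since $n_3\in H^1$ with values in $[-1,1]$, the quotient $|n_3'|^2/(1-n_3^2)$ is integrable only on the set where $1-n_3^2>0$, while on $\{n_3^2=1\}$ one has $n_3'=0$ a.e., so both sides of the inequality vanish there and the bound extends trivially. No further technicalities are needed, since once the Bogomolny bound is in hand both the lower bound in (i) and the attainment in (ii) follow by elementary first-order ODE manipulations.
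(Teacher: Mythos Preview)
Your proof is correct and is exactly the standard Modica--Mortola/Bogomolny argument one would expect for this one-dimensional problem. The paper itself does not give a proof of this lemma at all: it simply cites \cite{Alama2017} and \cite[Lemma 4.17]{ACS2021}, where the same completing-the-square computation is carried out. So your approach coincides with what the cited references do; there is nothing to compare.
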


In this lemma, we use that $g$ reduces to $c_*^2(1-n_3^2)$ for $Q$ in $\N$, as demanded in \eqref{def:g:uniax}. 
However, as pointed out in Remark 4.18 in \cite{ACS2021}, this is not necessary. 

During the blow up procedure in the next subsection, we want to quantify the energy necessary for a $\Qexn$ close to $\N$ to pass from $n_3(\Qexn)\approx \pm 1$ to $n_3(\Qexn-Y) = 0$, i.e.\ to intersect $\Texn$.
Since problem \eqref{def:one_dim_aux_problem} does not take into account the perturbation made by subtracting $Y\in B_\alpha(0)$ from $\Qexn$, we introduce for $\alpha>0$ small enough
\begin{align} \label{def:one_dim_aux_problem_Y}
I_\alpha(r_1,r_2,a,b) \defi \inf\{ I(r_1,r_2,a,\pm n_3(Q))\sd Q\in\Sym,\, n_3(Q-Y)=\pm b,\, Y\in B_\alpha(0) \}\, .
\end{align}
Since $n_3(Q)$ and $n_3(Q-Y)$ are only defined up to a sign, it is necessary to define $I_\alpha$ using the infimum not only over $Y$ but also the choice of sign. 
This leads to the slightly counter-intuitive situation that e.g.\ $I_\alpha(r_1,r_2,a,-a)=0$ for all $a\in [-1,1]$.
As a consequence, we only have convergence of $I_\alpha(r_1,r_2,a,b)\rightarrow I(r_1,r_2,a,b)$ for $\alpha\rightarrow 0$ if $ab\geq 0$.
In what follows, we will only be concerned with the case $b=0$ as this corresponds to a point on $\Texn$, and hence we have convergence of $I_\alpha(r_1,r_2,a,0)$ to $I(r_1,r_2,a,0)$ for all $a\in [-1,1]$ for $\alpha\rightarrow 0$.

The knowledge about the optimal profile in \eqref{lem:radial_turning:cos} is also used in the construction of the upper bound, in particular the fact that $|\nnopt|-1$ and all derivatives of $\nnopt$ decay fast enough (here exponentially) as $r\rightarrow\infty$.
The result that for minimizers of \eqref{def:one_dim_aux_problem}, $n_3^2$ approaches $1$ exponentially fast is complemented by the next lemma.
It states that for a bounded energy configuration on a line, $n_3^2$ cannot always stay far from $1$.

\begin{lemma}\label{lem:n3Q_on_line_close_to_1}
There exist constants $\mathfrak{C}>0$ and $\delta_0>0$ such that for all $K>0$, for all $\delta\in (0,\delta_0)$, there exists $C_\delta>0$ such that for all $\eta,\xi>0$ small enough, any one-dimensional interval $\ell$ and any $Q\in H^1(\ell,\Sym)$ satisfying the bound $\eta\:\Eex(Q,\ell) \leq K < \infty$ there is a set $I_\delta\subset\ell$ such that
\begin{align*}
|\ell\setminus I_\delta| \ \leq \ \frac{K}{C_\delta}\eta
\quad\text{and}\quad
|n_3(Q)| \geq 1-\mathfrak{C}\delta
\text{ on }I_\delta\, .
\end{align*}
\end{lemma}

\begin{proof}
For $\delta>0$ small enough let
\begin{align*}
g_\mathrm{min}^\delta 
\defi \min\{ g(Q)\sd Q\in\Sym,\: \dist(Q,\N)\leq \delta, \: |Q-Q_\infty|\geq\mathfrak{a}\sqrt{\delta} \}\, ,
\end{align*}
where $\mathfrak{a}>0$ is chosen as in \cite{ACS2021}.
Proposition 2.5 in \cite{ACS2021} then implies that $g_\mathrm{min}^\delta>0$.
\dsg{
Then, with the notation
\begin{align*}
A \ &:= \ \{x\in\ell\sd|Q-Q_\infty|<\mathfrak{a}\sqrt{\delta}\} \, , \\
B \ &:= \ \{x\in\ell\sd \dist(Q,\N)<\delta\} \, ,
\end{align*}
it holds
\begin{align*}
K 
\ &\geq \ 
\eta\:\Eex(Q,\ell)
\ \geq \ 
\frac{1}{\eta} g_\mathrm{min}^\delta |B\cap(\ell\setminus A)|\, .
\end{align*}
Expanding $|Q-Q_\infty|$ for $Q\in B$ and $\delta>0$ small enough, one can see that  $|n_3(Q)|\leq 1-p$ implies $|Q-Q_\infty|\geq \frac12 s_* \sqrt{p}$.
With the choice $p=\frac{4\mathfrak{a}^2\delta}{s_*^2}$ we then see
\begin{align}\label{lem:n3Q_on_line_close_to_1:eq}
K 
\ &\geq \ 
\frac{1}{\eta} g_\mathrm{min}^\delta \bigg|B\cap\bigg\{x\in\ell\sd |n_3(Q(x))|\leq 1-\frac{4\mathfrak{a}^2}{s_*^2}\delta \bigg\}\bigg| 
\, .
\end{align}
So we set $\mathfrak{C}\defi \frac{4\mathfrak{a}^2}{s_*^2}$ and $I_\delta \defi B\cap \{x\in\ell\sd |n_3(Q(x))| \leq 1-\mathfrak{C}\delta \}$.
It remains to show the estimate on $|\ell\setminus I_\delta|$. 
In view of Proposition~\ref{prop:f__g_geq}, it holds that \begin{align*}
\frac{\xi^2}{\eta} K
\ &\geq \
\int_{\ell\setminus B} \gamma_2\dist^2(Q,\N_{\eta,\xi}) \dx x
\ \geq \
C\gamma_2 \Big(\delta^2 - C\frac{\xi^4}{\eta^4}\Big) |\ell\setminus B| 
\, ,
\end{align*}
from which we infer that $|\ell\setminus B|\lesssim K\frac{\xi^2}{\eta \delta^2}$.
Together with \eqref{lem:n3Q_on_line_close_to_1:eq} this implies
\begin{align*}
|\ell\setminus I_\delta|
\ &\leq \
|\ell\setminus B| + |B\cap(\ell\setminus A)|
\ \lesssim \
\frac{K \xi^2}{\eta \delta^2} + \frac{K \eta}{g_{\text{min}}^\delta}
\, .
\end{align*}
} 
\end{proof}

In the following two sections, we detail how Lemma~\ref{lem:radial_turning} combined with Lemma~\ref{lem:n3Q_on_line_close_to_1} can be applied  in the case of $T\restr\Omega$ and on the surface $\M$.

\subsection{Blow up at points of $T$ in $\Omega$}
\label{subsec:lower_bound_-_blowup}

We define the measure $\mu_{\eta,\xi}(U) \defi \eta\Eex(\Qexn,U)$ for any open set $U$.
Since the energy $\eta\Eex(\Qexn)$ is bounded, the measure $\mu_{\eta,\xi}$ converges (up to extracting a subsequence) weakly* to a measure $\mu$.

\begin{lemma}\label{lem:blow_up_T}
For $\mathcal{H}^2-$a.e. point of rectifiability $x_0\in\Omega$ of $T$  it holds that
\begin{align}\label{lem:blow_up_T:eq}
\frac{\dx\mu}{\dx\mu_T}(x_0) 
\ \geq \  2\: I_\alpha(0,\infty,0,1)\, .
\end{align}
\end{lemma}

\begin{proof}
\textit{Step 1: Notation and preliminaries.}
Recall that for a point of rectifiability $x_0\in \rect(T)$ it holds
\begin{align*}
\lim_{r\rightarrow 0}\frac{\mu_T(B_r(x_0))}{\pi r^2}
\ &= \ \lim_{r\rightarrow 0}\frac{\mathcal{H}^2(\rect(T)\cap B_r(x_0))}{\pi r^2}
\ = \ 1\, .
\end{align*}
We note that for $\mathcal{H}^2-$a.e. point $x_0\in \rect(T)$ there exists the limit 
\begin{align}\label{lem:blow_up_ass_lim_mu_T}
\lim_{r\rightarrow 0} \frac{\mu(B_r(x_0))}{\pi r^2} =: L \, .
\end{align}
In the following we assume that $x_0\in\Omega$ is a point of rectifiability of $T$ which also satisfies \eqref{lem:blow_up_ass_lim_mu_T}.

Let $r_0>0$ such that $B_{r_0}(x_0)\subset \Omega$.
Next, we introduce some notation. 
Let $\Phi_r(x)\defi (x-x_0)/r$ be a rescaling and define $T_r\defi (\Phi_r)_\# T$.
Note that $\Phi_r(B_r(x_0))=B_1(0)=:B_1$.
The rectifiability ensures that there exists a unit vector $\nu\in\mathbb{S}^2$ such that
\begin{align}\label{lem:blow_up_T:conv_F}
\FF\left(T_r\restr B_1 - P_\nu\restr B_1\right) \rightarrow 0 \quad\text{ for } r\rightarrow 0 \, ,
\end{align}
where $P_\nu = \{\nu\}^\perp$ is the two dimensional plane perpendicular to $\nu$ passing through $0$.
Indeed, by Theorem 10.2 in \cite{Maggi2009} we know that $(T_{r_k}-x_0)/r_k^2$ approaches $P_\nu$ in a weak sense and by Theorem 31.2 in \cite{Simon1983} we get the equivalence between the weak convergence and convergence in the $\FF-$norm in our case of $T$ having integer coefficients and $T,\partial T$ being of bounded mass.

Since $\mu_{\eta,\xi}\rightharpoonup \mu$ and $\Tex\rightarrow T$ w.r.t. the flat norm for $\eta,\xi\rightarrow 0$, it holds for almost every $r$ that
\begin{equation}\label{lem:blow_up_ass_1}
\begin{aligned} 
\mu_{\eta,\xi}(B_r(x_0)) &\rightarrow \mu(B_r(x_0)) \\
\FF((\Tex - T)\restr B_r(x_0)) &\rightarrow 0 \, .
\end{aligned}
\end{equation}
We further choose a sequence $(r_k)_{k\in\mathbb{N}}$ converging to zero as $k\rightarrow \infty$ such that \eqref{lem:blow_up_ass_1} holds for each $r_k$ and
\begin{align}\label{lem:blow_up_est_1_1-k}
\left| \frac{\mu(B_\dsg{r_k}(x_0))}{\MM(T\restr B_\dsg{r_k}(x_0))} - L \right| + \FF(T_{r_k}\restr B_1 - P_\nu\restr B_1) \leq \frac{1}{k}\, .
\end{align}
Given the sequence $r_k$, we can extract a subsequence $\xi_k,\eta_k$ such that $\eta_k/r_k\leq \frac{1}{k}$ and
\begin{align}\label{lem:blow_up_est_2_1-k}
\FF((\Phi_{r_k})_\# T_{\eta_k,\xi_k}\restr B_1 - T_{r_k}\restr B_1)
 + \left| \frac{\mu_{\xi_k,\eta_k}(B_{r_k}(x_0)) - \mu(B_{r_k}(x_0))}{\MM(T\restr B_{r_k}(\dsg{x_0}))} \right| 
 \leq \frac{1}{k}\, .
\end{align}

\textit{Step 2: Flat norm convergence.}
Denote $T_k\defi ((\Phi_{r_k})_\# T_{\eta_k,\xi_k})\restr B_1$.
By \eqref{lem:blow_up_T:conv_F} and \eqref{lem:blow_up_est_2_1-k} it follows that $T_k\rightarrow P_\nu\restr B_1$ in the flat norm.
Hence there exist flat chains $A_{2,k}\in\flatch^2$ and $A_{3,k}\in\flatch^3$ with $\MM(A_{2,k}),\MM(A_{3,k})\rightarrow 0$ (for $k\rightarrow\infty$) such that
\begin{align}\label{lem:blow_up_T:T=P+A+dA}
T_k - P_\nu\restr B_1 = A_{2,k} + \partial A_{3,k}\, .
\end{align}
This implies that $\partial(T_k - P_\nu\restr B_1) = \partial A_{2,k}$ or in other words $\partial(T_k - A_{2,k}) = \partial (P_\nu\restr B_1) = 0$ in $B_1$ since $P_\nu$ is the boundary of the half space $H_\nu=\{p+t\nu\sd p\in P_\nu,\: t> 0\}$, i.e.\ $P_\nu\restr B_1 = \partial (H_\nu\restr B_1)$ in $B_1$.
This implies the existence of a flat chain $W_k\in\flatch^3(B_1)$ such that $T_k - A_{2,k} = \partial W_k = \partial (1 - W_k)$, where $1\in\flatch^3(B_1)$ is the flat chain associated to the set $B_1$.
Note that we can also choose the complement set $W_k^c = 1 - W_k$ since it has the same boundary in $B_1$.
From \eqref{lem:blow_up_T:T=P+A+dA} we deduce that
\begin{align*}
\partial(H_\nu\restr B_1 - W_k) 
&= P_\nu\restr B_1 - A_{2,k} + T_k = \partial A_{3,k}\, . 
\end{align*}
This implies that
\begin{align*}
H_\nu\restr B_1 - W_k = A_{3,k} 
\quad\text{or}\quad 
\dsg{1 - (H_\nu\restr B_1 - W_k) =}
H_\nu\restr B_1 - W_k^c = A_{3,k} \, .
\end{align*}
Without loss of generality we choose $W_k$ such that $H_\nu\restr B_1 - W_k = A_{3,k}$ and since $\MM(A_{3,k})\rightarrow 0$ as $k\rightarrow \infty$ we conclude that the symmetric difference $|W_k\Delta(H_\nu\restr B_1)|$ also converges to zero for $k\rightarrow \infty$. 

\begin{figure}
\begin{center}
\includegraphics[scale=1.5]{./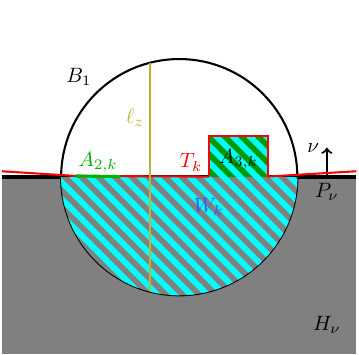}
\end{center}
\caption{Schematic illustration of the quantities involved in Step 2 of the blow up procedure for $T$.}
\label{fig:blow_up_T}
\end{figure}

\textit{Step 3: One dimensional estimates.}
For $z\in P_\nu$ we define the line $\ell_z \defi \{z+t\nu\sd t\in\mathbb{R}\text{ and }z+t\nu\in B_1\}$.

From Step 2 we recall that $|W_k\Delta(H_\nu\restr B_1)|,\MM(A_{2,k})\rightarrow 0$ as $k\rightarrow \infty$. 
This implies that for a subsequence (not relabelled) and almost all $z\in P_\nu$
\begin{align}\label{lem:blow_up_T:ss_conv}
|W_k\Delta (H_\nu\cap \ell_z)|, \mathcal{H}^0(A_{2,k}\cap\ell_z) \rightarrow 0 \quad\text{for } k\rightarrow \infty 
\end{align}
and hence for $k$ large enough $\ell_z$ crosses $T_k = \partial W_k + A_{2,k}$.

\ds{Defining $Q_k(y):=\Qexn(x_0+r_k y)$,} the energy in $B_{r_k}(x_0)$ can be expressed as
\begin{align}\label{lem:blow_up_T:rescale_E}
\frac{\mu_{\eta_k\xi_k}(B_{r_k}(0))}{\pi r_k^2} 
&= \frac{1}{\pi}\int_{B_1(0)} \frac{\eta_k}{2  r_k} |\nabla Q_k|^2 + \frac{r_k}{\eta_k}g(Q_k) + \frac{\eta_k r_k}{\xi_k^2}f(Q_k) + \eta r_k C_0 \dx y \\
&\geq \frac{1}{\pi} \int_{P_\nu\cap B_1} \int_{\ell_z} \frac{\eta_k'}{2  } |\partial_t Q_k|^2 + \frac{1}{\eta_k'}g(Q_k) + \frac{\eta_k'}{(\xi_k')^2}f(Q_k) + \eta_k' C_0' \dx t \dx z\, .
\end{align}
where we introduced the notation $\eta_k' \defi \frac{\eta_k}{r_k}$ and $\xi_k' \defi \frac{\xi_k}{r_k}$ and $C_0' = C_0(\xi_k',\eta_k')$.
Note that $\eta_k'/\xi_k' = \eta_k/\xi_k$.

This implies that by Fatou's lemma
\begin{align*}
L \ \geq \ 
\frac{1}{\pi} \int_{P_\nu\cap B_1} \liminf_{k\rightarrow\infty}\int_{\ell_z} \frac{\eta_k'}{2  } |\partial_t Q_k|^2 + \frac{1}{\eta_k}g(Q_k) + \frac{\eta_k'}{(\xi_k')^2}f(Q_k) + \eta_k' C_0' \dx t \dx z\, .
\end{align*}

In view of \eqref{lem:blow_up_T:eq} that we want to prove, we can restrict ourselves even further to the lines $\ell_z$ with 
\begin{align}\label{lem:blow_up_T:E_upper_bound_on_ell}
\liminf_{k\rightarrow\infty}\int_{\ell_z} \frac{\eta_k'}{2} |\partial_t Q_k|^2 + \frac{1}{\eta_k'}g(Q_k) + \frac{\eta_k'}{(\xi_k')^2}f(Q_k) + \eta_k' C_0' \dx t \ \leq \ 2 I_\alpha(0,\infty,0,1) \, ,
\end{align}
otherwise there is nothing to prove. 
By choosing another subsequence (which depends on $z$), we can assume this liminf is a limit and therefore that the sequence is bounded.

Using the inequality $A^2+B^2\geq 2AB$, the bound \eqref{lem:blow_up_T:E_upper_bound_on_ell} implies that
\begin{align*}
2 I_\alpha(0,\infty,0,1)
\ &\geq \ 
2 \int_{\ell_z} \dsg{\frac{1}{\sqrt{2}}}|\partial_t Q_k|\sqrt{\left(\frac{\eta_k'}{\xi_k'}\right)^2 f(Q_k) + g(Q_k) + C_0'} \dx t \\
\ &\geq \  
\dsg{\sqrt{2}}\gamma_2 \frac{\eta_k'}{\xi_k'} \int_{\ell_z} |\partial_t Q_k|\dist(Q_k,\N_{\eta_k',\xi_k'})\dx t\, ,
\end{align*}
where we also used \eqref{prop:f__g_geq:eq} of Proposition~\ref{prop:f__g_geq} in the last inequality.
Denoting $m\defi \min_{\ell_z} \dist(Q_k,\N_{\eta_k,\xi_k})$ and $M\defi \max_{\ell_z} \dist(Q_k,\N_{\eta_k,\xi_k})$ we can estimate the energy necessary for switching from $(M+m)/2$ to $M$ \ds{on the set $\ell_{z,m,M}\defi \{x\in\ell_z\sd \dist(Q_k(x),\N_{\eta_k,\xi_k}) > \tfrac12(M+m)\}$} by
\begin{align}\label{lem:blow_up_T:upper_bound_on_max_dist}
2 I_\alpha(0,\infty,0,1)
\ &\geq \ 
\dsg{\sqrt{2}}\gamma_2 \frac{\eta_k'}{\xi_k'} \frac{M+m}{2} \int_{\ell_{z,m,M}} |\partial_t Q_k| \dx t 
\ \geq \ 
\dsg{\sqrt{2}}\frac{\gamma_2}{\dsg{4}} \frac{\eta_k'}{\xi_k'} (M^2-m^2)\, ,
\end{align}
\ds{where in the last step we used $|Q^{(M)}-Q^{(\tfrac12(M+m))}|\geq M - |Q-Q^{(\tfrac12(M+m))}|\geq M - \tfrac12(M+m)$ for matrices $Q^{(d)}\in\Sym$ at distance $d$ from $\N_{\eta_k,\xi_k}$ and any $Q\in \N_{\eta_k,\xi_k}$.
Taking then the supremum of the RHS in $Q\in\Sym$ yields $\int_{\ell_{z,m,M}} |\partial_t Q_k| \dx t \geq \tfrac12(M-m)$.}
In order to obtain a uniform convergence of $\dist(Q_k,\N_{\eta_k,\xi_k})$, it remains to estimate $m$.

Again from \eqref{lem:blow_up_T:E_upper_bound_on_ell} by using \eqref{prop:f__g_geq:eq} in Proposition~\ref{prop:f__g_geq}, we get that
\begin{align*}
2 I_\alpha(0,\infty,0,1)
\ &\geq \ \gamma_2 \frac{\eta_k'}{(\xi_k')^2} \int_{\ell_z} \dist^2(Q_k,\N_{\eta_k',\xi_k'}) \dx t 
\ \geq \  \gamma_2 \frac{\eta_k'}{(\xi_k')^2} |\ell_z| m^2\, .
\end{align*}
In other words, $m^2\leq \frac{2 I_\alpha(0,\infty,0,1)}{\gamma_2|\ell_z|}\frac{(\xi_k')^2}{\eta_k'}$.
Plugging this estimate into \eqref{lem:blow_up_T:upper_bound_on_max_dist} yields 
\begin{align}\label{lem:blow_up_T:unif_bound_on_dist_N_on_ell}
\sup_{\ell_z} \dist^2(Q_k,\N_{\eta_k',\xi_k'}) 
\ &= \ M^2
\ \leq \ \frac{\dsg{8} I_\alpha(0,\infty,0,1)}{\dsg{\sqrt{2}}\gamma_2}\left(\frac{\xi_k'}{\eta_k'} + \frac{(\xi_k')^2}{|\ell_z|\eta_k'}\right)\, .
\end{align}
In view of \eqref{prop:f__g_geq:Nex_N} of Proposition~\ref{prop:f__g_geq} we can conclude that $Q_k$ is uniformly close to $\N$ and \dsg{$\dist(Q_k,\N)$} converges uniformly to zero as $\xi_k',\eta_k'\rightarrow 0$.

This implies together with the convergences in \eqref{lem:blow_up_T:ss_conv} that there exists a sequence $t_k\rightarrow 0$ such that $n_3(Q_k - Y_k)(z + t_k \nu) = 0$, where $Q_k(y)\defi Q_{\eta_k,\xi_k}(x_0+r_k y)$ and $Y_k\defi Y_{\eta_k,\xi_k}$.

We now split $\ell_z$ into $\ell_z^\pm$, where $\ell_z^+\defi \{z+t\nu\in\ell\sd t\geq t_k\}$ and $\ell_z^-\defi \{z+t\nu\in\ell\sd t\leq t_k\}$ and show that on both rays there are points for which $Q_k$ is close to $Q_\infty$.

Applying Lemma~\ref{lem:n3Q_on_line_close_to_1} for $\delta>0$ with the bound in \eqref{lem:blow_up_T:E_upper_bound_on_ell} implies that for $k$ large enough there exists $t_k^+\in (t_k,1)$ such that $|n_3(Q_k(t_k^+)-Y_k)|>1-\mathfrak{C}\sqrt{\delta}$.
The goal is to take $\delta\rightarrow 0$. 
For this, we choose a sequence $\delta_k$ depending on $k$ such that $\delta_k$ and $\eta_k'/C_{\delta_k}\leq \frac{1}{k}$ converge to zero as $k\rightarrow\infty$, where $C_{\delta_k}$ is the constant from Lemma~\ref{lem:n3Q_on_line_close_to_1}.
Similarly, there exists $t_k^-\in (-1,t_k)$ such that $|n_3(Q_k(t_k^-)-Y_k)|>1-\mathfrak{C}\sqrt{\delta_k}$.

The final estimate for the integral over $\ell_z$ then follows by summing the contributions from $\ell_z^\pm$, both in which we pass from $n_3(Q_k-Y_k)=0$ to $|n_3(Q_k-Y_k)|=1$.
Knowing that $Q_k$ is uniformly close to $\N$, we can apply 
Lemma 17 in \cite{Canevari2015a}, the Lipschitz assumption on $g$ in \eqref{def:g:N-lipschitz} and use the definition of $I_\alpha$ 
to determine the energetic cost on $\ell_z^\pm$. 
This yields
\begin{equation}\label{lem:blow_up_T:final_est_fatou}
\begin{aligned}
L 
\ &\geq \ \liminf_{k\rightarrow\infty} \frac{1}{\pi} \int_{P_\nu\cap B_1} \int_{\ell_z} \frac{\eta_k'}{2} |\nabla Q_k|^2 + \frac{1}{\eta_k'}g(Q_k) + \frac{\eta_k'}{(\xi_k')^2}f(Q_k) + \eta_k' C_0' \dx t \dx z \\
\ &\geq \ \frac{1}{\pi} \int_{P_\nu\cap B_1} \liminf_{k\rightarrow\infty}  \int_{\ell_z} \frac{\eta_k'}{2} |\nabla Q_k|^2 + \frac{1}{\eta_k'}g(Q_k) + \frac{\eta_k'}{(\xi_k')^2}f(Q_k) + \eta_k' C_0' \dx t \dx z \\
\ &\geq \ \frac{1}{\pi} \int_{P_\nu\cap B_1} \liminf_{k\rightarrow\infty}  \int_{\ell_z} \frac{\eta_k'}{2} (1-C\Vert Q_k - \R(Q_k)\Vert_{L^\infty(\ell_z)})|\nabla \R(Q_k)|^2 \\
&\qquad\qquad\qquad\qquad\qquad\qquad\qquad + \frac{1}{\eta_k'}g(\R(Q_k)) - C |Q_k - \R(Q_k)| \dx t \dx z \\
\ &\geq \ \frac{1}{\pi} \int_{P_\nu\cap B_1}  2I_\alpha(0,\infty,0,1) \dx z \\
\ &\geq \ 2\: I_\alpha(0,\infty,0,1)\, .
\end{aligned}
\end{equation}
\end{proof}

\subsection{Surface energy}
\label{subsec:lower_bound_-_surface}

In this section we do the necessary calculations to find the announced energy contribution on $\M$. 
For this, we estimate the energy in a boundary layer around $\M$. 
More precisely, we define $\M_{\sqrt{\eta}}\defi\{ x\in\Omega\sd \dist(x,\M)\leq \sqrt{\eta} \}$.
Then we proceed similarly to the previous section, the goal is to apply Lemma~\ref{lem:radial_turning} to the rays perpendicular to $\M$ on which $\Qexn$ is taking values close to $\N$.

We assume $\eta$ small enough such that $\sqrt{\eta} < \frac12\radM$, where $\radM$ was fixed in the beginning of Section~\ref{sec:cpt} such that $\radM$ is smaller than the minimal curvature radius of $\M$.
For $\omega\in\M$ and $r>0$ we define
\begin{align}\label{def:lower_bound_surface_rays}
L_{\omega,r} \defi \{ \omega + t\nu(\Omega)\sd t\in [0,r] \} \, .
\end{align}
We now rewrite the energy so that the line integrals over $L_{\omega,\sqrt{\eta}}$ appear.
We note that for $0<\eta\ll 1$ the map $\M\times [0,\sqrt{\eta}]\rightarrow \Omega$ given by $(\omega,r)\mapsto \omega + r\nu(\omega)$ is injective. 
The differential of this map is given by $\id_{T_\omega\M} + r\dx_\omega \nu + \nu$. 
Using the normalized eigenvectors $\nu,\overline{\omega_1},\overline{\omega_2}$ corresponding to the eigenvalues $1,\kappa_1,\kappa_2$ with $\kappa_i$ being the principal curvatures of $\M$ at $\omega$, i.e. the eigenvalues of the Gauss map $\dx_\omega\nu$. Then
\begin{align*}
\det(\id+r \dx_\omega \nu(\omega)) = (1+r\kappa_1)(1+r\kappa_2)
\end{align*} 
and the gradient transforms as
\begin{align*}
|\nabla \Qexn|^2 = |\partial_r \Qexn|^2 + \frac{1}{|1+r|\kappa_1||^2}|\partial_{\overline{\omega_1}} \Qexn|^2 + \frac{1}{|1+r|\kappa_2||^2}|\partial_{\overline{\omega_2}} \Qexn|^2\, .
\end{align*}
In order to shorten our formulas, we still use the notation $\nabla \Qexn$.
The energy can then be rewritten as
\begin{align*}
\Eex(\Qexn,\M_{\sqrt{\eta}})
\ &= \ \int_\M \int_0^{\sqrt{\eta}} \left( \frac{1}{2}|\nabla \Qexn|^2 + \frac{1}{\xi^2} f(\Qexn) + \frac{1}{\eta^2}g(\Qexn) \right) \prod_{i=1}^2(1+r\kappa_i) \dx r \dx \omega \, .
\end{align*}

We now distinguish two cases depending on whether the ray $L_{\omega,\sqrt{\eta}}$ intersects $\Texn$ or not.

\textit{Case 1: $L_{\omega,\sqrt{\eta}}$ does not intersect $\Texn$.}
In this case we can assume that
\begin{align*}
\int_0^{\sqrt{\eta}} \left( \frac{1}{2}|\nabla \Qexn|^2 + \frac{1}{\xi^2} f(\Qexn) + \frac{1}{\eta^2}g(\Qexn) \right) \prod_{i=1}^2(1+r\kappa_i) \dx r 
\ \leq \ I_\alpha(0,\infty,1,\cos(\theta))\, ,
\end{align*}
otherwise there is nothing to prove.
With the same argument as in \eqref{lem:blow_up_T:E_upper_bound_on_ell}-\eqref{lem:blow_up_T:unif_bound_on_dist_N_on_ell} we can show that $\sup_{L_{\omega,\sqrt{\eta}}} \dist(\Qexn,\N_{\eta,\xi})$ converges to zero as $\xi,\eta\rightarrow 0$.
Analogously to the blow up procedure, for $\delta>0$ we use Lemma~\ref{lem:n3Q_on_line_close_to_1} to deduce that there exists a radius $t_\omega\in [0,\sqrt{\eta}]$ such that $|n_3(\Qexn)(\omega+t_\omega\nu(\omega))|\geq 1-\mathfrak{C}\sqrt{\delta}$. 
We choose a sequence $\delta_{\eta}\rightarrow 0$ such that $\eta/C_{\delta_\eta}\rightarrow 0$ as $\eta\rightarrow 0$.
Note that $\Qexn$ does not verify the boundary condition \eqref{eq:bc}, but a slightly perturbed version. 
For $\eta,\xi\rightarrow 0$ we still obtain the right energy thanks to Proposition~\ref{prop:prop_reg_seq} and the uniform convergence therein.
As in \eqref{lem:blow_up_T:final_est_fatou} we then obtain 
\begin{equation} \label{surface_energy_wo_T}
\begin{aligned}
\liminf_{\eta,\xi\rightarrow 0} &\int_{L_{\omega,\sqrt{\eta}}} \frac{\eta}{2} |\nabla \Qexn|^2 + \frac{1}{\eta}g(\Qexn) + \frac{\eta}{\xi^2}f(\Qexn) + \eta C_0' \dx t \\
\ &\geq \ 
\liminf_{\eta,\xi\rightarrow 0}  \int_{L_{\omega,\sqrt{\eta}}} \frac{\eta}{2} (1-C\Vert \Qexn - \R(\Qexn)\Vert_{L^\infty(\ell_z)})|\nabla \R(\Qexn)|^2 \\
&\qquad\qquad\qquad + \frac{1}{\eta}g(\R(\Qexn)) - C |\Qexn - \R(\Qexn)| \dx t \\
\ &\geq \ I(0,\infty,1,|\cos(\theta)|)\, .
\end{aligned}
\end{equation}

\textit{Case 2: $L_{\omega,\sqrt{\eta}}$ intersects $\Texn$.}
Let $t_\omega'\in (0,\sqrt{\eta})$ denote the radius of intersection between $L_{\omega,\sqrt{\eta}}$ and $\Texn$.
The only difference to Case 1 is that we estimate the two parts $t\leq t_\omega'$ and $t\geq t_\omega'$ separately.

With the same reasoning as before we can assume that the energy on the ray is bounded and that $\dist(\Qexn,\N_{\eta,\xi})$ is uniformly converging to zero on the ray. 
On $L_{\omega,t_\omega'}$ we obtain just like in Step 1 the estimate
\begin{equation} \label{surface_energy_w_T_1}
\begin{aligned}
\liminf_{\eta,\xi\rightarrow 0} \int_{L_{\omega,t_\omega'}} \frac{\eta}{2} |\nabla \Qexn|^2 + \frac{1}{\eta}g(\Qexn) + \frac{\eta}{\xi^2}f(\Qexn) + \eta C_0' \dx t 
\ &\geq \ I_\alpha(0,\infty,\cos(\theta),0)\, .
\end{aligned}
\end{equation}

On the remaining part of the ray $L_{\omega,\sqrt{\eta}}$ we want to find the energy $I_\alpha(0,\infty,1,0)$.
Since $t_\omega'$ might be arbitrarily close to $\sqrt{\eta}$, we cannot apply Lemma~\ref{lem:n3Q_on_line_close_to_1} to conclude that $n_3(\Qexn)$ is close to $\pm 1$ somewhere. 
Extending the ray up to a distance $t=2\sqrt{\eta}$ from $\M$ and repeating the above reasoning, we can find for $\delta>0$ and $\eta$ small enough $t_\omega\in [\sqrt{\eta},2\sqrt{\eta}]$ such that $|n_3(\Qexn)(\omega+t_\omega\nu(\omega))|\geq 1-\mathfrak{C}\sqrt{\delta}$. 
Now we proceed again as in \eqref{surface_energy_wo_T} and combine with \eqref{surface_energy_w_T_1} to obtain
\begin{equation} \label{surface_energy_w_T}
\begin{aligned}
\liminf_{\eta,\xi\rightarrow 0} &\int_{L_{\omega,2\sqrt{\eta}}} \frac{\eta}{2} |\nabla \Qexn|^2 + \frac{1}{\eta}g(\Qexn) + \frac{\eta}{\xi^2}f(\Qexn) + \eta C_0' \dx t \\
\ &\geq \ I_\alpha(0,\infty,\cos(\theta),0) + I_\alpha(0,\infty,1,0)\, .
\end{aligned}
\end{equation}

\subsection{Proof of compactness and lower bound}
\label{subsec:compactness_alpha_0}

We now need to combine the estimates \eqref{prop:lower_bound_-_line_eq}, \eqref{lem:blow_up_T:eq}, \eqref{surface_energy_wo_T} and \eqref{surface_energy_w_T}.
To this aim, we use the localization technique for $\Gamma-$convergence as described for example in \cite[Ch. 16]{Braides2002}.
Let $U_i$, $i=1,2,3$ be three pairwise disjoint sets open in $\overline{\Omega}$.
Then it holds that
\begin{align*}
\liminf_{\eta,\xi\rightarrow 0} \eta\: \Eex(\Qex)
\ &\geq \ \liminf_{\eta,\xi\rightarrow 0} \sum_{i=1}^3 \eta\: \Eex(\Qex,U_i)
\ \geq \ \sum_{i=1}^3 \liminf_{\eta,\xi\rightarrow 0} \eta\: \Eex(\Qex,U_i) \\
\ &\geq \ \frac{\pi}{2}s_*^2\beta\: \MM(S\restr U_1) + 2 I_\alpha(0,\infty,1,0)\MM(T\restr(\Omega\cap U_2)) \\
&\qquad + \int_{\M} I(0,\infty,1,|\cos(\theta)|) \dx\mu_{(1-T\restr\M)\restr U_3} \\
&\qquad + \int_{\M} I_\alpha(0,\infty,1,0) + I_\alpha(0,\infty,\cos(\theta),0) \dx\mu_{T\restr(\M\cap U_3)}\, .
\end{align*}
Since the LHS does not depend on the sets $U_i$, we can take the supremum over all pairwise disjoint open sets.
For $\epsilon>0$ and by inner regularity we can approximate the measure $\MM(S)$ by a compact set $A_{1,\epsilon}\subset\rect(S)$ and an open set $U_{1,\epsilon}\supset A_{2,\epsilon}$ such that $\MM(S)-\MM(S\restr A_{1,\epsilon})\leq \epsilon$, $\mathcal{H}^2(\M\cap\overline{U_{1,\epsilon}})\leq \epsilon/2$ and $\MM(T\restr \overline{U_{1,\epsilon}})\leq \epsilon/2$ since the measures $\mu_S$ and $\mu_T$ are mutually singular.
Furthermore, we find another compact set $A_{2,\epsilon}\subset(\rect(T)\cap\Omega)\setminus U_{1,\epsilon}$ such that $\MM(T\restr\Omega)-\MM(T\restr A_{2,\epsilon})\leq \epsilon$. 
Then, by construction there exists an open set $U_{2,\epsilon}\supset A_{2,\epsilon}$ such that $U_{2,\epsilon}\cap U_{1,\epsilon}=\emptyset$ and $\dist(\M,U_{2,\epsilon})>0$.
Finally, taking an open neighbourhood of $\M$ disjoint from $\overline{U_{2,\epsilon}}$ and removing $\overline{U_{1,\epsilon}}$ from it, we find a third open set $U_{3,\epsilon}$ which satisfies $\mathcal{H}^2(\M\setminus U_{3,\epsilon})\leq \epsilon$.
By monotonicity we then find 
\begin{equation}\label{lower_bound_conclusion_alpha}
\begin{aligned}
\liminf_{\eta,\xi\rightarrow 0} \eta\: \Eex(\Qex)
\ &\geq \ \sup_{U_1,U_2,U_3}  \frac{\pi}{2}s_*^2\beta\: \MM(S\restr U_1) + 2 I_\alpha(0,\infty,1,0)\MM(T\restr(\Omega\cap U_2)) \\
&\qquad + \int_{\M} I(0,\infty,1,|\cos(\theta)|) \dx\mu_{(1-T\restr\M)\restr U_3} \\
&\qquad + \int_{\M} I_\alpha(0,\infty,1,0) + I_\alpha(0,\infty,\cos(\theta),0) \dx\mu_{T\restr(\M\cap U_3)} \\
\ &\geq \ 2 I_\alpha(0,\infty,1,0)\MM(T\restr\Omega) + \frac{\pi}{2}s_*^2\beta\: \MM(S) \\
&\qquad + \int_{\M} I(0,\infty,1,|\cos(\theta)|) \dx\mu_{(1-T\restr\M)} \\
&\qquad + \int_{\M} I_\alpha(0,\infty,1,0) + I_\alpha(0,\infty,\cos(\theta),0) \dx\mu_{T\restr\M}\, .
\end{aligned}
\end{equation}
We now want to pass to the limit $\alpha\rightarrow 0$.
In order to mark the dependence of $T$ and $S$ on $\alpha$, we add the index $\alpha$ in our notation for the rest of the proof.
Since $I_\alpha(0,\infty,1,0)\geq s_*c_*>0$, the mass of $T_\alpha\restr\Omega$ is bounded uniformly in $\alpha$ and since $\M$ has finite surface area it follows that $\MM(T_\alpha)$ is bounded independent of $\alpha$.
Since the mass of $S_\alpha$ and the length of the curves $\Gamma_\alpha$ are also uniformly bounded, we conclude that the flat chains $T_\alpha$ as well as their boundaries $\partial T_\alpha = S_\alpha + \Gamma_\alpha$ have finite mass.
Choosing a sequence $\alpha_k\rightarrow 0$,  \eqref{lower_bound_conclusion_alpha} holds and we can apply the compactness theorem for flat chains \cite[Cor. 7.5]{Fleming1966} as stated in Subsection~\ref{subsec:flat_chains}. 
From this we get that there exists a subsequence (not relabelled) and flat chains $T\in\flatch^2$, $S\in\flatch^2$ such that $\FF(T_{\alpha_k} - T)\rightarrow 0$ and $\FF(S_{\alpha_k} - S)\rightarrow 0$ as $k\rightarrow\infty$.
Since boundaries are preserved under flat convergence and, as we will prove below, $\Gamma_\alpha\rightarrow \Gamma$ in the flat norm, it holds that $\partial T = S + \Gamma$.
We note that 
$I_{\alpha_k}(0,\infty,\pm 1,0)\rightarrow I(0,\infty,\pm 1,0)$, 
and 
$I_{\alpha_k}(0,\infty,\cos(\theta),0)\rightarrow I(0,\infty,\cos(\theta),0)$ as $\alpha_k\rightarrow 0$.
Passing to the limit $\alpha_k\rightarrow 0$ in \eqref{lower_bound_conclusion_alpha} thus yields
\begin{equation}\label{lower_bound_conclusion_alpha_0}
\begin{aligned}
\liminf_{\eta,\xi\rightarrow 0} \eta\: \Eex(\Qex)
\ &\geq \ 2 I(0,\infty,1,0)\MM(T\restr\Omega) + \frac{\pi}{2}s_*^2\beta\: \MM(S) \\
&\qquad + \int_{\M} I(0,\infty,1,|\cos(\theta)|) \dx\mu_{(1-T\restr\M)} \\
&\qquad + \int_{\M} I(0,\infty,1,0) + I(0,\infty,\cos(\theta),0) \dx\mu_{T\restr\M} \\
\ &= \ 4 s_* c_* \MM(T\restr\Omega) + \frac{\pi}{2}s_*^2\beta\: \MM(S) \\
&\qquad + 2s_*c_*\int_{\M} (1-|\cos(\theta)|) \dx\omega  + 4s_*c_*\int_{\M} |\cos(\theta)| \dx\mu_{T\restr\M}\, .
\end{aligned}
\end{equation}
Combining the compactness result from Subsection~\ref{subsec:compact_fix_Y} for fixed $\alpha$ with the above estimates, we can choose a diagonal sequence $\alpha_{k(\xi,\eta)}\rightarrow 0$ as $\eta,\xi\rightarrow 0$ such that
\begin{align*}
\lim_{\eta,\xi\rightarrow 0}\FF(T_{\eta,\xi,n,\alpha_{k(\xi,\eta)}} - T) = 0 \qquad \lim_{\eta,\xi\rightarrow 0}\FF(S_{\eta,\xi,n,\alpha_{k(\xi,\eta)}} - S) = 0
\end{align*}
and \eqref{lower_bound_conclusion_alpha_0} holds.

It remains to verify that $\partial T = S + \Gamma$ as claimed by Theorem~\ref{thm:main}.
We recall from the boundary condition \eqref{eq:bc} that
\begin{align*}
\Qex(\omega) \ = \ Q_b(\omega) = s_*\left( \nu(\omega)\otimes\nu(\omega) - \frac13\id \right) \qquad \text{ for all }\omega\in\M\, .
\end{align*}
This implies that on $\Gamma$
\begin{align}\label{Gam:n3_Qb}
n_3(\Qex(\omega)) = n_3(Q_b(\omega))= \nu_3(\omega) = 0 \, .
\end{align}
by definition.
Furthermore, we assumed that the derivative of $\nu_3$ on $\Gamma$ is non-degenerate, i.e.\ $\nabla_\omega \nu_3(\omega)\neq 0$ for all $\omega\in\Gamma$.
Hence, on $\Gamma$ it holds
\begin{equation} \label{Gam:grad_n3}
\begin{aligned}
\nabla_\omega n_3(\Qex(\omega)) 
\ = \ \nabla_\omega \nu_3(\omega)
\ \neq \ 0 \, .
\end{aligned}
\end{equation}

Next, we consider the function $F(\omega,n,Y) \defi n_3(\Qexn(\omega) + Y)$ for $n\in\mathbb{N}$ and $Y\in B_\alpha(0)\subset\Sym$ for $0<\alpha\ll 1$.
Note that we can rewrite
\begin{align*}
F(\omega,n,Y) 
\ &= \ n_3(\Qex(\omega) + (\Qexn(\omega)-\Qex(\omega)) + Y) \\
\ &= \ n_3(Q_b(\omega) + (\Qexn(\omega)-\Qex(\omega)) + Y) \, .
\end{align*}
Since on $\M$, $\Qexn$ is by construction an approximation by convolution of $Q_b$, it holds that $\Qexn \rightarrow Q_b$ in $C^1$ on $\M$ for $n\rightarrow 0$.
In other words, from \eqref{Gam:n3_Qb} we get that $F(\omega,\infty,0) = 0$.

For the rest of the proof we argue locally on $\M$.
Let $(u,v)$ be a local parametrization on $\M$ such that $\nabla u$ is parallel to $\Gamma$ and $\nabla v$ is in direction of the normal vector of the curve $\Gamma$, called $\nu_\Gamma$.
We can choose $(u,v)$ such that $\omega_0 = (u(0),v(0))\in\Gamma$ and $(u,v(0))$ locally parametrizes $\Gamma$.
Then 
\begin{align*}
\partial_v F(\omega,n,Y)|_{\omega_0,\infty,0}
\ &= \ \partial_v F((u,v),n,Y)|_{(0,0),\infty,0} \\
\ &= \ D n_3(Q_b(\omega_0))(\partial_v Q_b(\omega_0) + \partial_v(\Qexn(\omega)-\Qex(\omega)))\, .
\end{align*}
For $n$ large enough we can assume that $\Vert D n_3 \Vert_{C^0(\N)} \Vert \Qexn - \Qex\Vert_{C^1(\M)}\leq \frac{1}{2}\inf_\omega |\partial_v n_3(Q_b(\omega))|$ by Proposition~\ref{prop:prop_reg_seq}.
Since $D n_3(Q_b(\omega_0))\partial_v Q_b(\omega_0) = \partial_v n_3(Q_b(\omega))|_{\omega=\omega_0}$ it follows from \eqref{Gam:grad_n3}  that $\partial_v F(\omega,n,Y)|_{\omega_0,\infty,0}\neq 0$.

The assumptions of the implicit function theorem are therefore satisfied and there exists an open neighbourhood $V$ of $(u(0),\infty,0)$ and a function $\tilde{v}$ defined on $V$ such that $F((u,\tilde{v}(u,n,Y)),n,Y) = 0$ on $V$.
In other words,
\begin{align*}
0 
\ &= \ F((u,\tilde{v}(u,n,Y)),n,Y)
\ = \ n_3(\Qexn((u,\tilde{v}(u,n,Y))) + Y)
\, .
\end{align*}
So $(u,\tilde{v}(u,n,Y))$ serves as a local parametrization of the set $\Gamma_{n,Y} := \{ \omega\in\M\sd n_3(\Qexn(\omega) + Y) = 0 \}$.
Noting that $\M$ is of class $C^2$ and hence $\nu\in C^1$, it holds that $\tilde{v}$ and $\Gamma_{n,Y}$ are also of class $C^1$ and in particular $\Gamma_{n,Y}$ has finite length.

Since $\tilde{v}\rightarrow 0$ uniformly as $n\rightarrow \infty$ and $Y\rightarrow 0$, it holds that $n_3(\Qexn+Y)$ also uniformly converges to  $n_3(Q_b)$. By Theorem 3.3 in \cite{Dambrine2020} it follows the Hausdorff convergence of $\Gamma_{n,Y}$ to $\Gamma$, i.e.\
\begin{align*}
\dist_\mathcal{H}(\Gamma,\Gamma_{n,Y})
\ := \ \max\left\{ \sup_{\omega\in\Gamma}\dist(\omega,\Gamma_{n,Y}), \sup_{\omega'\in\Gamma_{n,Y}}\dist(\omega',\Gamma) \right\} \rightarrow 0 \text{ for } n\rightarrow\infty\text{ and }Y\rightarrow 0 \, .
\end{align*}

Using the parametrization $\tilde{v}$ to link $\Gamma$ to $\Gamma_{n,Y}$, we can also build a flat $2-$chain $G_{n,Y}$ with boundary $\partial G_{n,Y} = \Gamma - \Gamma_{n,Y}$.
It then holds
\begin{align*}
\FF(\Gamma - \Gamma_{n,Y}) 
\ &\leq \ \MM(G_{n,Y}) 
\ \leq \ \sup_{(\omega,m,Z)\in V} \mathcal{H}^1(\Gamma_{m,Z}) \: \dist_\mathcal{H}(\Gamma,\Gamma_{n,Y})\, .
\end{align*}

\section{Upper bound}
\label{sec:upper_bound}

This section is devoted to the construction of the recovery sequence of Theorem~\ref{thm:main}. 
Essentially, there are three steps in this construction:
\begin{enumerate}
\item We approximate $T$ by a sequence $T_n$, solution to a minimization problem. 
The advantage of replacing $T$ by $T_n$ is the gain of regularity. 
Indeed, as we will see in Subsection~\ref{subsec:reg_T}, $T$ and its boundary inside $\Omega$ will be of class $C^{1,1}$. Furthermore, by a comparison argument, we can show that $\partial (T_n\restr\M)$ is a line of finite length.
\item We introduce local coordinate systems in which we can define $\Qexn$ and estimate its energy. 
\item Choosing a diagonal sequence $n(\xi,\eta)$ we find the recovery sequence.
\end{enumerate}

\subsection{A first regularity result for (almost) minimizers}
\label{subsec:reg_T}

In this subsection, we rewrite the limit energy $\E_0$ in a way that it only depends on $T$:
\begin{align} \label{eq:limit_energy_only_T}
\E_0(T) 
\ &= \ E_0(\M,\ee_3) 
+ 4s_*c_*\int_{\M} |\cos(\theta)| \dx\mu_{T\restr\M} 
+ 4s_*c_* \MM(T\restr\Omega)
+ \frac{\pi}{2}s_*^2\beta \MM(\partial T - \Gamma)\, ,
\end{align}
where $\Gamma\in\flatch^1$ is given by the curve $\{\nu_3=0\}\subset\M$. 
For the approximation of a flat chain $T\in\flatch^2$ we are going to study the following minimization problem: 
\begin{align} \label{eq:limit_energy_T_regularity}
\min_{\tilde{T}\in\flatch^2} \E_0(\tilde{T}) + n\: \FF(\tilde{T} - T)\, ,
\end{align}
for $n\in\mathbb{N}$. 
The existence of a minimum of \eqref{eq:limit_energy_T_regularity} is imminent since by assumption $T$ verifies $\E_0(T) + n\FF(T-T) = \E_0(T)<\infty$, the energy is non-negative and lower semi-continuous with respect to convergence in the flat norm.
We have the following result:

\begin{proposition}\label{prop:low_reg_T}
Let $T\in\flatch^2$ with $\E_0(T)< \infty$. For all $n\in\mathbb{N}$, the problem \eqref{eq:limit_energy_T_regularity} has a solution $T_n\in\flatch^2$. The minimizer $T_n$ verifies
\begin{enumerate}
\item $T_n\rightarrow T$ for $n\rightarrow\infty$ in the flat norm.
\item $T_n\restr\Omega$ is of class $C^{1}$ up to the boundary $(\partial T_n)\restr\Omega$. 
\item $\partial T_n$ is of class $C^{1,1}$ \dsg{(with uniformly bounded curvature) outside of $\Gamma$}. 
\end{enumerate}
\end{proposition}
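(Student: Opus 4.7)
My proof would split into three parts matching the three claims, treating existence/convergence first and then attacking regularity via an almost-minimality argument.

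\textbf{Existence and flat-norm convergence.} I would apply the direct method. A minimizing sequence $(T_n^{(k)})_k$ for \eqref{eq:limit_energy_T_regularity} satisfies $\E_0(T_n^{(k)}) + n\FF(T_n^{(k)}-T) \leq \E_0(T) < \infty$, which controls $\MM(T_n^{(k)}\restr\Omega)$ and $\MM(\partial T_n^{(k)}-\Gamma)$, hence $\MM(\partial T_n^{(k)})$ since $\Gamma$ has finite length. The bound on $\FF(T_n^{(k)}-T)$ combined with a support-confinement argument (truncating outside a large ball, using that $T$ has bounded support in every $B_R$) places the supports inside a common compact set. Fleming's compactness theorem \cite[Cor.~7.5]{Fleming1966} then extracts a subsequence converging in $\FF$-norm, and the lower semi-continuity of $\MM$ and of $\FF(\cdot-T)$ (together with continuity of the surface term $\int_\M|\cos\theta|\,d\mu_{T\restr\M}$ under flat convergence, which follows from rectifiability and the fact that $\M$ is a fixed $C^{1,1}$ surface) identifies the limit as a minimizer $T_n$. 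The convergence $T_n\to T$ in flat norm is then immediate: using $\tilde T = T$ as a competitor gives $n\FF(T_n - T) \leq \E_0(T)$, so $\FF(T_n - T) \leq \E_0(T)/n \to 0$.

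\textbf{Interior regularity.} Fix $n$ and write $T^* = T_n$. For $x_0\in\supp(T^*\restr\Omega)\setminus\supp(\partial T^*)$, pick a ball $B=B_r(x_0)\subset\Omega$ disjoint from $\M\cup\supp(\partial T^*)$. For any cycle $Z\in\flatch^2$ supported in $B$, the competitor $T^*+Z$ has the same boundary as $T^*$, so minimality gives
\begin{align*}
4s_*c_*\MM(T^*\restr B) \ \leq \ 4s_*c_*\MM((T^*+Z)\restr B) + n\FF(Z).
\end{align*}
By the isoperimetric inequality for $\mathbb{Z}_2$-cycles, $\FF(Z) \lesssim r\cdot\MM(Z)$ when $Z$ is supported in $B_r$, so this makes $T^*\restr B$ an $(r,\Lambda)$-almost-minimizer of area with $\Lambda=O(nr)$. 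The interior regularity theory for almost-minimizers of area in $\mathbb{R}^3$ with $\mathbb{Z}_2$ coefficients (Almgren, White) then yields that $\supp(T^*)\cap B$ is locally a smooth embedded minimal surface, in particular of class $C^1$. Together with the boundary analysis below, this yields the $C^1$ claim up to the boundary.

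\textbf{Boundary regularity.} This is the main obstacle. Near a point $y_0\in\supp(\partial T^*)\cap\Omega$ in a small ball $B=B_r(y_0)\subset\Omega$ disjoint from $\M$, I would compare $T^*$ with competitors constructed by replacing a short arc of $\partial T^*\cap B$ by a nearby curve $\gamma'$ and extending to a nearby modification of the surface. Writing the boundary part of the functional as $b\MM(\partial T^*)$ with $b=\frac{\pi}{2}s_*^2\beta$ (up to the fixed contribution of $\Gamma$, which lives on $\M$ and does not interact with $B$), the minimality inequality gives
\begin{align*}
a\MM(T^*\restr B) + b\MM(\partial T^*\cap B) \ \leq \ a\MM(T'\restr B) + b\MM(\partial T'\cap B) + n\FF(T^* - T').
\end{align*}
Choosing $T'$ so that $T^*-T'$ is a two-chain of area $O(r\cdot\MM(\gamma - \gamma'))$ sweeping between $\gamma$ and $\gamma'$ (a standard cone/homotopy construction), and estimating $\FF(T^*-T')\leq\MM(T^*-T')\lesssim r\MM(\gamma-\gamma')$, one extracts an almost-minimality statement for $\partial T^*$ as a 1-chain of the form $\MM(\partial T^*\cap B)\leq\MM(\gamma')+(Cr+Cnr)\cdot(\MM(\partial T^*\cap B)+\MM(\gamma'))$. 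This classifies $\partial T^*\cap\Omega$ as an $(r,\Lambda)$-almost-minimizer of length, whose support is therefore a $C^{1,1}$ curve (an almost-geodesic). The contact angle between $\supp(T^*)$ and $\supp(\partial T^*)$ is then determined by the Euler--Lagrange equation for the two-term functional, giving $C^1$ regularity of the surface up to its boundary.

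The delicate step is the construction of the comparison chain $T'$ together with the estimate on $\FF(T^*-T')$ in a way that decouples the surface and boundary variations so that the $C^{1,1}$ regularity for the curve $\partial T^*$ and the $C^1$ regularity up to the boundary for the surface can be extracted independently; this is where I expect most of the technical work to lie.
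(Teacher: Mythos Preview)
Your existence and convergence arguments are correct and match the paper. Your interior regularity for $T_n\restr\Omega$ via almost-minimality is also essentially what the paper does (citing Taylor \cite{Taylor1976} and David \cite{David2010}).

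There is, however, a genuine gap in the $C^{1,1}$ claim for $S=(\partial T_n)\restr\Omega$. The almost-minimality inequality you obtain, which after rearranging reads $\MM(S_r)\leq(1+Cr)\,\MM(\gamma')$, only yields $C^{1,1/2}$ regularity: this is precisely Morgan's theorem \cite{Morgan1994a}, and is exactly what the paper establishes first (Lemma~\ref{lem:reg_T_-_S_C1,1/2}). For a curve, a length excess of order $r^2$ over the chord corresponds to H\"older exponent $1/2$ for the tangent, not $1$ (a $C^{1,1}$ curve has excess $O(r^3)$). To upgrade to $C^{1,1}$ the paper runs an additional iteration (Lemma~\ref{lem:reg_T_-_S_C1,1}): once $S_r$ is known to lie in a $\rho$-tube about the secant $S'$, one can fill $S_r+S'$ by a $2$-chain of mass $\lesssim\rho\,\MM(S_r)$ rather than the cone of mass $\lesssim r\,\MM(S_r)$; this yields $\MM(S_r)\leq\ell+C\rho r$, and iterating the pair $\rho_{k+1}=\sqrt{2r\alpha_{k+1}}$, $\alpha_{k+1}=C r\rho_k$ drives $\rho_k$ down to $O(r^2)$, which is the $C^{1,1}$ bound. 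Your cone construction alone cannot see this improvement.

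The $C^1$-up-to-boundary claim for the surface is also underspecified. One cannot invoke the Euler--Lagrange equation for the contact angle before establishing regularity. The paper instead proves a monotonicity formula for $r^{-2}\MM(T\restr B_r(x_0))$ at boundary points, shows that blow-ups are half-planes of density $\tfrac12$, and then applies David's boundary regularity theorem \cite{David2019} --- a step which in turn relies on the $C^{1,1}$ regularity of $S$ already obtained.
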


We note that the above Proposition also holds true for $n=0$, i.e.\ minimizers of \eqref{eq:limit_energy_only_T} and hence of our limit problem are of class $C^1$ up to the boundary in $\Omega$ which is of class $C^{2}$. As we will see later, the minimizers of $\E_0$ are in fact smooth (see Proposition~\ref{prop:high_reg_T}).
In order to make this subsection more readable and simplify notation, we divide \eqref{eq:limit_energy_only_T} by $4s_*c_*$ and redefine the parameter $\beta$ to replace the constant $\frac{1}{8}\frac{s_*}{c_*}\beta$. Also, we will simply write $n$ instead of $\frac{n}{4c_*s_*}$. Since in this subsection we are only concerned with the regularity of minimizers, this change of notation does not impact our results.

The proof of Proposition~\ref{prop:low_reg_T} makes use of a series of lemmas which we are going to state and prove first.
The main ideas for the regularity of $T_n$ and $\partial T_n$ have already been developed in earlier papers \cite{Morgan1994a,Taylor1976,David2019,David2010}, so it remains to check that we can apply them in our case. 
For the sake of simple notation, we drop the subscript $n$ for the rest of this section and define $S\defi \partial T - \Gamma$. 
We recall from Subsection~\ref{subsec:flat_chains} that $\rect(S)$ is the set of all points of rectifiability of $S$. In particular, for $x_0\in\rect(S)$ the density $\lim_{r\rightarrow 0} \mu_S(B_r(x_0))/(2r)$ exists and is strictly positive.

\begin{lemma}\label{lem:reg_T_-_S_closed}
It holds that $\supp(S)=\overline{\rect(S)}$ and $\mathcal{H}^1(\supp(S)\setminus\rect(S))=0$.
\end{lemma}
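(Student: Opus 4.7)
The plan is to deduce both assertions from (i) rectifiability of $S$ coming from finite mass, and (ii) a linear density lower bound for $\mu_S$ on its support obtained from the minimality of $T$ in \eqref{eq:limit_energy_T_regularity}.

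By minimality and the admissibility of the old $T$ with $\E_0(T)<\infty$, we have $\MM(\partial T_n),\MM(S)<\infty$. Since $G=\mathbb{Z}_2$ is finite, Fleming's Theorem 10.1 yields that $S$ is rectifiable: $S=S\restr N$ for a countable union $N$ of $C^1$ one-submanifolds, and $\mu_S$ is concentrated on $\rect(S)\subseteq N$. The inclusion $\overline{\rect(S)}\subseteq\supp(S)$ is immediate. Conversely, if $x\in\supp(S)\setminus\overline{\rect(S)}$, one can choose $r>0$ with $B_r(x)\cap\rect(S)=\emptyset$; since $\mu_S$ is concentrated on $\rect(S)$, this forces $\mu_S(B_r(x))=0$, contradicting $x\in\supp(\mu_S)=\supp(S)$ (see \cite[Thm.~4.3]{Fleming1966}). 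This gives the first claim.

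The core of the proof is the density lower bound: there exist $c,r_0>0$ such that $\mu_S(B_r(x))\geq c\,r$ for every $x\in\supp(S)$ and every $r\in(0,r_0)$. The strategy is a classical cone comparison. Setting $m(r)=\mu_S(B_r(x))$, slicing theory gives $m'(r)\geq\MM(\langle S,d_x,r\rangle)$ for a.e.\ $r$. We build a competitor $T'=T_n+U$ for \eqref{eq:limit_energy_T_regularity} by replacing $S\restr B_r(x)$ with the cone from $x$ over the spherical slice: one chooses a $2$-chain $U$ supported in $B_r(x)$ with $\partial U=-S\restr B_r(x)+x\ast\langle S,d_x,r\rangle$ and $\MM(U)+\FF(U)\leq C\, r\, m(r)$. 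Inserting $T'$ into the minimization yields, after absorbing the new flat-norm and surface contributions,
\begin{equation*}
\beta\, m(r)\ \leq\ \beta\, C r\, m'(r) + C(1+n)\,r\,m(r),
\end{equation*}
and integrating this differential inequality for $r$ small enough that $C(1+n)r\ll\beta$ gives $m(r)\geq c\,r$. Combined with the finiteness of $\mu_S$, a Vitali-type covering argument then produces the measure comparison $\mathcal{H}^1\restr\supp(S)\leq c^{-1}\mu_S$, so that
\begin{equation*}
\mathcal{H}^1(\supp(S)\setminus\rect(S))\ \leq\ c^{-1}\,\mu_S(\supp(S)\setminus\rect(S))\ =\ 0,
\end{equation*}
by Step~1.

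The principal obstacle is the competitor construction and the bookkeeping of its mass versus its flat-norm cost, especially for points $x\in\supp(S)\cap\M$: the Euclidean cone must be replaced by a tangential modification along $\M$ in order to respect the boundary contribution $\int_{\M}|\cos\theta|\,d\mu_{T\restr\M}$ in $\E_0$. Away from $\M$, the argument is the standard Plateau-type monotonicity; near $\M$, one performs an analogous construction using a cone inside $\Omega$ together with a projection onto $\M$ so that the $|\cos\theta|$-term changes by a quantity controlled by $r\,m(r)$, leading to the same differential inequality and hence the same density bound.
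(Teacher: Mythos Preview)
Your argument for $\supp(S)=\overline{\rect(S)}$ is correct and in fact cleaner than the paper's: it is a general fact about finite-mass rectifiable flat chains and does not use minimality at all. The paper instead derives the density lower bound first and uses it for both assertions.

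The density bound itself, however, does not follow from your cone comparison as written. Your differential inequality
\[
\beta\,m(r)\ \leq\ \beta\,r\,m'(r)+C(1+n)\,r\,m(r)
\]
only yields that $r\mapsto e^{C(1+n)r/\beta}\,m(r)/r$ is nondecreasing, i.e.\ almost-monotonicity. Integrating from $\epsilon$ to $r$ gives $m(r)/r\geq e^{-C(1+n)r/\beta}\,m(\epsilon)/\epsilon$, so a lower bound on $m(r)/r$ requires knowing a priori that the density $\Theta(x)=\lim_{\epsilon\to 0}m(\epsilon)/\epsilon$ is positive. For $x\in\rect(S)$ this is fine (the $\mathbb{Z}_2$-density is $1$), but for an arbitrary $x\in\supp(S)$ nothing prevents $\Theta(x)=0$, and then the inequality is vacuous. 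Your Vitali step needs the bound on \emph{all} of $\supp(S)$, so the argument is circular as stated.

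The paper closes this gap by a different mechanism that genuinely exploits $G=\mathbb{Z}_2$: the slice $\langle S,d_x,r\rangle=\partial(S\restr B_r)$ is a $0$-boundary, hence has even mass. If $m(r)<2r$ for some small $r$, averaging forces $\MM(\langle S,d_x,s\rangle)<2$, hence $=0$, on a set of radii of positive measure; then $S\restr B_s$ is a small cycle. An isoperimetric filling $T_1$ with $\partial T_1=S\restr B_s$ and $\MM(T_1)\leq b\,\MM(S\restr B_s)^2$ plugged into the minimality of $T$ shows that nonzero cycles must have mass at least $\beta/(3b(n+1))$, contradicting smallness. This gives $m(r)\geq 2r$ for every $x\in\supp(S)$ directly, with no monotonicity needed.

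Your route can be repaired: first use monotonicity to get $m(r)\geq c r$ for $x\in\rect(S)$ (where $\Theta(x)=1$), then invoke your already-proved identity $\supp(S)=\overline{\rect(S)}$ and pass to the limit along $x_k\in\rect(S)$, $x_k\to x$, to obtain $\mu_S(B_r(x))\geq\mu_S(B_{r/2}(x_k))\geq cr/2$. With this extra step the proof closes; without it, it does not.
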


\begin{proof}
Let's show first that $S$ is supported by a closed $1$-dimensional set.

For this, we prove that $S$ cannot contain subcycles of arbitrary small length.
Assume that $S_1$ is a subcycle of $S$, i.e.\ $\MM(S)=\MM(S_1)+\MM(S-S_1)$ and $\partial S_1=0$, and that $S_1$ is supported in $B_r(x_0)$ for $r\in (0,\frac12\radM)$. 
By (7.6) in \cite{Fleming1966}, there exists a constant $b>0$ and $T_1\in \flatch^2$ such that $S_1 = \partial T_1$ and $\MM(T_1)\leq b\MM(S_1)^2$.
By projecting $T_1$ onto $B_r(x_0)\cap\overline{\Omega}$, we can furthermore assume that $T_1$ is supported in $B_r(x_0)$ and lies within $\overline{\Omega}$.
Projecting onto $B_r(x_0)$ does not affect the previous estimate since it decreases the mass. 
Projecting $T_1\restr E$ onto $\M$ has Lipschitz constant less than $1+4\frac{r}{\radM}$ and hence, the estimate stays true with an additional factor of $1+4\frac{r}{\radM}$.
We estimate by minimality of $T$
\begin{align*}
\E_0(T) + n\FF(T-T_0) 
\ &\leq \ \E_0(T+T_1) + n\FF(T+T_1-T_0) \\
\ &\leq \ \E_0(T) + \MM(T_1) - \beta\MM(S_1) + n\FF(T-T_0) + n\MM(T_1) \\
\ &\leq \ \E_0(T) - \beta\MM(S_1) + n\FF(T-T_0) + (n+1)(1+4\frac{r}{\radM}) b \MM(S_1)^2\, ,
\end{align*}
and thus $\beta\MM(S_1) - b(n+1)(1+4\frac{r}{\radM})\MM(S_1)^2\leq 0$. We hence find that either $\MM(S_1)=0$ or that $\MM(S_1)\geq \beta /(3b(n+1))$.

Now, let $x_0$ be a point of rectifiability of $S$ and $r\leq \beta /(6b(n+1))$. 
Assume that $\mu_S(B_r(x_0))<2r$. Then, since 
\begin{align*}
\int_0^r \mu_S(\partial B_s(x_0)) \dx s \leq \mu_S(B_r(x_0)) < 2r\, ,
\end{align*}
we can invoke Theorem 5.7 of \cite{Fleming1966} to deduce that there exists a set of positive measure $I\subset [0,r]$ such that $\mu_S(\partial B_s(x_0))<2$ for all $s\in I$. 
Thus, we can find radii $s<r$ such that $\MM(\partial(S\restr B_s(x_0)))\leq 1$. 
\dsg{But since $\partial\Gamma=0$, it also holds that $\partial S=\partial\partial T - \partial\Gamma=0$, so $S_1\defi S\restr B_s(x_0)$ cannot have just one end}.
We conclude that $\partial S_1 = 0$. 
In addition $\MM(S_1)<2r$ by assumption. 
Hence, we have $\MM(S_1)<\beta/(3b(n+1))$ and the above calculation shows that necessarily $\MM(S_1)=0$. 
In particular, $x_0$ is not in the support of $S$ which is a contradiction. 

Let us conclude now that $S$ is indeed a closed set. Let $\rect(S)$ be the rectifiability set of $S$. 
Since $S$ has coefficients in a finite group, it is rectifiable \cite{White1999a} with $\mu_S=\mathcal{H}^1\restr\rect(S)$. 
Now, take a sequence $x_k\in\rect(S)$ and assume $x_k\rightarrow x$. 
By the above reasoning it holds $\mu_S(B_r(x_k))\geq 2r$ for all $r\leq \beta/(6b(n+1))$ and in the limit $k\rightarrow\infty$ also $\mu_S(B_r(x))\geq 2r$. 
\dsg{It follows from Theorem 2.56 in \cite{Ambrosio2000} that $\mu_S\geq \mathcal{H}^1\restr \overline{\rect(S)}$ and we conclude} that $\mathcal{H}^1(\supp(S)\setminus\rect(S))=0$.
\end{proof}

After having established this basic property of $S$, we can state a first regularity result:

\begin{lemma}\label{lem:reg_T_-_S_C1,1/2}
The flat chain $S$ is supported on a finite union of closed $C^{1,\frac12}-$curves.
\end{lemma}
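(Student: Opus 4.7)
The plan is to show that $S$ is a quasi-minimizer of length in the sense of Almgren, i.e.\ satisfies an almost-minimality property with quadratic gauge $\omega(r) \lesssim r$, and then to invoke classical regularity results for such almost-minimal rectifiable $1$-chains (Taylor \cite{Taylor1976}, Morgan \cite{Morgan1994a}, David \cite{David2010,David2019}) to obtain $C^{1,\frac12}$ regularity. Finiteness of the number of connected components will then follow from the lower density bound already established in Lemma \ref{lem:reg_T_-_S_closed} combined with the finite total mass $\MM(S)<\infty$.

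For the quasi-minimality, fix $x_0\in\supp(S)$ and pick $r<\tfrac12 \radM$. Given any competitor $S'\in\flatch^1$ with $\supp(S-S')\subset \overline{B_r(x_0)}$ and $\partial S'=\partial(S\restr B_r(x_0))$, set $S_1 \defi (S\restr B_r(x_0))-S'$. Since $\partial S_1=0$ and $\supp(S_1)\subset \overline{B_r(x_0)}$, the isoperimetric inequality ((7.6) in \cite{Fleming1966}) provides $T_1\in\flatch^2$ supported in $\overline{B_r(x_0)}$ with $\partial T_1 = S_1$ and $\MM(T_1)\leq b\,\MM(S_1)^2$. Exactly as in the proof of Lemma \ref{lem:reg_T_-_S_closed}, projecting $T_1\restr E$ onto $\M$ keeps $T_1$ in $\overline{\Omega}$ at the cost of a factor $1+4r/\radM$ in the isoperimetric constant. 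Using $T+T_1$ as a competitor in \eqref{eq:limit_energy_T_regularity} and observing that the only terms of $\E_0$ affected inside $B_r(x_0)$ are $\MM(T\restr\Omega)$ and $\MM(S)$, the minimality of $T$ yields, after rearrangement,
\begin{equation*}
\beta\,\MM(S\restr B_r(x_0)) \ \leq \ \beta\,\MM(S') + C_n\,\MM(T_1) \ \leq \ \beta\,\MM(S') + C_n' \,\MM(S_1)^2 .
\end{equation*}
An a priori upper density bound $\mu_S(B_r(x_0))\leq C_1 r$, obtained by inserting the explicit competitor $S' = P_\# (S\restr B_r(x_0))$ with $P$ the radial projection onto $\partial B_r(x_0)$ (which produces a purely $2$-dimensional $T_1$ of mass $\lesssim r\,\mu_S(B_r(x_0))$), then gives $\MM(S_1)\lesssim r$ and hence
\begin{equation*}
\MM(S\restr B_r(x_0)) \ \leq \ \MM(S') + C_n''\, r^2,
\end{equation*}
which is precisely the almost-minimality condition of Almgren type with gauge $\omega(r)=C_n'' r$.

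Having this quasi-minimality, one invokes the regularity theory for almost-minimizing rectifiable $1$-chains in $\mathbb{R}^3$: the quadratic excess bound $\omega(r)\lesssim r$ is known to produce Hölder regularity of the unit tangent with exponent exactly $\frac12$, so that each point of $\supp(S)$ has a neighbourhood in which $\supp(S)$ is a $C^{1,\frac12}$ arc. Since $\partial S = \partial(\partial T - \Gamma) = -\partial\Gamma = 0$ (because $\Gamma$ is a union of closed $C^2$-curves), no component of $\supp(S)$ has an endpoint, so each component is a closed $C^{1,\frac12}$-curve. The lower density estimate $\mu_S(B_r(x_0))\geq 2r$ for $r\leq \beta/(6b(n+1))$ from Lemma \ref{lem:reg_T_-_S_closed} forces each connected component to carry length at least $C(n,\beta)>0$, so $\MM(S)<\infty$ leaves only finitely many of them.

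The main obstacle I expect is the clean derivation of the upper density bound and the careful bookkeeping of the projection step onto $\M$, so that the competitor $T+T_1$ remains admissible in $\overline\Omega$ without spoiling the isoperimetric constant; this relies in an essential way on the uniform curvature bound for $\M$ coming from the $C^{1,1}$ hypothesis. Once the quadratic gauge is established, the $C^{1,\frac12}$ conclusion is a direct citation to the almost-minimal set regularity theory.
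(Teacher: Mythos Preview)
Your overall strategy matches the paper's: establish an almost-minimality property for $S$ and then cite Morgan's $(M,\epsilon,\delta)$-regularity theorem \cite{Morgan1994a} to obtain the $C^{1,\frac12}$ conclusion; the finiteness argument via the lower density bound of Lemma~\ref{lem:reg_T_-_S_closed} is also the same in spirit.

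The technical route differs, however, and yours contains a gap. The paper compares $T$ to $T + C'$, where $C'$ is the cone from $x_0$ over the cycle $A = S_r + S_r'$; the cone estimate $\MM(C')\leq c\,r\,\MM(A)$ is \emph{linear} in $\MM(A)$, and after distinguishing the two cases $\MM(S_r)\lessgtr\MM(S_r')$ one obtains directly the multiplicative inequality
\[
\MM(S_r)\ \leq\ \Bigl(1+\tfrac{4c(n+1)}{\beta}\,r\Bigr)\,\MM(S_r')\,,
\]
which is precisely Morgan's hypothesis. No upper density bound is needed.

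You instead invoke the isoperimetric inequality $\MM(T_1)\leq b\,\MM(S_1)^2$, which is quadratic and therefore forces you to first prove an a~priori upper density bound $\mu_S(B_r(x_0))\lesssim r$ in order to reach $\MM(S_1)^2\lesssim r^2$. Your proposed derivation of that bound via the radial projection $P$ onto $\partial B_r(x_0)$ does not work: plugging this competitor into the minimality yields $(\beta - Cr)\,\MM(S_r)\leq\beta\,\MM(P_\#(S_r))$, but the radial projection is not Lipschitz near the center and there is no control whatsoever on $\MM(P_\#(S_r))$, so no bound on $\MM(S_r)$ follows. The simplest fix is to replace the isoperimetric filling by the cone over $S_r + S'$, exactly as the paper does; this removes the need for the upper density step altogether. (The paper does later prove an upper density bound, in the proof of the sharper Lemma~\ref{lem:reg_T_-_S_C1,1}, but only \emph{after} $C^{1,\frac12}$ regularity has reduced $S\cap\partial B_r(x_0)$ to two points.)

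A minor additional point: Morgan's theorem in principle allows triple junctions; the paper rules them out explicitly by noting that with coefficients in $\pi_1(\N)=\mathbb{Z}_2$ a triple junction would create boundary, contradicting $\partial S = 0$. You should include this observation.
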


\begin{proof}
Our goal is to prove that $S$ is an almost minimizer of the length functional $\MM$ and apply Theorem 3.8 in \cite{Morgan1994a} to deduce $C^{1,\frac12}-$regularity. 
 
Let $x_0\in \Omega$ and $\overline{r}\in(0,\frac12\radM)$ such that $B_{\overline{r}}(x_0)\subset\Omega$. 
Let $r\in (0,\overline{r})$.
Consider $T'\in\flatch^2$ with $\supp(T-T')\subset B_{r}(x_0)\subset\Omega$. 
For almost every $r\in(0,\overline{r})$, it holds that $S_r\defi S\restr B_r(x_0)$ is a flat chain with boundary $\partial S_r = S\restr \partial B_r(x_0)$. 
In this case, $S_r'\defi\partial T'\restr B_r(x_0)$ has the same boundary. 
Hence, the flat chain $A\defi S_r+S_r' = \partial T + \partial T'$ is a cycle, i.e. verifies $\partial A=0$ and is supported inside $B_{r}(x_0)$.
We can construct the cone $C'$ with vertex $x_0$ over $A$. Then, $\partial C'=A$ and $\MM(C')\leq c r \MM(A)$. 
Now, we distinguish two cases: It holds either $\MM(S_r)\leq \MM(S_r')$ (which is enough for our conclusion as we will see below) or $\MM(S_r)\geq \MM(S_r')$ and hence $\MM(A)\leq 2\MM(S_r)$.
Comparing $T$ to $T+C'$ and by minimality of $T$ we get that
\begin{align*}
\beta\MM(S_r) 
\ &\leq \ \beta\MM(S_r') + (n+1)\MM(C')
\ \leq \ \beta\MM(S_r') + 2c(n+1)r\MM(S_r)\, .
\end{align*}
For $r$ small enough we conclude that 
\begin{align}\label{lem:reg_T_-_S_C1,1/2:eq_r}
\MM(S_r)
\ &\leq \ \left(1 + \frac{4c(n+1)}{\beta}\: r\right) \MM(S_r')\, .
\end{align}
In case $T'$ is not entirely contained in $\Omega$, we need to project those parts of $T'$ and of the boundary $S_r'$ onto $\M$. 
Since we assumed $r<\overline{r}\leq\frac12\radM$, the Lipschitz constant of the projection can be estimated by $1+4\frac{r}{\radM}$, i.e.\ our analysis and in particular \eqref{lem:reg_T_-_S_C1,1/2:eq_r} holds true if we replace $\MM(S_r')$ by $(1+4\frac{r}{\radM})\MM(S_r')$.
\dsg{If we now consider $x_0\in\M$, we can carry out the same construction, projecting all objects onto $\overline{\Omega}$. 
Since the projection onto $\overline{\Omega}$ has a Lipschitz constant $1+O(r)$, the estimate \eqref{lem:reg_T_-_S_C1,1/2:eq_r} holds with a bigger constant in front of $r$.}
This shows that there exists a constant $C=C_{n,\beta,\radM}>0$ such that $S$ is $(\MM,C r,\overline{r})-$minimal in the sense of Almgren.
Together with Lemma~\ref{lem:reg_T_-_S_closed}, \eqref{lem:reg_T_-_S_C1,1/2:eq_r} allows us to apply Theorem 3.8 in \cite{Morgan1994a} which gives the $C^{1,1/2}-$regularity and the decomposition of $\supp(S)$ into a finite union of curves, possibly meeting at triple points.
Finally, since our flat chains take values only in $\pi_1(\N)=\{0,1\}$, we can exclude the existence of triple points since they would create boundary. Hence, $S$ is a union of curves.
\end{proof}

The regularity of $S$ in Lemma~\ref{lem:reg_T_-_S_C1,1/2} is not optimal. The following Lemma provides us with the smoothness we announced in Proposition~\ref{prop:low_reg_T}:

\begin{lemma}\label{lem:reg_T_-_S_C1,1}
The flat chain $S$ is supported on a finite union of closed $C^{1,1}-$curves. In particular, the curvature of $S$ is bounded.
\end{lemma}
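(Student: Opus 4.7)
By Lemma \ref{lem:reg_T_-_S_C1,1/2} we already know that $\supp(S)$ is a finite union of closed $C^{1,1/2}$ curves. It remains to upgrade this to $C^{1,1}$, which by a standard ODE argument follows as soon as the (distributional) curvature of $S$ is bounded in $L^\infty$. The plan is to derive this bound from a first-variation inequality obtained by comparing $T$ with ambient diffeomorphism images of itself.

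Fix $x_0\in\supp(S)\cap\Omega$ and $r\in(0,\tfrac12\radM)$ small enough that $B_r(x_0)\subset\Omega$ and $B_r(x_0)\cap\Gamma=\emptyset$. Let $\phi\in C_c^\infty(B_r(x_0);\mathbb{R}^3)$ and let $\Phi_t=\id+t\phi+o(t)$ be the flow generated by $\phi$. Setting $\tilde T_t\defi(\Phi_t)_\# T$ and $\tilde S_t\defi (\Phi_t)_\#S$, the minimality of $T$ for \eqref{eq:limit_energy_T_regularity} gives
\begin{align*}
\MM(T\restr B_r)+\beta\MM(S\restr B_r)
\ \leq \ \MM(\tilde T_t\restr B_r)+\beta\MM(\tilde S_t\restr B_r)+n\,\FF(\tilde T_t-T),
\end{align*}
the $F$-term on $\M$ and the $\Gamma$-contribution being unchanged because $\phi$ is compactly supported in $\Omega\setminus\M$ and away from $\Gamma$. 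The flat-norm penalty is controlled by the mass of the product chain $(\Phi_{\cdot})_\#([0,t]\times T)\restr B_r$, which is $O(t\,\|\phi\|_\infty\,\MM(T\restr B_r))$ and can be made arbitrarily small relative to $t$ by choosing $r$ small, so this term contributes only at lower order.

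Dividing by $t>0$ and sending $t\downarrow 0$, the first variation formulas for rectifiable chains yield
\begin{align*}
-\int_T \vec H_T\cdot\phi\dx\mu_T
\ +\ \int_{S} \vec\nu_T\cdot\phi\dx\mu_S
\ -\ \beta\int_{S} \vec\kappa_S\cdot\phi\dx\mu_S\ \geq\ -o(1),
\end{align*}
where $\vec H_T$ is the mean-curvature vector of $T$, $\vec\nu_T$ is the inward unit conormal to $T$ along $S$ (well defined $\mathcal{H}^1$-a.e.\ on $\rect(S)$), and $\vec\kappa_S$ is the curvature vector of the $C^{1,1/2}$ curve $S$. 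Replacing $\phi$ by $-\phi$ gives equality, and testing with $\phi$ supported in $\Omega\setminus\supp S$ and transverse to $T$ forces $\vec H_T\equiv 0$; testing against $\phi$ concentrated on $S$ then yields the Young-type relation
\begin{align*}
\beta\,\vec\kappa_S\ =\ \vec\nu_T\qquad\text{in the distributional sense along }S.
\end{align*}
Since $\vec\nu_T$ is a unit conormal, $|\vec\kappa_S|\leq 1/\beta$ $\mathcal{H}^1$-a.e., which in local $C^{1,1/2}$ graph coordinates $S=\{(t,f(t))\}$ translates into $|f''|\leq C$ a.e., hence $f\in C^{1,1}$.

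The main technical obstacle is the rigorous extraction of the boundary equation $\beta\vec\kappa_S=\vec\nu_T$: one must justify the interchange of $\tfrac{d}{dt}$ with the mass functional on a chain whose boundary has only $C^{1,1/2}$ regularity, and handle the flat-norm penalty. This is done exactly as in \cite{Morgan1994a, David2019} by approximating $S$ by piecewise linear curves, using that the first variation of $\MM(S)$ for $C^{1,1/2}$ curves is well defined as a vector-valued measure with absolutely continuous part $-\vec\kappa_S$, and bounding the penalty term via a $3$-chain filling of the swept region of mass $O(t\,r^3\,\|\phi\|_\infty)$. The case when $\supp(S)$ meets $\M$ is handled analogously, with the projection onto $\M$ used in Lemma \ref{lem:reg_T_-_S_C1,1/2} absorbing a factor $1+O(r/\radM)$ that does not affect the $L^\infty$ bound on $\vec\kappa_S$.
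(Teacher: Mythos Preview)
Your first-variation approach is different from the paper's and has a genuine gap. The crucial step is the identity $\beta\,\vec\kappa_S=\vec\nu_T$, where you assert that $\vec\nu_T$ is a \emph{unit} conormal of $T$ along $S$, ``well defined $\mathcal H^1$-a.e.''. At this stage you only know that $T$ is a rectifiable $\mathbb{Z}_2$-chain; the singular part of the first variation of $\MM(T)$ along $\partial T$ is a vector measure on $S$, but nothing yet forces its density with respect to $\mathcal H^1\restr S$ to equal~$1$. If several sheets of $T$ met along $S$ (which you have not ruled out) this density would exceed~$1$ and the bound $|\vec\kappa_S|\le 1/\beta$ would fail. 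Showing that $T$ is a single $C^1$ sheet up to $S$ is exactly the content of Lemma~\ref{lem:reg_T_-_interior_T}, which in the paper comes \emph{after} the present lemma and, as written, invokes the $C^{1,1}$ regularity of $S$ for its blow-up argument. Your route could be made to work only after reordering the lemmas and verifying that Lemma~\ref{lem:reg_T_-_interior_T} goes through with merely the $C^{1,1/2}$ input from Lemma~\ref{lem:reg_T_-_S_C1,1/2}; this is plausible but is real work that you have not done. The ``main technical obstacle'' you flag (differentiating $\MM$ at a $C^{1,1/2}$ boundary) is not the issue---the issue is the representation of the boundary term by a unit field.

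The paper avoids any use of $T$ beyond the cone/isoperimetric fillings already exploited in Lemma~\ref{lem:reg_T_-_S_C1,1/2}. For $x_0\in\supp S$ and small $r$ it compares $S_r=S\restr B_r(x_0)$ to two competitors: a geodesic arc on $\partial B_r$, yielding $\MM(S_r)\le 2\pi r(1+Cr)$, and the straight chord $S'$ between the two points of $S\cap\partial B_r$, filled via the isoperimetric inequality to give $\MM(S_r)\le \MM(S')+C_1 r^2$. An elementary iteration (excess $\alpha\Rightarrow$ tube radius $\rho\le\sqrt{2r\alpha}\Rightarrow$ new excess $\le C_2\rho r$) then traps $S_r$ in an $O(r^2)$-tube around $S'$, so $\dist(x_0,S')=O(r^2)$. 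This produces a dyadic estimate $\|\tau_r(x_0)-\tau_{r/2}(x_0)\|\le C r$ on the secant directions, hence a Lipschitz tangent field and the $C^{1,1}$ conclusion. No first variation, no conormal, no regularity of $T$ is needed.
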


\begin{proof}
Let $x_0\in\supp(S)$.
\dsg{Assume first that $x_0\in\Omega$} and take $r>0$ such that $B_{r}(x_0)\subset\Omega$ and $\mu_S(\partial B_{r}(x_0))=2$. 
Let $\{x_1,x_2\}\defi\supp(S)\cap\partial B_{r}(x_0)$ and define $S_{r}\defi S\restr B_{r}(x_0)$.
We compare $S_r$ (and $T\restr B_r(x_0)$) to two competitors. 

The first one is the geodesic segment $S_g$ joining $x_1$ and $x_2$ in $\partial B_r(x_0)$. For the corresponding $T_g$ we use a piece of $\partial B_r(x_0)$ between $T\restr (\partial B_r(x_0))$ and $S_g$. By minimality of $S_r$ we find
\begin{align}\label{lem:reg_T_-_S_C1,1:competitor1}
\beta\MM(S_r) \ \leq \ 2\pi r \left( \beta + 4(n+1)r \right)\, .
\end{align}

Our second competitor is the flat chain supported on the straight line segment joining $x_1$ to $x_2$ which we call $S'$.
Then $S'+S_r$ is supported in $B_r(x_0)$ and is closed, i.e.\ $\partial(S'+S_r)=0$.
By the construction (7.6) in \cite{Fleming1966}, we get the existence of a flat chain $T'\in\flatch^2$ supported in $\Omega$ and a constant $b>0$ (depending only on the dimensions of the flat chains and the ambient space) such that $\partial T' = S'+S_r$ and $\MM(T')\leq b(\MM(S') + \MM(S_r))^2$. 
Since $x_0\in\supp(S)$ it also holds that $\MM(S_r)\geq 2r$. This, together with the minimality of $S_r$ and \eqref{lem:reg_T_-_S_C1,1:competitor1} implies that
\begin{align}\label{lem:reg_T_-_S_C1,1:competitor2}
2\beta r
\ \leq \ \beta\MM(S_r) 
\ &\leq \ \beta\MM(S') + b(n+1)(\MM(S') + \MM(S_r))^2 \nonumber \\
\ &\leq \ \beta\MM(S') + b(n+1)\left(\MM(S') + 2\pi r \left( 1 + \frac{4(n+1)}{\beta} r \right)\right)^2 \\
\ &\leq \ \beta\MM(S') + C_1 r^2\, , \nonumber
\end{align}
for $C_1 = 2(2+2\pi)^2 b(n+1)$ and $r$ small enough. 
Hence, \eqref{lem:reg_T_-_S_C1,1:competitor2} implies $(2-(C_1/\beta) r)r\leq \MM(S')$.
If we now choose $r$ even smaller to assure $r\leq r_1\defi (C_1)^{-1}\beta$, one gets even $\MM(S')\geq r$, i.e.\ the points $x_1$ and $x_2$ must not be too close.

Our goal is now to show that $S_r$ is in fact close to $S'$ and that $S'$ is almost a diameter of $B_r(x_0)$, in the sense that $S_r$ lies in a small neighbourhood of $S'$ and the distance between $x_0$ and $S'$ is of order $r^2$.
Let's denote $\ell\defi\MM(S')=|x_2-x_1|$.
Suppose $\MM(S_r)\leq \ell + \alpha$ for a $\alpha>0$ and let $\rho>0$ be the smallest positive number such that $S_r$ lies within a $\rho-$neighbourhood of $S'$. Then, $\MM(S_r)\geq \sqrt{\ell^2 + 4\rho^2}$ and hence $\ell^2+4\rho^2\leq \MM(S_r)\leq (\ell + \alpha)^2$ which yields the bound
\begin{align}\label{lem:reg_T_-_S_C1,1:iteration_rho}
\rho \ \leq \ \sqrt{\frac{\ell\alpha}{2} + \frac{\alpha^2}{4}} \ \leq \ \sqrt{2 r \alpha}\, ,
\end{align}
provided $\alpha\leq 4r$ and $\ell\leq 2r$.
Applying this result to our case where $\alpha = \beta^{-1}C_1r^2$, we get $S_r$ is contained in a neighbourhood of $S'$ of radius $\rho\leq \sqrt{2\beta^{-1}C_1r^3}$.

In addition, if $S_r$ is supported in a $\rho-$cylinder around $S'$, there exists a $T_\rho\in\flatch^2$ and a constant $c$ (depending only on the space dimension) such that $\MM(T_\rho)\leq c \rho \MM(S_r)$ and $\partial T_\rho = S' + S_r$. 
This implies that $\MM(S_r) \leq \ell + \beta^{-1}(n+1)c\rho\MM(S_r)$. 
Previously, we have also shown that $\MM(S_r)\leq \ell + \beta^{-1}C_1 r^2 \leq 3r$, leading to
\begin{align}\label{lem:reg_T_-_S_C1,1:iteration_exponent}
\MM(S_r) \ \leq \ \ell + C_2 \rho r\, ,\quad\text{where } C_2 = 3c\frac{n+1}{\beta}\, .
\end{align}

Now, we want to iterate this procedure. Let $\alpha_0\defi \beta^{-1}C_1r^2$ as start of our induction. 
\begin{enumerate}
\item Knowing that $\MM(S_r)\leq \ell + \alpha_k$ (either by \eqref{lem:reg_T_-_S_C1,1:competitor2} or by induction hypothesis) and by \eqref{lem:reg_T_-_S_C1,1:iteration_rho} we can deduce that $S_r$ lies in a $\rho_k-$neighbourhood of $S'$, for $\rho_k = \sqrt{2r\alpha_k}$. 
\item Since $S_r$ lies in a $\rho_k-$neighbourhood of $S'$, one can use \eqref{lem:reg_T_-_S_C1,1:iteration_exponent} with $\rho=\rho_k$ to obtain $\MM(S_r)\leq \ell + \alpha_{k+1}$, where $\alpha_{k+1}\defi C_2 r \rho_k$.
\end{enumerate}
Throughout this iteration, $\alpha_k$ and $\rho_k$ verify $\rho_{k+1} = \sqrt{2r\alpha_{k+1}} = \sqrt{2C_2 \rho_k} \: r$.
Thus, $\rho_k$ converges to $2C_2 r^2$ in the limit $k\rightarrow \infty$.
We can conclude that the distance between a point in $S_r$ and $S'$ is at most $2C_2 r^2$.
In particular, since $x_0\in\supp(S_r)$, it holds that $\dist(x_0,\supp(S'))$ is of order $r^2$ which shows that the line $S'$ is close to being a diameter.

Let us turn now to the assertion of the lemma. 
For $x_0\in\supp(S)$ and $r>0$ chosen small enough, we denote $\tau_r(x_0)$ the vector $\frac{x_2-x_1}{\Vert x_2-x_1\Vert}$, where $x_1,x_2$ are constructed as before. 
By our previous calculations, we know that the corresponding points for $\frac{r}{2}$ are at most at distance $2C_2 r^2$ from the line connecting $x_1$ and $x_2$ which gives $\Vert\tau_r(x_0) - \tau_{\frac r2}(x_0)\Vert \leq C_3 r$. 
This shows that the limit $\tau(x_0) = \lim_{r\rightarrow 0} \tau_r(x_0)$ exists and that $\Vert\tau_r(x_0) - \tau(x_0)\Vert \leq 2C_3 r$. 
The triangle inequality then yields the existence of another constant $C_4>0$, depending on $\beta$ and $n$, such that for $x,y\in\supp(S)$ with $|x-y|=:r$ small enough we have $\Vert\tau(x) - \tau(y)\Vert \leq C_4 r$.

\dsg{
Now if $x_0\in \M$, we observe that as $r>0$,
the projection onto $\overline{\Omega}$ has a Lipschitz constant which converges to $1$ as $r\to 0$. 
We can reproduce the same construction, projecting back all the competitors onto $\overline{\Omega}$ and we end up with the same estimate, up to an error which vanishes as $r\to 0$. 
In particular, the curvature of $S$ is bounded by $C_4$.
}
\end{proof}

Having reached the optimal regularity for $S$, we now turn to the properties of $T$. 

\begin{lemma}\label{lem:reg_T_-_interior_T}
The flat chain $T\restr\Omega$ is supported on a hypersurface of class $C^1$ up to the boundary.
\end{lemma}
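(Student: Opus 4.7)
The overall strategy is to promote the penalized minimality of $T$ into an almost area-minimizing condition for $T\restr\Omega$, and then to invoke classical regularity theory for almost area-minimizing rectifiable $\mathbb{Z}_2$-chains.

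First, I would establish almost-minimality in the interior of $\Omega$. Fix $x_0 \in \supp(T)\cap\Omega$ and $r>0$ small enough that $\overline{B_r(x_0)} \subset \Omega$ and $\overline{B_r(x_0)} \cap \supp(S) = \emptyset$. For any competitor $\tilde T \in \flatch^2$ with $T - \tilde T$ supported in $B_r(x_0)$ and $\partial(T - \tilde T) = 0$, the cycle $T - \tilde T$ admits by (7.6) of \cite{Fleming1966} a filling $D \in \flatch^3$ supported in $\overline{B_r(x_0)}$ with $\MM(D) \leq b\, r\, \MM(T - \tilde T)$, so that $\FF(\tilde T - T_0) \leq \FF(T - T_0) + b\, r\, (\MM(T\restr B_r) + \MM(\tilde T\restr B_r))$. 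Since $B_r(x_0)$ does not meet $\M$ nor $\supp(S)$, the surface, angle and line terms in $\E_0$ restricted to $B_r(x_0)$ reduce to the area of $T\restr\Omega$, and minimality gives
\begin{align*}
\MM(T\restr B_r(x_0)) \ \leq \ \left(1 + C(n+1)\, r\right)\MM(\tilde T \restr B_r(x_0))
\end{align*}
for $r$ small enough, exactly as in the computation performed for $S$ in Lemma \ref{lem:reg_T_-_S_C1,1/2}.

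Second, I would apply interior regularity theory for such area almost-minimizers. Because the coefficient group is $\mathbb{Z}_2$, triple junctions would create boundary (excluded away from $S$), so the relevant model flat tangents are planes and Taylor's classification in \cite{Taylor1976} (or equivalently Almgren's $(\mathbf{M},\epsilon,\delta)$-minimal set theorem) yields that $\supp(T\restr\Omega) \setminus (\supp(S)\cup\M)$ is locally a $C^{1,\alpha}$ hypersurface.

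Third, I would treat the boundary $S$ inside $\Omega$. Since $S$ is a finite union of closed $C^{1,1}$ curves by Lemma \ref{lem:reg_T_-_S_C1,1}, the almost-minimality established above combined with the Hardt–Simon/Allard boundary regularity theorem for area-minimizing currents with smooth prescribed boundary (adapted to almost-minimizers with an $O(r)$ error term) gives $C^1$ regularity of $\supp(T\restr\Omega)$ up to $S$. For the part of the boundary lying on $\M$, one can locally parametrize $\M$ as a $C^{1,1}$-graph, reflect $T\restr\Omega$ across $\M$, and check that the reflected chain satisfies an almost-minimality inequality on the enlarged domain (the surface and angle terms contribute at most $O(r^2)$ on $B_r$); this reduces boundary regularity on $\M$ to interior regularity of the doubled chain.

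The main obstacle is the free boundary on $\M$: unlike the two-sided boundary at $S$, the surface $T\restr\Omega$ may freely attach to $\M$ along curves whose location is not prescribed, and the weighted term $\int_\M |\cos\theta|\, d\mu_{T\restr\M}$ couples the area of the free part of the boundary to the angle at which $T\restr\Omega$ meets $\M$. Handling this requires either the reflection trick above together with the observation that $|\cos\theta|$ is Lipschitz in the local tangent direction, or direct adaptation of free-boundary regularity results in the style of \cite{David2019,David2010}. Once this is settled, the combination of interior regularity, boundary regularity at $S$, and boundary regularity on $\M$ yields a $C^1$ hypersurface up to $\partial(T\restr\Omega)$, as claimed.
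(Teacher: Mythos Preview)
Your interior argument is essentially the paper's: establish almost-minimality of $T\restr\Omega$ against compactly supported variations and invoke Taylor/\cite{David2010}. The paper's bound is slightly simpler than yours: since any $2$-cycle with $\mathbb{Z}_2$ coefficients supported in $B_r$ bounds a $3$-chain of mass at most $|B_r|$, one has directly $\MM(T)\leq \MM(T')+n\FF(T-T')\leq \MM(T')+\tfrac{4}{3}\pi n r^3$, i.e.\ additive almost-minimality with excess $O(r^3)$, which already suffices.

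For boundary regularity at $S$ the paper takes a different route. Rather than adapt Hardt--Simon/Allard to $\mathbb{Z}_2$-chains with an almost-minimizing error (an adaptation that is not in the literature in this form), it verifies the hypotheses of Theorem~31.1 in \cite{David2019} by a direct monotonicity argument: comparing $T\restr B_r(x_0)$ against the cone over its boundary slice shows that $r\mapsto \MM(T\restr B_r(x_0))/r^2$ is monotone, hence blow-ups exist and are cones, hence half-planes, hence the density is $\tfrac12$; this is exactly condition (31.6) of David. The paper's approach stays within the almost-minimal-sets framework and avoids the mod-$2$ boundary-regularity issues you would have to confront.

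Your third step is unnecessary here. The lemma concerns $T\restr\Omega$ inside the \emph{open} set $\Omega$, and ``up to the boundary'' means up to $(\partial T)\restr\Omega=S\restr\Omega$; the free-boundary behaviour where $T$ meets $\M$ is not part of this statement. In the paper that issue is handled afterwards, not by regularity theory but by a slicing/projection argument (Lemma~\ref{lem:reg_T_-_finite_length_on_M} and Proposition~\ref{prop:upper_bound:approx_recov}), which only needs control of $\MM(T_n\restr\M_r)$ for a good radius $r$. So your reflection trick and the worry about the $|\cos\theta|$ weight, while reasonable in spirit, are aimed at a problem outside the scope of the present lemma.
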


\begin{proof}
We first discuss the regularity in the interior of $T\restr\Omega$. 
Let $x_0\in \Omega$, $r>0$ such that $B_r(x_0)\cap\supp(T\restr\Omega)\neq\emptyset$ and consider a variation $T'$ of $T$ in $B_r(x_0)$. 
Then, by minimality of $T$ we find
\begin{align*}
\MM(T) 
\ &\leq \ \MM(T') \ + \ n\FF(T-T') 
\ \leq \ \MM(T') \ + \ \frac{4}{3}\pi n r^3\, . 
\end{align*}
We can then apply the result of Taylor \cite{Taylor1976}, or more general Theorem 1.15 in \cite{David2010} to obtain $C^{1,\alpha}-$regularity in $\Omega$, for some $\alpha>0$. 

For the regularity up to the boundary we want to apply Theorem 31.1 in \cite{David2019}. In order to do this we need to show that on a certain scale, the boundary $S$ is close to a straight line and $T$ is almost flat. 

Take a point of rectifiability $x_0\in S$. 
We define a blow-up sequence $r_k\searrow 0$. 
Since $S$ is supported by $C^{1,1}-$curves, a blow up of $S$ converges to a straight line.
We claim that a blow up of $T$ converges to a limit $T_0$ which is a half plane. 
For this, we use the minimality of $T$ to deduce for $r>0$ small enough that
\begin{align*}
\MM(T\restr B_r(x_0)) + 2\beta r 
\ &\leq \ \MM(T\restr B_r(x_0)) + \beta\MM((\partial T)\restr B_r(x_0)) \\
\ &\leq \ \MM(\mathrm{cone}(T\restr \partial B_r(x_0))) + \beta\MM(\mathrm{cone}((\partial T)\restr \partial B_r(x_0))) \\
\ &\leq \ \frac{r}{2}\MM(T\restr \partial B_r(x_0)) + \beta r \MM((\partial T)\restr \partial B_r(x_0)) \\
\ &= \ \frac{r}{2}\MM(T\restr \partial B_r(x_0)) + 2\beta r\, .
\end{align*}
This implies that $\MM(T\restr B_r(x_0))/r^2$ is monotonically increasing and thus admits a unique limit $d$. 
We define $T_{r_k}=(T-x_0)/r_k$ and by monotonicity we get for $s_1<s_2$ that $\MM(T_{r_k}\restr B_{s_1})/s_1^2 \leq \MM(T_{r_k}\restr B_{s_2})/s_2^2$. For $r_k\rightarrow 0$ both sides converge to the same limit $\pi d$. 
But this means that $\MM(T_{0}\restr B_{s_1})/s_1^2 = \MM(T_{0}\restr B_{s_2})/s_2^2$ , i.e.\ $T_0$ is a cone and hence a half-plane. 
Since a half plane has density $\frac12$, we find $d=\frac{1}{2}$. 
In particular, we have for $k$ large enough
\begin{align*}
\MM\left( \frac{T_{r_k}-x_0}{r_k}\restr B_1 \right) 
\ &= \ \frac{\pi}{2} + o(1)\, ,
\end{align*}
from which it follows that condition (31.6) in \cite{David2019} holds and thus we can apply Theorem 31.1 on a length scale $R\leq r_k$.
We remark that by convergence in the flat norm, following \cite{Matveev2016}, we also verify the condition (31.4) of Theorem 31.1 in \cite{David2019}.
By compactness of the boundary $(\partial T)\restr\Omega$, we find a finite cover with balls of uniformly positive radius to which we can apply Theorem 31.1. This allows us to conclude.
\end{proof}

\begin{proof}[Proof of Proposition~\ref{prop:low_reg_T}]
We have already established the existence of a minimizer of \eqref{eq:limit_energy_T_regularity}. 
The convergence $\FF(T_n - T_0) \rightarrow 0$ for $n\rightarrow\infty$ is obvious since $n\:\FF(T_n - T_0)\leq \E_0(T_0)<\infty$ for all $n\in\mathbb{N}$.

The regularity of $T_n$ follows from Lemma~\ref{lem:reg_T_-_S_C1,1} and Lemma~\ref{lem:reg_T_-_interior_T}. 
\end{proof}

\subsection{Construction of the recovery sequence}

In this section we will use the approximation of $T$ given by the minimizer of \eqref{eq:limit_energy_T_regularity} to construct our recovery sequence.
First we establish the following Proposition which yields additional control over $\partial(T\restr\M)\setminus\partial T$ and its boundary which will be necessary for the final construction in Proposition~\ref{prop:upper_bound:recov_Q}.

\begin{proposition}\label{prop:upper_bound:approx_recov}
Let $T\subset\overline{\Omega}$ be a flat $2-$chain of finite mass and $S\subset\overline{\Omega}$ be a flat $1-$chain of finite mass such that $\partial S = 0$ and $\partial T = S + \Gamma$. 
Then, there exist finite mass flat chains $T_n\in\flatch^2$ of class $\mathrm{Lip}$ up to the boundary and $S_n\in\flatch^1$ of class $C^{1,1}$ such that
\begin{enumerate}
\item $\partial S_n = 0$ and $\partial T_n = S_n + \Gamma$,
\item $\FF(T_n-T)\rightarrow 0$ and $\E_0(T_n)\rightarrow \E_0(T)$  as $n\rightarrow\infty$,
\item and there exists a constant $C_n>0$ such that $\MM(\partial (T_n\restr\M)\setminus\partial T_n )\leq C_n$ and $\MM(\partial(\partial (T_n\restr\M)\setminus\partial T_n ))\leq C_n$. 
\end{enumerate}
\end{proposition}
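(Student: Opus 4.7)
My plan is to define $T_n$ as the minimizer of the regularized variational problem~\eqref{eq:limit_energy_T_regularity} supplied by Proposition~\ref{prop:low_reg_T}, and to set $S_n \defi \partial T_n - \Gamma$. The conditions in part~(1) are then automatic: $\partial T_n = S_n + \Gamma$ holds tautologically, and $\partial S_n = \partial^2 T_n - \partial\Gamma = 0$ since $\Gamma$ is a union of closed $C^2$-curves on $\M$. The Lipschitz regularity of $T_n$ up to the boundary is exactly Proposition~\ref{prop:low_reg_T}(2), and the $C^{1,1}$-regularity of $S_n \restr \Omega$ is Proposition~\ref{prop:low_reg_T}(3); if $S_n$ has a portion lying on $\M$, I would upgrade it to $C^{1,1}$ by adapting the competitor argument of Lemma~\ref{lem:reg_T_-_S_C1,1} using projections onto $\M$ (as was already done in Lemma~\ref{lem:reg_T_-_S_C1,1/2}).

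For part~(2), the flat-norm convergence $\FF(T_n - T) \to 0$ is part~(1) of Proposition~\ref{prop:low_reg_T}. For $\E_0(T_n) \to \E_0(T)$, I would combine minimality — which yields $\E_0(T_n) + n\FF(T_n - T) \leq \E_0(T)$, hence $\limsup_n \E_0(T_n) \leq \E_0(T)$ — with the lower semi-continuity of each summand of $\E_0$ along $\FF$-convergent sequences of bounded mass. The mass terms $\MM(S_n)$ and $\MM(T_n \restr \Omega)$ are classically lsc, and the boundary integral $\int_\M |\cos\theta| \dx \mu_{T_n \restr \M}$ is lsc as the integral of a nonnegative bounded measurable function against a weakly-$*$ converging sequence of Radon measures with controlled total variation.

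The main new content is property~(3). The key identification is
\[
\partial(T_n \restr \M) \setminus \partial T_n \;=\; I_n \defi \overline{T_n \restr \Omega} \cap \M,
\]
the interface curve along which the ``free'' piece $T_n \restr \Omega$ attaches to the particle surface. By $C^1$-regularity of $T_n \restr \Omega$ up to its boundary and $C^{1,1}$-regularity of $\M$, and after a small transversal perturbation (which preserves $\FF(T_n - T) \to 0$ and the energy limit), the set $I_n$ is a finite union of $C^1$-arcs on the compact manifold $\M$, yielding a finite (possibly $n$-dependent) bound $\MM(I_n) \leq C_n$. Its endpoints $\partial I_n$ are the transverse intersection points of $S_n$ with $\M$, which by the $C^{1,1}$-regularity of $S_n \restr \Omega$ together with compactness form a finite set, giving $\MM(\partial I_n) \leq C_n$ as well.

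The main obstacle I expect is precisely the transversality issue in step~(3): a priori the $C^1$-surface $T_n \restr \Omega$ could touch $\M$ tangentially along a 2-dimensional set, in which case $I_n$ need not be 1-rectifiable of finite length. I anticipate this to be ruled out by a local minimality argument — detaching a tangential contact patch strictly decreases $\MM(T_n \restr \Omega)$ while creating no new boundary length, contradicting minimality in~\eqref{eq:limit_energy_T_regularity} — but at worst one performs a small generic perturbation of either $T_n$ or $\M$ to restore transversality without disturbing the flat-norm convergence or the energy limit.
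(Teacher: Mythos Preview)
Your treatment of parts~(1) and~(2) matches the paper's: $T_n$ is the minimizer from Proposition~\ref{prop:low_reg_T}, $S_n=\partial T_n-\Gamma$, flat convergence and the energy inequality $\E_0(T_n)\leq\E_0(T)$ come directly from minimality, and lower semi-continuity closes the loop.

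For part~(3), however, your approach diverges from the paper's and carries a real gap. You invoke the $C^1$-regularity of $T_n\restr\Omega$ up to the boundary to control the interface $I_n=\overline{T_n\restr\Omega}\cap\M$. But look again at Lemma~\ref{lem:reg_T_-_interior_T}: the boundary regularity there is obtained by blow-up at points $x_0\in S$, i.e.\ at the \emph{free} boundary in $\Omega$. Nothing in Proposition~\ref{prop:low_reg_T} establishes regularity of $T_n\restr\Omega$ as one approaches $\M$, so you cannot conclude that $I_n$ is a finite collection of $C^1$-arcs. Your proposed fixes are also fragile: the ``detach a tangential patch'' competitor does not obviously decrease $\E_0$, since surface on $\M$ is weighted by $|\cos\theta|\leq 1$ while surface in $\Omega$ carries weight $1$, so pushing mass off $\M$ can \emph{increase} energy; and a generic perturbation of $T_n$ or $\M$ respecting the constraint $\supp T_n\subset\overline\Omega$, $\partial T_n=S_n+\Gamma$ and the energy convergence is not obviously available.

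The paper sidesteps the transversality issue entirely. It uses a coarea/averaging argument (Lemma~\ref{lem:reg_T_-_finite_length_on_M}): since $\int_0^{\radM/2}\MM(T_n\restr\M_r)\,dr\leq c\,\MM(T_n)$, one can pick a radius $r_n\in(0,\tfrac12\radM)$, $r_n\to 0$, with $\MM(T_n\restr\M_{r_n})\leq C$ and simultaneously $\MM(S_n\restr\M_{r_n})<\infty$. One then \emph{modifies} $T_n$ in the layer $\{\dist(\cdot,\M)\leq r_n\}$ by replacing it with the image under the homotopy $\Phi(x,t)=(1-t)x+t\Pi_\M x$ of the slice $T_n\restr\M_{r_n}$. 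After this modification the attachment curve is (the projection of) the controlled slice, so $\MM(\partial(T_n\restr\M)\setminus\partial T_n)\leq C_n$ and likewise for its boundary. Because $r_n\to 0$, the modification does not spoil $\FF(T_n-T)\to 0$ or $\E_0(T_n)\to\E_0(T)$. This slicing-and-projection trick is the missing ingredient in your proposal.
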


Essentially, the first two parts of Proposition are proved by Proposition~\ref{prop:low_reg_T}, saying that the minimizer $T_n$ of \eqref{eq:limit_energy_T_regularity} fulfils our claims.
It remains to prove the last assertion i.e.\ that we can modify $T_n$ to control the length of the set where the  $T_n$ attaches to $\M$. For this, we need the following average argument stating that we can find radii $r$ such that the corresponding sets $T_n\restr\M_r$, for $\M_r\defi\{x\in\Omega\sd\dist(x,\M)=r\}$, are of finite length.

\begin{lemma}\label{lem:reg_T_-_finite_length_on_M}
Let $T_n$ be as constructed in the previous subsection.
There exist a constant $c>0$ and a radius $r\in (0,\frac12\radM)$ such that 
\begin{align}
\MM(T_n\restr\M_r)
\ &\leq \ \frac{4c\MM(T_n)}{\radM}\, .
\end{align}
\end{lemma}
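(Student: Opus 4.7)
The strategy is to apply a coarea/slicing argument for flat chains to the distance function $d(x) \defi \dist(x,\M)$, restricted to a tubular neighbourhood of $\M$. The function $d$ is $1$-Lipschitz, and since $\radM$ is strictly smaller than the minimal curvature radius of $\M$, the level sets $\M_r$ for $r\in(0,\radM)$ are well-defined smooth surfaces parallel to $\M$. The point is that the level sets form a foliation of the tubular neighbourhood and the total mass of $T_n$ restricted to this neighbourhood controls the average mass on a slice.

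First I would invoke the slicing inequality for flat chains (see \cite[Sec.~3]{Fleming1966} or the coarea-type statement in Section~4 of the same reference), which yields a constant $c>0$ depending only on $\text{Lip}(d)=1$ such that
\begin{align*}
\int_0^{\frac{1}{2}\radM} \MM(T_n\restr\M_r) \dx r \ \leq \ c\:\MM\bigl(T_n\restr\{x\in\Omega\sd 0<d(x)<\tfrac12\radM\}\bigr) \ \leq \ c\:\MM(T_n)\, .
\end{align*}
Here $T_n\restr\M_r$ is to be interpreted as the one-dimensional slice of the two-dimensional chain $T_n$ by the level set $\{d=r\}$, which is a well-defined flat $1$-chain for almost every $r\in(0,\frac12\radM)$ by the standard slicing theory for flat chains.

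Then, by a simple averaging (Chebyshev/pigeonhole) argument applied to the non-negative function $r\mapsto \MM(T_n\restr\M_r)$ on the interval $(0,\frac12\radM)$ of length $\frac12\radM$, there must exist $r\in(0,\frac12\radM)$ for which
\begin{align*}
\MM(T_n\restr\M_r) \ \leq \ \frac{2}{\radM}\int_0^{\frac12\radM}\MM(T_n\restr\M_s)\dx s \ \leq \ \frac{2c\:\MM(T_n)}{\radM}\, .
\end{align*}
Up to harmlessly enlarging the constant, this gives the claimed bound $\MM(T_n\restr\M_r)\leq 4c\MM(T_n)/\radM$.

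The only minor technical point that requires care is to make sure that the slicing of $T_n$ by the parallel surfaces $\M_r$ is genuinely well-defined: this follows from the fact that $d$ is Lipschitz and smooth in the tubular neighbourhood (since $r<\radM$ is below the minimal curvature radius of $\M$), so that the level sets $\M_r$ are $C^{1,1}$-surfaces, and from the general theory that the slice $\langle T_n,d,r\rangle$ has finite mass and satisfies the above coarea inequality for a.e.\ $r$. This is the only delicate step; once it is in place, the averaging is routine.
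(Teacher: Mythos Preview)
Your proposal is correct and follows essentially the same approach as the paper's proof. The paper also invokes the coarea/slicing inequality for flat chains from \cite[(5.7)]{Fleming1966} to bound $\int_0^{\radM/2}\mu_{T_n}(\M_r)\dx r$ by $c\,\MM(T_n)$ and then concludes by the same averaging step, only phrasing it as a proof by contradiction rather than a direct pigeonhole argument.
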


\begin{proof}
Assume that $\MM(T_n\restr\M_r) > \frac{4c\MM(T_n)}{\radM}$ for all $r\in (0,\frac12\radM)$ and some $c>0$. This implies
\begin{align*}
\int_0^{\radM/2} \mu_{T_n}(\M_r)\dx r 
\ > \ 2c\MM(T_n)\, .
\end{align*}
Now, there exists a constant $c>0$ such that $\int_0^{\radM/2} \mu_{T_n}(\M_r)\dx r \leq c \MM(T_n)$ (see (5.7) in \cite{Fleming1966}). Hence, the lemma is proved. 
\end{proof}

Now, we can modify $T_n$ by replacing a small part close to $\M$ by a projection to control the boundary of $T_n\restr\M$ which is not included in $S$.

\ds{
\begin{proof}[Proof of Proposition~\ref{prop:upper_bound:approx_recov}]
We construct $T_n$ as in Proposition~\ref{prop:low_reg_T}.
To ensure the additional estimate, we choose a radius $r$ and a slice $\M_r$ as in Lemma~\ref{lem:reg_T_-_finite_length_on_M}.
With the same argument as in Lemma~\ref{lem:reg_T_-_finite_length_on_M} for $S_n$ one can choose $r\in (0,\frac12\radM)$ for which additionally $\MM(S_n\restr\M_r)$ is finite. 
Let $\Pi:\Omega_{\radM}\rightarrow\M$ be the projection onto $\M$.
We define $\Phi:\M_r\times[0,1]\rightarrow \overline{\Omega}$ by $\Phi(x,t)\defi (1-t)x + t\Pi x$.
Then, by \cite[Sec. 2.7]{Federer1960}, \cite[Cor. 2.10.11]{Federer1996}, $ \MM(\Phi_\#(T_n\restr\M_r\times [0,1])) \leq C r\MM(T_n\restr\M_r)$ and also $\MM(\Pi_\#(T_n\restr\M_r))\leq C\MM(T_n\restr\M_r)$.
Again by the same argument, we get $\MM(\partial\Pi_\#(T_n\restr\M_r))\leq C\MM(\partial(T_n\restr\M_r))$.
This procedure can be applied to almost every $r\in (0,\frac12\radM)$, in particular, we can choose a sequence $r_n\rightarrow 0$ as $n\rightarrow \infty$. 
Replacing $T_n$ close to $\M$ with these projections, we get the desired estimates.

The convergence of the energy $\E_0(T_n)$ to $\E_0(T)$ is a consequence of the convergence statements in Proposition~\ref{prop:low_reg_T} and the fact that $T_n\restr\M$ approaches $T\restr\M$.
\end{proof}

The recovery sequence $\Qex$ for our problem will be constructed around the regularized sequence of $T$. 
The gained regularity permits us to define $\Qex$ directly and without the need to write $T$ as a complex and ``glue'' together the parts of $\Qex$ on each simplex. 

%
%

\begin{proposition}\label{prop:upper_bound:recov_Q}
There exists a recovery sequence $\Qex$ for the problem \eqref{thm:main:limsup}.
\end{proposition}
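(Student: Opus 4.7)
By Proposition \ref{prop:upper_bound:approx_recov}, it suffices to produce, for each regularized pair $(T_n,S_n)$ (with $T_n$ Lipschitz up to its boundary, $S_n$ of class $C^{1,1}$, $\partial T_n = S_n+\Gamma$ and $\MM(\partial(T_n\restr\M)\setminus\partial T_n)\leq C_n$), a sequence $Q_{\eta,\xi}^{(n)}\in\A$ with
\begin{align*}
\limsup_{\eta,\xi\to 0}\eta\,\Eex^\A(Q_{\eta,\xi}^{(n)})\;\leq\;\E_0(T_n,S_n)\, ,
\end{align*}
together with the flat-chain convergences of Theorem \ref{thm:main}. Since $\E_0(T_n,S_n)\to\E_0(T,S)$ and $\FF(T_n-T)+\FF(S_n-S)\to 0$ as $n\to\infty$, a diagonal extraction then delivers the desired recovery sequence for $(T,S)$. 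Orientation issues are handled using the Lipschitz extension $v$ of $\nu$ to $\Omega$ from the preceding lemma, which selects a continuous unit-normal field to each smooth piece appearing below and fixes the sign conventions for $\nn(x)\cdot\ee_3$.

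The construction of $Q_{\eta,\xi}^{(n)}$ is by pieces, glued across thin transition layers. Introduce four length scales: $\xi \ll \eta \ll \sqrt\eta \ll r_0$, where $r_0\in(0,\radM)$ is small but fixed. On a tubular neighbourhood $U^S_{r_0}$ of $S_n$ of radius $r_0$, parameterize by the arc length $\sigma$ and the radial coordinate $\rho$ in the plane normal to $S_n$; in the disk $\{\rho\leq r_0\}$ put the standard planar degree-$1$ (uniaxial-biaxial) profile $Q^{\rm line}(\rho,\varphi)$ used in \cite{Canevari2015,Canevari2020}, which is biaxial on $\{\rho\lesssim\xi\}$, is uniaxial on $\{\rho\gtrsim\xi\}$ with director rotating by $\pi$ along one loop, and satisfies $\frac12\int_{B_{r_0}\setminus B_\xi}|\nabla Q^{\rm line}|^2\,dx\to \tfrac{\pi}{2}s_*^2|\ln\xi|$. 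Multiplied by $\eta$ and using $\eta|\ln\xi|\to\beta$, this yields the line contribution $\tfrac{\pi}{2}s_*^2\beta\,\MM(S_n)$. On a slab $U^T_{\sqrt\eta}$ of thickness $2\sqrt\eta$ around $T_n\restr\Omega$ (minus $U^S_{r_0}$), parameterize by the signed normal distance $t$ and set $Q_{\eta,\xi}^{(n)}(x)=s_*(\nn(x)\otimes\nn(x)-\tfrac13\id)$, where $\nn(x)$ rotates from $-\ee_3$ at $t=-\sqrt\eta$ to $\ee_3$ at $t=+\sqrt\eta$ through the $n_3=0$ surface, following the optimal 1D profile provided by Lemma \ref{lem:radial_turning}; after rescaling by $\eta$ this contributes exactly $4s_*c_*\MM(T_n\restr\Omega)$. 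In the boundary layer $\M_{\sqrt\eta}=\{x\in\Omega:\dist(x,\M)\leq\sqrt\eta\}$, parametrize by $(\omega,r)\mapsto\omega+r\nu(\omega)$ and use $\nnopt(r,\theta(\omega))$ from \eqref{lem:radial_turning:nn3_opt}: on the portion of $\M$ inside $F$ we interpolate $n_3$ from $\cos\theta$ at the boundary to $+1$ at $r=\sqrt\eta$, giving the density $2s_*c_*(1-\cos\theta)$; on $\M\setminus F$ we interpolate from $\cos\theta$ to $-1$, giving $2s_*c_*(1+\cos\theta)$. Outside these neighbourhoods and beyond a large ball we simply set $Q_{\eta,\xi}^{(n)}=\Qexinfty$.

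The total energy is estimated layer by layer. The Dirichlet plus $\xi^{-2}f$ part of the line construction yields $\tfrac{\pi}{2}s_*^2|\ln\xi|\,\MM(S_n)+o(|\ln\xi|)$; the $\eta^{-2}g$ part on a tube of radius $r_0$ contributes $O(r_0^2\MM(S_n)/\eta^2)$ whose $\eta$-multiple is negligible if $r_0=r_0(\eta)\to 0$ slowly enough. The slab around $T_n$ contributes $\eta^{-1}\cdot\eta\int I_{\rm opt}\,d\mathcal{H}^2=4s_*c_*\MM(T_n\restr\Omega)+o(1)$ using the exponential tails of $\nnopt$ so that the truncation at $t=\pm\sqrt\eta$ costs $o(1)$; the integrand that could appear on $\partial(T_n\restr\M)\setminus\partial T_n$ is controlled by the $C_n$-bound of Proposition \ref{prop:upper_bound:approx_recov} and is absorbed in $o(1)$. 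The boundary layer on $\M$ yields after integration $2s_*c_*E_0(\M,\ee_3)+4s_*c_*\int_\M|\cos\theta|\,d\mu_{T_n\restr\M}+o(1)$, where the $|\cos\theta|$ term accounts for those $\omega\in\M\cap G$ where the direction of rotation was flipped. The transition annuli between the four regions (of controlled Hausdorff measure and width $\sqrt\eta\vee r_0$) contribute $o(1/\eta)$ to the Dirichlet energy and $o(1/\eta)$ to the potential terms by a direct cutoff estimate, using the bounded curvatures of $S_n$, $T_n$ and $\M$. Summing and multiplying by $\eta$ gives precisely $\E_0(T_n,S_n)+o(1)$.

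The flat-chain compactness claim \eqref{thm:main:cptness} is then verified as follows: by construction, for almost every small $Y\in\Sym$ the preimage $\Texn=(Q_{\eta,\xi}^{(n)}-Y)^{-1}(\T)$ coincides, outside $U^S_{r_0}$, with a $C^1$-small perturbation of $T_n\restr\Omega$ together with the piece of $\M$ flagged by $G$, while $\Sexn=(Q_{\eta,\xi}^{(n)}-Y)^{-1}(\C)$ coincides with an $O(\xi)$-neighbourhood shadow of $S_n$; the flat-norm distances are controlled by the mass of these perturbations, which tends to $0$. The main obstacle is not any single layer (each of which is a direct but technical computation) but the \emph{gluing}: near $\Gamma$, where the line defect reaches $\M$, the three constructions (line tube, surface slab, boundary layer) meet, and the consistency of the director field (sign of $\nn\cdot\ee_3$ across $T_n$, compatibility with the $\Z_2$-topology imposed by $\pi_1(\N)=\mathbb{Z}_2$, and matching with the Dirichlet datum $s_*(\nu\otimes\nu-\tfrac13\id)$) has to be arranged so that the transition shell contributes only $o(1)$ after multiplication by $\eta$. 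This is done by choosing $r_0=r_0(\eta,\xi)$ with $r_0\to 0$ and $\eta/r_0^2\to 0$, and by performing the angular interpolation of $\nn$ on each transition annulus over an angular window of size $O(1)$, which bounds the Dirichlet integrand by a constant and the $g$-integrand by $c_*^2$, both multiplied by the vanishing volume of the annulus. Once this is verified for $(T_n,S_n)$, a final diagonal extraction in $n\to\infty$ closes the proof of Proposition \ref{prop:upper_bound:recov_Q}.
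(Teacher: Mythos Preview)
Your overall strategy---build the sequence for each regularized $(T_n,S_n)$ from four local profiles (line tube, $T$-slab, boundary layer, far field), glue, then diagonalize in $n$---matches the paper's, and the diagonalization-in-$n$ is a legitimate alternative to the paper's choice $n=n(\eta)$. However, there is a genuine inconsistency in your length scales around the defect line. You state both that the $g$-contribution on the tube $U^S_{r_0}$ is $\eta\cdot O(r_0^2/\eta^2)=O(r_0^2/\eta)$ and that this is negligible, and that you take $r_0$ with $\eta/r_0^2\to 0$. These two requirements are incompatible: the first forces $r_0=o(\sqrt\eta)$, the second forces $r_0\gg\sqrt\eta$. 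With the choice you actually write down ($r_0\gg\sqrt\eta$), the $g$-energy on the line tube blows up and the $\limsup$ inequality fails. The phrase ``slowly enough'' is also in the wrong direction: to kill the $g$-term you need $r_0\to 0$ \emph{fast enough} (namely $o(\sqrt\eta)$), while still having $\eta|\ln r_0|\to 0$ to retain the full line energy.

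The fix is what the paper does: do \emph{not} put the rotating $\pm\tfrac12$-profile on the whole tube of radius $r_0$. Instead, confine the rotating director $\nn(\phi)=(\sin(\phi/2),0,\cos(\phi/2))$ to a tube of radius $\eta$ around $S_n$, and then use the one-dimensional optimal transition $\Phi_\eta^\pm$ (based on $\nnopt$) along the projection direction to pass from this inner boundary to $Q_\infty$ over a shell of radius $\eta^\gamma$ with $\gamma\in(\tfrac12,1)$. This intermediate scale $\eta^\gamma$ (strictly between $\eta$ and $\sqrt\eta$) is the crucial choice: it is large enough compared to $\eta$ that the optimal profile has essentially completed its transition (so the truncation error is $o(1)$), yet small enough that the residual $g$-cost on the shell is $\eta^{2\gamma-1}\to 0$. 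Your ``angular interpolation on an annulus'' does not provide this; the matching between the rotating half-vortex and the $\pm\ee_3$ far field has to be done through the radial $\Phi_\eta^\pm$ profile, not an angular cutoff. Once you replace $r_0$ by $\eta^\gamma$ and insert this transition layer, your Steps for $T$, the boundary layer, and the $\partial(T_n\restr\M)\setminus\partial T_n$ piece go through essentially as written.
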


The construction relies on the approximation and regularisation made in the previous subsection.
We will construct $\Qex$ step by step: 
The straightforward parts are the profile on $F$ and $\M\setminus F$ away from $\partial F$, as well as the transition across $T$. 
We recall the notation $F$ from \eqref{def:set_F}
that if we write $\mu_{T\restr\M} = \chi_G \mathcal{H}^2\restr\M$ for a measurable set $G\subset\M$, then $F$ is given by
\begin{align*}
F \defi \{\omega\in\M\setminus G\sd \nu(\omega)\cdot\ee_3>0\} \cup \{\omega\in\M\cap G\sd\nu(\omega)\cdot\ee_3\leq 0\}\, .
\end{align*}
In order to be compatible with the latter, we have to adjust the construction made in \cite{ACS2021} for the singular line $S$. 
The profile of the part of $S$ that approaches the surface $\M$ can be chosen as in \cite{ACS2021}. 
Last, we need to connect $\partial F\setminus S$ to the profile of $T$ already constructed. 
This last part is a bit subtle since $\partial F\setminus S$ does not appear in the energy. 
The control we obtained in Proposition~\ref{prop:upper_bound:approx_recov} is artificial and indeed we do not control the length of $\partial F\setminus S$. 
\ds{
Another problem for our construction originates in the fact that the optimal profile $\nnopt$ needs to be accompanied by a horizontal vectorfield (denoted $v$ in the proof) to form a director field. 
Far from $\M$, this can be chosen constant, but on $\M$ the director must match the normal.
In order to be able to construct a regular vectorfield $v$, we need to ``cut holes'' into $T$ in regions on $\M$ where $\nu=\pm\ee_3$ that are also covered by $T$.
Similarly, we also cut out some pieces of $T$ close to $\M$ as the transition of $v$ from boundary data to constant far from $\M$ would result in infinite energy.
} 

\begin{proof}
From now on, we fix $n$ large. In view of Proposition~\ref{prop:upper_bound:approx_recov} we can find a constant $0 < C_n < +\infty$ such that $\MM(\partial(T_n\restr\M)),\MM(\partial(\partial(T_n\restr\M)\setminus \partial T_n))\leq C_n$ and that the curvature of $S_n$ is also bounded by $C_n$.

Furthermore, whenever this does not lead to confusion, we drop the subscript parameters $\eta,\xi$ and $n$ in order to make the construction more readable.

\ds{
\textit{Step 0: Modification of $T$.}
We start by noting that the construction ``close'' to the particle surface $\M$ will take place in a neighbourhood of size $\eta$. 
More precisely, let $M>1$. We will focus on on the $3\radP$ neighbourhood of $\M$ in $\Omega$, denoted by $\M_{3\radP}$.
By taking $3\radP<\tfrac12\radM$, we can ensure that the extension $\overline{\nu}=\nu\circ\Pi_\M$ exists on $\M_{3\radP}$ as before. 

Throughout the construction, we make the assumption that $\mathcal{H}^0(\supp(T)\cap \overline{\nu}^{-1}(\pm\ee_3))$ is a finite set, say $\{x_1,...,x_{N}\}$ for some $N\in\mathbb{N}$.
We furthermore assume that $x_i\notin S$.
It is a simplification and we will explain in Step 7 how to adapt the proof for the general case.

If all points $x_i$ for $i=1,...,N$ lie inside $\Omega$, we can choose $0<\epsilon'<\min\{\tfrac{\epsilon}{N} , \tfrac12 \dist(x_i,\M)\sd i=1,\ldots,N \}$. 
Then by slicing it holds that
\begin{align}\label{prop:upper_bound:recov_Q:slicing_for_holes}
\int_{\epsilon'/2}^{\epsilon'} \MM(\partial(T\setminus B_s(x_i)) \dx s
\ &\leq \ 
\MM(T\restr B_{\epsilon'}(x))
\ \leq \ C (\epsilon')^2 \, ,
\end{align}
where the last inequality follows from the $C^1$ regularity of $T$ but may also be easily deduced from the minimality of $T$, similarly to the case $x_i\in M$ discussed below.
From \eqref{prop:upper_bound:recov_Q:slicing_for_holes} it follows that we can choose a radius $s_i\in (\tfrac{\epsilon'}{2},\epsilon')$ such that $\MM(\partial(T\setminus B_{s_i}(x_i))\leq C \epsilon'$.
We define $\tilde{T} \defi T\setminus \bigcup_{i=1}^N B_{s_i}(x_i)$. 
Note that $\partial\tilde{T}$ differs from $\partial T$ since we introduced boundaries coming from $\partial B_{s_i}(x_i)$.
Since the lengths of those boundaries are controlled by $C\epsilon'$, the energy of $\tilde{T}$ can be estimated as $\E_0(\tilde{T})\leq \E_0(T) + C N \epsilon' \leq \E_0(T) + C \epsilon$. 

In the case where one (or more) point $x_i$ lies on $\M$, we also ``cut a hole'' into $T_n$ around $x_i$ in the following way.
By minimality of $T_n$ we compare the energy in \eqref{eq:limit_energy_T_regularity} with $T'$ where the $T_n\restr B_{\epsilon'}(x_i)$ has been pushed out onto the boundary of $B_{\epsilon'}(x_i)\cap\Omega$. 
The newly created surface has surface at most $4\pi(\epsilon')^2$ and the additional term in the flat distance is of order $n(\epsilon')^3$ so that the difference in energy between $T$ and $T'$ is of order $(\epsilon')^2$.
Since $\nu_3(x_i)=1$ it holds $\nu_3(x)\geq \tfrac12$ for $x$ in a neighbourhood of each $x_i$, and therefore $|\MM(T) - \MM(T')|\leq C (\epsilon')^2$. 
We can proceed as before by slicing to select a radius $s_i$ and then modify $T'$ to obtain a $\tilde{T}$.

With this procedure we obtain a modified flat chain $\tilde{T}$ with     does not intersect $\{\overline{\nu} = \pm \ee_3\}$ in $\M_{3\radP}$ and with energy $\E_0(\tilde{T})\leq \E_0(T) + C\epsilon$.
We will continue to work with $\tilde{T}$ in the next steps and omit the tilde in our notation. 

} 

\textit{Step 1: Adaptation of the optimal profile.}
The goal of this step is to construct a one dimensional profile close to the optimal one in Lemma~\ref{lem:radial_turning}, but where the transition takes place on a finite length and which gives the correct energy density \eqref{lem:radial_turning:cos} for $\eta\rightarrow 0$.
To this goal, we use the ``artificial'' length scale $\radP$ introduced in Step 0 and define
\begin{align}\label{prop:upper_bound:recov_Q:def_quasi_optimal_profile_1}
\Phi^{\pm}_\eta(t,\theta,v) 
\ := \ 
\sstar(\nn^{\pm}(t/\eta,\theta)\otimes\nn^{\pm}(t/\eta,\theta) - \frac13\id) \quad\text{ for } t\in [0,\radP]\, ,
\end{align}
with $\nn^\pm=(\sqrt{1-\nnopt^2}(v_1,v_2),\pm\nnopt)$, where $\nnopt(t,\theta)$ is the optimal profile from \eqref{lem:radial_turning:nn3_opt} and $(v_1,v_2)\in\mathbb{S}^1$. 
We will later take the limit $\eta\rightarrow0$ and $M\rightarrow\infty$ such that $\nn^\pm(\radP/\eta)\rightarrow\pm \ee_3$. 
Now we define $\Phi^{\pm}_\eta(t,\theta,v) $ for $t$ in the interval $[\radP,2\radP]$ to be
\begin{align}\label{prop:upper_bound:recov_Q:def_quasi_optimal_profile_2}
\Phi^{\pm}_\eta(t,\theta,v) 
\ := \ 
\sstar(\nn^{\pm}_\text{inter}(t,\theta)\otimes\nn^{\pm}_\text{inter}(t,\theta) - \frac13\id)\quad\text{ for } t\in (\radP,2\radP]\, ,
\end{align}
where $\nn^{\pm}_\text{inter}(t,\theta)$ is the unit vector interpolating between $\nn^{\pm}(M,\theta)$ and $\pm\ee_3$, that is, for $\alpha = \pm\arcsin(\nn^\pm(M,\theta)\cdot\ee_3)$ 
\begin{align*}
\nn^{\pm}_\text{inter}(t,\theta)
\ = \ 
\cos\left(\alpha\tfrac{2\radP-t}{\radP} \pm \tfrac{\pi}{2}\tfrac{t-\radP}{\radP}\right)
\hat{\nn}^\pm(M,\theta)
+
\sin\left(\alpha\tfrac{2\radP-t}{\radP} \pm \tfrac{\pi}{2}\tfrac{t-\radP}{\radP}\right)\ee_3
\end{align*}
for
\begin{align*}
\hat{\nn}^\pm(M,\theta)
\ &= \ \frac{\nn^\pm(M,\theta) - \nn_3^\pm(M,\theta)\ee_3}{|\nn^\pm(M,\theta) - \nn_3^\pm(M,\theta)\ee_3|} \, .
\end{align*}
This definition achieves the transition of $\Phi^{\pm}_\eta(\radP,\theta,v)$ to $Q_\infty$ in a way that the bulk potential $f$ vanishes. 
Finally, we define the transition between $\Qinfty$ and $\Qexinfty$ to take place for $t\in [2\radP,3\radP]$ via a linear interpolation
\begin{align}\label{prop:upper_bound:recov_Q:def_quasi_optimal_profile_3}
\Phi^{\pm}_\eta(t,\theta,v) 
\ := \ 
(3\radP-t)\Qinfty + (t-2\radP)\Qexinfty \quad\text{ for } t\in (2\radP,3\radP]\, .
\end{align}

To finish the definition of the profile $\Phi_\eta^\pm$, we have to construct the vectorfield $v:\Omega\rightarrow\mathbb{R}^2$ with modulus $1$.
In order for $\Phi_\eta^\pm$ to meet the boundary conditions, we require that $v = \nu' := \tfrac{(\nu_1,\nu_2)}{\sqrt{\nu_1^2+\nu_2^2}}$ on $\M$.
We define $v$ as follows:
Let $\ee\in\mathbb{S}^1$ and let $\overline{\nu'}$ denote a radial extension of $\nu'$ into a neighbourhood of $\M$ in $\Omega$, i.e.\ $\overline{\nu'} = \nu'\circ\Pi_\M$.
We introduce a monotone cut-off function $\varphi:\Omega\rightarrow [0,1]$ depending only on the distance to $\M$ which satisfies $\varphi=1$ in a $\tfrac{\radM}{4}-$neighbourhood of $\M$ and $\varphi=0$ at distance greater than $\tfrac{\radM}{2}$. Note that in the region where $\varphi\neq 0$ the function $\overline{\nu'}$ is defined. 
We can furthermore assume that $\varphi$ is Lipschitz, i.e.\ $|\nabla\varphi|\leq C$. 
We define 
\begin{align*}
v 
\ := \ \frac{\varphi \overline{\nu'} + (1-\varphi)\ee}{|\varphi \overline{\nu'} + (1-\varphi)\ee|} \, .
\end{align*}
This vector field is well-defined and $\mathbb{S}^1$ valued except for the set on which $\varphi(\overline{\nu'}-\ee) = -\ee$.
For a generic choice of $\ee$ and $\varphi$, this set is $1-$dimensional and we can assume that it intersects $T$ only in finitely many points.

For the construction of the profile around $T$, we can cut out small parts of $T$ around these points as we did in Step 0, so that $v$ is a Lipschitz $\mathbb{S}^1-$valued vector field in a neighbourhood of $T$ and coincides with $\nu'$ on $\M$ and with a constant vector $\ee$ far from $\M$.
Since the removal of parts of $T$ creates new boundary components, as in Step 0, this procedure introduces a further error of order $\radv$, where $\radv>0$ is the length scale of the holes.

All in all, we end up with a slightly modified chain $T$ (of approximately same energy) and a ``horizontal'' vector field $v$ which is $C^1$  in a neighbourhood of the support of $T$.

\textit{Step 2: Construction on $F$ and $F^c$.}
Let $\omega\in F_{3\radP}\defi\{\omega\in F\sd\dist(\omega,\partial F)\geq 3\radP\}\subset\M$ and let $0\leq r<3\radP \leq \frac12\radM$. 
For $v$ defined in Step 1 we set
\begin{align}
\label{prop:upper_bound:recov_Q:def_Q_on_F}
\Qex(\omega + r\nu(\omega))
\ := \ \Phi_\eta^{+}(r,\theta,v(\omega + r\nu(\omega))) \quad\text{where }\theta = \arccos(\nu(\omega)\cdot\ee_3)
\end{align}
and as before $\nu$ is the normal vector field of $\M$. 
We note that with this definition $\Qex(\omega) = Q_b(\omega)$. 

It remains to calculate the energy contribution coming from $F_{3\radP,R}$, where $F_{3\radP,R} := \{x\in\Omega\sd x = \omega+r\nu(\omega),\, \omega\in F_{3\radP},\, r\in [0,R]\}$ for $R>0$.
It holds that
\begin{align*}
\eta\:&\Eex(\Qex,F_{3\radP,3\radP}) \\
\ &= \ \int_{F_{3\radP}} \int_0^{3\radP} \left( \frac{\eta}{2}|\nabla\Qex|^2 + \frac{\eta}{\xi^2}f(\Qex) + \frac{1}{\eta}g(\Qex) + \eta C_0 \right)\prod_{i=1}^2(1+r\kappa_i) \dx r \dx \omega \, ,
\end{align*}
where $\kappa_i$ denotes the principal curvatures of $\M$ as in the previous section.
By definition of $\Phi_\eta^{+}$ it holds that $f(\Qex)=0$ for $r\in [0,2\radP]$.
Furthermore, by Proposition~\ref{prop:f__g_geq}, $C_0\lesssim \xi^2/\eta^4$ and by exponential convergence of $\nnopt$ to $1$ we deduce that
\begin{align*}
\int_{2\radP}^{3\radP}\left| \frac{\eta}{\xi^2}f(\Qex) + \frac{1}{\eta}g(\Qex) + \eta C_0 \right| \dx r
\ &\lesssim \ \frac{\eta}{\xi^2} \frac{\xi^2}{\eta^2} e^{-M} M\eta = M e^{-M}\, .
\end{align*}
We also point out that $\int_{\radP}^{2\radP}\tfrac{1}{\eta} g(\Qex)\dx r \lesssim M e^{-M} $. 
By the construction in Step 0 and Step 1 we can bound the gradient of $v$ \dsr{uniformly for all $x=\omega'+r\nu(\omega')$, where $\dist_\M(\omega',\omega)\geq \epsilon$ for all $\omega\in\M$ with $\nu_3(\omega)= \pm 1$}.
Lemma~\ref{lem:radial_turning} implies that the derivative of $\nnopt$ w.r.t.\ $\theta$ is bounded. 
\dsr{
Around the points $\omega\in\M$ where $\nu_3(\omega)= 1$ and for $3\radP\leq \radM/2$ it holds $v(x) = \overline{\nu'}(x)$. 
The gradient of $\Phi_\eta^{+}$ can be bounded by
\begin{align}\label{prop:upper_bound:recov_Q:estim_nu_pm_1}
|\nabla\Phi_\eta^{+} |^2
\ &\lesssim \
|\nabla \nnopt|^2
+ \Big|\nabla \Big(\sqrt{1-\nnopt^2} \: (v_1,v_2)\Big)\Big|^2
\, .
\end{align}
We point out that the first term in \eqref{prop:upper_bound:recov_Q:estim_nu_pm_1} is easily seen to be bounded since $\nabla \nnopt = 0$ in $\omega$ as $\nu_3= 1$ is a extremal value.
For the second term we recall that $v(x) = \overline{\nu'}(x)$.
A direct calculation using the explicit profile from Lemma~\ref{lem:radial_turning} shows that 
\begin{align*}
\sqrt{1-\nnopt^2(x)} \frac{(\overline{\nu}_1(x),\overline{\nu}_2(x))}{\sqrt{1-\overline{\nu}_3(x)^2}}
\ = \
2\frac{\exp(- \tfrac{c_* \: r}{s_*\: \eta})(\overline{\nu}_1(x),\overline{\nu}_2(x))}  {1+\overline{\nu}_3(x) + (1-\overline{\nu}_3(x)) \exp(-\tfrac{2c_* \: r}{s_*\: \eta})}
\, .
\end{align*}
Since $\overline{\nu}=\nabla\dist(\cdot,\M)$ and $|\nabla \overline{\nu}|=|D^2 \dist(\cdot,\M)|\leq C$, one can see from this representation that the tangential gradient is uniformly bounded and the radial derivative is bounded by $C/\eta$.
Therefore, integrating $\eta|\nabla \Phi_\eta^{+}|^2$ over the $\epsilon-$neighbourhood of the points $\omega\in\M$ with $\nu_3(\omega)=+1$ and $r\leq 3\radP$ leads to the upper bound $CM\epsilon^2$.
Note that the same argument does not work for $\Phi_\eta^+$ near the points where $\nu_3=-1$.
This is due to the fact that in this situation $1-\nnopt^2$ first increases to $1$ before decaying.
Together with the singularity of $v$ at $\nu_3=-1$ this implies that $(1-\nnopt^2)|\nabla v|^2$ is not necessarily integrable. 
This is the reason why we did not attribute those points to $F$ so that $\Qex$ is defined around $\nu_3=-1$ using the profile $\Phi_\eta^{-}$ instead.
} 
The above allows us to estimate
\begin{align*}
\eta\:\Eex(\Qex,F_{3\radP,3\radP})
\ &\leq \ \int_{F_{3\radP}} \left[\int_0^{\radP} \left( \frac{\eta}{2}|\partial_r\Qex|^2 + \frac{1}{\eta}g(\Qex) \right)\prod_{i=1}^2(1+r\kappa_i) \dx r  + C M e^{-M} \right] \dx \omega \\
\ &\leq \ (1+C M e^{-M})\int_{F_{3\radP}} I\left(0,M,\cos(\theta),+1\right) \dx \omega + \dsr{CM\epsilon^2} + o(1)\, .
\end{align*}

Analogously, we can define $\Qex$ on $F^c$ away from $\partial F$ by using $\Phi^{-}$ and estimate its energy.
Note that this construction may already create the part of $T$ 
that attaches to the surface $\M$ in the limit $\eta,\xi\rightarrow 0$.
Indeed, if a point $\omega$ is contained in $F$ although the energy density corresponding to $F^c$ would be lower, the profile constructed passes trough $n_3=0$ within a distance $\radP$ from $\M$ and hence is included in the limiting $T$. 

\textit{Step 3: Construction on $T$.}
Let $x\in T_{\eta}\defi\{x\in\supp(T)\sd \dist(x,\M)>3\radP\text{ and }\dist(x,S)>3\radP\}$. 
For each connected component of $T$ (and thus of $T_\eta$) we can associate a sign depending on the sign of the degree of the singularity line $S$ (if the component of $T$ has such). 
This must be compatible with the part of $T$ that reaches $\M$ and already has been constructed in Step 2. 
The compatibility corresponds to the choice of the signs of $\Phi_\eta^\pm$ and of the distance function, viewing $T_\eta$ as  a boundary, locally.
Assuming that in Step 2 we chose $\Phi^{+}_\eta$ whenever $\dist(\cdot,T_\eta)>0$ and $\Phi^{-}_\eta$ for $\dist(\cdot,T_\eta)<0$, we define
\begin{align*}
\Qexn(x) \ \defi \ \Phi_\eta^{+}(\dist(x,T),\frac{\pi}{2},v(x))\, .
\end{align*}
We recall that $T$ has been modified in such as way that $v$ from Step 1 is Lipschitz in a neighbourhood of $T$ and hence $|\nabla v|$ is bounded.
Writing $T_{\eta,t}\defi\{x\in\Omega\sd\dist(x,T_\eta)=\dist(x,T)\text{ and }\dist(x,T_\eta)\leq t\}$ for $t\geq 0$ we can estimate by Lemma~\ref{lem:radial_turning} and the coarea formula
\begin{align*}
\int_{T_{\eta,3\radP}} &\left[\frac{\eta}{2}|\nabla \Qexn|^2 + \frac{\eta}{\xi^2}f(\Qexn) + \frac{1}{\eta}g(\Qexn) + \eta C_0 \right] \dx x \\
\ &\leq \ \int_{T_{\eta,\radP}} \left[\frac{\eta}{2}|\nabla \Qexn|^2 + \frac{1}{\eta}g(\Qexn)\right] \dx x + C M e^{-M}\:\MM(T)  \\
\ &= \ 2s_*c_*\int_{T_{\eta,\radP}} |\nnopt'(\dist(x,T_n)/\eta)| \dx x  + C M e^{-M}\:\MM(T) \\
\ &= \ 2s_*c_*\int_{0}^{\radP} \int_{T_{\eta,\radP}\cap \{\dist(\cdot,T)=s\}} |\nnopt'(s/\eta)| \dx s + o(1) + C M e^{-M}\:\MM(T) \\
\ &= \ 2s_*c_*\int_{0}^{\radP} \mathcal{H}^2(T_{\eta,\radP}\cap \{\dist(\cdot,T)=s\}) |\nnopt'(s/\eta)| \dx s + o(1) + C M e^{-M}\:\MM(T) \\
\ &\leq \ 2s_*c_*\big( 2\MM(T) + o(1) \big)\int_{0}^{\radP} |\nnopt'(s/\eta)| \dx s + o(1) + C M e^{-M}\:\MM(T) \\
\ &= \ 4s_*c_* |\nnopt(M)| \:\MM(T) + o(1) + C M e^{-M}\:\MM(T)  \, ,
\end{align*}
where we also used that $\mathcal{H}^2(T_{\eta,\radP}\cap\{\dist(\cdot,T)=s\})\rightarrow 2\MM(T)$ for $s\rightarrow 0$.
Note that $|\nnopt(M)|\leq 1$.
Hence, for $\eta,\xi\rightarrow 0$ we end up with
\begin{align*}
\limsup_{\eta,\xi\rightarrow 0}\int_{T_{\eta,3\radP}} &\frac{\eta}{2}|\nabla \Qexn|^2 + \frac{\eta}{\xi^2}f(\Qexn) + \frac{1}{\eta}g(\Qexn) + \eta C_0 \dx x \\
\ &\leq \ 4s_*c_*(1 + C M e^{-M})\MM(T) \, .
\end{align*}

\textit{Step 4: Construction on $S\restr\Omega$.}
Following the notation we used in Step 2 and 3, we introduce the region 
\begin{align}\label{prop:upper_bound:recov_Q:df_region_Sn_Ome}
S_{3\radP} 
\defi 
\{x\in\Omega\sd \dist(x,\M)>3\radP, \,
\exists y\in T\text{ with }&\dist(y,S)\leq 3\radP \\
&\text{ and }\dist(x,T)=\Vert x-y\Vert\leq 3\radP\} \nonumber
\end{align}
around the singular line $S$ (see also Figure~\ref{fig:upper_bound_constr_S_Ome}). 
We will construct $\Qexn$ as follows:
Depending on the sign attributed to the connected component of $T$ in Step 3 or the change between $F$ and $F^c$ in Step 2, we place a singularity of degree $-\frac12$ (resp. $\frac12$) as in Lemma 5.2 in \cite{ACS2021} in the center of $S_{3\radP}$. 
We do so by setting $Q=0$ in a disk of radius $\xi$ (perpendicular to $S$) and oblate $Q$ uniaxial with director field $(\sin(\phi/2),0,\cos(\phi/2))$ on the annulus between the radii $2\xi$ and $\eta$, interpolating linearly in radial direction between these two regions. 
From the circle of radius $\eta$ to the boundary of the region \eqref{prop:upper_bound:recov_Q:df_region_Sn_Ome}, we use the profile $\Phi_\eta^\pm$ to make a transition to $Q_\infty$ along $\nabla\dist(\cdot,T)$. By doing so, we get the compatibility between the construction made for $T$ and $S$. 

More precisely, we define as in \cite{ACS2021}(Lemma 5.2, Step 3, Equation (55))
\begin{align*}
Q_B(r,\phi) 
\ := \ \begin{cases}
0 & r\in [0,\xi) \, , \\
\left(\frac{r}{\xi}-1\right)Q(\phi) & r\in [\xi,2\xi)\, , \\
Q(\phi) & r\in [2\xi,\eta)\, ,
\end{cases}
\end{align*}
where $r\in [0,\eta)$, $\phi\in [0,2\pi)$ and 
\begin{align*}
Q(\phi) 
\ = \ s_*\left(\nn(\phi)\otimes\nn(\phi)-\frac13\id\right) 
\quad\text{with}\quad 
\nn(\phi) \ = \ \begin{pmatrix}
\sin(\phi/2) \\ 0 \\ \cos(\phi/2)
\end{pmatrix}\, .
\end{align*}
We use this to define $\Qex$ on a small $\eta-$neighbourhood of $S$ as follows. 
For $\eta$ small enough, we can assume that the $\eta-$neighbourhood is parametrized by the projection onto $S$, the radius $\dist(\cdot,S)$ and an angle $\phi$.

Modifying $T$ close to $S$ if necessary, we can furthermore assume that on each (small) plane disk perpendicular to $S$, the restriction of $T$ to this disk is given by a straight line segment. 
To see that this modification is possible, we claim that one can select a radius $r\in (2\eta,3\eta)$ and a slice $T_r$ of $T$ at $\dist(\cdot,S)=r$ such that $3\eta T_r \leq 2\eta\int_{2\eta}^{3\eta} \mathcal{H}^1(T_s) \dx s \leq C\mathcal{H}^2(\{T\cap \{\dist(x,S)\leq 3\eta\}\}$. 
Indeed, this follows from $C^1-$regularity of $T$ up to the boundary or by constructing a competitor for $T$ in the following way:
Around a point $p\in S$ one can choose a tubular neighbourhood, depending on the curvature of $S$, and translate $S$. In case all of the $T_r$ did not satisfy the above condition, this operation decreases the energy of $T$ locally up to lower order terms.
One can then replace $T$ by a $\tilde{T}$ inside the tubular neighbourhood $\{\dist(x,S)\leq r\}$ where $\tilde{T}$ is defined by the straight lines connecting $S$ to $T_r$ on each disk perpendicular to $S$ with asymptotically negligible energy cost.

In order to define the profile on disks perpendicular to $S$, we introduce a orthonormal $C^{1,1}-$frame along $S$.
By Lemma~\ref{lem:reg_T_-_S_C1,1} we already know that the tangent vector field $\tau_S$ of $S$ is of this class. 
Ideally, one would like to choose the normal vector of $T$ to be part of the frame, however we only know that $T$ is of class $C^1$ up to the boundary. 
Instead, we will take an arbitrary normal vector field $\nu_S$ to $S$ of class $C^{1,1}$.
The existence of such a vector field can easily be seen via the following construction:
Since $S$ is compact and of bounded curvature, we can find finitely many points $p_i\in S$ such that there exist normal vectors $\nu_i$ to $S$ in $p_i$, where neighbouring vectors $\nu_i,\nu_j$ form an angle of strictly less than $\pi$. 
We can then choose $C^2-$smooth curves on the sphere $\mathbb{S}^2$ connecting all those $\nu_i$, resulting in $\nu_S$.
The third vector for our frame is simply obtained by taking the cross product $\tau_S\times\nu_S$.

Consider $x_0\in S$.
By applying rotations if necessary, we can assume that $\nu_{S}=\ee_1$ and $\tau_S\times\nu_S = \ee_3$.
We then set
\begin{align*}
\Qexn(x) 
\ &\defi \ Q_B(\dist(x,S),\phi(x))\, , 
\end{align*}
where
\begin{align*}
\phi(x) = \begin{cases}
\arccos\left( \nu_{S}\cdot \frac{x-x_0}{\Vert x-x_0\Vert} \right) & \text{if }(\tau_S\times\nu_S)\cdot \frac{x-x_0}{\Vert x-x_0\Vert} \geq 0 \, , \\
2\pi - \arccos\left( \nu_{S}\cdot \frac{x-x_0}{\Vert x-x_0\Vert} \right) & \text{otherwise.}
\end{cases}
\end{align*}

It remains the transition from the set $\{\dist(\cdot,S)=\eta\}$ to the boundary of \eqref{prop:upper_bound:recov_Q:df_region_Sn_Ome}.
Since $\tau_S\times\nu_S$ might not agree with $\nu_T$, the $T$ constructed around $S$ and the $T$ coming from Step 3 does not necessarily line up.
However, we have enough space to smoothly connect both parts inside $A_{r,\eta} = \{\dist(\cdot,S)=r\}\setminus \{\dist(\cdot,S)=\eta\}$ with asymptotically negligible contribution to the energy. 
Indeed, there exists a Lipschitz deformation $\mathfrak{D}:A_{r,\eta}\rightarrow \{\dist(\cdot,S)=\eta\}$ relative to $\{\dist(\cdot,S)=\eta\}$ such that $T_r = T \cap \{\dist(\cdot,S)=r\}$ gets mapped onto $T\cap \{\dist(\cdot,S)=\eta\}$.
We can then extend $\Qex$ from $\{\dist(\cdot,S)=\eta\}$ to all $\{\dist(\cdot,S)\leq r\}$ along this deformation $\mathfrak{D}$
by setting $\Qex(x) = \Qex(\mathfrak{D}(x))$.

Let $\Pi$ be the projection along $\nabla\dist(\cdot,T)$ onto $\{\dist(\cdot,S)=r\}\cup(T\cap \{ \dist(\cdot,S)\geq r\})$.
The function $\Qex$ is already defined on the first set in the union, for the second we simply pose $\Qex(x) = s_*((v(x),0)\otimes(v(x),0)-\frac13\id)$ in order to be compatible with Step 3.
For $x\in S_{3\radP}\setminus ( \{\dist(\cdot,S)\leq r\}\cup(T\cap \{ \dist(\cdot,S)\geq r\}) )$ we then define
\begin{align*}
\Qex(x)
\ := \ 
\Phi_\eta^{+}(\Vert x-\Pi x\Vert,\theta(x),v(x))\, ,
\end{align*}
where $\theta(x)$ is the angle between $\ee_3$ and the director field that we have already constructed in $\Pi x$, i.e.\ $\theta(x)=\arccos(\nn(\phi(\mathfrak{D}(x)))\cdot\ee_3)$ or $\theta(x) = \arccos(v(\Pi x)\cdot\ee_3)$ depending on which set contains $\Pi x$.

It is easy to see that since $f,g$ and $C_0$ are uniformly bounded and the curvature of $S$ is bounded by Lemma~\ref{lem:reg_T_-_S_C1,1}, we get for the integral up to distance $\eta$
\begin{align*}
&\eta\int_{\{\dist(\cdot,S)\leq\eta\}} \frac{1}{2}|\nabla \Qexn|^2 + \frac{1}{\xi^2}f(\Qexn) + \frac{1}{\eta^2}g(\Qexn) + C_0 \dx x \\
\ &\quad\leq \ 
\frac{\eta}{2} \int_{2\xi}^{\eta} \int_{\{\dist(\cdot,S)=r\}} |\nabla (Q\circ\phi)(x)|^2 \dx r 
\ + \ \frac{\eta}{2} \int_{\xi}^{2\xi} \int_{\{\dist(\cdot,S)=r\}} |\nabla (Q_B(r,\phi(x)))|^2 \dx r \\
&\qquad+ \ \ C\frac{\eta}{\xi^2}\mathcal{H}^3(\{ \dist(\cdot,S)\leq 2\xi \})
\ + \ C\frac{\eta}{\eta^2}\mathcal{H}^3(\{ \dist(\cdot,S)\leq \eta \}) \, .
\end{align*}
Estimating the gradient at distance $r\defi\dist(\cdot,S)\in [2\xi,\eta)$ yields
\begin{align*}
\frac12|\nabla (Q\circ\phi)(x)|^2
\ &= \ s_*^2 |\nabla(\nn\circ\phi)(x)|^2 
\ \leq \  s_*^2\left|
\begin{pmatrix}
\tfrac12\cos(\phi/2) \\ 0 \\ -\tfrac12\sin(\phi/2)
\end{pmatrix}
\otimes\nabla\phi(x)\right|^2 
\ \leq \  \frac{s_*^2}{4r^2}\left( 1 + C r \right) + C \, ,
\end{align*}
where we used that the derivative of the polar angle in the disk perpendicular to $S$ is $\tfrac1r$ and that derivatives of the basis vector fields $\tau_S$, $\nu_S$ and $\tau_S\times\nu_S$ of our frame are bounded.
Hence, we get 
\begin{align*}
\frac{\eta}{2} \int_{2\xi}^{\eta} \int_{\{\dist(\cdot,S)=r\}} |\nabla (Q\circ\phi)(x)|^2 \dx r
\ &\leq \ \frac{s_*^2\pi}{2}\, \eta\, \MM(S\restr\Omega)\int_{2\xi}^\eta \frac{1}{r} \dx r + o(1) \\
\ &\leq \ \frac{\pi}{2} s_*^2 \eta|\ln(\xi)|\, \MM(S\restr\Omega) + o(1)\, .
\end{align*}
Similarly, for $r\in [\xi,2\xi)$ we obtain
\begin{align*}
\frac12|\nabla (Q_B(r,\phi(x)))|^2
\ &= \ s_*^2 \left|\left(\frac{r}{\xi}-1\right)\nabla(\nn\circ\phi)(x)\right|^2 
\ + \ s_*^2 \left|\nabla\left(\frac{r}{\xi}-1\right)\right|^2  \\
\ &\leq \  C\left|\left(\frac{r}{\xi}-1\right)
\begin{pmatrix}
\tfrac12\cos(\phi/2) \\ 0 \\ -\tfrac12\sin(\phi/2)
\end{pmatrix}
\otimes\nabla\phi(x)\right|^2 
+ \frac{C}{\xi^2} \, ,
\end{align*}
from which it follows that
\begin{align*}
\frac{\eta}{2} \int_{\xi}^{2\xi} \int_{\{\dist(\cdot,S)=r\}} |\nabla (Q_B(r,\phi(x)))|^2 \dx r
\ &\leq \ C\eta\, .
\end{align*}

For the energy of the remaining part of the domain defined in \eqref{prop:upper_bound:recov_Q:df_region_Sn_Ome} we use Lipschitz continuity of $\Pi,\mathfrak{D}$ to get
\begin{align*}
\eta\int_{S_{3\radP}\setminus\{\dist(\cdot,S)\leq\eta\}} \frac{1}{2}|\nabla \Qexn|^2 + \frac{1}{\xi^2}f(\Qexn) &+ \frac{1}{\eta^2}g(\Qexn) + C_0 \dx x \\ 
\ &\leq \ C\eta \left(\frac{(\radP)^2}{\eta^2} + M^2 e^{-M}\right)\MM(S\restr\Omega) + o(1)
\end{align*}
which vanishes in the limit $\eta\rightarrow 0$.
We obtain
\begin{align*}
\limsup_{\eta,\xi\rightarrow 0} \:\eta\:\int_{S_{3\radP}} &\frac{1}{2}|\nabla \Qexn|^2 + \frac{1}{\xi^2}f(\Qexn) + \frac{1}{\eta^2}g(\Qexn) + C_0 \dx x 
\ \leq \ \frac{\pi}{2}s_*^2 \: \beta \: \MM(S\restr\Omega) \, .
\end{align*}

\begin{figure}
\begin{center}
\includegraphics[scale=1.5]{./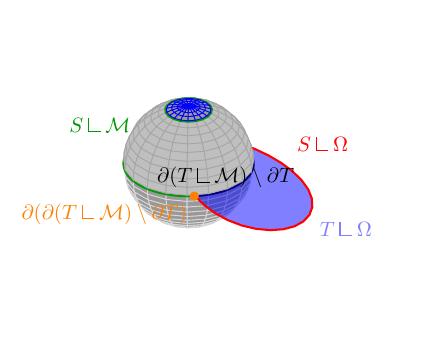}
\end{center}
\caption{Schematic view of the different parts of $T$ and $S$ that are constructed in Step 2 to 6}
\label{fig:upper_bound_constr_global}
\end{figure}

\begin{figure}[H]
\begin{center}
\includegraphics[scale=1.25]{./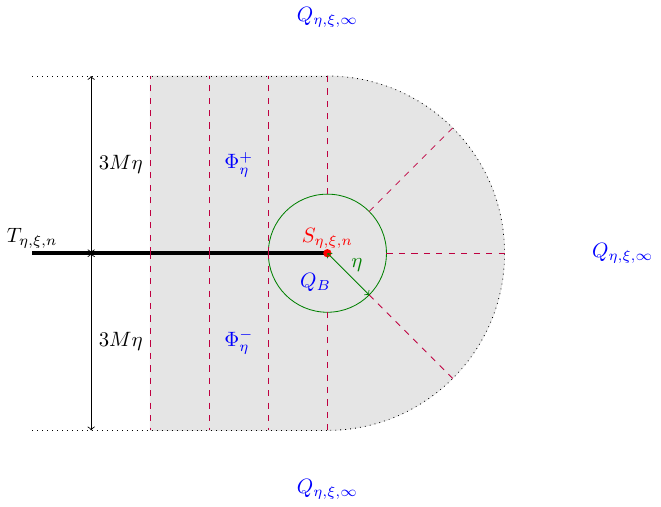}
\end{center}
\caption{Sketch of the construction for $\Qexn$ in Step 4 in the region $S_{3\radP}$ defined in \eqref{prop:upper_bound:recov_Q:df_region_Sn_Ome} (grey shaded area). Dashed lines indicate the direction of the projection $\Pi$.}
\label{fig:upper_bound_constr_S_Ome}
\end{figure}

\textit{Step 5: Construction on $S\restr\M$.}
The domain
\begin{align*}
S_{0,3\radP} \defi
\{ x\in\Omega\sd \dist(x,S)\leq 3\radP, \dist(x,M)\leq 3\radP \text{ and } \dist(x,\partial(\partial (T\restr\M)\setminus\partial T))\geq 3\radP \}
\end{align*}
can essentially be treated in the same manner as in Step 4 or as in \cite[p.1444, Step 3]{ACS2021}. 
Also the boundary of $T$ in $\Omega$ (but close to $\M$) that was created in Step 0 and Step 1 to ensure that $v$ is well-defined, is treated in the same way. 
To give some more details, we can reuse the profile $Q_B$ from the previous step (assuming a $+\frac12-$singularity) for defining $\Qex$ in a ball of radius $\eta$ centered in $x_S$ in the middle of $S_{0,3\radP}$, seen as family of plane sets perpendicular to $S$.
Note that $\Qex$ has already been defined on the boundary on each of those plane sets. 
Thus, a simple two dimensional interpolation of the phase angle along $\nabla\dist(\cdot,x_S)$ as in \cite[eq (56-64)]{ACS2021} shows that the energy contribution is
\begin{align*}
\Eex(\Qex,S_{0,3\radP})
\ &\leq \ (1+C\eta)\frac{\pi}{2}s_*^2|\ln(\xi)|\MM(S\restr\M) + C\eta\frac{(\radP)^2}{\eta^2} + C \beta \radv\, .
\end{align*} 

\textit{Step 6: Construction on $\partial F\setminus S$.}
It remains to fill the ``gaps'' left by the Steps 2 to 5 (see also Fig.~\ref{fig:upper_bound_constr_global}). 
The important part is the transition between the part of $T$ that approaches $\M$ (and which was constructed in Step 2) and the part that stays bounded away, including the region where $S$ detaches from $\M$. 
At distance larger than $3\radP$ from $\M$, we set $\Qex=\Qexinfty$ for all points where we haven't defined $\Qex$ so far. Note that this is compatible with the previous constructions.

Let's consider the set $\Upsilon_{3\radP}\defi\{ x\in\Omega\sd \dist(x,\partial(T\restr\M)\setminus\partial T)\leq 3\radP\text{ and }  \dist(x,\partial(\partial(T\restr\M)\setminus\partial T))\geq 3\radP\}$. 
Considering the slices of $\Upsilon_{3\radP}$ orthogonal to and parametrized by $\partial(T\restr\M)$, we note that Steps 2 to 5 ensure that $\Qex$ takes values in $\N$ whenever meeting the boundary of the slice and $\Qex$ having trivial homotopy class.
For an arbitrary $Q\in N$, we can define $\Qex\defi Q$ on a disk of size $\eta$ in the middle of the slice and 
again by linear interpolation of the phase towards the boundary of the disk.
We thus get a function $\Qex\in H^1(\Upsilon_{0,3\radP},\N)$ respecting the previous constructions and $Q=Q_b$ on $\M$.
Furthermore, the interpolation allows us to estimate $|\nabla Q|^2\leq C((\radP)^{-2}+\eta^{-2})$ and since $g$ is bounded, $f(Q)=0$ (because $Q$ takes values in $\N$) the energy contribution can be estimated
\begin{align*}
\eta\Eex(Q,\Upsilon_{0,3\radP})
\ &\leq \ C \ \eta \ |\Upsilon_{0,3\radP}|\left( \frac{1}{(\radP)^2} + \frac{1}{\eta^2} + 1 \right) \\
\ &\leq \ C \ \MM(\partial(T\restr\M)\setminus \partial T)\left(\eta + \frac{(\radP)^2}{\eta} + \eta(\radP)^2 \right)\, ,
\end{align*}
which vanishes in the limit $\eta,\xi\rightarrow 0$ due to our hypothesis about the finite size of $\partial(T\restr\M)\setminus \partial T$.

It remains the region where $S$ detaches from $\M$ or in other words $\Upsilon_{1,3\radP}\defi\{ x\in\Omega\sd  \dist(x,\partial(\partial(T\restr\M)\setminus S))\leq 3\radP\}$. 
We can also use interpolation to construct $\Qex$ and estimate its energy but we need to be a bit more careful since this time $f(\Qex)$ cannot be chosen to be zero. This is due to the isotropic core of our construction around $S$.
So we connect the ``core'' parts from Step 4 and 5 where we defined $S$ in $\Omega$ and close to $\M$ via a tube in which $\Qex$ is defined via the profile $Q_B$ which has been used in both steps. 
Around this tube, we can again apply the previous idea of linear interpolation of the phase, this time on slices perpendicular to the tube. 
We end up with
\begin{align*}
\Eex(\Qex,\Upsilon_{1,3\radP})
\ &\leq \ C\radP \MM(\partial(\partial(T\restr\M)\setminus \partial T))\, ,
\end{align*}
which vanishes in the limit $\eta\rightarrow 0$ in view of the bound $\MM(\partial(\partial(T\restr\M)\setminus \partial T))\leq C_n$.

\textit{Step 7: Conclusion}
Combining Step 1 to Step 6, we obtain a function $\Qex$ which respects the boundary conditions and satisfies the energy estimate
\begin{align*}
\limsup_{\eta,\xi\rightarrow 0} \eta\: \Eex(\Qex)
\ &\leq \ (1 + C M e^{-M}) \: \E_0(T,S) + C(1+ \beta)\radv \, .
\end{align*}
Since $M$ can be chosen arbitrarily large and $\radv$ arbitrarily small, we can construct a diagonal sequence  and obtain the claim. 

\ds{
It remains to show how to proceed if the assumption that the set $\supp(T)\cap\{x\in\M_{3\radP}\sd \overline{\nu}(x) = \pm\ee_3\}$ is finite does not hold. 
In this case, there is an additional approximation step that needs to be carried out as we detail in the following.

Using the area formula for $\overline{\nu}$ \cite[Thm. 2.91]{Ambrosio2000}, it holds that
\begin{align*}
\int_{\mathbb{S}^2} \mathcal{H}^0(\supp(T)\cap \overline{\nu}^{-1}(y)) \dx \mathcal{H}^2(y)
\ &\leq \ 
C\int_{\supp(T)\cap\{\dist(\cdot,\M)\leq \radM/2\}} |\nabla \overline{\nu}(x)|^2 \dx x \, ,
\end{align*}
which is finite by regularity of $\M$.
Therefore, for all $\epsilon>0$ there exists a unit vector $\ee_3^\epsilon\in\mathbb{S}^2$ such that $|\ee_3^\epsilon - \ee_3|<\epsilon$ and $\mathcal{H}^0(\supp(T)\cap \overline{\nu}^{-1}(\ee_3^\epsilon))<\infty$.
Write $\supp(T)\cap \overline{\nu}^{-1}(\ee_3^\epsilon)=\{x_1,...,x_{N}\}$ .
We can furthermore choose $\ee_3^\epsilon$ such that all of the points $x_i\notin\supp(S)$.
We can then proceed as in Step 0.
In the remaining steps of the proof, we also need to adapt the ``optimal profile''.
Replacing the function $1-n_3^2$ by $1-(\nn\cdot\ee_3^\epsilon)^2$ in Lemma~\ref{lem:radial_turning}, we obtain a new function $\nnopt^\epsilon$ that we use to construct $\nn^\pm_\eta$ in Step 1 by posing
\begin{align*}
\nn^\pm 
\ = \ 
\sqrt{1-(\nnopt^\epsilon)^2} v
\pm \nnopt^\epsilon \ee_3^\epsilon 
\, ,
\end{align*}
the function $v$ being constructed as before but with $\Pi_{(\ee_3^\epsilon)^\perp}(\nu)$ instead of $\nu'$.

By exponential decay of the optimal profile $\nnopt^\epsilon$, the interpolation between $\nn^{\epsilon,\pm}(M,\theta)$ and $\pm \ee_3$ taking place in $\Phi^{\epsilon,\pm}_\eta(t,\theta,v)$ for $t\in (\radP,2\radP]$ is well defined.

Noting that 
\begin{align*}
|1-(\nn\cdot\ee_3^\epsilon)^2 - (1-(\nn\cdot\ee_3)^2)|
\ &\leq \
2\epsilon + \epsilon^2 
\, ,
\end{align*}
we deduce that the additional error introduced by this change is estimated by $C\tfrac{\epsilon}{\eta}(3\radP)(\mathcal{H}^2(\M) + \MM(T))$ which is of order $M\epsilon (1+\E_0(T,S))$.
Therefore, we obtain the final bound
\begin{align*}
\limsup_{\eta,\xi\rightarrow 0} \eta\: \Eex(\Qex)
\ &\leq \ (1 + C M e^{-M} + C M \radv) \: \E_0(T,S) + C \beta\radv + C M \radv
\end{align*}
and passing to the limit first in $\radv\rightarrow 0$ and then in $M\rightarrow \infty$ yields the result.

} 
\end{proof}
} 

\section{Regularity and optimality conditions for the limit problem}
\label{sec:limit_problem}

Let us first state an improved regularity results for minimizers of the energy $\E_0$:

\begin{proposition}\label{prop:high_reg_T}
Let $T$ be a minimizer of \eqref{def:lim_energy} and $S=\partial T-\Gamma$. 
Then each component of $T\restr\Omega$ is an embedded manifold-with-boundary of class $C^\infty$. 
\end{proposition}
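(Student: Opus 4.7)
The plan is to upgrade the $C^{1,\alpha}$ regularity of $T\restr\Omega$ up to its boundary (given by Proposition \ref{prop:low_reg_T} with $n=0$) to $C^\infty$ by deriving the local Euler--Lagrange conditions for a minimizer and invoking classical regularity for minimal and capillary free-boundary problems. Three regions must be analysed: the interior of $T\restr\Omega$, the free boundary $S\restr\Omega$, and the contact set $\M\cap\supp(\partial(T\restr\Omega))$.

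\textit{Interior regularity.} At $x_0\in\supp(T\restr\Omega)\setminus(\supp(S)\cup\M)$, any variation $T+R$ with $R\in\flatch^2$ compactly supported in a ball $B_r(x_0)\subset\Omega\setminus(\supp(S)\cup\M)$ and $\partial R=0$ preserves $\partial T$, so only the term $4s_*c_*\MM(T\restr\Omega)$ changes. Hence $T$ is locally mass-minimizing; combined with the $C^{1,\alpha}$ regularity, Allard's $\varepsilon$-regularity (equivalently, analyticity of $C^1$ minimal surfaces) gives real-analyticity in $B_r(x_0)$.

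\textit{Free boundary along $S\restr\Omega$.} At $x_0\in\supp(S)\cap\Omega$, varying the pair $(T,S)$ in a ball around $x_0$ disjoint from $\M$ yields two coupled conditions: $T$ has vanishing mean curvature away from $S$, and along $S$ the vector balance
\begin{align*}
4s_*c_*\,\mathbf{n}_T \ + \ \tfrac{\pi}{2}s_*^2\beta\,\mathbf{k}_S \ = \ 0
\end{align*}
holds, where $\mathbf{n}_T$ is the inward unit conormal of $\partial T$ within $T$ and $\mathbf{k}_S$ is the curvature vector of $S\subset\mathbb{R}^3$. This is the classical ``minimal surface with line-tensioned free boundary'' system. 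Writing $T$ locally as a graph over its tangent plane at $x_0$ and $S$ as a curve in this graph, one obtains a coupled elliptic boundary-value problem with $C^{1,1}$ data from Proposition \ref{prop:low_reg_T}; Schauder bootstrap produces $C^{k,\alpha}$ regularity for every $k$, hence $C^\infty$.

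\textit{Capillary boundary on $\M$.} At $x_0\in\M\cap\supp(\partial(T\restr\Omega))\setminus(\supp(S)\cup\Gamma)$, the density $4s_*c_*|\cos\theta|=4s_*c_*|\nu_3|$ is smooth and bounded away from $0$ and $1$ locally. Local variations of the pair $(T,G)$ (where $G=\supp(T\restr\M)$) yield a Young--Dupré type contact-angle condition along $\partial(T\restr\Omega)\cap\M$ that balances the conormal of $T\restr\Omega$, the conormal of $T\restr\M$ weighted by $|\nu_3|$, and the tangential gradient of $|\nu_3|$ on $\M$. Since $\M\in C^{1,1}$ and the weight is smooth and nondegenerate locally, classical regularity for capillary surfaces yields $C^\infty$ regularity up to the boundary on this portion. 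The main obstacle lies near the degenerate sets $\Gamma$ (where $|\nu_3|$ vanishes and the capillary condition becomes singular) and $\supp(S)\cap\M$ (where two distinct free-boundary regimes meet). Both loci are however lower-dimensional: $\Gamma$ is a $C^2$-curve by assumption, and $\supp(S)\cap\M$ is discrete (since $\MM(S)<\infty$ and $S$ is supported on $C^{1,1}$ curves by Lemma \ref{lem:reg_T_-_S_C1,1}). They are therefore topologically excluded from the smooth boundary of each connected component of $T\restr\Omega$ as a manifold-with-boundary; together with the absence of triple points of $S$ (forced by $\pi_1(\N)=\mathbb{Z}_2$, cf.\ end of the proof of Lemma \ref{lem:reg_T_-_S_C1,1/2}), one identifies each component of $T\restr\Omega$ as a smoothly embedded $C^\infty$ manifold-with-boundary.
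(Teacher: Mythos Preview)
Your treatment of the interior and of the free boundary $S\restr\Omega$ is correct and matches the paper's strategy: invoke Proposition~\ref{prop:low_reg_T} at $n=0$ for the baseline $C^{1}$/$C^{1,1}$ regularity, then upgrade via the Euler--Lagrange conditions and Schauder theory. The paper is simply terser, citing Theorem~2.1 in Morgan~\cite{Morgan1994} for the bootstrap rather than writing out the balance law along $S$ and the graph reduction as you do.

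Your third region, however, is both superfluous and contains a genuine error. The statement concerns $T\restr\Omega$ with $\Omega$ open, so $\M=\partial\Omega$ is not part of the object under consideration; the boundary of the manifold-with-boundary in question is exactly $(\partial T)\restr\Omega=S\restr\Omega$ (since $\Gamma\subset\M$), and the analysis stops there. This is why the paper's Lemma~\ref{lem:reg_T_-_interior_T} only establishes regularity up to $S$ and never touches a capillary problem. Beyond being unnecessary, your capillary step is wrong as stated: $\M$ is assumed merely $C^{1,1}$, and capillary regularity up to a fixed obstacle cannot exceed the regularity of the obstacle, so the claim ``classical regularity for capillary surfaces yields $C^\infty$ regularity up to the boundary on this portion'' is unjustified. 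Dropping the third paragraph (and the subsequent discussion of $\Gamma$ and $\supp(S)\cap\M$, which is only there to patch that paragraph) leaves a valid proof that coincides with the paper's.
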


\begin{proof}
The main work has been already carried out in the proof of Proposition~\ref{prop:low_reg_T} for $n=0$.
The higher regularity can be obtained by Schauder theory. For details we refer to Theorem 2.1 in \cite{Morgan1994}.
\end{proof}

Next, we give a characterization of minimizers of the limit energy. Because of the regularity given by Proposition~\ref{prop:high_reg_T}, we can take variations of $T\restr\Omega$ and $S\restr\Omega$ in the classical sense to derive the optimality conditions. Furthermore, we can obtain a version of Young's law \cite{Philippis2014,Taylor1977}

\begin{proposition}\label{prop:optim_cond_min_E0}
Let $T$ be a minimizer of \eqref{def:lim_energy} and $S=\partial T-\Gamma$. 
Then $T\restr\Omega$ has zero mean curvature and $S\restr\Omega$ is of constant curvature $\frac{8}{\pi}\frac{c_*}{s_*}\beta^{-1}$.
Furthermore, Young's law holds
\begin{align*}
\nu_{\partial (T\cap\Omega)}\cdot \nu_{\partial F^{+}} 
\ = \ \nu_\M\cdot\ee_3 \quad\text{on }\partial (T\restr\Omega)\setminus S\, ,
\end{align*}
i.e.\ $T$ meets $\M$ in an angle $\theta=\arccos(\nu_\M\cdot\ee_3)$.
\end{proposition}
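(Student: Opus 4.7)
The plan is to leverage the smoothness of minimizers given by Proposition~\ref{prop:high_reg_T} and compute classical first variations of $\E_0$ along smooth one-parameter families of deformations of the pair $(T,S)$ subject to the constraint $\partial T = S + \Gamma$. Throughout, let $\gamma := \partial(T\restr\Omega)\setminus S$ denote the smooth contact curve where $T\restr\Omega$ meets $\M$, let $\tau^T_\bullet$ denote the outward unit conormal to $T$ along its various boundary components, and let $\vec\kappa$ be the curvature vector of $S$.

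\textbf{Step 1 (mean curvature of $T\restr\Omega$).} First I would take a variation generated by a smooth vector field $X$ compactly supported in the interior of the smooth surface $T\restr\Omega$, touching neither $S$ nor $\gamma$ nor $\Gamma$. Under such a variation the only affected term of $\E_0$ is $4s_*c_*\MM(T\restr\Omega)$, and the classical first variation of area gives
\begin{align*}
\delta\MM(T\restr\Omega) \ = \ \int_{T\restr\Omega} \langle X, H\,\nu_T\rangle\,\dx\H^2 \ = \ 0\, .
\end{align*}
Since $X$ is arbitrary, $H\equiv 0$ on $T\restr\Omega$.

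\textbf{Step 2 (curvature of $S$).} Next I would take $X$ supported in a neighborhood of a point of $S\restr\Omega$ away from $\M$. Such a variation moves $S$ normally and deforms $T$ accordingly so that $\partial T = S+\Gamma$ is preserved. Using $H=0$ from Step~1, the bulk contribution to $\delta\MM(T\restr\Omega)$ vanishes and only the boundary term at $S$ survives; combining this with the first variation of length of $S$, the Euler--Lagrange equation becomes
\begin{align*}
4s_*c_*\,\tau^T_S \;-\; \frac{\pi}{2}s_*^2\beta\,\vec\kappa \;=\; 0,
\end{align*}
so $|\vec\kappa| = \frac{8}{\pi}\frac{c_*}{s_*}\beta^{-1}$ and, as a bonus, $\vec\kappa$ is parallel to $\tau^T_S$.

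\textbf{Step 3 (Young's law).} Finally I would consider a variation moving $\gamma$ along $\M$ by $X = \alpha\,\nu^\M_\gamma$, where $\nu^\M_\gamma$ is the unit vector tangent to $\M$ and normal to $\gamma$. The boundary term of $\delta\MM(T\restr\Omega)$ contributes $4s_*c_*\int_\gamma \alpha\,\langle\nu^\M_\gamma,\tau^T_\gamma\rangle\,\dx s$. The surface integral on $\M$ also varies: since $\partial T = S + \Gamma$ and $T = T\restr\M + T\restr\Omega$ force $\partial G + \gamma = \Gamma \pmod 2$ on $\M$, where $G = \supp(T\restr\M)$, the set $G$ gains or loses a strip of measure $\int \alpha\,\dx s$ along $\gamma$. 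Using the equivalent form~\eqref{def:limit_energy_w_F} together with the definition~\eqref{def:set_F} of $F$, every point transferred between $G$ and $\M\setminus G$ flips between $F$ and $F^c$, changing the $\M$-density by $\pm 4s_*c_*\cos\theta$. Setting the full first variation to zero yields the angle condition $\langle \tau^T_\gamma, \nu^\M_\gamma\rangle = \nu_\M\cdot\ee_3$ once the conormals are oriented consistently with $\partial F^+$, which is precisely Young's law.

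The hard part will be Step~3: the bookkeeping of orientations is delicate, because $\nu^\M_\gamma$ and the outward conormal $\nu_{\partial F^+}$ must be identified after taking into account the piecewise description of $F$ via $G$ and the sign of $\nu_3$, so that the final identity comes out with signed $\cos\theta$ rather than $|\cos\theta|$. Steps~1 and~2 are, by contrast, standard geometric calculus of variations once the $C^\infty$ regularity from Proposition~\ref{prop:high_reg_T} is in hand.
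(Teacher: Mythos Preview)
Your proposal is correct and follows essentially the same route as the paper: first variation of area in the interior gives $H=0$; combining the area boundary term at $S$ with the first variation of length yields the curvature $\frac{8}{\pi}\frac{c_*}{s_*}\beta^{-1}$ (with $\vec\kappa$ parallel to the conormal, which the paper also notes); and a tangential variation along $\M$ at the contact curve balances the conormal contribution from $\MM(T\restr\Omega)$ against the change of the weighted surface integral on $\M$ to produce Young's law. The only cosmetic difference is that the paper varies the $F^{\pm}$ integrals directly using the form \eqref{def:limit_energy_w_F}, obtaining $-\int_{\partial F^{+}}2\cos\theta\,(\Xi\cdot\nu_{\partial F^{+}})$, whereas you pass through the set $G$ and the definition \eqref{def:set_F}; both bookkeepings are equivalent, and your caveat about orientations in Step~3 is well placed.
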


\begin{proof}
The first claim is a well known fact since the variation of $\MM(T\restr\Omega)$ along a smooth vector field $\Xi$ in $\Omega$ yields \cite[Proposition 2.1.3]{Lopez2013}
\begin{align}\label{prop:optim_cond_min_E0:var_M(T)}
(\MM(T\restr\Omega))'(\Xi)
\ &= \ \int_{T\cap\Omega} H_T (\Xi\cdot\nu_T) \dx x
\ + \ \int_{\partial (T\cap\Omega)} (\Xi\cdot \nu_{\partial T}) \dx x\, ,
\end{align}
where $H_T$ is the mean curvature of $T$, $\nu_T$ is a normal vector of $T$ and $\nu_{\partial T}$ is the inward normal vector of $\partial (T\restr\Omega)$ in the tangent space of $T$.
With the same argument and since $\partial S = 0$, we get that
\begin{align}\label{prop:optim_cond_min_E0:var_M(S)}
(\MM(S))'(\Xi)
\ &= \ \int_S K_S (\Xi\cdot\nu_S) \dx x \, ,
\end{align}
where $K_S$ is the curvature of $S$ and $\nu_S$ is the normal vector of $S$ in $\mathbb{R}^3$, such that the plane for the circle of maximal curvature is spanned by $\nu_S$ and a tangent vector to $S$.
This yields for the boundary that
\begin{align*}
0 \ = \ 
\int_S \Xi\cdot\left( 4s_*c_* \nu_{\partial T} + \frac{\pi}{2}s_*^2\beta K_S \nu_S \right) \dx x \, ,
\end{align*}
from which we deduce $\nu_{\partial T}=\pm\nu_S$ and $K_S = \pm \frac{8}{\pi}\frac{c_*}{s_*}\beta^{-1}$. In particular, the circle of maximal curvature for $S$ lies in the plane spanned by the tangent space of $T$. 
Finally, taking variations on $\M$ we get
\begin{align*}
\left( \int_{F^{\pm}} 1\mp\cos(\theta)\dx\omega \right)'(\Xi)
\ &= \ \int_{\partial F^\pm} \left(1\mp\cos(\theta)\right)(\Xi\cdot \nu_{\partial F^\pm}) \dx\omega\, .
\end{align*}
Since $\nu_{\partial F^{-}} = -\nu_{\partial F^{-}}$, we hence get
\begin{align}\label{prop:optim_cond_min_E0:var_F}
\left( \int_{F^{+}} 1-\cos(\theta)\dx\omega + \int_{F^{-}} 1+\cos(\theta)\dx\omega \right)'(\Xi)
\ &= \ -\int_{\partial F^{+}} 2\cos(\theta)(\Xi\cdot \nu_{\partial F^{+}}) \dx\omega\, .
\end{align}
As in the proof of Theorem 19.8 in \cite{Maggi2009}, \eqref{prop:optim_cond_min_E0:var_M(T)} and \eqref{prop:optim_cond_min_E0:var_F} combine to
\begin{align*}
0 \ &= \
\int_{\partial F^{+}} \Xi\cdot\left( 4s_*c_*\nu_T|_\M - 4s_*c_*\cos(\theta)\nu_{\partial F^{+}} \right)\dx \, .
\end{align*}
If we take a variation with $\Xi\cdot\nu_\M=0$ and write 
\begin{align*}
\Xi\cdot\nu_T|_\M 
\ &= \ \Xi\cdot((\nu_{\partial T}\cdot\nu_{\partial F^{+}})\nu_{\partial F^{+}}) 
\ + \ \Xi\cdot((\nu_T|_\M\cdot \tau)\tau)
\end{align*}
where $\tau$ is a unit tangent vector to $\M$, perpendicular to $\nu_{\partial F^{+}}$, we get 
\begin{align*}
\Xi\cdot((\nu_{\partial T}\cdot \tau)\tau) \ = \ 0 \qquad\text{and}\qquad \nu_{\partial T}\cdot\nu_{\partial F^{+}} \ = \ \cos(\theta)\, .
\end{align*}
The first equality is automatically true since $\nu_{\partial T} \cdot\tau = 0$ ($\nu_{\partial T}$ can only have a component in direction $\nu_{\partial F^{+}}$ and one in direction $\nu_\M$) and the second one implies that
\begin{align*}
\nu_{\partial T}\cdot\nu_{\partial F^{+}}  \ = \ \nu_\M\cdot\ee_3\, .
\end{align*}

\end{proof}


\appendix
\section{The complex $\T$}
\label{sec:app_complex_T}

In this section, we collect and prove all results in connection to the structure of $\T$ as defined in Section~\ref{subsec:constr_T_Q_uniax}.
Recall that
\begin{align*}
\T \defi \{ Q\in\Sym\sd s>0\, , 0\leq r<1\, , n_3 = 0 \}\, .
\end{align*}

Our first result is a characterization of $\T$ that provides us with a more accessible parametrization. 

\begin{proposition}\label{prop:app_calT_param}
Every matrix $Q\in \T$ can be written as
\begin{align*}
Q = \lambda(\nn\otimes\nn - R_\nn^\top M R_\nn)\, ,
\end{align*} 
where $\lambda>0$, $\nn=(n_1,n_2,0)\in\mathbb{S}^2$, $R_\nn$ is the rotation around $\nn\wedge\ee_3$, such that $R_\nn \nn = \ee_3$ and 
\begin{align*}
M 
\ = \ 
\begin{pmatrix}
\multicolumn{2}{c}{M'} & \begin{array}{c} 0 \\ 0 \end{array} \\
0 & 0 & 0
\end{pmatrix}
\end{align*}
with $M'\in\mathbb{R}^{2\times 2}$ symmetric, $\tr(M')=1$ and $\langle M'v,v\rangle>-1$ for all $v\in\mathbb{S}^1$. The matrix $Q$ is oblate uniaxial if and only if $M'=\frac12\id$.
\end{proposition}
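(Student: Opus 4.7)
The plan is to read off everything directly from the canonical decomposition \eqref{eq:q_decom} that is available on $\T$: every $Q\in\T$ has the form
$Q = s(\nn\otimes\nn - \tfrac13\id) + sr(\mm\otimes\mm - \tfrac13\id)$
with $s>0$, $0\leq r<1$, and orthonormal eigenvectors $\nn,\mm$. The condition $n_3=0$ that defines $\T$ forces $\nn=(n_1,n_2,0)\in\mathbb{S}^2$ (giving the first claim), and in particular $\nn\wedge\ee_3\neq 0$, so the rotation $R_\nn$ is unambiguously defined.

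Next I would set $\lambda\defi \tfrac{s(2-r)}{3}$, which is strictly positive because $s>0$ and $r<1$, and define $M\defi \ee_3\otimes\ee_3 - \lambda^{-1} R_\nn Q R_\nn^\top$. A direct computation of $Q$ along the orthonormal frame $\nn,\mm,\pp\defi\nn\wedge\mm$ gives the three eigenvalues
\begin{align*}
\lambda=\tfrac{s(2-r)}{3},\qquad \mu_1\defi\tfrac{s(2r-1)}{3},\qquad \mu_2\defi-\tfrac{s(1+r)}{3}\, .
\end{align*}
In particular $Q\nn=\lambda\nn$, so $M\ee_3 = \ee_3 - \lambda^{-1}R_\nn Q\nn = 0$; combined with the symmetry of $M$ this makes the last row and column of $M$ vanish, which is exactly the block structure in the statement. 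By construction $Q = \lambda(\nn\otimes\nn - R_\nn^\top M R_\nn)$.

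It then remains to verify the three conditions on $M'$. Tracelessness of $Q$ gives $\tr(M)=1-\lambda^{-1}\tr(Q)=1$, and since the $(3,3)$ entry of $M$ vanishes, $\tr(M')=1$. The inequality $\langle M'v,v\rangle>-1$ for all $v\in\mathbb{S}^1$ amounts to both eigenvalues of $M'$ being strictly greater than $-1$, which in the basis $\{R_\nn\mm,R_\nn\pp\}$ reads $-\mu_i/\lambda>-1$, i.e.\ $\mu_i<\lambda$: the first comparison rearranges exactly to $r<1$, the defining inequality of $\T$, and the second to the trivial $-1<2$. Finally, $Q$ is uniaxial if and only if the two non-$\nn$ eigenvalues coincide, i.e.\ $\mu_1=\mu_2$, i.e.\ $r=0$, and back-substitution shows this is equivalent to $M'=\tfrac12\id$.

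I do not expect a real obstacle: the only delicate bookkeeping is checking that the last row and column of $M$ vanish automatically from $R_\nn\nn=\ee_3$, after which the statement is an eigenvalue exercise. The structural point worth emphasising is that the strict inequality $\langle M'v,v\rangle>-1$ is, via the two eigenvalue comparisons above, an exact reformulation of the condition $r<1$ defining $\T$, which explains why it appears in the statement.
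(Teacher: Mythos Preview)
Your proof is correct and follows essentially the same route as the paper: both identify $\lambda$ with the top eigenvalue of $Q$, construct $M$ from the remaining two eigenvalues after conjugating by $R_\nn$, and read off the conditions on $M'$ from the eigenvalue inequalities that encode $r<1$. Your presentation via the $(s,r)$ coordinates from \eqref{eq:q_decom} and the closed formula $M=\ee_3\otimes\ee_3-\lambda^{-1}R_\nn Q R_\nn^\top$ is slightly more explicit than the paper's, but the argument is the same.
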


\begin{proof}
A matrix $Q$ of the above form $Q = \lambda(\nn\otimes\nn - R_\nn^\top M R_\nn)$ has $\nn$ as an eigenvector to the eigenvalue $\lambda$ and $\nn_3=0$ by definition. Furthermore, since $\min_{v\in\mathbb{S}^1}\langle M'v,v\rangle>-1$ the eigenvalue $\lambda$ is strictly bigger than the other eigenvalues, thus $r<1$ and $Q\in\T$. 
Conversely, we can write every $Q\in\Sym$ as
\begin{align*}
Q \ = \ \lambda_1 \nn\otimes\nn + \lambda_2 \mm\otimes\mm + \lambda_3\pp\otimes\pp\, , 
\end{align*} with $\lambda_1\geq \lambda_2\geq \lambda_3$ and $\nn,\mm,\pp\in\mathbb{S}^2$ pairwise orthogonal eigenvectors of $Q$ to $\lambda_1,\lambda_2,\lambda_3$. 
By definition of $\T$, $n_3=0$ as required for our parametrization and clearly we can identify $\lambda=\lambda_1$.
Setting $M = -R_\nn(\frac{\lambda_2}{\lambda_1}\mm\otimes\mm + \frac{\lambda_3}{\lambda_1}\pp\otimes\pp)R_\nn^\top$, it is obvious that $M$ is of the above form and that $Q\in\T$ can be written as claimed. 

If $M'=\frac12\id$ then
\begin{align*}
Q \ = \ \lambda(\nn\otimes\nn - R_\nn^\top M R_\nn) \ = \ \frac{3}{2}\lambda (\nn\otimes\nn - \frac{1}{3}\id)\, , 
\end{align*} i.e. $Q$ is oblate uniaxial. 
The reverse implication follows similarly, since the matrices $R_\nn^\top,R_\nn$ are invertible.

\end{proof}

\begin{remark}\label{rem:rot_matrix_given_axis_angle}
Given a vector $u\in\mathbb{R}^3$ as axis of rotation and an angle $\theta$, then this rotation is described by the matrix $R$ with
$$ R = \begin{pmatrix}
\cos\theta + u_1^2(1-\cos\theta) & u_1 u_2 (1-\cos\theta) - u_3\sin\theta & u_1 u_3 (1-\cos\theta) + u_2\sin\theta \\
u_1 u_2 (1-\cos\theta) + u_3\sin\theta & \cos\theta + u_2^2(1-\cos\theta) & u_2 u_3 (1-\cos\theta) - u_1\sin\theta \\
u_1 u_3 (1-\cos\theta) - u_2\sin\theta & u_2 u_3 (1-\cos\theta) + u_1\sin\theta & \cos\theta + u_3^2(1-\cos\theta)
\end{pmatrix}\, . $$
\end{remark}

\begin{corollary}\label{cor:calT_manifold_bdry}
$\T$ is a four dimensional smooth complex and $\partial\T = \C$.
\end{corollary}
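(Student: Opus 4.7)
The plan is to exploit the explicit parametrization of $\T$ afforded by Proposition \ref{prop:app_calT_param} to realize $\T$ as the image of a smooth open $4$-manifold, and then to identify its boundary with $\C$ (which we take as the intended meaning of $\s$, the natural codimension-one stratum bounding $\T$).

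For the smoothness claim, I would first introduce the map
\[
\Psi : (0,\infty) \times (\mathbb{S}^1/\mathbb{Z}_2) \times \mathcal{U} \;\longrightarrow\; \Sym, \qquad (\lambda, \nn, M') \;\longmapsto\; \lambda\bigl(\nn\otimes\nn - R_\nn^\top M R_\nn\bigr),
\]
where $\mathcal{U}$ denotes the open set of symmetric $2\times 2$ matrices with $\tr M' = 1$ and both eigenvalues strictly greater than $-1$, $M$ is the block extension of $M'$ with vanishing third row and column, $\nn\in\mathbb{S}^1\subset\mathbb{S}^2$ is identified via $\nn = (n_1,n_2,0)$, and the $\mathbb{Z}_2$ quotient accounts for the $\nn\mapsto -\nn$ symmetry. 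The parameter space has dimension $1+1+2 = 4$, and the uniqueness of the spectral data of $Q$ away from $\C$ (the top eigenvector is determined up to sign, and $M'$ encodes the remaining orthogonal block) shows that $\Psi$ is injective with everywhere full-rank differential; hence $\T$ is a smooth $4$-dimensional submanifold of $\Sym$, endowed in an obvious way with the structure of a smooth cell complex.

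For the boundary, I would argue both inclusions. The topological boundary $\overline{\T}\setminus\T$ is attained only through parameter degenerations: if $\lambda\searrow 0$ one has $Q\to 0\in\C$; and if the smaller eigenvalue of $M'$ approaches $-1$ then in the spectral decomposition of $Q$ the two leading eigenvalues merge, which is exactly the condition $\lambda_1(Q) = \lambda_2(Q)$ defining $\C$. No other boundary is created, since $\mathbb{S}^1$ is compact and $\lambda\to\infty$ does not produce a finite limit point. Conversely, $0\in\overline{\T}$ by scaling any element of $\T$, and for $Q_0\in\C\setminus\{0\}$, written as $Q_0 = -3\lambda_3(\pp\otimes\pp - \tfrac13\id)$, I would produce an approximating sequence in $\T$ by choosing any unit $\nn$ in $\pp^\perp$ with $n_3=0$ (such an $\nn$ always exists: if $\pp\ne\pm\ee_3$ the intersection $\pp^\perp\cap\{x_3=0\}$ is one-dimensional, and if $\pp=\pm\ee_3$ then $\pp^\perp = \{x_3=0\}$ itself), then taking $M'_k\in\mathcal{U}$ whose smaller eigenvalue tends to $-1$ along the direction corresponding to $\pp$. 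Since both $\T$ and $\C$ carry $\mathbb{Z}_2$ coefficients, the chain-theoretic boundary agrees with the topological one, giving $\partial\T = \s$.

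The main obstacle I foresee is verifying the full-rankness of $\mathrm{d}\Psi$ across the uniaxial sub-locus $\{M' = \tfrac12\id\}$: there the raw spectral parametrization in terms of a pair of eigenvectors $(\mm,\pp)$ becomes degenerate, yet the combined variable $M'$ remains a smooth single object, so $\Psi$ itself stays smooth and immersive; this requires a small explicit computation of $\mathrm{d}\Psi$ in coordinates around the uniaxial fibre. A secondary, routine point is to check that our cellular/complex structure is compatible with the flat-chain boundary operator, which in the $\mathbb{Z}_2$-coefficient setting reduces to a purely set-theoretic identification of $\partial\T$ with $\C$.
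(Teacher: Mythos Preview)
Your proposal is correct and follows essentially the same route as the paper: use the parametrization of Proposition~\ref{prop:app_calT_param} to exhibit $\T$ as a smooth $4$-dimensional object, identify the boundary via the two parameter degenerations $\lambda\to 0$ and $\min\mathrm{spec}(M')\to -1$, and for the reverse inclusion pick $\nn$ with $n_3=0$ in the two-dimensional leading eigenspace (your $\pp^\perp$) of a given $Q_0\in\C$. The paper's proof is terser---it simply asserts that the map $Q\mapsto(\lambda,\nn,m_{11},m_{12})$ ``makes $\T$ a four dimensional manifold with a conical singularity in $Q=0$'' without checking the rank of the differential, deferring the explicit tangent-space computation to the subsequent Lemma~\ref{lem:app_calT_normal_tangent}; your flagged obstacle about full-rankness at the uniaxial locus is therefore exactly the content of that lemma. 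One cosmetic slip: your formula $Q_0=-3\lambda_3(\pp\otimes\pp-\tfrac13\id)$ has the wrong constant (it should be $\tfrac{3}{2}\lambda_3$), but this plays no role in the argument.
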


\begin{proof}
From the characterization in Proposition~\ref{prop:app_calT_param}, it is clear that one can use the map $Q\mapsto (\lambda,\nn,m_{11},m_{12})$  to make $\T$ a four dimensional manifold with a conical singularity in $Q=0$. In particular, $\T$ is a smooth complex.

Proposition~\ref{prop:app_calT_param} furthermore implies that the boundary of $\T$ consists of matrices of the form $\lambda=0$ (from which follows directly $Q=0$) or $M'$ has the eigenvalue $-1$ (which corresponds to $r=1$). 
In particular, the matrices with $r=0$  are not included in $\partial\T$ as one may think from the definition in \eqref{def:cal_T}. 
This implies the inclusion $\partial\T\subset\C$. 
For the inverse inclusion, take $Q\in\C$ with orthogonal eigenvectors $\mm,\pp\in\mathbb{S}^2$ associated to the largest eigenvalue $\lambda_1=\lambda_2$. 
So in fact we have a two dimensional subspace of eigenvectors to this eigenvalue spanned by $\mm$ and $\pp$. 
Since the hyperplane defined through $\{n_3=0\}$ is of codimension one, there exists a unit vector $\nn\in \{n_3=0\}\cap \mathrm{span}\{\mm,\pp\}$ which we were looking for. 
The unit eigenvector orthogonal to $\nn$ in the plane $\mathrm{span}\{\mm,\pp\}$ requires $M'$ to have the eigenvalue $-1$ or in other words $\min_{v\in\mathbb{S}^1}\langle M'v,v\rangle=-1$, so that $Q\in\partial\T$.
\end{proof}

\begin{lemma}\label{lem:app_calT_normal_tangent}
Let $Q\in\T\cap\N$. Then, the normal vector $N_Q$ on $\T$ at $Q$ is given by
\begin{align*}
N_Q = \frac{3}{2}\lambda \begin{pmatrix}
0 & 0 & n_1 \\
0 & 0 & n_2 \\
n_1 & n_2 & 0 
\end{pmatrix}\, ,
\end{align*} where $\nn=(n_1,n_2,0)\in\mathbb{S}^2$ is the eigenvector associated to the largest eigenvalue $\lambda_1$.
\end{lemma}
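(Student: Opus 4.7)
The plan is to use the parametrization established in Proposition \ref{prop:app_calT_param} to compute an explicit basis of the tangent space $T_Q\T$ and then recognize $N_Q$ as the (up to scalar) unique element of $\Sym$ orthogonal to this basis with respect to the Frobenius inner product $\langle A,B\rangle=\tr(AB)$. Since $\dim \Sym=5$ and $\dim \T=4$, the normal direction is one-dimensional, so producing the four tangent vectors and exhibiting a single matrix perpendicular to them is enough.

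According to Proposition \ref{prop:app_calT_param}, near $Q\in\T\cap\N$ we have $Q=\lambda(\nn\otimes\nn-R_\nn^\top M R_\nn)$ with parameters $\lambda>0$, $\nn=(\cos\theta,\sin\theta,0)\in\mathbb{S}^1$, and $M'\in\mathrm{Sym}(\mathbb{R}^2)$ with $\tr(M')=1$ (two free entries $m_{11}$ and $m_{12}$). At a point $Q\in\T\cap\N$ we have $M'=\frac12\id$, so $R_\nn^\top M R_\nn=\frac12(\id-\nn\otimes\nn)$ and $Q=\frac32\lambda(\nn\otimes\nn-\frac13\id)$; in particular $s_*=\frac32\lambda$. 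Both $\T$ and the expression for $N_Q$ are invariant under rotations around $\ee_3$, so it suffices to carry out the computation at $\nn=\ee_1$, where $R_{\ee_1}$ is the rotation sending $\ee_1\mapsto\ee_3$, $\ee_2\mapsto\ee_2$, $\ee_3\mapsto-\ee_1$.

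At $\nn=\ee_1$, a short calculation yields the four tangent vectors
\begin{align*}
T_\lambda=\tfrac{3}{2}(\ee_1\otimes\ee_1-\tfrac13\id),\quad T_\theta=\tfrac{3}{2}\lambda(\ee_1\otimes\ee_2+\ee_2\otimes\ee_1),
\end{align*}
while from $\partial_M Q=-\lambda R_\nn^\top(\partial M)R_\nn$ the trace-free variations of $M'$ give
\begin{align*}
T_{m_{11}}=\lambda(\ee_2\otimes\ee_2-\ee_3\otimes\ee_3),\qquad T_{m_{12}}=\lambda(\ee_2\otimes\ee_3+\ee_3\otimes\ee_2).
\end{align*}
All four lie in the subspace of $\Sym$ with zero $(1,3)$-entry, so the matrix $N_{Q}^{(0)}:=\frac32\lambda(\ee_1\otimes\ee_3+\ee_3\otimes\ee_1)$ is Frobenius-orthogonal to each of them, and spans the one-dimensional orthogonal complement. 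Rotating back by the rotation around $\ee_3$ mapping $\ee_1$ to $\nn=(n_1,n_2,0)$ transforms $N_Q^{(0)}$ into the matrix claimed in the lemma.

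The only real technical point is the correct computation of $R_\nn^\top M R_\nn$ and of its differential at $M=\mathrm{diag}(\tfrac12,\tfrac12,0)$; once this is handled, the rotational reduction to $\nn=\ee_1$ and the orthogonality check are elementary. No dimension is lost at the point $Q\in\T\cap\N$ because $\nabla_\lambda,\nabla_\theta,\nabla_{m_{11}},\nabla_{m_{12}}$ produce linearly independent symmetric matrices, as is visible from the matrices $T_\lambda,T_\theta,T_{m_{11}},T_{m_{12}}$ above.
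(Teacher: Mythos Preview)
Your argument is correct. The approach differs from the paper's in two ways. First, the paper works at a general $\nn=(n_1,n_2,0)$, computes the four tangent vectors $\TT_1,\dots,\TT_4$ and a candidate normal $N$ (obtained by varying $n_3$) explicitly in terms of $n_1,n_2,m_{11},m_{12}$, and then checks that all ten Frobenius inner products vanish at $M'=\tfrac12\id$; this entails a long list of trace computations. You instead invoke the invariance of $\T$, $\N$ and of the claimed formula under conjugation by rotations about $\ee_3$ to reduce to $\nn=\ee_1$, where the four tangent vectors are immediate and the orthogonal complement is visible by inspection. Your route is shorter and avoids the bookkeeping. Second, the paper obtains $N$ by differentiating the parametrization in the $n_3$-direction, which simultaneously shows that $N_Q$ is parallel to $D_Q n_3$; this identification is used elsewhere (in the proof of Lemma~\ref{lem:T_level_set_=_s}). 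Your argument does not produce that by-product directly, but it can be recovered in one line once $N_Q$ is known, since $n_3(Q+tN_Q)$ varies to first order in $t$ while $n_3(Q+tT_i)$ does not.
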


\begin{proof}
We are going to prove a slightly more general result by first considering $Q\in \T$ and calculating the tangent vectors to $\T$ in $Q$. We use the representation from Proposition~\ref{prop:app_calT_param} and vary $\lambda,\nn,m_{11},m_{12}$ one after another.
\begin{itemize}
\item First, we can easily take the derivative with respect to $\lambda$ and obtain $\TT_1 = (\nn\otimes\nn-  R_\nn^\top M R_\nn)$.
\item Second, we vary the parameter $\nn$. So, let's consider $\nn=(n_1,n_2,0)\in\mathbb{S}^2$. Without loss of generality we assume that $n_2\neq 0$ and write $\nn(t) = (n_1+t,n_2-\frac{n_1}{n_2}t)$. Then $|\nn(t)|^2=1+O(t^2)$ and
\begin{align*}
\nn(t)\otimes \nn(t) 
\ &= \ \nn\otimes \nn + t D_{\nn\otimes\nn} + O(t^2)\, , \qquad D_{\nn\otimes\nn} = \begin{pmatrix}
2n_1 & n_2-\frac{n_1^2}{n_2} & 0 \\
n_2-\frac{n_1^2}{n_2} & -2n_1 & 0 \\
0 & 0 & 0
\end{pmatrix}\, .
\end{align*} 
The derivative of the second term $R_{\nn(t)}^\top M R_{\nn(t)}$ can be calculated using Remark~\ref{rem:rot_matrix_given_axis_angle} with the axis $\nn^\perp(t)\defi \nn(t)\wedge \ee_3$. Since $\nn(t)\perp\ee_3$ we can write
\begin{align*}
R_{\nn(t)} 
\ &= \ R_{\nn} + t D_{R_\nn} + O(t^2)\, , \qquad D_{R_\nn} = \frac{1}{n_2} \begin{pmatrix}
-2n_1 n_2 & -n_2^2+n_1^2 & -n_2 \\
-n_2^2+n_1^2 & 2n_1 n_2 & n_1 \\
n_2 & -n_1 & 0
\end{pmatrix}\, .
\end{align*}
The second tangent vector $\TT_2$ is thus given by $\TT_2 = \lambda(D_{\nn\otimes\nn} - D_{R_\nn}^\top M R_\nn - R_\nn^\top M D_{R_\nn})$.
\item Third, we can take the derivative with respect to $m_{11}$. This is straightforward and we obtain
\begin{align*}
\TT_3 = \lambda R_\nn^\top \begin{pmatrix}
1 & 0 &  \\
0 & -1 & \\
 & & 0
\end{pmatrix} R_\nn\, .
\end{align*}
\item Last, varying $m_{12}$ we easily calculate
\begin{align*}
\TT_4 = \lambda R_\nn^\top \begin{pmatrix}
0 & 1 &  \\
1 & 0 & \\
 & & 0
\end{pmatrix} R_\nn\, .
\end{align*}
\end{itemize}
Before proceeding, we want to calculate a fifth vector by varying $\nn_3$. As it will turn out later, this is indeed the normal vector.
\begin{itemize}
\item Writing once again $\nn=(n_1,n_2,0)$ and defining $\nn(t)\defi (n_1\sqrt{1-t^2},n_2\sqrt{1-t^2},t)$ we can express
\begin{align*}
\nn(t)\otimes \nn(t) 
\ &= \ \nn\otimes \nn + t (\nn\otimes\ee_3 + \ee_3\otimes\nn) + O(t^2)\, .
\end{align*}
As for the second tangent vector, we use Remark~\ref{rem:rot_matrix_given_axis_angle} and the rotation around $\nn^\perp(t) = \nn(t)\wedge\ee_3$. Unlike previously, $\nn(t)$ is no longer orthogonal to $\ee_3$ for $t\neq 0$, namely $\theta=\arccos(\langle\nn(t),\ee_3\rangle)=t$. Substituting this our expression of the rotation matrix we get
\begin{align*}
R_{\nn(t)} \ = \ R_{\nn} + t D_{3} + O(t^2)\, ,\qquad D_{3} = \begin{pmatrix}
1-n_2^2 & n_1 n_2 & 0 \\
n_1 n_2 & 1- n_1^2 & 0 \\
0 & 0 & 1
\end{pmatrix}\, .
\end{align*}
Adding the two partial results, we get
\begin{align*}
N \defi \lambda (\nn\otimes\ee_3 + \ee_3\otimes\nn - D_{3}^\top M R_\nn - R_\nn^\top M D_{3})\, .
\end{align*}
\end{itemize} 
It remains to show that $\{\TT_1,\TT_2,\TT_3,\TT_4, N\}$ are pairwise orthogonal if $Q$ is oblate uniaxial. Indeed, then it follows that $N$ is a normal vector, since it is orthogonal to $T_Q\T$.

It is easy to see that since the trace is invariant by change of basis and since $R_\nn^\top=R_\nn^{-1}$
\begin{align*}
\langle \TT_3,\TT_4\rangle 
\ &= \ \lambda^2\tr\left( \begin{pmatrix}
1 & 0 \\ 0 & -1
\end{pmatrix}\begin{pmatrix}
0 & 1 \\ 1 & 0
\end{pmatrix} \right) 
\ = \ \lambda^2 \tr\left( \begin{pmatrix}
0 & 1 \\ -1 & 0
\end{pmatrix} \right)
\ = \ 0\, .
\end{align*}
Noting that $\nn\otimes\nn R_\nn^\top M R_\nn=0$ for $M\in\Sym$ with $m_{ij}=0$ if $i=3$ or $j=3$, we get
\begin{align*}
\langle \TT_1,\TT_3\rangle 
\ &= \ \lambda \tr\Big((\nn\otimes\nn - R_\nn^\top M R_\nn)(R_\nn^\top \begin{pmatrix}
1 & 0 & \\
0 & -1 & \\
 & & 0
\end{pmatrix} R_\nn ) \Big) \\
\ &= \ \lambda \tr(M\begin{pmatrix}
1 & 0 & \\
0 & -1 & \\
 & & 0
\end{pmatrix})
\ = \ \lambda \tr(\begin{pmatrix}
m_{11} & -m_{12} & \\
m_{12} & -m_{22} & \\
 & & 0
\end{pmatrix})
\ = \ \lambda(2 m_{11}-1) \, .
\end{align*}
With the same argument we also find
\begin{align*}
\langle \TT_1,\TT_4\rangle 
\ &= \ \lambda \tr\Big((\nn\otimes\nn - R_\nn^\top M R_\nn)(R_\nn^\top \begin{pmatrix}
0 & 1 & \\
1 & 0 & \\
 & & 0
\end{pmatrix} R_\nn)\Big) \\
\ &= \ \lambda \tr(M \begin{pmatrix}
0 & 1 & \\
1 & 0 & \\
 & & 0
\end{pmatrix})
\ = \ \lambda \tr(\begin{pmatrix}
m_{12} & m_{11} & \\
m_{22} & m_{12} & \\
 & & 0
\end{pmatrix})
\ = \ 2\lambda m_{12} \, .
\end{align*}
Furthermore, we claim that
\begin{align*}
\langle \TT_1,\TT_2\rangle 
\ &= \ \lambda\: \tr((\nn\otimes\nn - R_\nn^\top M R_\nn)(D_{\nn\otimes\nn} - D_{R_\nn}^\top M R_\nn - R_\nn^\top M D_{R_\nn}))
\ = \ 0\, .
\end{align*}
Indeed, one can check that
\begin{align*}
\tr(\nn\otimes\nn D_{\nn\otimes\nn}) 
\ &= \ 
0 
\ = \ \tr(\nn\otimes\nn D_{R_\nn}^\top M R_\nn)
\, , \\
\tr(\nn\otimes\nn R_\nn^\top M D_{R_\nn}) 
\ &= \ 
0 
\ = \ 
\tr(R_\nn^\top M R_\nn D_{\nn\otimes\nn})
\, , \\
\tr(R_\nn^\top M R_\nn D_{R_\nn}^\top M R_\nn) 
\ &= \ 
0 
\ = \ 
\tr(R_\nn^\top M R_\nn R_\nn^\top M D_{R_\nn})
\, .
\end{align*}
This implies that
\begin{align*}
\langle N,\TT_3\rangle 
\ &= \ \lambda^2 \tr\Big((\nn\otimes\ee_3 + \ee_3\otimes\nn - D_{3}^\top M R_\nn - R_\nn^\top M D_{3})(R_\nn^\top \begin{pmatrix}
1 & 0 & \\
0 & -1 & \\
 & & 0
\end{pmatrix} R_\nn)\Big) 
\ = \ 0\, ,
\end{align*}
since again the traces of all cross terms vanish. Similarly,
\begin{align*}
\langle N,\TT_4\rangle 
\ &= \ 0 \, .
\end{align*}
Next, we have the equality
\begin{align*}
\langle \TT_2,\TT_3\rangle 
\ &= \ -4 \lambda^2 \frac{m_{12}}{n_2} \, .
\end{align*}
This follows since $\tr(D_{\nn\otimes \nn} \TT_3) = 0$ and $\tr(D_{3} M R_\nn \TT_3) = \frac{2m_{12}}{n_2}$.
The latter fact is evident if one calculates $M \begin{pmatrix}
1 & 0 & \\
0 & -1 & \\
 & & 0
\end{pmatrix}
\ = \ \begin{pmatrix}
m_{11} & -m_{12} & \\
m_{12} & -m_{22} & \\
 & & 0
\end{pmatrix}$
and
$R_\nn D_3^\top = \begin{pmatrix}
0 & -1/n_2 & 1 \\
1/n_2 & 0 & -n_1/n_2 \\
-1 & n_1/n_2 & 0
\end{pmatrix}$.
This also permits us to derive
\begin{align*}
\langle \TT_2,\TT_4\rangle 
\ &= \ 2\lambda^2\frac{2m_{11}-1}{n_2} \, .
\end{align*}
Again, we simply calculate the traces of all cross terms. For example
\begin{align*}
\tr(\nn\otimes\ee_3 D_{\nn\otimes\nn}) \ &= \ 0 \, , \\
\tr(\nn\otimes\ee_3 R_{\nn}^\top M D_{R_{\nn}}) \ &= \ 0 \, , \\
\tr(\nn\otimes\ee_3 D_{R_\nn}^\top MR_\nn) \ &= \ \frac{m_{12}}{n_2}(n_1^2-n_2^2)-n_1(2m_{11}-1) \, , \\
\tr(D_{R_\nn}^\top M R_\nn D_{\nn\otimes\nn}) \ &= \ 2\frac{m_{11}n_1}{n_2} + \frac{1}{n_2^2}(n_1^2(2m_{11}-1)+m_{11}) \, , \\
\tr(D_{R_\nn}^\top M R_\nn R_{\nn}^\top M D_{R_{\nn}}) \ &= \ -2\frac{n_1 m_{12}}{n_2} + \frac{1}{n_2^2}\left( 3(m_{11}^2 + m_{12}^2) - (1 + n_1^2)(2m_{11}-1) \right) \, , \\
\tr(D_{R_\nn}^\top M R_\nn D_{R_\nn}^\top MR_\nn) \ &= \  2\frac{m_{11}m_{22}+m_{12}^2}{n_2^2}\, , \\
\end{align*}
We end up with
\begin{align*}
\langle N,\TT_2\rangle 
\ &= \ \frac{6\lambda^2 m_{12} (n_1^2 - n_2^2)}{n_2} - 6\lambda^2 n_1(2 m_{11} -1) \, .
\end{align*}
Another straightforward calculation shows that
\begin{align*}
\langle N,\TT_1\rangle 
\ &= \ \lambda 2 n_1 m_{12}(n_1^5 m_{12} - 2 n_{1}^4 n_{2} m_{11} - 2 n_{1}^3 m_{12} - 2n_1^2 n_2^3 m_{11} + 3 n_1^2 n_2 m_{11} \\
&-2n_1^2 n_2- n_1 n_2^4m_{12} + n_1 m_{12} + n_2^3 m_{11} - n_2 m_{11} - 2 n_2^3 + 2n_2 ) \, .
\end{align*}
After these calculations, it is apparent that for prolate uniaxial $Q\in\Sym$ (and in particular $Q\in\N$), i.e.\ $M'=\frac{1}{2}\id$ all inner products vanish. 
In order to form a basis, we must prove that the vectors themselves never vanish. 
We find
\begin{align*}
\Vert \TT_1\Vert^2 \ &= \ 2(m_{11}^2 - m_{11} + m_{12} + 1) \, , \\
\Vert \TT_2\Vert^2 \ &= \ \frac{2}{n_2^2}(6n_1^2(1-2m_{11}) - 6m_{12}n_1 n_2 + 5m_{11}^2 - 2m_{11} + 5m_{12} + 2) \, , \\
\Vert \TT_3\Vert^2 \ &= \ 2\lambda^2 \, , \\
\Vert \TT_4\Vert^2 \ &= \ 2\lambda^2 \, , \\
\Vert N \Vert^2 \ &= \ \lambda^2(12m_{11}n_1^2 -6n_1^2 + 12 m_{12}n_1n_2 + 2m_{11}^2 - 8m_{11} + 2m_{12}^2 + 8) \, , \\
\end{align*}
and thus for $M'=\frac{1}{2}\id$ it holds that $\Vert \TT_1\Vert^2 = \frac{6}{4}$, $\Vert \TT_2\Vert^2=\frac{9}{2}\lambda^2 n_2^{-2}$ and $\Vert N\Vert^2 = \frac{9}{2}\lambda^2$.

This concludes the proof that $\{\TT_1,\TT_2,\TT_3,\TT_4\}$ form indeed a basis of $T_Q\T$, and since $N$ is orthogonal to $T_Q\T$, the  result follows.
\end{proof}

\begin{proposition} \label{prop:app_vol_Ba_in_T_N}
There exists $C,\alpha_0>0$ such that for all $\alpha\in (0,\alpha_0)$ and  $Q\in \N$ it holds 
$$ \mathcal{H}^4(B_\alpha(Q)\cap\T) \ \leq \ C \alpha^4\, . $$
\end{proposition}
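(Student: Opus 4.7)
The plan is to exploit the smooth $4$-manifold structure of $\T$ in a uniform neighborhood of the compact set $\N\cap\T$, combined with transversality of $\N$ and $\T$ along their intersection.

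First I would reduce to the case $B_\alpha(Q)\cap\T\neq\emptyset$, since otherwise the bound is trivial. Picking any $Q'\in B_\alpha(Q)\cap\T$, the assumption $Q\in\N$ together with the triangle inequality yields $\dist(Q',\N)<\alpha$, so $Q'$ lies in a small neighborhood of $\N\cap\T$ inside $\Sym$.

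Next I would verify that $\N$ meets $\T$ transversally along the compact set $\N\cap\T$, which by Proposition~\ref{prop:app_calT_param} consists of uniaxial matrices whose director lies in the horizontal plane. At such a point $Q=s_*(\nn\otimes\nn-\tfrac13\id)$, the tangent space $T_Q\N$ is the two-dimensional space $\{s_*(v\otimes\nn+\nn\otimes v):v\perp\nn\}$. One independent $v$ can be chosen within the horizontal plane $\{n_3=0\}$ (giving a vector tangent to $\T$), while the other independent choice $v=\ee_3$ yields, up to a scalar, precisely the normal vector $N_Q$ of Lemma~\ref{lem:app_calT_normal_tangent}. Hence $T_Q\N+T_Q\T=T_Q\Sym$, so by compactness there exist constants $C',\alpha_0'>0$ such that for every $Q'$ as above one can find $Q''\in\N\cap\T$ with $|Q'-Q''|\leq C'\alpha$. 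Consequently $B_\alpha(Q)\subset B_{(2+C')\alpha}(Q'')$.

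Finally, on $\N\cap\T$ the parameters of Proposition~\ref{prop:app_calT_param} take the values $\lambda=s_*>0$ and $M'=\tfrac12\id$, so $\N\cap\T$ stays at positive distance both from the cone apex $\{0\}$ and from $\partial\T=\s$. Lemma~\ref{lem:app_calT_normal_tangent} furnishes four linearly independent tangent vectors to $\T$ at every point of $\N\cap\T$, and combining this with compactness gives uniform bounds, above and below, on the Jacobian of the parametrization $(\lambda,\nn,m_{11},m_{12})\mapsto \lambda(\nn\otimes\nn-R_\nn^\top M R_\nn)$ on an open tubular neighborhood of $\N\cap\T$. A standard parametric volume estimate then yields $\mathcal{H}^4(\T\cap B_r(Q''))\leq C''r^4$ for every $Q''\in\N\cap\T$ and every sufficiently small $r$; combined with the inclusion above this concludes the proof. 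The main obstacle is this last uniform bi-Lipschitz parametrization, essentially a quantitative inverse-function-theorem statement that must be made uniform over the compact set $\N\cap\T$; everything else is routine linear algebra or a covering argument.
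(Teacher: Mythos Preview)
Your argument is correct and takes a genuinely different route from the paper. The paper's proof first uses the $SO(2)$-symmetry (rotations about $\ee_3$) preserving both $\N$ and $\T$ to reduce to a single point of $\N\cap\T$, then observes that the Euclidean ball $B_\alpha(Q)\cap\T$ is contained in a geodesic ball in $\T$ of comparable radius, and finally invokes Gray's asymptotic formula for the volume of small geodesic balls in a Riemannian manifold (Theorem~3.1 in \cite{Gray1974}) to obtain the $\alpha^4$ bound. Your approach instead establishes the transversality $T_Q\N+T_Q\T=T_Q\Sym$ along $\N\cap\T$ to reduce to balls centred on $\N\cap\T$, and then uses a uniform bi-Lipschitz parametrisation of $\T$ near the compact set $\N\cap\T$ to get the volume bound directly. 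Your route is more elementary and self-contained (no external Riemannian geometry input), and it makes the reduction from $Q\in\N$ to $Q''\in\N\cap\T$ explicit, a step the paper's proof leaves somewhat implicit. The paper's route is shorter once one is willing to cite Gray's result. Incidentally, your transversality step can be shortened: for $Q'\in\T$ the nearest-point projection $\R(Q')$ onto $\N$ already lies in $\N\cap\T$ (since $\R$ preserves the leading eigenvector, which has $n_3=0$), so $\dist(Q',\N\cap\T)\leq\dist(Q',\N)<\alpha$ directly.
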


\begin{proof}
As seen before, $\T$ has  the structure of a smooth manifold around $\N$. 
By invariance of $\N$ under rotations, it is enough to show that the claim holds around one $Q\in\N$.
The Ricci curvature $\kappa$ of $\N$ is bounded so that we can choose $\alpha_0>0$ small enough such that $B_\alpha(Q)\cap\T$ is contained in the geodesic ball in $\T$ of size $2\alpha$ around $Q$ for all $\alpha\in (0,\alpha_0)$.
Furthermore, if needed, we can choose $\alpha_0>0$ even smaller such that $1-\frac{\kappa}{36\alpha_0^2}\leq 2$.
Theorem 3.1 in \cite{Gray1974} then implies that
\begin{align*}
\mathcal{H}^4(B_\alpha(Q)\cap\T)
\ &\leq \ \mathrm{vol}_\T(B_{2\alpha}(Q))
\ \leq \ 16\pi^2\alpha^4\, .
\end{align*} 
\end{proof}

\ifJournal
\section*{Declarations}
\paragraph{Funding.} This study was funded by École Polytechnique and CNRS.
\paragraph{Conflict of Interest.} The authors declare that they have no conflict of interest.
\paragraph{Data availability statement.} No data availability in the manuscript. No datasets were generated or analysed in this article.
\paragraph{Code availability.} Not applicable
\fi
\paragraph{Acknowledgment.} The authors thank Antoine Lemenant and Guy David for the useful discussions and comments on the regularity for almost minimizers of length and surface.
\ds{The authors also thank Vincent Millot, Lia Bronsard and the anonymous referee for comments and suggestions, which helped us to fix some issues and greatly improve the manuscript.}



\addcontentsline{toc}{section}{References}
\bibliography{Notes_LC}{}

\end{document}